\theoremstyle{plain}
\newtheorem{theorem}{Theorem}
\newtheorem{corollary}[theorem]{Corollary}
\newtheorem{proposition}[theorem]{Proposition}
\newtheorem{lemma}[theorem]{Lemma}
\theoremstyle{definition}
\newtheorem{definition}[theorem]{Definition}
\newtheorem{example}[theorem]{Example}
\newtheorem{notation}[theorem]{Notation}
\newtheorem{construction}[theorem]{Construction}
\theoremstyle{remark}
\newtheorem{remark}[theorem]{Remark}
\numberwithin{theorem}{section}
\tikzstyle{vertex}=[circle, draw, inner sep=0pt, minimum size=6pt]
\tikzset{style green/.style={
		set fill color=green!50!lime!60,
		set border color=white,
	},
	style cyan/.style={
		set fill color=cyan!90!blue!60,
		set border color=white,
	},
	style orange/.style={
		set fill color=orange!30,
		set border color=white,
	},
	style blue/.style={
	set fill color=blue!20,
	set border color=blue!20,
	},
	hor/.style={
		above left offset={-0.100,0.31},
		below right offset={0.15,-0.125},
		#1
	},
	ver/.style={
		
	above left offset={-0.1,0.3},
		below right offset={0.15,-0.15},
		#1
	}
}
\title{Cohn-Leavitt path algebras of bi-separated graphs}
\author{Mohan. R and B. N. Suhas}
\keywords{Leavitt path algebras, Cohn-Leavitt path algebras, Weighted Leavitt path algebras, Leavitt path algebras of hypergraphs}
\address{Statistics and Mathematics Unit, Indian Statistical Institute Bangalore, India}
\email{rmohan689@gmail.com}
\address{Department of Mathematics, Amrita School of Engineering, Bangalore, Amrita Vishwa Vidyapeetham, India}
\email{chuchabn@gmail.com}
\begin{document}
\begin{abstract}
The purpose of this paper is to provide a common framework for studying various generalizations of Leavitt algebras and Leavitt path algebras. This paper consists of two parts. In part I we define Cohn-Leavitt path algebras of a new class of graphs with an additional structure called bi-separated graphs, which generalize the constructions of Leavitt path algebras of various types of graphs. We define and study the category \textbf{BSG} of bi-separated graphs with appropriate morphisms so that the functor which associates a bi-separated graph to its Cohn-Leavitt path algebra is continuous. We also characterize a full subcategory of \textbf{BSG} whose objects are direct limits of finite complete subobjects. We compute normal forms of these algebras and apply them to study some algebraic theoretic properties in terms of bi-separated graph-theoretic properties. 

In part II we specialize our attention to Cohn-Leavitt path algebras of a special class of bi-separated graphs called B-hypergraphs. We investigate their non-stable K-theory and show that the lattice of order-ideals of V-monoids of these algebras is determined by bi-separated graph-theoretic data. Using this information we study representations of Leavitt path algebras of regular hypergraphs and also find a matrix criterion for Leavitt path algebras of finite hypergraphs to have IBN property.
\end{abstract}
\maketitle

\section{Introduction}\label{Introduction}

A unital ring $R$ is said to have \textit{Invariant Basis Number}  (IBN) property if whenever the free left modules $R^m$ and  $R^n$ are isomorphic for some natural numbers $m$ and $n$, then $m=n$. In a series of papers \cite{MR0077520, MR0083986, MR0132764, MR0178018}, W.G. Leavitt studied rings which do not satisfy IBN property. For natural numbers $m,n$ with $m<n$, a non-IBN ring $R$ is said to have \textit{module type} $(m,n)$ if  $m$ and $n$ are the least positive integers such that $R^m\cong R^n$. For any arbitrary field $K$ and natural numbers $m,n$ with $m<n$, Leavitt constructed `universal' $K$-algebras $L_K(m,n)$ of module type $(m,n)$ (now called \textit{Leavitt algebras}).

$L_K(m,n)$ is presented as a unital $K$-algebra with generators $x_{ij},x_{ij}^\ast$ where $1\leq i\leq m, 1\leq j\leq n$, and relations 
$$\sum\limits_{j=1}^nx_{lj}x_{jk}^\ast=\delta_{lk}\quad\text{and}\quad\sum\limits_{i=1}^mx_{li}^\ast x_{ik}=\delta_{lk}.$$
Leavitt also proved that $L_K(m,n)$ is simple if and only if $m=1$, and that $L_K(m,n)$ is a domain for all $m>1$.

In a seemingly unrelated development, J. Cuntz constructed and studied in \cite{MR0467330} and \cite{MR604046}, a class of $C^\ast $-algebras, now known as \textit{Cuntz algebras} $\mathcal{O}_n$, which are generated by $n$ isometries $S_1,S_2,\dots,S_n$ such that $\sum_{i=1}^{n}S_iS_i^\ast =1$. It turns out that the Leavitt algebra $L_\mathbb{C}(1,n)$ of type $(1,n)$ is a dense $\ast$-subalgebra of the Cuntz algebra $\mathcal{O}_n$. Brown \cite{MR637007}, and McClanahan \cite{MR1172034,MR1245464}, studied $C^\ast$-algebras $U_{(m,n)}^{nc}$ which are the $C^\ast$-analogs of $L_K(m,n)$.

Cuntz and Krieger \cite{MR561974} generalized the construction of $\mathcal{O}_n$ by considering a class of $C^\ast$-algebras, known as \textit{Cuntz-Krieger algebras} associated to an $n\times n$ matrix with entries in $\{0,1\}$. In a subsequent development, in \cite{MR1626528}, Cuntz-Krieger algebras were realized as special cases of a broader class of $C^\ast$-algebras arising from directed graphs called \textit{graph $C^\ast$-algebras}. The interested reader is referred to \cite{MR2135030} for further information on this important class of $C^\ast$-algebras. 

Algebraic analogs of graph $C^\ast$-algebras were defined and studied for row-finite graphs independently in \cite{MR2172342} and \cite{MR2310414}, and for arbitrary graphs in \cite{MR2417402} and  \cite{MR2363133}, under the name \textit{Leavitt path algebras} $L_K(E)$. These algebras generalize Leavitt algebras $L_K(1,n)$ in a similar way as the graph $C^\ast$-algebras generalize Cuntz algebras $\mathcal{O}_n$. The interested reader may consult the book \cite{MR3729290} and the references provided there.  

In \cite{MR2826405} and \cite{MR2980456}, Ara and Goodearl initiated the study of a much larger class of algebras called \textit{Cohn-Leavitt path algebras} and their  $C^\ast$-analogs based on the concept of \textit{separated graphs} $(E,C)$. The Cohn-Leavitt path algebras extend the existing version of Leavitt path algebras and graph $C^\ast$-algebras for a particular choice of $C$. It was shown that any free product of algebras $L_K(1,n)$ appears as $L_K(E,C)$ for a suitable separated graph $(E,C)$. Also, for any $n\geq m\geq 1$, there is a separated graph $(E,C)$ such that $L_K(E,C)\cong M_{m+1}(L_K(m,n))\cong M_{n+1}(L_K(m,n))$ and the corner of $L_K(E,C)$ corresponding to one vertex of $E$ is isomorphic to $L_K(m,n)$ itself. Similarly, free products of Cuntz algebra $\mathcal{O}_n$ and matrix $C^\ast$-algebras $M_{m+1}(U_{(m,n)}^{\text{nc}})$ appear as $C^\ast(E,C)$ for suitable separated graphs $(E,C)$. However, their motivation to study Cohn-Leavitt path algebras of separated graphs was of $K$-theoretic nature and towards developing techniques to answer realization problem for von Neumann regular rings (cf. \cite{MR2513205}). Using Bergman's machinery \cite{MR0357503}, they describe the $\mathcal{V}$-monoid of $L_K(E,C)$ and study the lattice of trace ideals of Cohn-Leavitt path algebras in terms of separated graph theoretic data.

Independently, in \cite{MR3096577}, Hazrat defined the concept of \textit{weighted Leavitt path algebras} $L_K(E,w)$ of weighted graphs as a graph theoretic generalization of Leavitt algebras of type $(m,n)$ for any natural numbers $m,n$. The Gr\"obner–Shirshov bases of $L_K(E,w)$ (called normal forms) were found, and as an  application, the characterization of $L_K(E,w)$  which are domains was studied in \cite{MR3707905}. As another application of normal forms, Gelfand-Kirillov dimensions of weighted Leavitt path algebras were studied in \cite{preusser2018weighted}. However, Cohn-Leavitt path algebras and weighted Leavitt path algebras are not special cases of each other.

In \cite{preusser2019leavitt}, Raimund defined and studied Leavitt path algebras associated to \textit{hypergraphs}. He showed that Leavitt path algebras of separated graphs and vertex weighted Leavitt path algebras of row-finite vertex-weighted graphs are examples of hypergraphs. 

In this article, we initiate the study of  bi-separated graphs $\dot E=(E,(C,S),(D,T))$ and the associated Cohn-Leavitt path algebras $\mathcal{A}_K(\dot E)$. With a particular choice of $(C,S)$ and/or $(D,T)$, we show that the Leavitt path algebras, Cohn-Leavitt path algebras of separated graphs, weighted Leavitt path algebras and Leavitt path algebras of hypergraphs are special cases of $\mathcal{A}_K(\dot E)$. 

In section \ref{section: preliminaries} we recall some preliminaries on graphs and their Leavitt path algebras. We also recall the definitions of various generalizations of Leavitt path algebras and some of their properties. The part I of this paper begins with section \ref{The Algebras}, where we define the Cohn-Leavitt path algebras of bi-separated graphs and state some very basic results
that follow from the definitions. We also show how the various generalizations of Leavitt path algebras introduced in the previous section are special cases of $\mathcal{A}_K(\dot E)$. In section \ref{Category}, we define the category \textbf{BSG} of bi-separated graphs and show that every object in this category is a direct limit of countable complete sub-objects (see Proposition \ref{prop: direct limit of countable}). However, this statement does not hold if we replace countable by finite. We then define a new sub-category of \textbf{BSG}, which we call ``tame category \textbf{tBSG}" and show that this category characterizes all objects of \textbf{BSG} which are direct limits of finite complete sub-objects. Section \ref{normal forms} deals with computation of normal forms of $\mathcal{A}_K(\dot E)$ using Bergman's diamond lemma and some of their applications. In particular we find bi-separated graph theoretic conditions to study algebraic properties of Cohn-Leavitt path algebras such as simplicity,  semiprimitivity, von Neumann regularity, finiteness etc and also characterize the algebras which are domains.

In part II of this paper we focus our attention to the study of B-hypergraphs. In section \ref{section: B-hypergraphs} we define B-hypergraphs $(\dot{E},\Lambda)$ and their $H$-monoids and we show that $H$-monoids are isomorphic to the $\mathcal{V}$-monoids of the corresponding Cohn-Leavitt path algebras. In section \ref{section: ideal lattice}, we introduce the partially ordered set of admissible triples AT$(\Dot{E},\Lambda)$ for each B-hypergraph $(\Dot{E},\Lambda)$ and show that this poset is a lattice. We further show that the lattice of order-ideals of $H$-monoid of $(\Dot{E},\Lambda)$ is isomorphic to the lattice AT$(\Dot{E},\Lambda)$, which establishes that AT$(\Dot{E},\Lambda)$ is isomorphic to the complete lattice of trace ideals of Cohn-Leavitt path algebra of $(\Dot{E},\Lambda)$. In section \ref{sec: reps of LPA of hypergraphs} we study the representations of Leavitt path algebras of regular hypergraphs and show that the category of unital right modules of these algebras is a full subcategory and a retract of quiver representations of underlying graphs of the hypergraphs. Also, we give a characterization of Leavitt path algebras of regular hypergraphs having a finite dimensional representation in terms of their $H$-monoids. Finally in section \ref{sec: lpa of hypergraphs and IBN} we provide a matrix criteria for a Leavitt path algebra of a finite hypergraph having invariant basis number. 

\begin{notation}\label{notation_2.1}
	Throughout this paper, $K$ denotes a fixed field; $\mathbb{Z}$ denotes the set of integers; $\mathbb{Z}^+$ denotes the set of non-negative integers; $\mathbb{N}$ denotes the set of positive integers. $\delta$ is Kronecker delta (i.e $\delta_{ij}=0$ if $i\neq j$ and $\delta_{ij}=1$ if $i=j$). By a ring (resp. $K$-algebra) we mean an associative (not necessarily commutative or unital) ring (resp. $K$-algebra).
\end{notation}

\section{Preliminaries}\label{section: preliminaries}

In this section we recall some preliminary definitions and  propositions and fix some conventions which will be used throughout the article.

A \textit{graph} (or \textit{ quiver})  $E=(E^0,E^1,r,s)$ consists of two sets $E^0, E^1$  called the \textit{set of vertices} and the \textit{set of edges} respectively, and two functions $r,s:E^1\rightarrow E^0$ called the \textit{range map} and the \textit{source map} respectively. We place no restriction on the cardinalities of $E^0$ and $E^1$ or on the properties of the functions $r$ and $s$. We say a graph is \textit{finite} if both $E^0$ and $E^1$ are finite. A vertex $v\in E^0$ is called a \textit{source} (resp. \textit{sink}) if $r^{-1}(v)=\emptyset$ (resp. $s^{-1}(v)=\emptyset$). 

A \textit{subgraph} $F=(F^0,F^1,r_F,s_F)$ of $E=(E^0,E^1,r_E,s_E)$ is defined by $F^0\subseteq E^0$, $F^1\subseteq E^1$ and $r_F$ is the restriction of $r_E$ on $F^1$ and $s_F$ is the restriction of $s_E$ on $F^1$. Let $V$ be a subset of $E^0$. The \textit{induced subgraph} on $V$ is the subgraph $E_V=(V,E_V^1,r_V,s_V)$ such that $E_V^1:=s^{-1}(V)\cap r^{-1}(V)$, $r_V$ and $s_V$ are restrictions of $r_E$ and $s_E$ on $E_V^1$ respectively. A subgraph is \textit{full} if it is induced on its set of vertices.



A \textit{graph morphism} $$\phi:F=(F^0,F^1,r_F,s_F)\rightarrow E=(E^0,E^1,r_E,s_E)$$ is a pair of maps $\phi^0:F^0\rightarrow E^0$ and $\phi^1:F^1\rightarrow E^1$ such that $r_E(\phi^1(e))=\phi^0(r_F(e))$ and $s_E(\phi^1(e))=\phi^0(s_F(e))$, for every $e\in F^1$.

We denote the category of graphs along with graph morphisms by \textbf{Gra}. Given a family of graphs $\{E_i\}_{i\in I}$ in \textbf{Gra}, we define their disjoint union $\bigsqcup\limits_{i\in I}E_i$ to be the graph whose vertex set is $\bigsqcup\limits_{i\in I}E_i^0$, edge set is $\bigsqcup\limits_{i\in I}E_i^1$, and the source and range maps are trivial extensions of $s_i$ and $r_i$ respectively for all $i\in I$.

A \textit{path} $\mu$ in a graph $E$ is either a vertex $v\in E^0$ or a finite sequence of edges $\mu=e_1e_2\dots e_n$ such that $r(e_i)=s(e_{i+1})$, for $i=1,\dots,n-1$. The set of all paths in $E$ is denoted by $E^\star$. We define the length function $l:E^\star\rightarrow\mathbb{Z}^+$ by
\begin{center}
	\[
    l(\mu) =
	\begin{cases}
	0, &\quad\text{if}\quad \mu=v\in E^0,\\
	n, &\quad\text{if}\quad \mu=e_1e_2\dots e_n.\\
	\end{cases}
	\]
\end{center}
We denote the set of all paths in $E$ of length $n$ by $E^n$, and hence $E^\star=\bigcup_{n\geq 0}E^n$. The source and range functions $s,r$ can be extended to $E^\star$ as follows:  
$$\text{if}~v\in E^0,~ s(v):=v~\text{and}~r(v):=v,$$
$$\text{if}~\mu=e_1e_2\dots e_n, ~s(\mu):=s(e_1)~\text{and}~r(\mu):=r(e_n).$$

The \textit{(free) path category} $\mathcal{C}_E$ of a graph $E$ is the small category with Ob($\mathcal{C}_E):=E^0$ and for $v,w\in E^0$,  Mor$(v,w):=\{\mu\in E^\star\mid s(\mu)=v,r(\mu)=w\}$. In other words, the elements of $\mathcal{C}_E$ are paths in $E$ and the partial multiplication is defined by path concatenation. The \textit{path $0$-semigroup} $S^0(E)$ of a graph $E$ is the set $E^\star\sqcup\{0\}$ along with multiplication defined by extension of partial multiplication of $\mathcal{C}_E$ by $0$. That is 
\[
    \mu\cdot\nu :=
	\begin{cases}
	0, &\quad\text{if}\quad\text{either}~\mu=0~\text{or}~\nu=0,\\
	0, &\quad\text{if}\quad r(\mu)\neq s(\nu),\\
	\mu\nu, &\quad\text{if}\quad r(\mu)=s(\nu).
	\end{cases}
	\]

\begin{definition}\label{path_algebra}
	Let $E$ be a graph. The \textit{Path $K$-algebra of $E$}, denoted by $K(E)$, is defined to be the quotient of the free associative $K$-algebra generated by $E^0\cup E^1$ modulo the following relations:
	\begin{enumerate}
		\item $vw=\delta_{vw}v$, for all $v,w\in E^0$,
		\item $s(e)e=e r(e)=e$, for all $e\in E^1$.
	\end{enumerate}
\end{definition}

In other words, the path $K$-algebra of $E$ is obtained as the contracted $K$-algebra of the graph $0$-semigroup $S^0(E)$ (i.e, the zero of $K(E)$ and $S^0(E)$ are identified). The following proposition follows from an application of Bergman's diamond lemma \cite{MR506890}.

\begin{proposition}\label{prop: basis of a path algebra}
Let $E$ be a graph. Then $E^\star$ is a linear $K$-basis for $K(E)$.
\end{proposition}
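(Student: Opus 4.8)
The plan is to realize $K(E)$ as the quotient of the free associative $K$-algebra $K\langle E^0\cup E^1\rangle$ by the ideal generated by the defining relations, and to apply Bergman's diamond lemma \cite{MR506890} to exhibit $E^\star$ as the set of irreducible monomials of a confluent, terminating reduction system. First I would fix a reduction system $S$ on the free monoid on $E^0\cup E^1$ consisting of the rewriting rules $vw\rightarrow\delta_{vw}v$ for $v,w\in E^0$, $ve\rightarrow\delta_{v,s(e)}e$ and $ev\rightarrow\delta_{r(e),v}e$ for $v\in E^0,\,e\in E^1$, and $ef\rightarrow 0$ whenever $e,f\in E^1$ with $r(e)\neq s(f)$. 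A preliminary step is to check that the two-sided ideal $I_S$ generated by $\{W_\sigma-f_\sigma:\sigma\in S\}$ coincides with the ideal $I$ generated by relations (1)--(2) of Definition \ref{path_algebra}: the vertex-only rules are exactly relation (1), the rules $ve\to e$ (for $v=s(e)$) and $ev\to e$ (for $v=r(e)$) are relation (2), and the remaining rules are easy consequences (for instance $ef\equiv e\,r(e)\,s(f)\,f\equiv\delta_{r(e),s(f)}\,e\,r(e)\,f\pmod I$). This guarantees $K\langle E^0\cup E^1\rangle/I_S\cong K(E)$.

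Next I would equip the free monoid with the partial order by word length; since every rule strictly decreases length and sends a monomial of length $2$ to a monomial of length at most $1$ (or to $0$), this order is compatible with $S$ and satisfies the descending chain condition, so the hypotheses of the diamond lemma are in place. I would then identify the irreducible monomials. Because the rules forbid any adjacency of two vertices, of a vertex and an edge in either order, and of two edges $e,f$ with $r(e)\neq s(f)$, an irreducible word containing a vertex must consist of that single vertex, while an irreducible word built from edges $e_1\cdots e_n$ must satisfy $r(e_i)=s(e_{i+1})$ for all $i$. Thus the irreducible monomials are precisely the length-$0$ paths (vertices) together with the paths of positive length, that is, exactly $E^\star$.

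The core of the argument, and the step I expect to be the most laborious, is the verification that every ambiguity of $S$ is resolvable. Since all left-hand sides have length $2$ and no two distinct rules share a left-hand side, there are no inclusion ambiguities, and every overlap ambiguity arises from a length-$3$ word $xyz$ in which both $xy$ and $yz$ are reducible. I would organize the check according to whether the overlapped middle letter $y$ is a vertex or an edge, and within each case by the types of $x$ and $z$, reducing $xyz$ in the two possible orders and comparing the outcomes. The only subtlety is the careful bookkeeping of the Kronecker deltas (using identities such as $\delta_{uv}\delta_{uw}=\delta_{uv}\delta_{vw}$) and the handling of the rule $ef\to 0$, which makes several ambiguities resolve to $0$ from both sides. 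Confirming that all eight essential configurations resolve, I would invoke the diamond lemma to conclude that the irreducible words $E^\star$ form a $K$-basis of $K(E)$.
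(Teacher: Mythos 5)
Your proposal is correct and follows exactly the route the paper intends: the paper simply asserts that the proposition ``follows from an application of Bergman's diamond lemma,'' and your reduction system, length order, identification of the irreducible words with $E^\star$, and overlap analysis are precisely the details of that application. The only point worth noting is that the free algebra must be taken without unit (equivalently, one works with nonempty words), since otherwise the empty word would also be irreducible; with that convention your argument matches the paper's.
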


We recall that an associative ring $R$ is said to have a \textit{set of local units} $U$ if $U$ is a set of idempotents in $R$ having the property that, for each finite subset $r_1,\ldots,r_n$ of $R$, there exists a $u \in U$ for which $ur_iu = r_i$, for $1 \leq i \leq n$. Also, an associative ring $R$ is said to have \textit{enough idempotents} if there exists a set of nonzero orthogonal idempotents $I$ in $R$ for which the set $F$ of finite sums of distinct elements of $I$ is a set of local units for $R$. We denote a ring with ring with enough idempotents by $(R,I)$. For any graph $E$ note that $(K(E),E^0)$ is a $K$-algebra with enough idempotents. Moreover, $K(E)$ is unital if and only if $E^0$ is finite in which case $\sum_{v\in E^0}v$ is the unit.

By \textbf{K-Alg} we mean the category whose objects are $K$-algebras with enough idempotents and whose morphisms are $K$-algebra morphisms which map local units to local units. We note that $K(\_)$ is \textit{not} a functor from \textbf{Gra} to \textbf{K-Alg}. This is because a graph morphism $\phi:F\rightarrow E$ can map two distinct vertices $v,w\in F^0$ to a same vertex in $E$, in which case $K(\phi)(vw)\neq 0$ in $K(E)$, but $vw=0$ in $K(F)$. However, if \textbf{Gr} denotes the category whose objects are graphs and morphisms are graph morphisms $\phi=(\phi^0,\phi^1)$  such that $\phi^0$ is injective. Then it is easy to verify that $K(\_)$ is a \textit{continuous functor} from \textbf{Gr} to \textbf{K-Alg} (i.e, $K(\_)$ maps direct limits to direct limits).

\begin{definition}\label{double_graph}
	Given a graph $E$, the \textit{double of $E$}, denoted by $\widehat{E}$, is defined to be the graph $(E^0,E^1\sqcup\overline{E^1} ,\widehat{r},\widehat{s})$, where $\overline{E^1} =\{e^\ast \mid e\in E^1\}$, and the functions $\widehat{r}$ and $\widehat{s}$ are such that
	\begin{center}
		$\widehat{r}|_{E^1}=r, \widehat{s}|_{E^1}=s$, $\widehat{r}(e^\ast )=s(e)$ and $\widehat{s}(e^\ast )=r(e)$, for all $e\in E^1$.
	\end{center}
\end{definition}

A path $\mu\in\widehat{E}^\star$ is called a \textit{generalized path}. We say a graph $E$ is connected if its double $\widehat{E}$ is connected. That is, for any $v,w\in E^0$, there is a generalized path $\mu\in \widehat{E}^\star$ such that $s(\mu)=v$ and $r(\mu)=w$. The \textit{connected components} of $E$ are the graphs $\{E_j\}_{j \in J}$ such that $E = \bigsqcup \limits_{j \in J} E_j$, where every $E_j$ is connected. 

A $K$-algebra $R$ is called a \textit{$G$-graded algebra} if $R=\bigoplus\limits_{g\in G}R_g$, where $G$ is a group and each $R_g$ is a $K$-subspace of $R$ and $R_{g}R_{h}\subseteq R_{gh}$ for all $g,h\in G$. The set $R^H=\bigcup_{g\in G}R_g$ is called the set of \textit{homogeneous elements} of $R$. $R_g$ is called the $g$-component of $R$ and the nonzero elements of $R_g$ are called \textit{homogeneous of degree $g$}. We write deg$(r)=g$ if $r\in A_g-\{0\}$. Note that for any graph $E$ we can define $\mathbb{Z}$-grading on $K(\widehat{E})$ by defining deg($v)=0$ for every $v\in E^0$, deg($e)=1$ and deg($e^\ast)=-1$.

A $\ast$-ring is an associative unital ring $R$ with an antiautomorphism $\ast:R\rightarrow R$ that is also an involution. That is, $\ast$ satisfies the following properties: for every $x,y\in R$, $(x+y)^\ast=x^\ast+y^\ast$, $(xy)^\ast=y^\ast x^\ast$, $(x^\ast)^\ast=x$ and $1^\ast=1$. Let $K$ be a $\ast$-field with involution $\overline{\phantom{A}}:K\rightarrow K$. A $\ast$-algebra is a $K$-algebra $R$ that is also a $\ast$-algebra such that $(kr)^\ast=\overline{k}r^\ast$ for every $k\in K$ and $r\in R$. Note that for any graph $E$ and a $\ast$-field $K$ with involution $\overline{\phantom{A}}:K\rightarrow K$, $K(\widehat{E})$ is a $\ast$-algebra with respect to the involution defined (on the generators) by $(v)\mapsto v$ for every $v\in E^0$, $e\mapsto e^\ast$ for every $e\in E^1$ and $e^\ast\mapsto e$ for each $e^\ast\in\overline{E^1}$.

\subsection{Leavitt path algebras and their generalizations}\hfil\\

In this subsection we recall the definitions of Leavitt path algebras and their various generalizations.

Recall that a vertex $v\in E^0$ is called \textit{regular} if $0<|s^{-1}(v)|<\infty$. The set of all row regular vertices in $E$ is denoted by Reg$(E)$.

\begin{definition}[\textbf{Cohn-Leavitt path algebras of graphs}]\label{def: CLPA of a graph}
	Let $E$ be a graph and $S$ be any subset of row regular vertices Reg$(E)$ of $E$. The \textit{Cohn-Leavitt path algebra} $CL_K^S(E)$ of $E$ \textit{relative to} $S$ is obtained from $P_K(\widehat{E})$ by imposing the following (Cuntz-Krieger) relations:

	\begin{enumerate}
		\item[CK1:] $\quad e^\ast f=\delta_{e,f}r(e)$ for every $e,f\in E^1$,
		\item[CK2:] $\quad v=\sum\limits_{e\in s^{-1}(v)}ee^\ast $ for every $v\in S$.
	\end{enumerate}

In particular, $CL_K^{\emptyset}(E)$ is called \textit{Cohn path algebra} of $E$ and denoted by $C_K(E)$. The algebra $CL_K^{\text{Reg}(E)}(E)$ is called \textit{Leavitt path algebra} of $E$ and denoted by $L_K(E)$.
\end{definition}

\begin{definition}
[\textbf{Cohn-Leavitt path algebras of separated graphs}]\label{defn: Cohn-Leavitt path algebras of separated graphs}
	A \textit{separated graph} is a pair $(E,C)$, where $E$ is a graph and $C=\bigsqcup_{v\in E^0}C_v$, $C_v$ being a partition of $s^{-1}(v)$, for each vertex $v\in E^0$. Let $C_{\text{fin}} = \{X \in C \mid |X| < \infty\}$ and $S\subset C_{\text{fin}}$. Then the \textit{Cohn-Leavitt path algebra} $CL_K(E,C,S)$ of $(E,C)$ \textit{relative to} $S$ is defined as the quotient of $P_K(\widehat{E})$ obtained by imposing the following relations:
	
		\begin{enumerate}
			\item[SCK1:] $\quad e^\ast f=\delta_{e,f}r(e)$, for all $e,f\in X, X\in C$,
			\item[SCK2:] $\quad v=\sum_{e\in X}ee^\ast$, for every $X\in S, v\in E^0$.
		\end{enumerate}
		
	If $S=C_{\text{fin}}$, the Cohn-Leavitt path algebra is simply called Leavitt path algebra of $(E,C)$ and denoted by $L_K(E,C)$.
\end{definition}
\begin{definition}[\textbf{Weighted Leavitt path algebras}]\label{defn: WLPA of wgraph}
Let $E$ be a row-finite graph and $w:E^1\rightarrow \mathbb{N}$ be a (weight) function. We define the (associated) \textit{weighted graph} $E_{w}$ to be a graph $(E_{w}^0,E_{w}^1,r_{w},s_{w})$, where $E_{w}^0=E^0$, $E_{w}^1=\{e_i\mid e\in E^1, 1\leq i\leq w(e)\}$, $r_{w}(e_i)=r(e)$ and $s_{w}(e_i)=s(e)$, for all $e\in E^1, 1\leq i\leq w(e)$. Set $w(v) = \max\{w(e)\mid e\in s^{-1}(v)\}$. (Note that $w(v)$ is well-defined since $E$ is row-finite). The \textit{weighted Leavitt path algebra} $WL_K(E_{w})$ of the weighted graph $E_{w}$ is the quotient of $P_K(\widehat{E}_{w})$ obtained by going modulo the following relations:
	\begin{enumerate}		
		\item[wCK1:] $\quad\quad \sum\limits_{1\leq i\leq w(v)}e_i^\ast f_i=\delta_{e,f}r(e)$, for every $e,f\in E^1$ and $s(e)=s(f)=v\in E^0$,
		\item[wCK2:] $\quad\quad \sum\limits_{e\in s^{-1}(v)}e_ie_j^\ast =\delta_{i,j}v$, for each $v\in E^0, 1\leq i,j\leq w(v)$,
	\end{enumerate}
	where we set $e_i$ and $e_i^\ast $ to be zero whenever $i>w(e)$.
\end{definition}

\begin{definition}[\textbf{Leavitt path algebras of Hypergraphs}]\label{defn: LPA of hypergraph}
Let $I$ and $X$ be sets. Recall that a function $x:I\rightarrow X$, given by $i\mapsto x_i=x(i)$ is called a family of elements in $X$ indexed by $I$. We denote a family $x$ of elements in $X$ indexed by $I$ by $(x_i)_{i\in I}$. 

A \textit{hypergraph} is a quadruple $\mathcal{H}=(\mathcal{H}^0,\mathcal{H}^1,s,r)$ where $\mathcal{H}^0$ and $\mathcal{H}^1$ are sets called the set of vertices and the set of hyperedges respectively. For each $h\in\mathcal{H}^1$ there exists a pair of non-empty indexing sets $I_h,J_h$ such that  $s(h):I_h\rightarrow\mathcal{H}^0$, and $r(h):J_h\rightarrow\mathcal{H}^0$ are families of vertices. 

Let $\mathcal{H}$ be a hypergraph. A hyperedge $h\in\mathcal{H}^1$ is called \textit{source regular} (resp. \textit{range regular}) if $I_h$ is finite (resp. $J_h$ is finite). The set of all source regular hyperedges of $\mathcal{H}$ is denoted by $\mathcal{H}_{\text{sreg}}^1$ and the set of all range regular hyperedges of $\mathcal{H}$ is denoted by $\mathcal{H}_{\text{rreg}}^1$. The hypergraph $\mathcal{H}$ is said to be \textit{regular} if $\mathcal{H}^1=\mathcal{H}_\text{sreg}^1=\mathcal{H}_\text{sreg}^1$.

The \textit{Leavitt path algebra $L_K(\mathcal{H})$ of the hypergraph $\mathcal{H}$} is the $K$-algebra presented by the generating set $\{v,h_{ij},h_{ij}^\ast \mid v\in \mathcal{H}^0, h\in \mathcal{H}^1, i\in I_h, j\in J_h\}$ and the relations
	\begin{enumerate}
		\item $uv=\delta_{u,v}u$, for every $u,v\in \mathcal{H}^0$,
		\item $s(h)_ih_{ij}=h_{ij}=h_{ij}r(h)_j$ and $ r(h)_jh_{ij}^\ast =h_{ij}^\ast =h_{ij}^\ast s(h)_i$, for every $h\in \mathcal{H}^1$, $i\in I_h$, and $j\in J_h$
		\item $\sum\limits_{j\in J_h}h_{ij}h_{kj}^\ast =\delta_{ik}s(h)_i$, for every $h\in \mathcal{H}_\text{sreg}^1$ and $i,k\in I_h$,
		\item $\sum\limits_{i\in I_h}h_{ij}^\ast h_{ik}=\delta_{jk}r(h)_j$, for every $h\in \mathcal{H}_\text{rreg}^1$ and $1\leq j,k\in J_h$.
	\end{enumerate}
\end{definition}

\begin{remark}
Let $L$ denote any one of the $K$-algebras appearing in the definitions \ref{def: CLPA of a graph}-\ref{defn: LPA of hypergraph}. Then note that $L$ satisty the following properties.
\begin{enumerate}
     \item The algebra $L$ is unital if and only if the set of vertices $V$ in the underlying graph is finite. In this case, the unit is the sum of vertices. In general $(L,V)$ is a $K$-algebra with enough idempotents.
     \item If $\overline{\phantom{A}} : K \rightarrow K$ is an involution on the field $K$, then $L$ is a $\ast$-algebra (with respect to the involution $\ast : L \rightarrow L$).
     \item $L$ is a graded quotient algebra of $K(\widehat{E})$ with respect to standard $\mathbb{Z}$-grading given by length of paths.
 \end{enumerate}
\end{remark}

\begin{center}
	\textbf{PART I}
\end{center}
\section{Cohn-Leavitt path algebras of bi-separated graphs}\label{The Algebras}

\begin{definition}\label{biseparated graph}
	A \textit{bi-separated graph} is a triple $\dot E=(E,C,D)$ such that
	\begin{enumerate}
		\item $E=(E^0,E^1,r,s)$ is a graph,
		\item $C=\bigsqcup_{v\in E^0}C_v$, where $C_v$ is a partition of $s^{-1}(v)$ for every non-sink $v\in E^0$,
		\item $D=\bigsqcup_{v\in E^0}D_{v}$, where $D_{v}$ is a partition of $r^{-1}(v)$ for every non-source $v\in E^0$,
		\item $|X\cap Y|\leq 1$, for every $X\in C$ and $Y\in D$,
	\end{enumerate}
\end{definition}
In the above definition, $C$ is called row-separation of $E$, $D$ is called column-separation of $E$ and $(C,D)$ is called bi-separation of $E$. The elements of $C$ are called rows and the elements of $D$ are called columns. Let $C_{\text{fin}}:=\{X\in C\mid |X|<\infty\}$ and $D_{\text{fin}}:=\{Y\in D\mid |Y|<\infty\}$. A bi-separated graph $\dot{E}$ is called \textit{finitely row-separated} (resp. \textit{finitely column-separated}) if $C=C_{\text{fin}}$ (resp. if $D=D_{\text{fin}}$) and is called \textit{finitely bi-separated} if both $C = C_{\text{fin}}$ and $D = D_{\text{fin}}$.  

In the above definition we follow the convention that if $S$ is a set, by a partition $P$ of $S$ we mean a family of pairwise disjoint nonempty subsets of $S$, whose union is $S$. For any non-empty set $S$, there always exist two trivial partitions: the partition $P_1$ on $S$, called the \textit{discrete} partition, where every element of $P_1$ is a singleton and the partition $P_S$, called the \textit{full} partition, where $S$ is the only element of $P_S$. 

\begin{example}(\textbf{Standard bi-separation of a simple graph})
Let $E$ be a \textit{simple} graph (that is, $e,f\in E^1$ such that $s(e)=s(f)$ and $r(e)=r(f)$ implies $e=f$. In other words, there are no multiedges between any two vertices). We can obtain a canonical  bi-separation on $E$ by considering both $C_v$ and $D_v$ to be full partitions. In other words, by defining $C_v=\{s^{-1}(v)\}$ for every non-sink $v\in E^0$ and $D_v=\{r^{-1}(v)\}$ for every non-source $v\in E^0$. This bi-separation is called standard.

\end{example}

In the following examples, $E$ denotes an arbitrary graph.
\begin{example}(\textbf{Trivial bi-separation of a graph})
By  trivial bi-separation on a graph $E$, we mean both $C_v$ and $D_v$ are discrete partitions. 
\end{example}

\begin{example}\textbf{(Cuntz-Krieger bi-separation of a graph)}\label{Cuntz-Krieger bi-separation}
We can obtain another canonical bi-separation on $E$ by combining full row-separation and discrete column-separation on $E$ as follows: Consider $C_v=\{s^{-1}(v)\}$ for every non-sink $v\in E^0$ and $D_v=\{\{e\}\mid e\in r^{-1}(v)\}$ for every non-source $v\in E^0$.
\end{example}

\begin{example}\label{separated graphs}\textbf{(Separated graphs)}
A bi-separated graph $(E,C,D)$ in which the column-separation is discrete is called a \textit{row-separated graph} or simply \textit{separated graph} (cf. definition \ref{defn: Cohn-Leavitt path algebras of separated graphs}). Separated graphs are denoted by $(E,C)$.
\end{example}

\begin{example}\textbf{(Weighted graphs)} \label{Weighted bi-separation}
Let $E$ be a row-finite graph and $w:E^1\rightarrow\mathbb{N}$ be a weight map on $E$. Consider the weighted graph $E_w=(E_w^0,E_w^1,r_w,s_w)$. We associate a  bi-separation on $E_w$ as follows: For every non-sink $v\in E^0$ and $1\leq i\leq w(v)$ define $X_v^i:=\{e_i\mid e\in s^{-1}(v)\}$. For every $v\in E^0$ and $e\in s^{-1}(v)$ define $Y_v^e:=\{e_i\mid 1\leq i\leq w(e)\}$. Now consider $C_v:=\{X_v^i\mid 1\leq i\leq w(v)\}$ and $D_v:=\bigsqcup_{w\in E^0}D_{vw}$ where $D_{vw}=\{Y_v^e\mid e\in s^{-1}(v)\cap r^{-1}(w)\}$. Here $C=C_{\text{fin}}$, since $E$ is row-finite and $D=D_{\text{fin}}$, since $w$  takes natural numbers as values.
\end{example}

\begin{example}\label{hyper bi-separation}(\textbf{Hypergraphs})
We show that any hypergraph $\mathcal{H}$ can be associated to bi-separated graph $\dot{E}=(E,C,D)$ as follows: Define $E = (\mathcal{H}^0,E^1,s^\prime,r^\prime)$, where $E^1=\{h_{ij}\mid h\in\mathcal{H}^1, i\in I_h, j\in J_h\}$, $s^\prime(h_{ij})=s(h)_i$ and $r^\prime(h_{ij})=r(h)_j$. For an arbitrary $h\in\mathcal{H}^1$, if $i\in I_h$ then $X_h^i := \{h_{ij}\mid j\in J_h\}$ and if $j\in J_h$ then $Y_h^j := \{h_{ij}\mid i\in I_h\}$. For $v\in E^0$, define $C_v = \{X_h^i\mid h\in \mathcal{H}^1, v\in s(h)\}$ and $D_v = \{Y_h^j\mid h\in \mathcal{H}^1, v\in r(h)\}$. By construction, $C=C_{\text{fin}}$ and $D=D_{\text{fin}}$.
\end{example}

\begin{notation}
Given a bi-separated graph $\dot E=(E,C,D)$, the maps $s$ and $r$ can be extended to $C$ and $D$ respectively in well-defined manner as follows: For $X\in C$, define $s(X):=s(e)$ where $e\in X$ and for $Y\in D$, define $r(Y):=r(e)$ where $e\in Y$. 

Also, for each $X\in C$ and $Y\in D$ we set
\begin{center}
	\[
	XY = YX =
	\begin{cases}
	e, &\quad\text{if}\quad X\cap Y=\{e\},\\
	0, &\quad\text{otherwise}.\\
	\end{cases}
	\]
\end{center}
We interchangeably use $XY$ and $X \cap Y$, wherever there is no cause for confusion.
\end{notation}

\begin{definition}\label{LPA of BSG}
	Let $\dot E=(E,C,D)$ be a bi-separated graph. The \textit{Leavitt path algebra of $\dot E$} with coefficients over $K$, denoted by $L_K(\dot E)$, is the quotient of $K(\widehat{E})$ obtained by imposing the following relations:
	\begin{enumerate}
		\item[$L1$:] for every $X,X^\prime\in C_{\text{fin}}$,
		$$\sum\limits_{Y\in D}(XY)(YX^\prime)^\ast =\delta_{XX^\prime}s(X),$$
		
		\item[$L2$:] for every $Y,Y^\prime\in D_{\text{fin}}$,
		$$\sum\limits_{X\in C}(YX)^\ast(XY^\prime) =\delta_{YY^\prime}r(Y).$$
		
	\end{enumerate}
\end{definition}

\begin{example}(\textbf{Leavitt path algebra of a standard bi-separated simple graph})

Let $E$ be a simple graph and consider the standard bi-separation $(C,D)$ on $E$. Let the set of all non-sinks of $E$ be denoted by $E_\alpha$ and the set of all non-sources be denoted by $E_\omega$. Then $|C|=|E_\alpha|$ and $|D|=|E_\omega|$. Recall that a vertex $v\in E^0$ is called row-regular if $0<|s^{-1}(v)|<\infty$ and $w\in E^0$ is called column-regular if $0<|r^{-1}(w)|<\infty$. The set of all row-regular vertices is denoted by RReg($E$) and the set of all column regular vertices is denoted by CReg($E$). Note that $|C_\text{fin}|=|\text{RReg}(E)|$ and $|D_\text{fin}|=|\text{CReg}(E)|$. 

Let $A$ be a $|C_\text{fin}|\times|D|$ matrix over $K(\widehat{E})$ with entries 
\[	A(v,w) =
	\begin{cases}
	e, &\quad\text{if}\quad e\in E^1~\text{such that}~s(e)=v,r(e)=w,\\
	0, &\quad\text{otherwise}.\\
	\end{cases}
	\]
where $v\in \text{RReg}(E)$ and $w\in E_\omega$. Let $A^\ast$ denote the `adjoint transpose' of $A$. Similiarly, let $B$ be a $|C|\times|D_\text{fin}|$ matrix over $K(\widehat{E})$ with entries 
\[	B(v,w) =
	\begin{cases}
	e, &\quad\text{if}\quad e\in E^1~\text{such that}~s(e)=v,r(e)=w,\\
	0, &\quad\text{otherwise}.\\
	\end{cases}
	\]
where $v\in E_\alpha$ and $w\in \text{CReg}(E)$. Let $B^\ast$ denote the `adjoint transpose' of $B$. 

Then the defining relations L1 and L2 of Leavitt path algebras are obtained by imposing the following matrix relations:
\begin{eqnarray*}
(L1):\quad AA^\ast &=& V,\\
(L2):\quad B^\ast B &=& U.
\end{eqnarray*}

where $V$ is the $|\text{RReg}(E)|\times|\text{RReg}(E)|$ diagonal matrix with diagonal entries $V(v,v)=s(v)$ and and $U$ is the $|\text{CReg}(E)|\times|\text{CReg}(E)|$ diagonal matrix with diagonal entries $U(w,w)=r(w)$.

In particular, if $E$ is finite simple graph, then the matrix $A$ (resp. the matrix $B$) is obtained from the adjacency matrix of $E$ by removing the zero rows (resp. zero columns) and replacing $1$'s with corresponding edges. We illustrate this with a few examples below.

\begin{enumerate}
	    \item[(1)] \label{line graph} For $n \geq 1$, let $\Sigma_n$ be the following line graph with $n$ vertices and $n-1$ edges:
	\begin{center}
			\begin{tikzpicture}
			[->,>=stealth',shorten >=1pt,thick,scale=0.6]

			\draw [fill=black] (-2,0)		circle [radius=0.1];		
			\draw [fill=black] (-4,0)		circle [radius=0.1];		
			\draw [fill=black] (-6,0)		circle [radius=0.1];	
			\draw [fill=black] (2,0)		circle [radius=0.1];	
			\draw [fill=black] (4,0)		circle [radius=0.1];

			\path[->]	(-1.8,0)	edge[] 	node	{}		(-0.2,0);
			\path[->]	(-3.8,0)	edge[] 	node	{}		(-2.2,0);
			\path[->]	(-5.8,0)	edge[] 	node	{}		(-4.2,0);
			\path[->]	(2.2,0)	edge[] 	node	{}		(3.8,0);

			\node at (0.8,0) {$\dots$};
			\node at (-6,-0.5) {$v_1$};
			\node at (-4,-0.5) {$v_2$};
			\node at (-2,-0.5) {$v_3$};
			\node at (2,-0.5) {$v_{n-1}$};
			\node at (4,-0.5) {$v_n$};
			
			\node at (-5,0.4) {$e_1$};
			\node at (-3,0.4) {$e_2$};
			\node at (-1,0.4) {$e_3$};
			\node at (3,0.4) {$e_{n-1}$};

			\node at (-8,0) {$\Sigma_n=$};	
			
			\end{tikzpicture}
		\end{center}
	Then \[A=\begin{pmatrix}
	e_1 & 0 & \dots & 0 \\
	0 & e_2 & \dots & 0\\
	\vdots & \vdots & \dots & \vdots \\
	0 & 0 & \dots & e_{n-1} \\
	\end{pmatrix},\]
	
	and the relations obtained are
	$$e_ie_i^\ast =v_i\quad\text{and}\quad e_i^\ast e_i=v_{i+1},$$\
	where $1\leq i\leq n-1$. In this case, it is easy to see that	
	 $$L_K(\dot \Sigma_n)\cong L_K(\Sigma_n)\cong M_n(K).$$ 
	 \item[(2)] \label{tri-diagonal} For $n \geq 3$, let $\Gamma_n$ denote the following graph with $n$ vertices:
	\begin{center}
		\begin{tikzpicture}
	[->,>=stealth',shorten >=1pt,thick, scale=0.6]

	\draw [fill=black] (-2,0)		circle [radius=0.1];		
	\draw [fill=black] (-4,0)		circle [radius=0.1];		
	\draw [fill=black] (-6,0)		circle [radius=0.1];	
	\draw [fill=black] (2,0)		circle [radius=0.1];	
	\draw [fill=black] (4,0)		circle [radius=0.1];

	\path[->]	(2,0.2)	edge[out=130, in=50, looseness=4,  distance=2cm, ->] 	node	{}		(2,0.2);
	\path[->]	(-2,0.2)	edge[out=130, in=50, looseness=4,  distance=2cm, ->] 	node	{}		(-2,0.2);
	\path[->]	(-4,0.2)	edge[out=130, in=50, looseness=4,  distance=2cm, ->] 	node	{}		(-4,0.2);
	\path[->]	(4.2,0)	edge[out=40, in=320, looseness=4,  distance=2cm, ->] 	node	{}		(4.2,0);
	\path[->]	(-6.2,0)	edge[out=220, in=140, looseness=4,  distance=2cm, ->] 	node	{}		(-6.2,0);

	\path[->]	(-5.8,0)	edge[bend left=40] 	node	{}		(-4.2,0);
	\path[->]	(-3.8,0)	edge[bend left=40] 	node	{}		(-2.2,0);
	\path[->]	(-1.8,0)	edge[bend left=40] 	node	{}		(-0.2,0);
	\path[->]	(2.2,0)	edge[bend left=40] 	node	{}		(3.8,0);
	
	\path[->]	(-4.2,0)	edge[bend left=40] 	node	{}		(-5.8,0);
	\path[->]	(-2.2,0)	edge[bend left=40] 	node	{}		(-3.8,0);
	\path[->]	(-0.2,0)	edge[bend left=40] 	node	{}		(-1.8,0);
	\path[->]	(3.8,0)	edge[bend left=40] 	node	{}		(2.2,0);

	\node at (0.8,0) {$\dots$};
	\node at (-6,-0.5) {$v_1$};
	\node at (-4,-0.5) {$v_2$};
	\node at (-2,-0.5) {$v_3$};
	\node at (2,-0.5) {$v_{n-1}$};
	\node at (4,-0.5) {$v_n$};

	\node at (-9,0) {$\Gamma_n=$};	
	
	\end{tikzpicture}
\end{center}
Then the adjacency matrix of $\Gamma_n$ has at least two entries and at most three entries in both rows and columns, i.e.,

\[A=\begin{pmatrix}
\ast & \ast & 0 & 0 & 0 & \dots & 0 & 0\\
\ast & \ast & \ast & 0 & 0 & \dots & 0 & 0\\
0 & \ast & \ast & \ast & 0 & \dots & 0 & 0\\
\vdots & \vdots & \vdots & \vdots & \vdots & \ddots & \vdots & \vdots\\
0 & 0 & 0 & 0 & 0 &\dots & \ast & \ast\\
\end{pmatrix},\]
where $\ast$'s are nonzero entries filled by the corresponding edges. In this case, though, the explicit description of the Leavitt path algebra $L_K(\dot{\Gamma_n})$ is not known.
	\end{enumerate}
\end{example}

\begin{example}(\textbf{Groupoid algebra of a free groupoid})
Let $E$ be a graph and consider the trivial bi-separation $(C,D)$ on $E$. The defining relations of Leavitt path algebra of $(E,C,D)$ turns the free path category of $\widehat{E}$ into a free groupoid and hence $L_K(E,C,D)$ is the groupoid algebra of this free groupoid. 
In particular if $E$ has only one vertex then with respect to trivial bi-separation, the Leavitt path algebra is the group algebra of the free group with generators $E^1$ (Here we identified the vertex with the group identity).
\end{example}

\begin{example}(\textbf{Leavitt path algebra of a graph})
Let $E$ be any graph and $\dot{E}$ be the associated bi-separated graph with respect to Cuntz-Kreiger bi-separation on $E$. Then we have $L_K(\dot{E})\cong L_K(E)$.
\end{example}

\begin{example}(\textbf{Leavitt path algebra of a separated graph})
Let $\dot{E}=(E,C)$ be a (row) separated graph. Then it is direct that $L_K(\dot{E})\cong L_K(E,C)$.
\end{example}

\begin{example}(\textbf{Weighted Leavitt path algebra of a weighted graph})
Let $E$ be a row-finite graph and $w:E^1\rightarrow \mathbb{N}$ be a weight map. Consider $\dot{E}=(E,C,D)$ where $(C,D)$ is the weighted bi-separation on $E$ as in example \ref{Weighted bi-separation}. Then it is immediate that $L_K(\dot{E})\cong WL_K(E_w)$.
\end{example}

It has been noted in \cite[page 171]{MR2980456} that neither weighted Leavitt path algebras nor Leavitt path algebras of separated graphs are particular cases of the each other. One can mix the above two examples and construct new algebras as follows:
\begin{example}[\textbf{Weighted Cohn-Leavitt path algebras of finitely separated graphs}]
	Let $(E,C)$ be a finitely row-separated graph (i.e. a separated graph in which $C=C_{\text{fin}}$). Let $w:E^1\rightarrow\mathbb{N}$ be a function and $E_{w}$ be the associated weighted graph. For $X\in C$, set $w(X) = \max\{w(e)\mid e\in X\}$. The weighted Cohn-Leavitt path algebra $CL_K(E_{w},C)$ of $(E_{w},C)$ can be defined as the quotient of $P_K(\widehat{E}_{w})$ by factoring out the following relations:
	\begin{enumerate}
		\item[wSCK1:] $\quad\sum\limits_{1\leq i\leq w(X)}e_i^\ast f_i=\delta_{e,f}r(e),$ for every $e,f\in X$ and $X\in C$, 
		\item[wSCK2:] $\quad\sum\limits_{e\in X}e_ie_j^\ast =\delta_{i,j}s(e),$ for each $X\in C, 1\leq i,j\leq w(X)$,
	\end{enumerate}
where we set $e_i$ and $e_i^\ast $ to be zero whenever $i>w(e)$.

Given a weighted finitely separated graph $(E_w,C)$, we get a canonical bi-separated graph as follows: For $X\in C$ and $1\leq i\leq w(X)$, define $\widetilde{X}^i = \{e_i\mid e\in X\}$ and set $\widetilde{C}_v=\{\widetilde{X}^i\mid X \in C_v ~\text{and}~ 1\leq i\leq w(X)\}$. Here $\widetilde{C}=\widetilde{C}_{\text{fin}}$, since $E$ is finitely separated. Now, for $e\in X$, define $\widetilde{Y}_X^e = \{e_i\mid 1\leq i\leq w(e)\}$ and  $\widetilde{D}_{vw}=\{\widetilde{Y}_X^e\mid e\in X\}$. Observe that $\widetilde{D}=\widetilde{D}_{\text{fin}}$, since $w$ is natural number valued. Now setting $\dot E=(E,\widetilde{C},\widetilde{D})$, we immediately get
$$L_K(E,\widetilde{C},\widetilde{D})\cong L_K(E_{w},C).$$
\end{example}

\begin{example}[\textbf{Leavitt path algebra of a hypergraph}]
Given any hypergraph $\mathcal{H}$, consider the associated bi-separated graph $\dot{E_H}$ as in example \ref{hyper bi-separation}. Then we have $L_K(\dot{E})\cong L_K(\mathcal{H})$.
\end{example}

\begin{definition}\label{CLPA of BSG}
	Let $\dot E=(E,C,D)$ be bi-separated graph. Let $S\subseteq C_{\text{fin}}$ and $T\subseteq D_{\text{fin}}$ be two distinguished sets. The \textit{Cohn-Leavitt path algebra of $\dot E$} with coefficients over $K$ relative to $(S,T)$, denoted by $\mathcal{A}_K(E,(C,S),(D,T))$, is the quotient of $K(\widehat{E})$ obtained by imposing the following relations:
	\begin{enumerate}
		\item[$\mathcal{A}1$:] for every $X,X^\prime\in S$,
		$$\sum\limits_{Y\in D}(XY)(YX^\prime)^\ast =\delta_{XX^\prime}s(X),$$
		
		\item[$\mathcal{A}2$:] for every $Y,Y^\prime\in T$,
		$$\sum\limits_{X\in C}(YX)^\ast(XY^\prime) =\delta_{YY^\prime}r(Y),$$
		
	\end{enumerate}
\end{definition}

For notational convenience we denote the bi-separated graph with given distinguished subsets as in the above definition as a 5-tuple $\dot{E}=(E,(C,S),(D,T))$ and again call it bi-separated graph if there is no confusion and denote the Cohn-Leavitt path algebra also as $\mathcal{A}_K(\dot{E})$. Whenever we want to distinguish the case that $S=C_{\text{fin}}$ and $T=D_{\text{fin}}$ we simply call the Cohn-Leavitt path algebra as Leavitt path algebra.

\begin{proposition}[\textbf{Universal property of $\mathcal{A}_K(\dot{E})$}]\label{Universal_property_for_CLPA}
 Let $\dot{E}=(E,(C,S),(D,T))$ be a bi-separated graph. Suppose $\mathfrak{A}$ is a $K$-algebra which contains a set of pairwise orthogonal idempotents $\{A_v \mid v \in E^0\}$, two sets $\{A_e \mid e \in E^1\}$, $\{B_e \mid e \in E^1\}$ for which the following hold.
 \begin{enumerate}
     \item $A_{s(e)}A_e=A_eA_{r(e)}=A_e$, and $A_{r(e)}B_e=B_eA_{s(e)} =B_e$ for all $e \in E^1$.
     \item for every $X,X^\prime \in S$, $\sum\limits_{Y \in D}A_{XY}B_{YX^{\prime}} = \delta_{XX^{\prime}}A_{s(X)}$,
     \item for every $Y,Y^\prime \in T$, $\sum\limits_{X \in C}B_{YX}A_{XY^{\prime}} = \delta_{YY^{\prime}}A_{r(X)}$
 \end{enumerate}

 then there exists a unique map $\psi: \mathcal{A}_K(\dot{E}) \rightarrow \mathfrak{A}$ such that $\psi(v)=A_v$, $\psi(e) = A_e$, and $\psi(e^\ast)=B_e$ for all $v \in E^0$ and $e \in E^1$.
\end{proposition}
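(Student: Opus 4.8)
The plan is to build $\psi$ in two stages, exploiting the fact that $\mathcal{A}_K(\dot E)$ is by construction a quotient of the path algebra $K(\widehat E)$, which is itself a quotient of the free $K$-algebra on the generating set $E^0 \sqcup E^1 \sqcup \overline{E^1}$. First I would invoke the universal property of the free algebra: the assignment $v \mapsto A_v$, $e \mapsto A_e$, $e^\ast \mapsto B_e$ extends uniquely to a $K$-algebra homomorphism $\widetilde\psi$ from the free algebra into $\mathfrak A$. The problem then reduces to checking that $\widetilde\psi$ annihilates, in turn, the defining relations of $K(\widehat E)$ (Definition \ref{path_algebra}) and then the Cohn--Leavitt relations $\mathcal A1$, $\mathcal A2$ (Definition \ref{CLPA of BSG}), after which $\psi$ is obtained by descending through the successive quotients.

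To factor $\widetilde\psi$ through $K(\widehat E)$ I would verify the relations of the double path algebra. The orthogonality relation $vw = \delta_{vw}v$ is sent to $A_vA_w = \delta_{vw}A_v$, which holds precisely because $\{A_v\}$ is a set of pairwise orthogonal idempotents. The edge relations $s(e)e = e\,r(e) = e$ and $r(e)e^\ast = e^\ast s(e) = e^\ast$ (recall $\widehat s(e^\ast) = r(e)$ and $\widehat r(e^\ast) = s(e)$) are sent exactly to the four identities gathered in hypothesis (1). Hence $\widetilde\psi$ descends to a homomorphism $\overline\psi : K(\widehat E) \to \mathfrak A$ with $\overline\psi(v)=A_v$, $\overline\psi(e)=A_e$ and $\overline\psi(e^\ast)=B_e$.

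Next I would push $\overline\psi$ down to the quotient $\mathcal A_K(\dot E)$. Adopting the convention $A_0 := 0$ (so that $A_{XY}$ means $A_e$ when $X\cap Y = \{e\}$ and $0$ otherwise), the formal-product notation introduced before Definition \ref{LPA of BSG} gives $\overline\psi\big((XY)(YX')^\ast\big) = A_{XY}B_{YX'}$ for $X,X' \in C$ and $Y \in D$. Because $X \in S \subseteq C_{\text{fin}}$ is finite and $D$ partitions the ranges (so each edge of $X$ lies in exactly one column), only finitely many $Y \in D$ meet $X$ and the sum in $\mathcal A1$ is finite; applying $\overline\psi$ termwise and invoking hypothesis (2) yields $\sum_{Y \in D}A_{XY}B_{YX'} = \delta_{XX'}A_{s(X)} = \overline\psi(\delta_{XX'}s(X))$. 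Thus $\overline\psi$ kills $\mathcal A1$, and the symmetric argument using hypothesis (3), together with the finiteness of $Y \in T \subseteq D_{\text{fin}}$, kills $\mathcal A2$. Therefore $\overline\psi$ factors through the defining quotient, producing the required map $\psi$.

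Uniqueness is then immediate: $\mathcal A_K(\dot E)$ is generated as a $K$-algebra by the images of $E^0 \sqcup E^1 \sqcup \overline{E^1}$, and the prescribed values $\psi(v)=A_v$, $\psi(e)=A_e$, $\psi(e^\ast)=B_e$ pin down any homomorphism on a generating set. I expect the only genuinely delicate point to be the bookkeeping in the third step: correctly reading the products $XY$ as edges (or zero), matching $\overline\psi$ of a starred generator $(YX')^\ast$ with $B_{YX'}$ rather than with an adjoint of $A_{YX'}$, and confirming that the separation hypotheses $S \subseteq C_{\text{fin}}$, $T \subseteq D_{\text{fin}}$ force the relevant sums to be finite so that hypotheses (2) and (3) apply verbatim. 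Everything else is a routine transcription of the defining relations into the hypotheses imposed on $\mathfrak A$.
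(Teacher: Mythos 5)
Your argument is correct and is exactly the standard factorization-through-the-presentation argument the paper relies on (the paper in fact states Proposition \ref{Universal_property_for_CLPA} without writing out a proof, since $\mathcal{A}_K(\dot{E})$ is defined by generators and relations): hypothesis (1) together with orthogonality of the $A_v$ handles the path-algebra relations of $K(\widehat{E})$, hypotheses (2) and (3) handle $\mathcal{A}1$ and $\mathcal{A}2$, and uniqueness follows since the generators determine any homomorphism. Your attention to the conventions $A_0=0$, the reading of $(YX')^\ast$ as $B_{YX'}$, and the finiteness of the sums via $S\subseteq C_{\text{fin}}$, $T\subseteq D_{\text{fin}}$ covers the only delicate points.
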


\begin{example}[\textbf{Cohn-Leavitt path algebra of a graph}]
Let $E$ be a graph and let $S\subseteq\text{RReg}(E)$. Then the Cohn-Leavitt path algebra $CL_K^S(E)$ of $E$ can be realized as Cohn-Leavitt path algebra $\mathcal{A}_K(\dot{E})$ of the bi-separated graph $\dot{E}=(E,(C,S),(D,T))$, where $(C,D)$ is the Cuntz-Krieger bi-separation on $E$, RReg$(E)=C_{\text{fin}}$ and $T=D$. 
\end{example}
\begin{example}(\textbf{Cohn-Leavitt path algebra of a separated graph})
Let $(E,C)$ be a separated graph and $S\subseteq C_{\text{fin}}$. Set $\dot{E}=(E,(C,S),(D,T))$, where $T=D_{\text{fin}}=D$. Then from definition it is clear that $\mathcal{A}_K(\dot{E})\cong CL_K(E,C,S)$.
\end{example}

 We say a bi-separated graph $\dot{E}=(E,(C,S),(D,T))$ is \textit{connected} if the underlying graph $E$ is connected. Because of the following proposition, we assume henceforth that every bi-separated graph is connected.

\begin{proposition}
 Let $\dot{E}$ be a bi-separated graph. Suppose $\dot{E} = \bigsqcup \limits_{j \in J} E_j$ is a decomposition of $\dot{E}$ into its connected components. Then $\mathcal{A}_{K}(\dot{E}) \cong \bigoplus \limits_{j \in J}\mathcal{A}_{K}(\dot{E_j})$, where $\dot{E_j}$ is the bi-separated graph structure on $E_j$ induced by the bi-separated graph structure on $E$.
\end{proposition}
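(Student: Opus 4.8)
The plan is to build the isomorphism from the universal property of $\mathcal{A}_K(\dot E)$ (Proposition \ref{Universal_property_for_CLPA}), exploiting that every piece of defining data---vertices, edges, rows, columns, and the relations $\mathcal{A}1$, $\mathcal{A}2$---respects the decomposition into connected components. The single observation driving everything is a \emph{locality} statement, which I would record first: since every edge $e$ joins $s(e)$ to $r(e)$ inside one component, each row $X\in C$ and each column $Y\in D$ is contained in a unique $E_j$; consequently $C=\bigsqcup_j C_j$, $D=\bigsqcup_j D_j$, $S=\bigsqcup_j S_j$, $T=\bigsqcup_j T_j$ (where $C_j,D_j,S_j,T_j$ are the induced data on $E_j$), and moreover $XY\neq 0$ forces $X$ and $Y$ to lie in the same component.

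First I would construct $\Phi\colon \mathcal{A}_K(\dot E)\to\bigoplus_{j}\mathcal{A}_K(\dot{E_j})$. Set $\mathfrak A=\bigoplus_j\mathcal A_K(\dot{E_j})$, and for $v\in E^0$, $e\in E^1$ let $A_v,A_e,B_e$ be the images of $v,e,e^\ast$ in the unique summand $\mathcal A_K(\dot{E_j})$ with $v\in E_j^0$, resp.\ $e\in E_j^1$. The idempotents $\{A_v\}$ are pairwise orthogonal because vertices from different components sit in distinct (orthogonal) summands, and condition (1) of the universal property is immediate. For condition (2), fix $X,X'\in S$: if $X,X'$ lie in the same component $E_j$, then by locality $\sum_{Y\in D}A_{XY}B_{YX'}$ collapses to the sum over $Y\in D_j$, which is exactly the left side of relation $\mathcal A1$ for $\dot{E_j}$ and hence equals $\delta_{XX'}A_{s(X)}$; if they lie in different components then every term vanishes (since a single $Y$ cannot meet both $X$ and $X'$) while $\delta_{XX'}=0$, so the relation holds as $0=0$. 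Condition (3) is symmetric, and the universal property yields a unique homomorphism $\Phi$.

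Next I would construct the inverse. For each $j$ the inclusions $v\mapsto v$, $e\mapsto e$, $e^\ast\mapsto e^\ast$ satisfy the hypotheses of the universal property of $\mathcal A_K(\dot{E_j})$ with target $\mathcal A_K(\dot E)$---again because the $\dot{E_j}$-relations coincide, by locality, with the $\dot E$-relations restricted to component $j$---giving $\Psi_j\colon\mathcal A_K(\dot{E_j})\to\mathcal A_K(\dot E)$. Since products of generalized paths lying in distinct components are zero in $K(\widehat E)$ and hence in $\mathcal A_K(\dot E)$, the images of the $\Psi_j$ are mutually orthogonal, so (using that elements of the direct sum have finite support) they assemble into a single homomorphism $\Psi\colon\bigoplus_j\mathcal A_K(\dot{E_j})\to\mathcal A_K(\dot E)$. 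Finally I would check $\Phi\circ\Psi$ and $\Psi\circ\Phi$ on generators, where both are visibly the identity, completing the proof.

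The genuinely load-bearing step is the locality claim together with its consequence that the cross-component instances of $\mathcal A1$ and $\mathcal A2$ are vacuous; everything else is bookkeeping. A cleaner equivalent route, worth stating as a remark, is to observe that $\widehat E=\bigsqcup_j\widehat{E_j}$ gives $K(\widehat E)\cong\bigoplus_j K(\widehat{E_j})$ (every generalized path lives in one component, and cross products vanish), that each relator defining $\mathcal A1$, $\mathcal A2$ lies in a single summand by locality, and hence that the relation ideal splits as $I=\bigoplus_j I_j$; the claimed isomorphism then follows at once from $\mathcal A_K(\dot E)=K(\widehat E)/I$.
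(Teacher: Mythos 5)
Your proposal is correct and follows the same route as the paper, which proves the proposition simply by appealing to the universal property of $\mathcal{A}_K(\dot{E})$ (Proposition \ref{Universal_property_for_CLPA}); your write-up just makes explicit the locality observation (each row, column, and relation lives in a single component) and the verification of the universal-property hypotheses that the paper leaves implicit.
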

\begin{proof}
Follows from universal property of $\mathcal{A}_K(\dot{E})$.
\end{proof}

\begin{lemma}
Let $\dot{E}$ be a bi-separated graph.
 \begin{enumerate}
     \item The algebra $\mathcal{A}_{K}(\dot{E})$ is unital if and only if $E^0$ is finite. In this case, $$1_{\mathcal{A}_{K}(\dot{E})} = \sum \limits_{v \in E^0} v.$$
     \item For each $\alpha \in \mathcal{A}_{K}(\dot{E})$, there exists a finite set of distinct vertices $V(\alpha)$ for which $\alpha = f \alpha f$, where $f = \sum \limits_{v \in V(\alpha)}v$. Moreover, the algebra $(\mathcal{A}_{K}(\dot{E}), E^0)$ is a ring with enough idempotents.
     \item Let $\overline{\phantom{A}} : K \rightarrow K$ be an involution on the field $K$. Then with respect to the involution $\ast : \mathcal{A}_K(\Dot{E}) \rightarrow \mathcal{A}_K(\Dot{E})$, $\mathcal{A}_K(\dot{E})$ is a $\ast$-algebra.
     \item $\mathcal{A}_K(\dot{E})$ is a graded quotient algebra of $K(\widehat{E})$ with respect to standard $\mathbb{Z}$-grading given by length of paths.
 \end{enumerate}
 \begin{proof}
  The proof follows on similiar lines of \cite[Lemma 1.2.12]{MR3729290}.
 \end{proof}
\end{lemma}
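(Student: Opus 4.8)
The plan is to transfer each structural feature from the ambient path algebra $K(\widehat{E})$ — which by Proposition \ref{prop: basis of a path algebra} has $\widehat{E}^\star$ as a $K$-basis and, as recorded in the Preliminaries, already carries enough idempotents, an involution, and the standard $\mathbb{Z}$-grading — down to the quotient $\mathcal{A}_K(\dot{E})$, checking in each case that the defining relations $\mathcal{A}1$ and $\mathcal{A}2$ are compatible with that feature. I would establish part (2) first, since part (1) rests on it. Every $\alpha \in \mathcal{A}_K(\dot{E})$ is the image of a finite $K$-linear combination of generalized paths $\mu \in \widehat{E}^\star$, and each such $\mu$ satisfies $s(\mu)\,\mu = \mu = \mu\,r(\mu)$ with $s(\mu),r(\mu) \in E^0$, a property preserved under the quotient map. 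Collecting the finitely many sources and ranges of the monomials occurring in a representative of $\alpha$ into a finite set $V(\alpha) \subseteq E^0$ and setting $f = \sum_{v \in V(\alpha)} v$, the orthogonality relation $vw = \delta_{vw}v$ inherited from Definition \ref{path_algebra} yields $f\alpha f = \alpha$. Hence the finite sums of distinct vertices form a set of local units and $(\mathcal{A}_K(\dot{E}), E^0)$ has enough idempotents.

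For part (1), the forward direction is immediate: when $E^0$ is finite, $f = \sum_{v \in E^0} v$ is a single local unit fixing every generator, hence a two-sided identity. For the converse I argue contrapositively: if $E^0$ is infinite and $1$ were a unit, part (2) supplies a finite $V(1)$ with $1 = f 1 f$, and any vertex $w \notin V(1)$ satisfies $w = 1\cdot w = (f 1 f)w = f 1 (fw) = 0$ since $fw = \sum_{v \in V(1)} \delta_{vw} v = 0$, contradicting $w \neq 0$.

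Part (3): the involution on $K(\widehat{E})$ fixing vertices and swapping $e \leftrightarrow e^\ast$ descends provided the ideal generated by the $\mathcal{A}1, \mathcal{A}2$ relations is $\ast$-stable. Applying $\ast$ to the $\mathcal{A}1$ relation for $(X,X')$ and using $(xy)^\ast = y^\ast x^\ast$ together with the symmetries $(XY)=(YX)$, $(YX')=(X'Y)$ and the fixity of $s(X)$ turns $\sum_{Y}(XY)(YX')^\ast = \delta_{XX'}s(X)$ into $\sum_{Y}(X'Y)(YX)^\ast = \delta_{X'X}s(X')$, which is precisely the $\mathcal{A}1$ relation for $(X',X)$; symmetrically $\ast$ carries the $\mathcal{A}2$ relation for $(Y,Y')$ to that for $(Y',Y)$. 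Thus $\ast$ permutes the generating relations, the ideal is $\ast$-invariant, and $\mathcal{A}_K(\dot{E})$ is a $\ast$-algebra. For part (4), under the grading $\deg(v)=0$, $\deg(e)=1$, $\deg(e^\ast)=-1$ each summand $(XY)(YX')^\ast$ of $\mathcal{A}1$ has degree $1+(-1)=0$ and its right-hand side $\delta_{XX'}s(X)$ is a vertex of degree $0$; the terms of $\mathcal{A}2$ are likewise homogeneous of degree $0$. So both relation families are homogeneous, the ideal they generate is graded, and $\mathcal{A}_K(\dot{E})$ is a graded quotient of $K(\widehat{E})$.

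The single genuine obstacle — used tacitly in the converse of (1) and in asserting that $E^0$ consists of \emph{nonzero} (hence pairwise distinct, being orthogonal) idempotents in (2) — is that the vertices survive nonzero in the quotient; this is not formal from the presentation alone. I would secure it either by invoking the normal-form/basis computation for $\mathcal{A}_K(\dot{E})$ carried out in Section \ref{normal forms}, or, to keep the lemma self-contained, by exhibiting via the universal property of Proposition \ref{Universal_property_for_CLPA} an explicit nonzero representation in which each $v \in E^0$ acts as a nonzero idempotent. Everything else is a routine verification along the lines of \cite[Lemma 1.2.12]{MR3729290}.
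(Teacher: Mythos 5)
Your proof is correct and follows essentially the same route as the paper, which simply cites the standard argument of \cite[Lemma 1.2.12]{MR3729290}: transfer the idempotent, involutive and graded structure from $K(\widehat{E})$ to the quotient after checking that the relations $\mathcal{A}1$, $\mathcal{A}2$ are orthogonality-compatible, $\ast$-stable and homogeneous, and deduce (1) from (2). You also correctly isolate the one non-formal ingredient—that the vertices remain nonzero and distinct in $\mathcal{A}_K(\dot{E})$—and your appeal to the normal-form basis of Theorem \ref{Theorem: Normal forms} (whose proof is independent of this lemma) legitimately closes that gap.
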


Now, the linear extension of $\ast$ in paths induces a grade reversing involution. Hence for any $\dot{E}$,  $\mathcal{A}_K(\dot{E})$ are $\mathbb{Z}$-graded $\ast$-algebras and the (graded) categories of left modules and right modules for any of Cohn-Leavitt path algebras of bi-separated graphs are equivalent.

Let $\Gamma$ be any group with identity $\varepsilon$ and we can consider a $\Gamma$-grading on $\mathcal{A}_K(\dot{E})$ with $E^0\sqcup E^1$ being homogeneous. Since $v^2=v$ for each $v\in E^0$, we are forced to define deg$(v)=\varepsilon$. Since the relations $\mathcal{A}1$ and $\mathcal{A}2$ are homogeneous we can conclude that all $e^\ast$ in $\overline{E^1}$ are also homogeneous with deg$(e^\ast)$=(deg$(e))^{-1}$. As a result, any function from $E^1$ to $\Gamma$ defines a unique $\Gamma$-grading on $\mathcal{A}_K(\dot{E})$ with deg$(v)=\varepsilon$ for all $v\in E^0$ and deg$(e^\ast)=$(deg$(e))^{-1}$. A refinement or a morphism from a $\Gamma$-grading to $\Gamma^\prime$-grading on an algebra $A$ is given by a group homomorphism $\phi:\Gamma\rightarrow\Gamma^\prime$ such that for all $\gamma^\prime\in\Gamma^\prime$, $A_{\gamma^\prime}=\bigoplus\limits_{\phi(\gamma)=\gamma^\prime}A_\gamma$ where $A_\gamma:=\{a\in A\mid \text{deg}_\Gamma(a)=\gamma\}\cup\{0\}$. There is a universal $\Gamma$-grading on $\mathcal{A}_K(\dot{E})$ which is a refinement of all others:
\begin{proposition}
	Let $\Gamma:=FE^1$, the free group on $E^1$ with identity $\varepsilon$. The $\Gamma$-grading defined by deg$_\Gamma(v)=\varepsilon$ and deg$_\Gamma(e)=e$ is an initial universal object in the category of group gradings of $\mathcal{A}_K(\dot{E})$ with $E^0\sqcup E^1$ being homogeneous.
\end{proposition}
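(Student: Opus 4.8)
The plan is to deduce everything from the universal property of the free group $FE^1$ together with the fact, recorded in the paragraph preceding the proposition, that any set map $E^1\to\Gamma$ determines a unique group grading of $\mathcal{A}_K(\dot E)$ with $E^0\sqcup E^1$ homogeneous, $\deg(v)=\varepsilon$, and $\deg(e^\ast)=(\deg e)^{-1}$. First I would record that the stated $FE^1$-grading is a legitimate object of the category: applying that observation to the inclusion $E^1\hookrightarrow FE^1$ of free generators yields a well-defined decomposition $\mathcal{A}_K(\dot E)=\bigoplus_{g\in FE^1}A_g$ in which every generalized path $\mu=g_1\cdots g_n$, with each $g_i\in E^0\sqcup E^1\sqcup\overline{E^1}$, is homogeneous of degree $\deg_\Gamma(\mu)=\deg_\Gamma(g_1)\cdots\deg_\Gamma(g_n)$ computed inside $FE^1$.

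To prove initiality, let an arbitrary $\Gamma'$-grading of $\mathcal{A}_K(\dot E)$ with $E^0\sqcup E^1$ homogeneous be given. By the same earlier discussion its degree function satisfies $\deg_{\Gamma'}(v)=\varepsilon'$ and $\deg_{\Gamma'}(e^\ast)=(\deg_{\Gamma'}(e))^{-1}$; write $g_e:=\deg_{\Gamma'}(e)$. The universal property of the free group then produces a unique group homomorphism $\phi\colon FE^1\to\Gamma'$ with $\phi(e)=g_e$ for all $e\in E^1$. Since $\phi$ is multiplicative and satisfies $\phi(e)=\deg_{\Gamma'}(e)$, $\phi(\varepsilon)=\varepsilon'=\deg_{\Gamma'}(v)$, and $\phi(e^{-1})=g_e^{-1}=\deg_{\Gamma'}(e^\ast)$, I would verify by a short induction on path length that $\deg_{\Gamma'}(\mu)=\phi(\deg_\Gamma(\mu))$ for every generalized path $\mu$, using only the multiplicativity of degree in both gradings.

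Finally, because the generalized paths span $\mathcal{A}_K(\dot E)$ and each of them is homogeneous in both gradings with degrees matched by $\phi$, the component $A_{\gamma'}$ of the $\Gamma'$-grading is spanned precisely by those $\mu$ with $\phi(\deg_\Gamma(\mu))=\gamma'$; that is, $A_{\gamma'}=\bigoplus_{\phi(\gamma)=\gamma'}A_\gamma$, which is exactly the defining condition for $\phi$ to be a morphism of gradings. Uniqueness of $\phi$ is forced, since any such morphism must satisfy $\phi(e)=\phi(\deg_\Gamma(e))=\deg_{\Gamma'}(e)=g_e$ on the free generators of $FE^1$. Thus the $FE^1$-grading admits a unique morphism to every other admissible grading and is initial.

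The only genuinely delicate point I anticipate is the passage from generators to the quotient algebra: one must be sure that the identity $\deg_{\Gamma'}=\phi\circ\deg_\Gamma$, established on the spanning monomials, is consistent with the defining relations $\mathcal{A}1$ and $\mathcal{A}2$. This is automatic, however, precisely because those relations are homogeneous in every admissible grading, so the grading descends to the quotient and the degree assignment on homogeneous components is unambiguous; no inconsistency can therefore arise when comparing the two gradings through $\phi$.
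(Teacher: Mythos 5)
Your proposal is correct and follows essentially the same route as the paper, whose entire proof consists of defining $\phi:FE^1\rightarrow\Gamma^\prime$ by $\phi(e)=\deg_{\Gamma^\prime}(e)$ and relying on the preceding observation that any map $E^1\rightarrow\Gamma$ yields a unique admissible grading because the relations $\mathcal{A}1$ and $\mathcal{A}2$ are homogeneous. You simply spell out the verification of the refinement condition and the uniqueness of $\phi$, which the paper leaves implicit.
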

\begin{proof}
	For any $\Gamma^\prime$-grading let $\phi:\Gamma\rightarrow\Gamma^\prime$ be the group homomorphism given by $\phi(e)=\text{deg}_{\Gamma^\prime}(e)$.
\end{proof}

\section{The categories \textbf{BSG} and \textbf{tBSG}}\label{Category}
In this section we introduce two categories \textbf{BSG} of bi-separated graphs and the category \textbf{tBSG} of tame bi-separated graphs. We show that the functor $\mathcal{A}_K(\underline{\hspace{7pt}})$ from \textbf{BSG} to \textbf{K-Alg} is continuous. We also show that each object of \textbf{tBSG} is a direct limit of sub-objects based on finite graphs, from which we obtain every Cohn-Leavitt path algebra of tame bi-separated graph as a direct limit of  unital Cohn-Leavitt path algebras.
\begin{definition}
	We define a category \textbf{BSG} of bi-separated graphs as follows: The objects of \textbf{BSG} are bi-separated graphs (with distinguished subsets) $\dot{E}=(E,(C,S),(D,T))$. A morphism $\phi: \dot{E} \rightarrow \dot{\widetilde{E}}$ in \textbf{BSG} is a triple $\phi=(\phi_0,\phi_1,\phi_2)$ satisfying the following conditions:
	\begin{enumerate}
		\item  $\phi_0:E\rightarrow \widetilde{E}$ is graph morphism such that $\phi_0^0,\phi_0^1$ are injective.
		
		\item $\phi_1:C\rightarrow\widetilde{C}$ is a map such that $X\in C_v\implies\phi_1(X)\in C_{\phi_0^0(v)}$.
		
		\item $\phi_1(S)\subset\widetilde{S}$. Moreover $\left.\phi_0^1\right|_X:X\rightarrow\widetilde{X}$ is a bijection, for every $X\in S$.
		
		\item $\phi_2:D\rightarrow\widetilde{D}$ is a map such that $Y\in D_v\implies\phi_1(Y)\in D_{\phi_0^0(v)}$.
		
		\item $\phi_2(T)\subset\widetilde{T}$. Moreover $\left.\phi_0^1\right|_Y:Y\rightarrow\widetilde{Y}$ is a bijection, for every $Y\in T$.
		\item If $X\in S, Y\in T$ then $X\cap Y=\emptyset\implies\widetilde{X}\cap\widetilde{Y}=\emptyset$.
	\end{enumerate}
\end{definition}

\begin{proposition}
	The category \emph{\textbf{BSG}} admits arbitrary direct limits.
\end{proposition}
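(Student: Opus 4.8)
The plan is to construct the direct limit of a directed system $\{\dot{E}_i\}_{i \in I}$ of bi-separated graphs explicitly at the level of the underlying data (vertices, edges, partitions, distinguished subsets), and then verify that this candidate object satisfies the universal property of a colimit in \textbf{BSG}. Since the category of graphs \textbf{Gr} with injective-on-vertices morphisms is already known from the excerpt to admit direct limits (via the functor $K(\_)$ being continuous), the natural first move is to build the limit graph $E = \varinjlim E_i$ in the usual way: take $E^0 = \varinjlim E_i^0$ and $E^1 = \varinjlim E_i^1$ as direct limits of sets along the (injective) transition maps, with $r,s$ induced by the compatibility of the cone maps. Because all transition morphisms $\phi_{ij}$ have injective components $\phi_0^0, \phi_0^1$, these set-level colimits are just increasing unions (up to identification), which keeps the construction concrete.

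\medskip\noindent\textbf{Key steps.} First I would define the limit graph $E$ and the canonical cone maps $\psi_i : E_i \to E$ as above. Second, I would construct the row-separation $C$ on $E$: for each vertex $v \in E^0$ and each edge $e \in s^{-1}(v)$, the membership of $e$ in a block $X \in C_{v_i}$ at finite stages must be transported forward along the transition maps; the subtlety is that condition (2) of a morphism only requires $X \in C_v \Rightarrow \phi_1(X) \in C_{\phi_0^0(v)}$, so blocks may merge under the system. I would therefore define the blocks of $C_v$ as equivalence classes generated by the relation ``$e \sim f$ if they lie in a common block at some stage,'' and dually for $D$. Third, I would define the distinguished subsets $S = \bigcup_i \psi_i(S_i)$ and $T = \bigcup_i \psi_i(T_i)$, using condition (3) (that $\phi_0^1|_X$ is a bijection for $X \in S$) to ensure that finiteness of blocks in $S$ is preserved, so that $S \subseteq C_{\mathrm{fin}}$ and $T \subseteq D_{\mathrm{fin}}$ genuinely hold in the limit. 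Fourth, I would verify that the defining axioms of a bi-separated graph survive — in particular axiom (4), $|X \cap Y| \le 1$ for $X \in C$, $Y \in D$; this should follow because each $X \cap Y$ is a directed union of intersections $X_i \cap Y_i$, each of cardinality at most one, and the inclusions are injective. Finally, I would check the universal property: given a compatible cone $\{\theta_i : \dot{E}_i \to \dot{F}\}$ into another bi-separated graph, the universal properties of the set-level colimits yield unique component maps $\theta_0, \theta_1, \theta_2$, and I would verify that these assemble into a genuine \textbf{BSG}-morphism satisfying all six conditions, with condition (6) (the disjointness-preservation for $X \in S$, $Y \in T$) requiring the most care.

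\medskip\noindent\textbf{The hard part} will be the bookkeeping around the partition structure and the distinguished subsets, rather than the graph-theoretic skeleton. Specifically, the merging of blocks permitted by morphism condition (2) means that $C_v$ in the limit is \emph{not} simply the disjoint union of the $C_{v_i}$; I must take a well-chosen equivalence relation and then confirm that the cone maps $\psi_i$ actually satisfy condition (2) with respect to it, and that the induced map on a compatible cone respects conditions (3) and (5) concerning bijectivity of $\phi_0^1$ restricted to blocks in $S$ and $T$. The compatibility of condition (6) across the whole system — ensuring $X \cap Y = \emptyset$ continues to force $\widetilde{X} \cap \widetilde{Y} = \emptyset$ for the universal map — is the delicate point where disjointness at finite stages must be shown to propagate to the colimit. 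I expect that once the blocks and distinguished subsets are defined via directed unions of their finite-stage counterparts, verification of each of the six morphism conditions and the four bi-separation axioms reduces to a routine (if lengthy) diagram chase, justifying the comparatively brief proof one would write.
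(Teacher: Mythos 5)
Your proposal is correct and follows essentially the same route as the paper, which simply cites the construction of direct limits of separated graphs in Ara--Goodearl (Proposition 3.3 of \cite{MR2980456}) and notes that $D$ and $T$ are handled exactly as $C$ and $S$; your explicit construction (limit graph as a directed union, blocks of $C$ and $D$ obtained by merging finite-stage blocks, $S$ and $T$ as unions of images, then the universal property check) is precisely what that argument does, with the column side added. The points you flag as delicate (block merging, preservation of bijectivity on blocks in $S$ and $T$, and condition (6)) are the right ones and resolve as you expect.
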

\begin{proof}
The proof is similar to \cite[Proposition 3.3]{MR2980456}. The only addition is that we have to define $D$ and $T$ analogous to the way we define $C$ and $S$.

\end{proof}

Recall that a functor is \textit{continuous} if it preserves direct limits.
\begin{proposition}\label{continuous_functor_proposition}
	The assignment $\dot{E} \rightsquigarrow \mathcal{A}_K(\dot{E})$ extends to a continuous covariant functor $\mathcal{A}_K(\underline{\hspace{7pt}}))$ from \emph{\textbf{BSG}} to \emph{\textbf{K-Alg}}.
\end{proposition}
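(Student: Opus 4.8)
The plan is to lean throughout on the universal property of Proposition \ref{Universal_property_for_CLPA}, thereby avoiding any direct manipulation of the defining ideal of $K(\widehat E)$. On objects there is nothing to check: the preceding Lemma already shows that $(\mathcal A_K(\dot E),E^0)$ is an object of \textbf{K-Alg}. To act on a morphism $\phi=(\phi_0,\phi_1,\phi_2)\colon\dot E\to\dot{\widetilde E}$, I would set $A_v=\phi_0^0(v)$, $A_e=\phi_0^1(e)$ and $B_e=(\phi_0^1(e))^\ast$ inside $\mathcal A_K(\dot{\widetilde E})$ and invoke the universal property. The idempotents $\{\phi_0^0(v)\}$ are pairwise orthogonal because $\phi_0^0$ is injective, and condition (1) of Proposition \ref{Universal_property_for_CLPA} is immediate from $\phi_0$ being a graph morphism. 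Granting the relations (handled below), uniqueness in the universal property forces $\mathcal A_K(\mathrm{id})=\mathrm{id}$ and $\mathcal A_K(\psi\phi)=\mathcal A_K(\psi)\mathcal A_K(\phi)$, since both sides agree on the generators $v,e,e^\ast$, so functoriality is automatic. Finally, injectivity of $\phi_0^0$ sends a finite sum of distinct vertices to a finite sum of distinct vertices, so $\mathcal A_K(\phi)$ carries local units to local units and is indeed a morphism of \textbf{K-Alg}.

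The main obstacle is verifying conditions (2) and (3) of the universal property, i.e. that the images of the generators satisfy the Leavitt relations $\mathcal A1$ and $\mathcal A2$ of $\mathcal A_K(\dot{\widetilde E})$. Fix $X,X'\in S$ and write $\widetilde X=\phi_1(X)$, $\widetilde X'=\phi_1(X')$; using $X\in C_v\Rightarrow\widetilde X\in C_{\phi_0^0(v)}$ one gets $s(\widetilde X)=\phi_0^0(s(X))$, and the injectivity of $\phi_0^1$ together with the bijections $\phi_0^1|_X\colon X\to\widetilde X$ forces $\phi_1$ to be injective on $S$, so the Kronecker deltas match. The real content is the identity $\sum_{Y\in D}\phi_0^1(XY)\,\phi_0^1(YX')^\ast=\sum_{\widetilde Y\in\widetilde D}(\widetilde X\widetilde Y)(\widetilde Y\widetilde X')^\ast$, after which relation $\mathcal A1$ in $\mathcal A_K(\dot{\widetilde E})$ finishes the job (both sums are finite because $X\in S\subseteq C_{\mathrm{fin}}$). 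This identity amounts to showing that $\phi_0^1$ both preserves and reflects the column structure on the edges lying in $S$-sets: a column $Y\in D$ meets both $X$ and $X'$, at $e$ and $f$ say, if and only if some column of $\widetilde D$ meets both $\widetilde X$ and $\widetilde X'$ at $\phi_0^1(e)$ and $\phi_0^1(f)$. I expect this bookkeeping---matching the two index sets of columns via $\phi_0^1$---to be the delicate step, and it is exactly here that the morphism conditions (3), (5) and (6) (the bijectivity on $T$-sets and the emptiness-of-intersection clause) are used; the relation $\mathcal A2$ is handled symmetrically by interchanging the roles of $(C,S)$ and $(D,T)$.

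For continuity, let $\{\dot E_i,\phi_{ij}\}$ be a direct system in \textbf{BSG} with limit $\dot E=\varinjlim\dot E_i$ and structural morphisms $\mu_i\colon\dot E_i\to\dot E$ (such limits exist by the preceding proposition). The cocone $\mathcal A_K(\mu_i)$ induces a canonical \textbf{K-Alg} map $\Phi\colon\varinjlim\mathcal A_K(\dot E_i)\to\mathcal A_K(\dot E)$, and I would construct its inverse through the universal property of $\mathcal A_K(\dot E)$. Writing $L=\varinjlim\mathcal A_K(\dot E_i)$ with canonical maps $\lambda_i$, every vertex and edge of $E$ descends from a finite stage (since $E^0=\varinjlim E_i^0$ and $E^1=\varinjlim E_i^1$ in the limit of the system), so I would define $A_v,A_e,B_e\in L$ as the $\lambda_i$-images of representatives. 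The key point---and the reason finiteness was built into $S$ and $T$---is that each relation $\mathcal A1$ (resp. $\mathcal A2$) involves only the finitely many edges of a single $X\in S\subseteq C_{\mathrm{fin}}$ (resp. $Y\in T\subseteq D_{\mathrm{fin}}$), so all the data of one relation already live, and the relation already holds, at a common finite stage $\dot E_i$; applying $\lambda_i$ transports it to $L$. The universal property then yields $\Psi\colon\mathcal A_K(\dot E)\to L$, and comparing $\Phi$ and $\Psi$ on generators (uniqueness again) shows they are mutually inverse, proving that $\mathcal A_K(\underline{\hspace{7pt}})$ preserves direct limits.
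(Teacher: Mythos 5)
Your proposal is correct and follows essentially the same route as the paper, whose proof simply defers to \cite[Proposition 3.6]{MR2980456}: the functor is defined on morphisms through the universal property of $\mathcal{A}_K(\dot{E})$, and continuity comes from the fact that each relation $\mathcal{A}1$, $\mathcal{A}2$ involves only the finitely many edges of a single $X\in S\subseteq C_{\text{fin}}$ or $Y\in T\subseteq D_{\text{fin}}$, so every relation already holds at some finite stage of the direct system. The column-matching identity that you leave as an expectation is indeed the only point where the bi-separated case genuinely differs from the separated-graph case treated in \cite{MR2980456}, and your identification of conditions (3), (5) and (6), together with $|X\cap Y|\leq 1$ and the injectivity of $\phi_0^1$, as the ingredients needed to carry it out is the right one.
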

\begin{proof}
	The proof is similar to \cite[Proposition 3.6]{MR2980456}.
\end{proof}

\begin{definition}\label{complete_morphism}
	We say a morphism $\phi:\dot{E} \rightarrow \dot{\widetilde{E}}$ in \textbf{BSG} is \textit{complete} if $\phi_1^{-1}(\widetilde{S})=S$ and $\phi_2^{-1}(\widetilde{T})=T$.
\end{definition}
\begin{definition}\label{sub-object}
 Let $\dot{E}$ be an object in \textbf{BSG}. A \textit{sub-object} of $\dot{E}$ is an object $\dot{E^\prime}=(E^\prime,(C^\prime,S^\prime),(D^\prime,T^\prime))$ such that $E^{\prime}$ is a sub-graph of $E$ and the following conditions hold:
 \begin{eqnarray}
     C^{\prime} &=& \{X \cap (E^{\prime})^{1}~ |~ X \in C\setminus S,~ X \cap (E^{\prime})^{1} \neq \emptyset\} ~\sqcup \nonumber \\
    & & \{X \in S ~|~ X \cap (E^{\prime})^{1} \neq \emptyset\}.\nonumber \\
     S^{\prime} &=& \{X \in S ~|~ X \cap (E^{\prime})^{1} \neq \emptyset\}.\nonumber \\
     D^{\prime} &=& \{Y \cap (E^{\prime})^{1}~ |~ Y \in D\setminus T,~ D \cap (E^{\prime})^{1} \neq \emptyset\}~\sqcup \nonumber \\
     & &\{D \in T ~|~ D \cap (E^{\prime})^{1} \neq \emptyset\}.\nonumber \\
     T^{\prime} &=& \{Y \in T ~|~ Y \cap (E^{\prime})^{1} \neq \emptyset\}.\nonumber
 \end{eqnarray}
\end{definition}
\begin{definition}\label{complete_subobject}
	Let $\dot{E}$ be an object in \textbf{BSG}. A \textit{complete sub-object} of $\dot{E}$ is a sub-object $\dot{E^\prime}$ such that the inclusion is a complete morphism.		
\end{definition}
\begin{proposition}\label{prop: direct limit of countable}
	Any object in \emph{\textbf{BSG}} is a direct limit of countable complete sub-objects.
\end{proposition}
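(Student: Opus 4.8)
The plan is to let $\Sigma$ denote the set of all countable complete sub-objects of $\dot{E}$, partially ordered by the sub-object relation of Definition \ref{sub-object}, and to prove two things: that $\Sigma$ is upward directed, and that $\dot{E}$ is the direct limit in \textbf{BSG} of the inclusion system $(\dot{E'})_{\dot{E'}\in\Sigma}$. Since \textbf{BSG} admits arbitrary direct limits, the only real work is to produce inside $\Sigma$ a directed family that covers all of $\dot{E}$ and reassembles its bi-separation; both required facts then follow from a single \emph{saturation} statement.

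First I would isolate the combinatorial heart as a lemma: given any countable set of vertices $W\subseteq E^0$ and any countable set of edges $A\subseteq E^1$, there is a countable complete sub-object $\dot{E'}=(E',(C',S'),(D',T'))$ with $W\subseteq (E')^0$ and $A\subseteq (E')^1$. The point that makes completeness tractable is the following reformulation, read off from Definitions \ref{sub-object}, \ref{complete_morphism} and \ref{complete_subobject}: a sub-object $\dot{E'}$ is a genuine object of \textbf{BSG} whose inclusion is complete \emph{precisely when} its edge set $(E')^1$ is \emph{saturated} under distinguished rows and columns, meaning that $X\in S$ with $X\cap (E')^1\neq\emptyset$ forces $X\subseteq (E')^1$, and $Y\in T$ with $Y\cap (E')^1\neq\emptyset$ forces $Y\subseteq (E')^1$. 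Indeed, a distinguished $X\in S$ is kept whole in $C'$, so it can be a block of the partition $C'_{s(X)}$ only if it sits entirely inside $(E')^1$; conversely, once saturation holds the induced $C'_v$ and $D'_v$ are honest partitions and one checks directly that $\phi_1^{-1}(S)=S'$, $\phi_2^{-1}(T)=T'$. Crucially, because $S\subseteq C_{\text{fin}}$ and $T\subseteq D_{\text{fin}}$, every such $X$ and $Y$ is finite, which is exactly what keeps saturation within countable cardinality.

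To build $\dot{E'}$ I would iterate. Put $A_0=A$ and, having defined a countable $A_n\subseteq E^1$, set
\[
A_{n+1}=A_n\cup\bigcup\{X\in S\mid X\cap A_n\neq\emptyset\}\cup\bigcup\{Y\in T\mid Y\cap A_n\neq\emptyset\}.
\]
Since $A_n$ is countable and each distinguished $X,Y$ it meets is finite, $A_{n+1}$ is again countable; hence $A'=\bigcup_n A_n$ is a countable edge set, saturated under $S$ and $T$ by construction. Taking $(E')^1=A'$ and $(E')^0=W\cup s(A')\cup r(A')$ and defining $C',S',D',T'$ by the formulas of Definition \ref{sub-object} yields a countable complete sub-object, which proves the lemma. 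Granting it, directedness of $\Sigma$ follows by applying the lemma to $W=(E_1')^0\cup(E_2')^0$ and $A=(E_1')^1\cup(E_2')^1$ for any two members $\dot{E_1'},\dot{E_2'}$, and coverage follows by applying it to a single vertex or edge, so that $E^0=\bigcup_{\dot{E'}\in\Sigma}(E')^0$ and $E^1=\bigcup_{\dot{E'}\in\Sigma}(E')^1$. Finally, the colimit reconstructs the full bi-separation: a distinguished row $X\in S$, being finite, appears whole in every member of $\Sigma$ meeting it, whereas a non-distinguished row $X\in C\setminus S$ is recovered as the directed union $\bigcup(X\cap (E')^1)=X$ of its truncations, and symmetrically for columns; this is precisely the description of direct limits in \textbf{BSG}, so $\dot{E}=\varinjlim_{\dot{E'}\in\Sigma}\dot{E'}$.

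The main obstacle is the saturation step itself: closing $A_n$ under a distinguished row may drag in new edges lying in fresh distinguished columns, closing under those drags in new rows, and so on, so one must verify that this cascade stabilizes within countable cardinality. It does, precisely because each individual $X\in S$ and $Y\in T$ is finite while only countably many of them are touched at each stage — it is exactly this ``finite blocks, but possibly infinitely many of them'' phenomenon that obstructs the analogous statement with ``finite'' in place of ``countable'' and motivates the later passage to the tame category \textbf{tBSG}. A secondary, purely bookkeeping obstacle is confirming that the saturated data satisfy every clause of Definition \ref{biseparated graph} (in particular $|X'\cap Y'|\leq 1$) together with the completeness clauses; but all of these are inherited verbatim from $\dot{E}$.
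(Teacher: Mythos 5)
Your proposal is correct and takes essentially the same route as the paper: you saturate a seed set by iteratively adjoining the finite blocks of $S$ and $T$ that meet it (exactly the paper's chain $\mathcal{E}_0\rightarrow\mathcal{E}_1\rightarrow\cdots$), with countability preserved precisely because $S\subseteq C_{\mathrm{fin}}$ and $T\subseteq D_{\mathrm{fin}}$. The only cosmetic difference is bookkeeping: you index the directed system by all countable complete sub-objects and get directedness from your lemma, whereas the paper indexes by the finite subsets of $E^0\sqcup E^1$; both give the same limit.
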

\begin{proof}
	Let $\dot{E}$ be an object in $\textbf{BSG}$. For a finite subset $A\subset E^0\sqcup E^1$, let $E_A$ be the graph generated by $A$, i.e., \begin{center}
		$E_A^1 = A\cap E^1$ and $E_A^0 = (A\cap E^0)\cup s_E^{-1}(E_A^1)\cup r_E^{-1}(E_A^1)$.
	\end{center}
	
	Take $v\in E_A^0$ and set
	\begin{center}
		$\mathcal{E}_{0v}=s_E^{-1}(v)\cup\bigcup\limits_{\substack{X\in S\cap C_v\\X\cap A\neq\emptyset}}X\cup\bigcup\limits_{\substack{Y\in T\cap D_v\\Y\cap A\neq\emptyset}}Y$.
	\end{center}
	If the graph $\mathcal{E}_0$ generated by $E_A^0\sqcup \bigsqcup\limits_{v\in E_A^0}\mathcal{E}_{0v}$ is a complete sub-object of the given object, we are done. If not, define
	\begin{center}
		$\mathcal{E}_{1v}$ to be $\mathcal{E}_{0v}\cup\bigcup\limits_{\substack{X\in S\cap C_v\\X\cap \mathcal{E}_{0v}\neq\emptyset}}X\cup\bigcup\limits_{\substack{Y\in T\cap D_v\\Y\cap \mathcal{E}_{0v}\neq\emptyset}}Y$.
	\end{center}
	If the graph $\mathcal{E}_1$ generated by $\mathcal{E}_0^0\sqcup \bigsqcup\limits_{v\in\mathcal{E}_0^0}\mathcal{E}_{1v}$ is a complete sub-object of the given object we are done. If not define $\mathcal{E}_{2v}$ similarly and continue this process.
	
	Hence we have a chain $\mathcal{E}_0\rightarrow\mathcal{E}_1\rightarrow\mathcal{E}_2\rightarrow\dots$.
	
	Let $\mathcal{E}$ be the direct limit of this chain, i.e., let $\dot{\mathcal{E}} = (\mathcal{E},(\mathcal{C},\mathcal{S}),(\mathcal{D},\mathcal{T}))$ be the direct limit of the directed system $\{\dot{\mathcal{E}_i} = (\mathcal{E}_i,(\mathcal{C}_i,\mathcal{S}_i),(\mathcal{D}_i,\mathcal{T}_i))\}$, where $i\in\mathbb{N} \cup \{0\}$.

	Observe that for $i\geq 0, ~\mathcal{S}_i=\{X\in S\mid X\cap\mathcal{E}_{i-1}^1\neq\emptyset\}$ and $\mathcal{C}_i=\{X\in\mathcal{E}_i^1\mid X\in C \setminus S, X\cap\mathcal{E}_i^1\neq\emptyset\}\sqcup \mathcal{S}_i$, where $\mathcal{E}_{-1}$ means $A$. Similar statements hold for $\mathcal{D}_i$ and $\mathcal{T}_i$.
	
	
	Since the vertex set and edge set of $\mathcal{E}$ are  countable union of finite sets, it is a countable sub-graph of $E$.
	We claim that $\dot{\mathcal{E}}$ is a complete sub-object of $\dot{E}$. For, let $X\in S$ and $X\cap\mathcal{E}^1\neq\emptyset$. Then $X\cap\mathcal{E}_i^1\neq\emptyset$, for some $i\in \mathbb{N} \cup \{0\}$. This implies $X\in\mathcal{S}_{i+1}$, which in turn means  $X\in\mathcal{S}$.  Similarly one can start with $X \in C \setminus S$ and prove that whenever $X \cap \mathcal{E}^1\neq\emptyset$, the set $X \cap \mathcal{E}^1 \in \mathcal{C} \setminus \mathcal{S}$. Repeating the analogous arguments for $T$, we can conclude that for each finite subset $A\subset E^0\sqcup E^1$, there exists a countable complete sub-object $\dot{\mathcal{E}}$ of $\dot{E}$. Now by keeping the set of all finite subsets of $E^0\sqcup E^1$ as the indexing set, we get a directed system of countable complete sub-objects whose direct limit is $\dot{E}$.

\end{proof}

We emphasize that a general object in \textbf{BSG} cannot be written as a direct limit of finite complete sub-objects as the following example illustrates:
\begin{example}
	Consider the following simple graph $\Gamma_{\infty}$ on countably infinite vertices.
	\begin{center}
		\begin{tikzpicture}
		[->,>=stealth',shorten >=1pt,thick, scale=0.6]

		\draw [fill=black] (-2,0)		circle [radius=0.1];		
		\draw [fill=black] (-4,0)		circle [radius=0.1];		
		\draw [fill=black] (0,0)		circle [radius=0.1];

		\path[->]	(0,0.2)	edge[out=130, in=50, looseness=4,  distance=2cm, ->] 	node	{}		(0,0.2);
		\path[->]	(-2,0.2)	edge[out=130, in=50, looseness=4,  distance=2cm, ->] 	node	{}		(-2,0.2);
		\path[->]	(-4,0.2)	edge[out=130, in=50, looseness=4,  distance=2cm, ->] 	node	{}		(-4,0.2);

		\path[->]	(-5.8,0)	edge[bend left=40] 	node	{}		(-4.2,0);
		\path[->]	(-3.8,0)	edge[bend left=40] 	node	{}		(-2.2,0);
		\path[->]	(-1.8,0)	edge[bend left=40] 	node	{}		(-0.2,0);
		\path[->]	(0.2,0)	edge[bend left=40] 	node	{}		(1.8,0);
		
		\path[->]	(-4.2,0)	edge[bend left=40] 	node	{}		(-5.8,0);
		\path[->]	(-2.2,0)	edge[bend left=40] 	node	{}		(-3.8,0);
		\path[->]	(-0.2,0)	edge[bend left=40] 	node	{}		(-1.8,0);
		\path[->]	(1.8,0)	edge[bend left=40] 	node	{}		(0.2,0);

		\node at (2.8,0) {$\dots$};
		\node at (-6.5,0) {$\dots$};
		\node at (-4,-0.8) {$v_{-1}$};
		\node at (-2,-0.8) {$v_0$};
		\node at (0,-0.8) {$v_1$};

		\node at (-9,0) {$\Gamma_\infty=$};	
		
		\end{tikzpicture}
	\end{center}
Observe that $\Gamma_\infty$ is a simple graph. Consider the standard bi-separation $(C,D)$ on $\Gamma_\infty$ and let $S=C_{\text{fin}}=C$ and $T=D_{\text{fin}}=D$. Then $C_\infty$ cannot be written as a direct limit of finite complete sub-objects. For, if there is a complete sub-object of $\Gamma_\infty$, then by definition we are forced to include all the edges and so, it will no more be finite.
	
\end{example}

\subsection{The category of tame bi-separated graphs}
\hfill\\

Let $\dot{E}$ be an object in \textbf{BSG}. Set \begin{eqnarray}\label{definition_of_S_1_and_T_1}
	 S_1 &=&\{X\in S\mid X\cap Y\neq\emptyset,\quad\text{for some}\quad Y\in T\},\\
	 S_2 &=& S-S_1, \\
	T_1 &=& \{Y\in T\mid X\cap Y\neq\emptyset,\quad\text{for some}\quad X\in S\}~ \text{and} \\
	T_2 &=& T-T_1.
\end{eqnarray}

We define a relation $\sim_T$ on $S_1$ as follows: For $X,X^{\prime} \in  S_1$, define $X\sim_T X^\prime$ if either $X = X^{\prime}$ or there exists a finite sequence $X_0,Y_1,X_1,Y_2,X_2,$  $\ldots, Y_{n-1},X_{n-1},Y_n,X_n$ such that for each $0\leq i\leq n$, $X_i \in S, Y_i \in T$, and $X_0=X$, $X_n=X^\prime$, $X_i\cap Y_{i+1}\neq\emptyset$ and $Y_i\cap X_i \neq \emptyset$. It is not hard to see that $\sim_T$ is an equivalence relation on $S_1$. Let $S_1 = \bigsqcup\limits_{\lambda \in \Lambda} \mathcal{X}_{\lambda}$ be the partition of $S_1$ induced by $\sim_T$.

Define $\sim_S$ on $T_1$ similarly and let $T_1=\bigsqcup\limits_{\lambda^\prime\in\Lambda^\prime}\mathcal{Y}_{\lambda^\prime}$ be the partition induced by $\sim_S$.

We claim that the indexing sets $\Lambda$ and $\Lambda^{\prime}$ are in bijection. To see this, start with $\lambda \in \Lambda$. Let $X \in \mathcal{X}_{\lambda}$ be an arbitrarily fixed element. This means, there exists a $\lambda^{\prime} \in \Lambda^{\prime}$ and $Y \in \mathcal{Y}_{\lambda^{\prime}}$ such that $X\cap Y \neq \emptyset$. If $X^{\prime} \neq X$ is another element of $\mathcal{X}_{\lambda}$, and if there is a $Y^{\prime} \in T$ such that $X^{\prime}\cap Y^{\prime} \neq \emptyset$, then $Y^{\prime} \sim_S Y$ because $X^{\prime} \sim_T X$. So $Y^{\prime}$ belongs to the same $\mathcal{Y}_{\lambda^{\prime}}$ as $Y$. Also if there is another element $Y_1 \in T$ such that $X\cap Y_1 \neq \emptyset$, then clearly $Y_1 \sim_S Y$ and so $Y_1$ also lies in same $\mathcal{Y}_{\lambda^{\prime}}$. This implies that the map $\Lambda \rightarrow \Lambda^{\prime}$ defined by $\lambda \mapsto \lambda^{\prime}$ is well-defined. Similarly one can define a map $\Lambda^{\prime} \rightarrow \Lambda$. It is not hard to see that these maps are inverses of each other which proves the claim. Therefore, we have the following proposition:

\begin{proposition}\label{Lambda=Lambda{prime}}
 Let $\dot{E}$ be an object in \emph{\textbf{BSG}} and let $S_1, T_1$ be as defined in equations \ref{definition_of_S_1_and_T_1} and 3 above. Then there exist canonical partitions $S_1 = \bigsqcup \limits_{\lambda \in \Lambda} \mathcal{X}_{\lambda}$ and $T_1 = \bigsqcup \limits_{\lambda^{\prime} \in \Lambda^{\prime}} \mathcal{Y}_{\lambda^{\prime}}$ of $S_1$ and $T_1$ respectively such that the indexing sets $\Lambda$ and $\Lambda^{\prime}$ are bijective.
\end{proposition}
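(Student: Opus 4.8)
The plan is to repackage the entire construction as the connected-component decomposition of a single bipartite graph, which makes both the equivalence-relation claims and the desired bijection transparent. First I would form the \emph{intersection graph} $\mathcal{G}$ whose vertex set is $S_1 \sqcup T_1$ and in which $X \in S_1$ and $Y \in T_1$ are joined by an edge precisely when $X \cap Y \neq \emptyset$; this is bipartite with parts $S_1$ and $T_1$. The crucial preliminary observation is that every vertex of $\mathcal{G}$ has degree at least one: if $X \in S_1$ then by definition $X \cap Y \neq \emptyset$ for some $Y \in T$, and that $Y$ lies in $T_1$ since it meets $X \in S$, so $\{X,Y\}$ is an edge; the symmetric statement holds for vertices in $T_1$.

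Next I would identify $\sim_T$ and $\sim_S$ with the ``same connected component'' relation on $\mathcal{G}$. A witnessing sequence $X_0, Y_1, X_1, \ldots, Y_n, X_n$ for $X \sim_T X'$ is, by the conditions $X_i \cap Y_{i+1} \neq \emptyset$ and $Y_i \cap X_i \neq \emptyset$, exactly a path $X_0 - Y_1 - X_1 - \cdots - X_n$ in $\mathcal{G}$; moreover each intermediate $X_i$ automatically lies in $S_1$ and each $Y_i$ in $T_1$, so nothing is lost by working inside $\mathcal{G}$. Conversely, since $\mathcal{G}$ is bipartite, any path joining two vertices of $S_1$ alternates between $S_1$ and $T_1$ and hence has precisely the alternating form demanded by $\sim_T$. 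Thus $X \sim_T X'$ iff $X$ and $X'$ lie in the same component of $\mathcal{G}$, and symmetrically for $\sim_S$ on $T_1$; in particular this reproves, for free, that $\sim_T$ and $\sim_S$ are equivalence relations.

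With this in hand the proposition becomes a counting statement about components. Because every vertex has degree at least one, each connected component of $\mathcal{G}$ contains at least one vertex of $S_1$ and at least one vertex of $T_1$. Consequently the $S_1$-part of a component is exactly one $\sim_T$-class $\mathcal{X}_\lambda$, and its $T_1$-part is exactly one $\sim_S$-class $\mathcal{Y}_{\lambda'}$; assigning to a component its $S_1$-part (respectively its $T_1$-part) yields bijections from the set of connected components of $\mathcal{G}$ onto $\Lambda$ and onto $\Lambda'$. Composing these produces the required bijection $\Lambda \cong \Lambda'$, which on representatives sends $\lambda$ to the index $\lambda'$ of the class containing any $Y \in T$ meeting any chosen $X \in \mathcal{X}_\lambda$ --- exactly the map described before the statement.

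The only genuine subtlety, and the step I would single out for care, is the well-definedness of this correspondence: that $\lambda'$ is independent of the choices of $X \in \mathcal{X}_\lambda$ and of the partner $Y$. In the graph formulation this is automatic, since all of $\mathcal{X}_\lambda$ together with its $\mathcal{G}$-neighbours sits inside one component. If one prefers the direct route, the same fact is the observation that a $\sim_T$-chain from $X$ to $X'$ converts into a $\sim_S$-chain from $Y$ to $Y'$ by prepending the column $Y$ (which meets $X = X_0$) and appending the column $Y'$ (which meets $X' = X_n$), the alternating intersection conditions for $\sim_S$ reading off termwise from those for $\sim_T$. This bookkeeping is routine, so the bipartite-graph viewpoint is the cleaner way to organize the argument.
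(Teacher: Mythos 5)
Your proof is correct, but it takes a genuinely different route from the paper's. The paper works directly with the relations $\sim_T$ and $\sim_S$: it fixes $X \in \mathcal{X}_\lambda$, picks some $Y \in T$ meeting $X$, declares $\lambda'$ to be the index of the $\sim_S$-class of $Y$, verifies that this assignment is independent of the choices of $X$ and $Y$ (using that $X' \sim_T X$ forces any partner $Y'$ of $X'$ to satisfy $Y' \sim_S Y$), then builds the analogous map $\Lambda' \rightarrow \Lambda$ and notes the two are mutually inverse. You instead introduce the bipartite intersection graph on $S_1 \sqcup T_1$ and identify both $\Lambda$ and $\Lambda'$ with its set of connected components, using that every vertex has positive degree so that each component meets both parts. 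This buys several things at once: well-definedness of the correspondence and the fact that the two maps are inverse become automatic, and you also re-derive that $\sim_T$ and $\sim_S$ are equivalence relations, which the paper only asserts. The price is the (routine) verification that a witnessing chain for $\sim_T$ is the same datum as an alternating path in the graph, which you carry out correctly, including the needed observation that the intermediate terms automatically lie in $S_1$ and $T_1$. Both arguments rest on the same chain-of-intersections idea; yours is the more structural packaging, the paper's the more hands-on one.
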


\begin{remark}\label{Indexing set Lambda in BSG}
 Because of the above proposition, we will denote the indexing sets of the canonical partitions of both $S_1$ and $T_1$ by $\Lambda$.
\end{remark}
	\begin{definition}\label{definition_of_tame}
	A bi-separated graph $\dot{E}$ is called  \textit{tame} if $|\mathcal{X}_{\lambda}| < \infty$ and $|\mathcal{Y}_{\lambda}| < \infty$, for each $\lambda \in \Lambda$. The tame bi-separated graphs along with complete morphisms form a category which we call a \textit{tame (sub)category of bi-separated graphs}. It will be denoted by \textbf{tBSG}.
\end{definition}
Note that any finite bi-separated graph is tame. Also, the class of bi-separated graphs in examples \ref{Cuntz-Krieger bi-separation}, \ref{separated graphs}, \ref{Weighted bi-separation}, and \ref{hyper bi-separation} are all tame.


\begin{proposition}\label{free-product_and_tame}
	Let $\dot{E}$ be a tame bi-separated graph such that $S = C_{\emph{fin}} = C$, $T = D_{\emph{fin}} = D$ and $|E^0|=1$. For $\lambda\in\Lambda$, let $E_\lambda$ be the subgraph of $E$ with edge set $\bigcup\limits_{\substack{e\in X\\X\in\mathcal{X}_\lambda}}\{e\}$ and consider the bi-separation $C_\lambda=\{X\in\mathcal{X}_\lambda\}$, and $D_\lambda=\{Y\in\mathcal{Y}_\lambda\}$ . Then  
	$\mathcal{A}_K(\dot{E})$ is isomorphic to the free-product of algebras $\mathcal{A}_K(\dot{E_\lambda})$, where $\lambda$ varies over the indexing set $\Lambda$.
\end{proposition}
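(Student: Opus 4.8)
The plan is to show that $\mathcal{A}_K(\dot E)$ and the free product $P:=\Conv_{\lambda\in\Lambda}\mathcal{A}_K(\dot{E_\lambda})$, taken as the coproduct in the category of unital $K$-algebras, satisfy the same universal mapping property, by exhibiting mutually inverse homomorphisms between them on generators. First I would record the structural consequences of the hypotheses. Writing $v$ for the unique vertex, $s^{-1}(v)=r^{-1}(v)=E^1$, so $C$ and $D$ are both partitions of $E^1$; each edge $e$ lies in a unique row $X(e)\in C$ and a unique column $Y(e)\in D$, and by condition (4) of Definition \ref{biseparated graph} we have $X(e)\cap Y(e)=\{e\}$. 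In particular every row meets some column and conversely, so $S_1=S=C$, $T_1=T=D$ and $S_2=T_2=\emptyset$. The crucial observation is that if $X\in\mathcal{X}_\lambda$ and $Y\in\mathcal{Y}_{\lambda'}$ with $\lambda\neq\lambda'$ then $X\cap Y=\emptyset$: for if $X\cap Y=\{e\}$ then $X=X(e)$, $Y=Y(e)$, and since $X(e)\cap Y(e)\neq\emptyset$ the bijection $\Lambda\to\Lambda'$ of Proposition \ref{Lambda=Lambda{prime}} forces $\lambda=\lambda'$. Hence the edge sets $E_\lambda^1=\bigcup_{X\in\mathcal{X}_\lambda}X$ partition $E^1$, and tameness together with $C=C_{\text{fin}}$ makes each $E_\lambda^1$ finite, so each $\dot{E_\lambda}$ is a finite single-vertex bi-separated graph and each $\mathcal{A}_K(\dot{E_\lambda})$ is unital with unit $v$.

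Next I would build $\Phi:P\to\mathcal{A}_K(\dot E)$. For fixed $\lambda$, send the generators $v$, $\{e,e^\ast\mid e\in E_\lambda^1\}$ of $\mathcal{A}_K(\dot{E_\lambda})$ to their namesakes in $\mathcal{A}_K(\dot E)$. Using the vanishing observation, in relation $\mathcal{A}1$ of $\mathcal{A}_K(\dot E)$ for $X,X'\in\mathcal{X}_\lambda$ every summand with $Y\notin\mathcal{Y}_\lambda$ is zero, so the sum over $D$ collapses to the sum over $\mathcal{Y}_\lambda=D_\lambda$; likewise for $\mathcal{A}2$. Thus the defining relations of $\mathcal{A}_K(\dot{E_\lambda})$ hold among these images, and the universal property (Proposition \ref{Universal_property_for_CLPA}) applied inside $\mathcal{A}_K(\dot E)$ yields homomorphisms $\iota_\lambda:\mathcal{A}_K(\dot{E_\lambda})\to\mathcal{A}_K(\dot E)$, all sending $v\mapsto v=1$. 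The universal property of the free product then assembles these into a unital homomorphism $\Phi:P\to\mathcal{A}_K(\dot E)$.

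For the reverse map I would invoke the universal property of $\mathcal{A}_K(\dot E)$ with target $P$. Set $A_v=1_P$, and for $e\in E^1$ with $X(e)\in\mathcal{X}_\lambda$ let $A_e,B_e$ be the images of $e,e^\ast$ under the canonical map $\mathcal{A}_K(\dot{E_\lambda})\to P$. Condition (1) of Proposition \ref{Universal_property_for_CLPA} is automatic since $A_v=1_P$. For condition (2), fix $X,X'\in C$. If they lie in the same class $\mathcal{X}_\lambda$, the sum $\sum_{Y\in D}A_{XY}B_{YX'}$ truncates to $Y\in\mathcal{Y}_\lambda$, lives entirely in the factor $\mathcal{A}_K(\dot{E_\lambda})$, and equals $\delta_{XX'}v$ by relation $\mathcal{A}1$ there. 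If $X\in\mathcal{X}_\lambda$ and $X'\in\mathcal{X}_{\lambda'}$ with $\lambda\neq\lambda'$, then $\delta_{XX'}=0$ and every summand vanishes, since a nonzero term needs $XY\neq0\neq YX'$, forcing $Y\in\mathcal{Y}_\lambda\cap\mathcal{Y}_{\lambda'}=\emptyset$. Condition (3) is verified symmetrically via $\mathcal{A}2$. This produces a unital homomorphism $\Psi:\mathcal{A}_K(\dot E)\to P$.

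Finally, both composites $\Psi\circ\Phi$ and $\Phi\circ\Psi$ fix the respective generating sets, hence are the identity, so $\Phi$ is the desired isomorphism. I expect the only genuine subtlety to be the cross-class vanishing $X\cap Y=\emptyset$ for $\lambda\neq\lambda'$, since it simultaneously collapses the defining sums into a single factor and kills the off-diagonal interaction terms in $P$; everything else is a routine comparison of generators and relations. A secondary point to pin down carefully is that the free product must be taken in the unital category, so that the copies of $v$ from the various factors are amalgamated to the single unit $1_P$, matching the unit $\sum_{w\in E^0}w=v$ of $\mathcal{A}_K(\dot E)$.
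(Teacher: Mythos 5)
Your proof is correct, and since the paper states this proposition without providing a proof (passing directly to the Ara--Goodearl corollary), your argument is exactly the intended one: the cross-class disjointness $X\cap Y=\emptyset$ for $X\in\mathcal{X}_\lambda$, $Y\in\mathcal{Y}_{\lambda'}$, $\lambda\neq\lambda'$ (which follows from the bijection between the canonical partitions of $S_1$ and $T_1$) collapses the relations $\mathcal{A}1$, $\mathcal{A}2$ into the individual factors, and the two universal properties then yield mutually inverse maps. The only small point worth adding is that the same disjointness also shows $\bigcup_{Y\in\mathcal{Y}_\lambda}Y=\bigcup_{X\in\mathcal{X}_\lambda}X$, so that $D_\lambda$ really is a partition of the edge set of $E_\lambda$ and $\dot{E_\lambda}$ is a well-defined bi-separated graph.
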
 
\begin{corollary}[\cite{MR2980456} Proposition 2.10]
 Let $(E,C)$ be a separated graph with $|E^0|=1$. Then $L_K(E,C)$ is isomorphic to the free-product of algebras $L_K(1,|X|)$, where $L_K(1,|X|)$ is the Leavitt algebra of type $(1,|X|)$ and $X$ varies over $C$.
\end{corollary}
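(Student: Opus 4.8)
The plan is to deduce the statement from Proposition \ref{free-product_and_tame} by regarding the separated graph $(E,C)$ as a bi-separated graph and then identifying the resulting free factors explicitly. First I would view $(E,C)$ as the bi-separated graph $\dot{E}=(E,(C,S),(D,T))$ of Example \ref{separated graphs}, so that the column-separation $D$ is discrete and, for the Leavitt path algebra, $S=C_{\text{fin}}$ and $T=D_{\text{fin}}=D$; as noted in the example identifying separated-graph algebras, $\mathcal{A}_K(\dot E)\cong L_K(E,C)$. To invoke Proposition \ref{free-product_and_tame} I need $S=C_{\text{fin}}=C$ and tameness, which I verify in the next step (and which amounts to all rows being finite).

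Next I would compute the equivalence relation $\sim_T$ on $S_1$. Since $D$ is discrete, every $Y\in T$ is a singleton $\{e\}$, and since $C$ is a partition of $s^{-1}(v)$ at the single vertex $v$, distinct rows are pairwise disjoint. Hence a hop $X_i\cap Y_{i+1}\neq\emptyset$ together with $Y_{i+1}\cap X_{i+1}\neq\emptyset$ forces a common edge $e\in X_i\cap X_{i+1}$, i.e. $X_i=X_{i+1}$; any chain witnessing $X\sim_T X'$ therefore never leaves a single row. Thus each class $\mathcal{X}_\lambda$ is a singleton $\{X\}$ with $X\in C$, the indexing set $\Lambda$ is in bijection with $C$, and $|\mathcal{X}_\lambda|=1$ while $|\mathcal{Y}_\lambda|=|X|$. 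Tameness then holds precisely when every row is finite, so $S=C_{\text{fin}}=C$ and the hypotheses of Proposition \ref{free-product_and_tame} are met; applying it gives $\mathcal{A}_K(\dot E)\cong\Conv_{X\in C}\mathcal{A}_K(\dot{E_X})$.

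It then remains to identify each factor. For the class corresponding to $X=\{e_1,\dots,e_n\}$ with $n=|X|$, the subgraph $E_X$ has the single vertex $v$, edge set $X$, the single full row $C_X=\{X\}$, and discrete columns $D_X=\{\{e_i\}\mid 1\le i\le n\}$. Writing out relations $\mathcal{A}1$ and $\mathcal{A}2$ for this data, I would obtain $\sum_{i=1}^n e_ie_i^\ast=v$ and $e_i^\ast e_j=\delta_{ij}v$, which are exactly the relations presenting the Leavitt algebra $L_K(1,n)$ with $v$ playing the role of the unit. Using the universal property (Proposition \ref{Universal_property_for_CLPA}) against the presentation of $L_K(1,n)$, I would construct mutually inverse homomorphisms and conclude $\mathcal{A}_K(\dot{E_X})\cong L_K(1,|X|)$. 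Combining with the free-product decomposition and the identification $\mathcal{A}_K(\dot E)\cong L_K(E,C)$ yields $L_K(E,C)\cong\Conv_{X\in C}L_K(1,|X|)$.

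I expect the only genuinely delicate point to be the verification that $\sim_T$ collapses to the identity on rows, that is, that discreteness of $D$ together with disjointness of the blocks of $C$ forbids any nontrivial connecting chain; once that is in hand, the remaining work is a routine matching of generators and relations against the standard presentation of $L_K(1,n)$.
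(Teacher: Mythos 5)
Your proposal is correct and follows exactly the route the paper intends: the corollary is stated as an immediate consequence of Proposition \ref{free-product_and_tame}, and your verification---that discreteness of $D$ plus disjointness of the blocks of $C_v$ collapses every $\sim_T$-chain, so $\mathcal{X}_\lambda=\{X\}$, $\mathcal{Y}_\lambda=\{\{e\}\mid e\in X\}$, and each factor $\mathcal{A}_K(\dot{E_X})$ has presentation $\sum_{e\in X}ee^\ast=v$, $e^\ast f=\delta_{ef}v$, i.e.\ $L_K(1,|X|)$---is precisely the missing computation. Your observation that tameness and the hypothesis $S=C_{\text{fin}}=C$ force all rows to be finite is also right, and is implicitly assumed in the corollary's statement, since $L_K(1,|X|)$ only makes sense for $|X|<\infty$.
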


\begin{theorem}\label{direct-limit_of_finite_complete_sub-objects}
	Every object $\dot{E}$ in \emph{\textbf{tBSG}} is a direct limit of finite (complete) sub-objects. Conversely, if $\dot{E}$ in \emph{\textbf{BSG}} is a direct limit of finite complete sub-objects then it belongs to \emph{\textbf{tBSG}}.
\end{theorem}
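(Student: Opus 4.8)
The statement has two directions, and the plan is to treat them separately, using the completeness condition as the bridge between the finite/tame combinatorics and the direct-limit structure. The key technical reformulation I would keep in mind throughout is the description of completeness noted in the proof of Proposition \ref{prop: direct limit of countable}: a sub-object $\dot{E}^\prime$ is complete precisely when, for every $X \in S$ with $X \cap (E^\prime)^1 \neq \emptyset$ we have $X \in S^\prime$ (equivalently $X \subseteq (E^\prime)^1$), and dually for every $Y \in T$. Thus completeness forces an entire distinguished row or column to be swallowed once any of its edges is present.

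\emph{Forward direction.} Suppose $\dot{E}$ is tame, so $|\mathcal{X}_\lambda| < \infty$ and $|\mathcal{Y}_\lambda| < \infty$ for every $\lambda \in \Lambda$. The plan is to mimic the construction in Proposition \ref{prop: direct limit of countable} but to show that for each finite $A \subseteq E^0 \sqcup E^1$ the stabilization process terminates after finitely many steps and produces a \emph{finite} complete sub-object. Starting from $A$, the sets $\mathcal{E}_{0v}$ adjoin the finitely many edges forced by completeness; the crucial observation is that whenever an $X \in S_1$ is drawn in, tameness bounds the $\sim_T$-equivalence class $\mathcal{X}_\lambda$ containing it, so only finitely many further rows $X^\prime \in \mathcal{X}_\lambda$ and finitely many columns $Y \in \mathcal{Y}_\lambda$ (again finite by tameness) can ever be forced. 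For rows and columns lying in $S_2$ or $T_2$ (those meeting no column, resp.\ row, from the other separation), no further propagation occurs because the defining interaction $X \cap Y \neq \emptyset$ is vacuous. Hence the saturation closes within one $\Lambda$-indexed cascade and the process of Proposition \ref{prop: direct limit of countable} halts at some finite $\mathcal{E}_N$. Taking the set of all finite subsets $A$ as a directed index set then exhibits $\dot{E}$ as a direct limit of these finite complete sub-objects.

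\emph{Converse direction.} Suppose $\dot{E} \in \textbf{BSG}$ is a direct limit $\dot{E} = \varinjlim \dot{E}_i$ of finite complete sub-objects, and suppose for contradiction that $\dot{E}$ is not tame, so some class $\mathcal{X}_\lambda$ (say) is infinite. The plan is to pick any edge $e$ belonging to some $X_0 \in \mathcal{X}_\lambda$; since the $\dot{E}_i$ exhaust $E$, the edge $e$ lies in some finite complete $\dot{E}_i$. By completeness, $X_0 \subseteq (E_i)^1$, and then $S^\prime_i$ contains $X_0 \in S_1$. I would then argue that completeness propagates along the $\sim_T$-chain: if $X_0 \cap Y \neq \emptyset$ for some $Y \in T$, then $Y$ meets $(E_i)^1$, so by the dual completeness condition $Y \subseteq (E_i)^1$; this in turn pulls in every $X^\prime \in S$ with $X^\prime \cap Y \neq \emptyset$, and iterating this forces the entire (infinite) class $\mathcal{X}_\lambda$ into the finite edge set $(E_i)^1$, a contradiction. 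Thus $\dot{E}$ must be tame.

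\emph{Main obstacle.} The genuine difficulty lies in the forward direction, specifically in verifying that the saturation cascade truly terminates. One must argue carefully that the propagation is confined to a single bipartite $\sim_T/\sim_S$ component of $(S_1, T_1)$ at a time, so that the bound on $|\mathcal{X}_\lambda|$ and $|\mathcal{Y}_\lambda|$ genuinely caps the number of newly adjoined edges, and that edges in $S_2$ and $T_2$ (and the additional vertices their endpoints drag in) do not restart an unbounded chain. Handling the interplay between forcing a row complete, which adds new vertices, and the fact that those new vertices may themselves be sources/ranges of further distinguished rows and columns, requires showing that condition (4) of Definition \ref{biseparated graph} together with the equivalence-class bookkeeping of Proposition \ref{Lambda=Lambda{prime}} confines the whole process, and this is where I would spend most of the care.
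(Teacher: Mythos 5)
Your proposal is correct and takes essentially the same route as the paper: the forward direction re-runs the saturation construction of Proposition \ref{prop: direct limit of countable} and uses tameness to conclude each $\dot{\mathcal{E}}_A$ is finite, while the converse deduces finiteness of each $\mathcal{X}_\lambda$ and $\mathcal{Y}_\lambda$ from the fact that completeness forces whole rows/columns, propagating along $\sim_T$-chains, into a single finite complete sub-object whose distinguished sets consist of pairwise disjoint nonempty edge sets. The extra care you devote to termination of the cascade is exactly the detail the paper leaves implicit in the phrase ``it follows from the definition of tame bi-separated graphs.''
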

\begin{proof}
	Let $\dot{E}$ be an object in \textbf{tBSG}. By exactly same arguments as in Proposition \ref{prop: direct limit of countable}, $\dot{E}$ is a direct limit of the directed system $$\{\left( \dot{\mathcal{E}_A} = (\mathcal{E},(\mathcal{C},\mathcal{S}),(\mathcal{D},\mathcal{T}))_A,\hookrightarrow\right) \mid A \quad\text{is a finite subset of} \quad E^0\sqcup E^1\}.$$ It follows from the definition of tame bi-separated graphs that $\dot{\mathcal{E}_A}$ is finite, for each finite subset $A$ of $E^0 \sqcup E^1$.
	
	Conversely, let $\dot{E}$ be a direct limit of the directed system
	$$\{\left( \dot{\mathcal{E}_i} = (\mathcal{E}_i,(\mathcal{C}_i,\mathcal{S}_i),(\mathcal{D}_i,\mathcal{T}_i)),\hookrightarrow\right) \mid i \in I\}$$ of finite complete sub-objects. We know that $S$ and $T$ can be partitioned as $S_1\sqcup S_2$ and $T_1\sqcup T_2$ respectively (see equations \ref{definition_of_S_1_and_T_1} to 4 before the proposition \ref{Lambda=Lambda{prime}}). If $S_1=\emptyset$, then $T_1=\emptyset$ and clearly $\dot{E}$ is tame.
	
	Suppose $S_1\neq\emptyset$. Then we have $S_1=\bigsqcup\limits_{\lambda \in \Lambda}\mathcal{X}_{\lambda}$ and $T_1=\bigsqcup\limits_{\lambda \in \Lambda}\mathcal{Y}_{\lambda}$. That each $|\mathcal{X}_{\lambda}|$ and $|\mathcal{Y}_{\lambda}|$ is finite follows from the fact that the directed system consists of finite objects and the morphisms involved are complete.
\end{proof}
\begin{corollary}\label{cor: CLPA of tame is direct limit of unital}
 Let $\Dot{E}$ be an object in \emph{\textbf{tBSG}}. Then the Cohn-Leavitt path algebra $\mathcal{A}_K(\Dot{E})$ is the direct limit of the directed system of unital algebras $\{\mathcal{A}_K(\Dot{E}_i)\}_{i \in I}$ such that whenever $j \geq i$, the map $\mathcal{A}_K(\Dot{E}_i)\rightarrow \mathcal{A}_K(\Dot{E}_j)$ is a monomorphism, where 
 $\{\Dot{E}_i\}_{i \in I}$ is a directed system of finite complete sub-objects of $\Dot{E}$ whose direct limit is $\Dot{E}$.
 \begin{proof}
     By the previous theorem, $\Dot{E}$ is a direct limit of a directed system $\{\Dot{E}_i\}_{i \in I}$ consisting of its finite complete sub-objects. Therefore, by proposition \ref{continuous_functor_proposition} and Theorem \ref{Theorem: Normal forms}, $\mathcal{A}_K(\Dot{E})$ is the direct limit of the directed system of algebras $\{\mathcal{A}_K(\Dot{E}_i)\}_{i \in I}$. 
 \end{proof}
\end{corollary}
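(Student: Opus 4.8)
The plan is to obtain the statement by feeding the structural results already established into the continuity of $\mathcal{A}_K(\underline{\hspace{4pt}})$, and then to isolate the genuinely new content, namely the injectivity of the connecting maps. First I would invoke Theorem~\ref{direct-limit_of_finite_complete_sub-objects} to write $\dot{E}$ as the direct limit of a directed system $\{\dot{E}_i\}_{i\in I}$ of \emph{finite} complete sub-objects, whose connecting morphisms are the complete inclusions $\dot{E}_i\hookrightarrow\dot{E}_j$ for $i\le j$. Since $\mathcal{A}_K(\underline{\hspace{4pt}})$ is a continuous functor (Proposition~\ref{continuous_functor_proposition}), it takes this direct limit to a direct limit in \textbf{K-Alg}, giving $\mathcal{A}_K(\dot{E})\cong\varinjlim_{i}\mathcal{A}_K(\dot{E}_i)$. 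Finiteness of $\dot{E}_i$ means $E_i^0$ is finite, so by the lemma characterising unitality each $\mathcal{A}_K(\dot{E}_i)$ is unital with identity $\sum_{v\in E_i^0}v$; note that the connecting maps are only \textbf{K-Alg} morphisms preserving local units, not unital homomorphisms, which is consistent with $\mathcal{A}_K(\dot{E})$ being non-unital when $E^0$ is infinite.

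What remains, and what I regard as the heart of the matter, is to show that for each complete inclusion $\iota:\dot{E}_i\hookrightarrow\dot{E}_j$ the induced algebra map $\mathcal{A}_K(\iota):\mathcal{A}_K(\dot{E}_i)\to\mathcal{A}_K(\dot{E}_j)$ is injective. My approach is to compare the explicit linear bases furnished by the normal form theorem (Theorem~\ref{Theorem: Normal forms}). That theorem applies Bergman's diamond lemma to orient the relations $\mathcal{A}1$ and $\mathcal{A}2$ into a confluent rewriting system on generalized paths in $\widehat{E}$, the irreducible (reduced) words forming a $K$-basis; in particular the vertices survive as basis elements, while each relation $\mathcal{A}1$ is used to eliminate one distinguished monomial $ee^{\ast}$ attached to a row $X\in S$, and $\mathcal{A}2$ reduces the $e^{\ast}f$ patterns attached to columns $Y\in T$. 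I would then argue that $\mathcal{A}_K(\iota)$ carries the reduced-word basis of $\mathcal{A}_K(\dot{E}_i)$ bijectively onto the subset of the reduced-word basis of $\mathcal{A}_K(\dot{E}_j)$ consisting of words supported in $\widehat{E_i}$, which yields injectivity of the $K$-linear map at once.

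The crux, and the step I expect to demand the most care, is to check that a word $w$ supported in $\widehat{E_i}$ is reducible in $\dot{E}_i$ if and only if it is reducible in $\dot{E}_j$, so that the two reduction systems agree on such $w$ and the reduced words correspond. This is exactly where completeness of $\iota$ is indispensable. A reduction coming from $\mathcal{A}1$ (respectively $\mathcal{A}2$) acts on edges lying in a distinguished row $X\in S$ (respectively column $Y\in T$); for $w$ supported in $\widehat{E_i}$ these edges lie in $E_i^1$. If such an edge lies in a distinguished row of $\dot{E}_j$, then that row meets $E_i^1$, hence by the sub-object description is a full row lying in $S_i$ (and by morphism condition~(3) maps onto it bijectively), so the same reduction is available in $\dot{E}_i$; conversely, an edge lying in a non-distinguished row of $\dot{E}_i$ has, by completeness $\phi_1^{-1}(\widetilde{S})=S$, its image row non-distinguished in $\dot{E}_j$, so no new $\mathcal{A}1$-reduction is introduced. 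The dual statements for columns follow from $\phi_2^{-1}(\widetilde{T})=T$ together with condition~(5). Hence the set of applicable reductions on $\widehat{E_i}$-words is identical in the two algebras, the reduced words match, and $\mathcal{A}_K(\iota)$ is a monomorphism. It is worth noting that a naive splitting sending generators outside $\dot{E}_i$ to $0$ need not respect $\mathcal{A}1$ when a distinguished row of $\dot{E}_j$ based at a vertex of $E_i$ has none of its edges in $E_i$, so the basis comparison, rather than a retraction, is the reliable route.
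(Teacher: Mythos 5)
Your proof is correct and takes essentially the same route as the paper: Theorem \ref{direct-limit_of_finite_complete_sub-objects} together with continuity of the functor $\mathcal{A}_K$ gives the direct limit presentation, and Theorem \ref{Theorem: Normal forms} is what supplies injectivity of the connecting maps (this is precisely what the paper's one-line proof cites, with the details you spell out left implicit), while unitality of each $\mathcal{A}_K(\dot{E}_i)$ follows from finiteness of $E_i^0$. The one refinement to add is that your assertion that the reduction systems of $\dot{E}_i$ and $\dot{E}_j$ agree on words supported in $\widehat{E_i}$ presupposes that the forbidden words of Definition \ref{forbidden_word} are chosen compatibly --- for instance, fix the choice of the column $Y$ for each pair $X,X'\in S$ once in $\dot{E}$ and restrict it to every sub-object, which is possible because $X,X'\subseteq E_i^1$ forces any common column of $X,X'$ in $\dot{E}$ to restrict to a common column in $\dot{E}_i$ giving literally the same word $(XY)(YX')^{\ast}$ --- since with independent choices the distinguished column for a pair could differ in the two graphs, and then a word normal in $\dot{E}_i$ need not be normal in $\dot{E}_j$, so the basis-comparison step would not go through as stated.
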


By corollary \ref{cor: CLPA of tame is direct limit of unital} (Cohn-)Leavitt path algebras of the classes of bi-separated graphs in examples \ref{Cuntz-Krieger bi-separation}-\ref{hyper bi-separation} are direct limits of unital sub-(Cohn-)Leavitt path algebras of same type.
 
\section{Normal forms and their applications}\label{normal forms}

\begin{definition}\label{forbidden_word}
	For each pair $X,X^\prime \in S$, if there exists $Y\in D$ such that $X\cap Y\neq\emptyset$ and $X^\prime\cap Y\neq \emptyset$, then we choose and fix one such $Y$ (this may vary with $X,X^\prime$) and call $(XY)(YX^\prime)^\ast $ a \textit{forbidden word of type I}.
	
	Similarly for each pair $Y,Y^\prime \in T$, if there exists $X\in D$ such that $Y\cap X\neq\emptyset$ and $Y^\prime\cap X\neq \emptyset$, then we choose and fix one such $X$ and call $(YX)^\ast (XY^\prime)$ a \textit{forbidden word of type II}.
\end{definition}

\begin{definition}\label{normal_path}
	A generalized path $\mu\in\widehat{E}^\star$ is called \textit{normal} if it does not contain any forbidden subword of types I or II. An element of $K(\widehat{E})$ is called \textit{normal} if it lies in the $K$-linear span of generalized normal paths.
\end{definition}

We will show that any element of $\mathcal{A}_K(\Dot{E})$ has precisely one normal representative in $K(\widehat{E})$. For this, we need to use Bergman's diamond lemma. We refer the reader to \cite[pp. 180-182]{MR506890} for the statement of the lemma and basic terminologies.

\begin{theorem}\label{Theorem: Normal forms}
	Let $\Dot{E}$ be a bi-separated graph . Then $\mathcal{A}_K(\Dot{E})$ has a basis consisting of normal generalized paths.
\end{theorem}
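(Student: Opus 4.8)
The plan is to apply Bergman's diamond lemma to the presentation of $\mathcal{A}_K(\dot E)$ as the quotient of $K(\widehat E)$ by the relations $\mathcal{A}1$ and $\mathcal{A}2$, taking as the underlying monoid the generalized paths $\widehat E^\star$, which form a basis of $K(\widehat E)$ by Proposition \ref{prop: basis of a path algebra}. First I would turn each defining relation into a reduction rule by isolating its chosen forbidden word. For a type I forbidden word $(XY_0)(Y_0X')^\ast$ attached to a pair $X,X'\in S$, relation $\mathcal{A}1$ can be rewritten as $(XY_0)(Y_0X')^\ast = \delta_{XX'}\,s(X) - \sum_{Y\neq Y_0}(XY)(YX')^\ast$, which we read left to right; symmetrically $\mathcal{A}2$ rewrites a type II forbidden word $(YX_0)^\ast(X_0Y')$ as $\delta_{YY'}\,r(Y) - \sum_{X\neq X_0}(YX)^\ast(XY')$. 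By Definition \ref{normal_path}, the monomials irreducible under this system are exactly the normal generalized paths.

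Next I would fix a compatible term order. Choose a well-ordering of $E^1$, order each $e^\ast$ according to its underlying edge $e$, and for each pair designate the forbidden column $Y_0$ (resp. row $X_0$) so that the edge $XY_0$ (resp. $YX_0$) is maximal among the admissible choices. Order generalized paths by the shortlex order (first by length, then lexicographically) induced from this well-ordering of $E^1\sqcup\overline{E^1}$. This is a semigroup partial order compatible with concatenation on both sides and satisfies the descending chain condition; with the maximality convention, each reduction rule sends its left-hand monomial to a $K$-linear combination of strictly smaller monomials, since the vertex terms are strictly shorter and the remaining length-two terms have a strictly smaller leading letter. Hence the reduction system is compatible with the order and terminating.

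The crux is resolving the ambiguities. Since every forbidden word has length two there are no inclusion ambiguities, and two type I words (or two type II words) cannot overlap, because a peak $ef^\ast$ ends in a starred letter while another peak begins with an unstarred letter (and dually for valleys). Thus the only overlap ambiguities are the mixed ones: words $e f^\ast h$ in which $ef^\ast$ is type I and $f^\ast h$ is type II, and words $e^\ast f g^\ast$ in which $e^\ast f$ is type II and $f g^\ast$ is type I. For each such word I would reduce it in both possible orders and check that the two resulting expressions coincide. This is where the axiom $|X\cap Y|\leq 1$ is essential: it guarantees that each product $XY$ is a single well-defined edge, so that the double sums produced by applying $\mathcal{A}1$ and $\mathcal{A}2$ in either order can be matched term by term. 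I expect this ambiguity resolution to be the main obstacle, as it is the only step demanding a genuine computation with the bi-separation structure rather than formal bookkeeping.

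Once both families of overlap ambiguities are shown to be resolvable, Bergman's diamond lemma applies and yields that the images of the irreducible monomials form a $K$-basis of $\mathcal{A}_K(\dot E)$. Since the irreducible monomials are precisely the normal generalized paths, this produces the asserted basis and completes the proof.
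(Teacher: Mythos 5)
Your proposal follows essentially the same route as the paper: both apply Bergman's diamond lemma to the reduction system obtained by isolating the chosen forbidden word in each relation, both observe that the only ambiguities are the mixed overlaps $ef^\ast h$ (type I into type II) and $e^\ast f g^\ast$ (type II into type I), and both conclude that the irreducible monomials are exactly the normal generalized paths; the paper carries out the overlap computation you defer, reducing each side in two steps to the common value $\sum_{X_2\neq X_1}\sum_{Y_2\neq Y_1}(XY_2)(Y_2X_2)^\ast(X_2Y)$. The one point of divergence is the term order: the paper uses length together with a count of forbidden adjacent pairs $m(GAH)$ and then argues termination separately, whereas your shortlex order with the convention that the designated edge $XY_0$ (resp.\ $YX_0$) is maximal is a clean alternative that makes compatibility and the descending chain condition immediate.
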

\begin{proof}
	In order to apply Bergman's diamond lemma, we
	replace the defining relations by the following:
	\begin{center}
	    \begin{enumerate}
	    \item[$1^\prime$:] For any $v,w\in E^0$, $$vw=\delta_{v,w}v.$$
	    \item[$2^\prime$:] For any $v\in E^0, e\in E^1$,
	    $$ve=\delta_{v,s(e)}e,$$
	    $$ev=\delta_{v,r(e)}e,$$
	    $$ve^\ast=\delta_{v,r(e)}e^\ast,$$
	    $$e^\ast v=\delta_{v,s(e)}e^\ast.$$
	    \item[$3^\prime$:] For any $e,f\in E^1$,
	    $$ef=0,\quad\text{if}\quad r(e)\neq s(f),$$
	    $$e^\ast f=0,\quad\text{if}\quad s(e)\neq s(f),$$
	    $$ef^\ast=0,\quad\text{if}\quad r(e)\neq r(f),$$
	    $$e^\ast f^\ast=0,\quad\text{if}\quad s(e)\neq r(f).$$
	   
		\item[$\mathcal{A}^\prime 1:$] For each $X,X^\prime \in S$ for which there exists $Y\in D$ such that $X\cap Y\neq \emptyset$ and $X^\prime \cap Y\neq\emptyset$,
		$$e_Xe_{X^\prime}^\ast:=(XY)(YX^\prime)^\ast=\delta_{X,X^\prime}s(X)-\sum\limits_{\substack{Y_1\in D\\Y_1\neq Y}}(XY_1)(Y_1X^\prime)^\ast.$$
		
		\item[$\mathcal{A}^\prime 2:$] For each $Y,Y^\prime \in T$ for which there exists $X\in C$ such that $X\cap Y\neq \emptyset$ and $X\cap Y^\prime\neq\emptyset$,
		$$e_Ye_{Y^\prime}^\ast:=(YX)(XY^\prime)^\ast=\delta_{Y,Y^\prime}r(Y)-\sum\limits_{\substack{X_1\in C\\X_1\neq X}}(YX_1)^\ast(X_1Y^\prime).$$
	\end{enumerate}
	\end{center}
	(i.e. In $\mathcal{A}^\prime 1$ and $\mathcal{A}^\prime 2$, LHS contains forbidden words).
	
    Denote by $\Sigma$ the reduction system consisting of all pairs $\sigma=(w_\sigma,f_\sigma)$, where $w_\sigma$ equals the LHS of an equation above and $f_\sigma$ the corresponding RHS. Let $\langle \Bar{P}\rangle$ be the monoid consisting of all words formed by letters in $E^0\cup E^1\cup \overline{E^1}$ and $\langle P \rangle$ be the semi-group obtained by removing the identity element of $\langle \Bar{P}\rangle$. We define a partial order on $\langle P\rangle$ as follows:
    
    Let $A=x_1x_2\dots x_n\in \langle P\rangle$. Set $l(A)=n$ and $$m(A)=|\{i\in\{1,2,\dots,(n-1)\}\mid x_ix_{i+1} \text{is of type I or type II}\}|.$$ Define a partial order $\leq$ on $\langle P\rangle$ by $A\leq B$ if and only if one of the following holds:
    \begin{enumerate}
        \item $A=B$,
        \item $l(A)<l(B)$ or
        \item $l(A)=l(B),\quad\text{and for each}\quad G,H\in\langle\Bar{P}\rangle,\quad m(GAH)<m(GBH)$.
    \end{enumerate}
    
    Clearly $\leq$ is a semigroup partial order on $\langle P\rangle$ compatible with $\Sigma$ and also the descending chain condition is satisfied. It remains to show that all ambiguities of $\Sigma$ are resolvable. Recall from Proposition \ref{prop: basis of a path algebra} that $\widehat{E}^\star$ is a linear $K$-basis for $K(\widehat{E})$. Hence it is sufficient to show that the following ambiguities are resolvable:
    
    \begin{equation}
       e_Xe_{X^\prime}^\ast e_Y = (XY^\prime)(Y^\prime X^\prime)^\ast(X^\prime Y) \tag{$\mathcal{A}^\prime 1-\mathcal{A}^\prime 2$}
    \end{equation}
    \begin{equation}
        e_Y^\ast e_{Y^\prime}e_X^\ast = (YX^\prime)^\ast(X^\prime Y^\prime)(Y^\prime X^\prime)^\ast. \tag{$\mathcal{A}^\prime 2-\mathcal{A}^\prime 1$}
    \end{equation}
    
   We note that there are no inclusion ambiguities. We only show how to resolve ambiguity of type $\mathcal{A}^\prime 1-\mathcal{A}^\prime 2$ and the other case follows similarly.
    
	\begin{center}
		\begin{tikzpicture}
	[->,>=stealth',shorten >=1pt,thick, scale=0.6]

	\path[->]	(-3,24)	edge[bend right=20] 	node	{}		(-6,22);
	\path[->]	(3,24)	edge[bend left=40] 	node	{}		(7,19);
	
	\path[->]	(-7,20)	edge[] 	node	{}		(-7,16.5);
	\path[->]	(-7,14)	edge[] 	node	{}		(-7,10.5);
	\path[->]	(-7,8)	edge[] 	node	{}		(-7,4.5);
	
	\path[->]	(7,17)	edge[] 	node	{}		(7,13.5);
	\path[->]	(7,11)	edge[] 	node	{}		(7,7.5);
	\path[->]	(7,5)	edge[] 	node	{}		(7,1.5);
	
	\path[->]	(-7,2)	edge[bend right=40] 	node	{}		(-4,-2.5);
	\path[->]	(7,0)	edge[bend left=20] 	node	{}		(5,-2.5);
	
	\node at (-5.5,23.7) {$\mathcal{A}^\prime 1$};
	\node at (5.5,23.7) {$\mathcal{A}^\prime 2$};
	\node at (-8,18.5) {$=$};
	\node at (7.5,15.5) {$=$};
	\node at (-8,12.5) {$\mathcal{A}^\prime 2$};
	\node at (8,9.5) {$\mathcal{A}^\prime 1$};
	\node at (-8,6) {$=$};
	\node at (-7,-1) {$=$};
	\node at (7.5,3.5) {$=$};
	\node at (7,-1.5) {$=$};

	\node at (0,24) {$(XY_1)(Y_1X_1)^\ast(X_1Y)$};
	
	\node at (-5,21) {$\bigg[ \delta_{X,X_1}s(X_1)-\sum\limits_{\substack{Y_2\in D\\Y_2\neq Y_1}}(XY_2)(Y_2X_1)^\ast\bigg](X_1Y)$};
	
	\node at (5,18) {$(XY_1)\bigg[ \delta_{Y,Y_1}r(Y_1)-\sum\limits_{\substack{X_2\in C\\X_2\neq X_1}}(Y_1X_2)^\ast(X_2Y)\bigg]$};
	
	\node at (-5,15) {$\delta_{X,X_1}(X_1Y)-\sum\limits_{\substack{Y_2\in D\\Y_2\neq Y_1}}(XY_2)(Y_2X_1)^\ast(X_1Y)$};
	
	\node at (5,12) {$\delta_{Y,Y_1}(XY_1)-\sum\limits_{\substack{X_2\in C\\X_2\neq X_1}}(XY_1)(Y_1X_2)^\ast(X_2Y)$};
	
	\node at (-3,9) {$\delta_{X,X_1}(X_1Y)-\sum\limits_{\substack{Y_2\in D\\Y_2\neq Y_1}}(XY_2)\bigg[\delta_{Y,Y_2}r(Y)-\sum\limits_{\substack{X_2\in C\\X_2\neq X_1}}(Y_2X_2)^\ast(X_2Y)\bigg]$};
	
	\node at (3,6) {$\delta_{Y,Y_1}(XY_1)-\sum\limits_{\substack{X_2\in C\\X_2\neq X_1}}\bigg[\delta_{X,X_2}s(X)-\sum\limits_{\substack{Y_2\in D\\Y_2\neq Y_1}}(XY_2)(Y_2X_2)^\ast\bigg](X_2Y)$};
	
	\node at (-3,3) {$\delta_{X,X_1}(X_1Y)-\delta_{Y,Y_2}(XY_2)+\sum\limits_{\substack{Y_2\in D\\Y_2\neq Y_1}}\sum\limits_{\substack{X_2\in C\\X_2\neq X_1}}(XY_2)(Y_2X_2)^\ast(X_2Y)$};
	
	\node at (3,0) {$\delta_{Y,Y_1}(XY_1)-\delta_{X,X_2}(X_2Y)+\sum\limits_{\substack{X_2\in C\\X_2\neq X_1}}\sum\limits_{\substack{Y_2\in D\\Y_2\neq Y_1}}(XY_2)(Y_2X_2)^\ast(X_2Y)$};
	
	\node at (0,-3) {$\sum\limits_{\substack{X_2\in C\\X_2\neq X_1}}\sum\limits_{\substack{Y_2\in D\\Y_2\neq Y_1}}(XY_2)(Y_2X_2)^\ast(X_2Y)$};
	
	\end{tikzpicture}
\end{center}
    
    This proves the confluence condition. The final expression written above is a finite sum as $X\in S$ and $Y\in T$. If some terms in this expression contain forbidden words, we further reduce them as above. But this process has to terminate in finitely many stages (again since $X\in S$ and $Y\in T$). Also, it should be noted that in this sequence of reductions, the same forbidden word cannot appear more than once at different stages. For, that would mean that a single relation is contributing more than one forbidden words, which is a contradiction. This proves the reduction finiteness as well. 
\end{proof}

\begin{corollary}
 Let $\dot{E}$ be a bi-separated graph. Then the natural homomorphism from the path algebra $K(E)$ to the algebra $\mathcal{A}_K(\dot{E})$ is an inclusion.
 \begin{proof}
  From the theorem, it follows that each path $\mu\in E^\star$ is a part of a basis of $\mathcal{A}_K(\dot{E})$ as $\mu$ does not contain any forbidden word.
 \end{proof}
\end{corollary}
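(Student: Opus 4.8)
The plan is to deduce the statement directly from the normal-form theorem (Theorem \ref{Theorem: Normal forms}) together with the basis description of the path algebra (Proposition \ref{prop: basis of a path algebra}), reducing everything to the observation that honest paths are automatically normal. First I would pin down the natural homomorphism: since the generators $E^0\cup E^1$ of $K(E)$ and the defining relations of Definition \ref{path_algebra} are among the generators and relations of $K(\widehat{E})$, there is a canonical $K$-algebra map $K(E)\to K(\widehat{E})$ fixing vertices and edges, and composing with the quotient $K(\widehat{E})\to\mathcal{A}_K(\dot{E})$ yields the natural homomorphism $\iota\colon K(E)\to\mathcal{A}_K(\dot{E})$. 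On the basis $E^\star$ of $K(E)$ provided by Proposition \ref{prop: basis of a path algebra}, the map $\iota$ sends each path $\mu=e_1e_2\cdots e_n$ to the same word regarded as a generalized path in $\mathcal{A}_K(\dot{E})$.

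The crux is to check that every such $\mu\in E^\star$ is a normal generalized path in the sense of Definition \ref{normal_path}. Here I would unwind Definition \ref{forbidden_word}: a forbidden word of type I has the form $(XY)(YX^\prime)^\ast$, i.e. an edge of $E^1$ immediately followed by a starred edge from $\overline{E^1}$, while a forbidden word of type II has the form $(YX)^\ast(XY^\prime)$, i.e. a starred edge immediately followed by an unstarred edge. Since $\mu=e_1\cdots e_n$ consists entirely of edges from $E^1$, every consecutive pair $e_ie_{i+1}$ is a product of two unstarred edges and can match neither the pattern $ef^\ast$ of type I nor the pattern $e^\ast f$ of type II. Hence $\mu$ contains no forbidden subword and is therefore normal.

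Finally I would invoke Theorem \ref{Theorem: Normal forms}, which states that the normal generalized paths form a $K$-basis of $\mathcal{A}_K(\dot{E})$. Distinct paths in $E^\star$ are distinct elements of $\widehat{E}^\star$, hence distinct normal generalized paths, so $\iota$ maps the $K$-basis $E^\star$ of $K(E)$ injectively into a $K$-basis of $\mathcal{A}_K(\dot{E})$; in particular $\iota$ sends a basis to a linearly independent set and is therefore injective, i.e. an inclusion. The only genuine content lies in the second paragraph, namely the combinatorial observation that a word built solely from unstarred edges can never realize either prescribed forbidden pattern, and this is precisely where the definitions of forbidden words must be read carefully; everything else is the formal transport of bases through $\iota$.
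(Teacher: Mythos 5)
Your proposal is correct and follows exactly the paper's argument: the paper likewise deduces the corollary from Theorem \ref{Theorem: Normal forms} by observing that a path $\mu\in E^\star$ contains no forbidden subword (both forbidden patterns require a starred letter adjacent to an unstarred one) and is therefore a normal generalized path belonging to the basis of $\mathcal{A}_K(\dot{E})$. Your write-up merely makes explicit the routine details the paper leaves implicit, namely the construction of the map $\iota$ and the transport of the basis $E^\star$ to a linearly independent set.
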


In the following subsections we give some applications of normal forms of Cohn-Leavitt path algebras. To be precise, we find bi-separated graph theoretic properties that correspond to algebraic properties. One needs to be careful in the sense that in the general setting of bi-separated graphs, finding conditions only on underlying graphs are not enough. We also need suitable conditions on $(S,T)$ which give the defining relations of Cohn-Leavitt path algebras. In the following subsections we recall some definitions and propositions from the theory of rings with enough idempotents. Then we give their applications to the case of Cohn-Leavitt path algebras. We note that the reasoning is very similar to that of \cite{preusser2019leavitt}. Wherever some care is required we provide complete proofs else the reader is refered to \cite{preusser2019leavitt} for proofs.

\subsection{Local valuations and their applications}\label{LV-Objects}\hfill\\

\begin{definition}
 Let $(R,I)$ be a ring with enough idempotents. A \textit{local valuation} on $(R,I)$ is a map $\nu:R\rightarrow\mathbb{Z}^+\cup\{-\infty\}$ such that 
 \begin{enumerate}
     \item $\nu(x)=-\infty~\text{if and only if}~x=0$,
     \item $\nu(x-y)\leq\max\{\nu(x),\nu(y)\}$ for any $x,y\in R$ and
     \item $\nu(xy)=\nu(x)+\nu(y)$ for any $e\in I$, $x\in Re$ and $y\in eR$.
 \end{enumerate}
 A local valuation $\nu$ on $(R,I)$ is called trivial if $\nu(x)=0$ for each $x\in R-\{0\}$. 
\end{definition}

Let $R$ be a ring. A left ideal $\mathfrak{a}$ of $R$ is called \textit{essential} if $\mathfrak{a}\cap\mathfrak{b}=0\Rightarrow\mathfrak{b}=0$ for any left ideal $\mathfrak{b}$ of $R$. For any $x\in R$ recall the left annihilator ideal of $x$ is Ann$(x):=\{y\in R\mid yx=0\}$. A ring $R$ is called \textit{left non-singular} if for any $x\in R$, Ann$(x)$ is essential $\Leftrightarrow x=0$. A right non-singular ring is defined similarly. A ring is \textit{non-singular} is if it is both left and right non-singular.
\begin{proposition}\cite[Proposition 37]{preusser2019leavitt}
Let $(R,I)$ be a ring with enough idempotents which admits a local valuation. Then $R$ is non-singular.
\end{proposition}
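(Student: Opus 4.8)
The plan is to distill from the local valuation a \emph{local domain} property and then use it to produce, for each nonzero $x$, a nonzero one-sided ideal that meets the annihilator of $x$ trivially. This exhibits the annihilator as non-essential, which is exactly what non-singularity asks for. Throughout I would follow the reasoning of \cite[Proposition 37]{preusser2019leavitt} cited in the statement.

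First I would record the key consequence of axiom (3): if $e \in I$ and $a \in Re$, $b \in eR$ are both nonzero, then $\nu(ab) = \nu(a) + \nu(b) \geq 0 > -\infty$, so by axiom (1) we get $ab \neq 0$. In other words there are no zero-divisors across the decomposition $Re \cdot eR$. This is the only analytic input needed.

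Next, for left non-singularity I would take $x \neq 0$ and find a suitable idempotent. Since $(R,I)$ has enough idempotents, some finite sum $u = e_1 + \cdots + e_n$ of distinct elements of $I$ satisfies $uxu = x$; in particular $ux = x \neq 0$, so $\sum_i e_i x = x$ forces $ex \neq 0$ for some $e := e_i \in I$. I then claim that the nonzero left ideal $Re$ meets $\mathrm{Ann}(x)$ only in $0$. Indeed $ex \in eR$ (because $e(ex) = ex$), while any nonzero $y \in Re$ satisfies $y = ye$, so $yx = y(ex)$ with $y \in Re$ and $ex \in eR$; by the local domain property $yx \neq 0$, i.e. $y \notin \mathrm{Ann}(x)$. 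Since $e = e^2 \in Re$ and $e \neq 0$, the left ideal $Re$ is nonzero and $Re \cap \mathrm{Ann}(x) = 0$, so $\mathrm{Ann}(x)$ is not essential. The reverse direction is trivial, as $x = 0$ gives $\mathrm{Ann}(x) = R$, which is essential; hence $R$ is left non-singular.

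Finally, the right-non-singular case is the mirror image: given $x \neq 0$ choose $e \in I$ with $xe \neq 0$ (possible since $xu = x$), observe $xe \in Re$, and show $eR \cap \{\, y : xy = 0 \,\} = 0$ using $\nu(xz) = \nu((xe)z) = \nu(xe) + \nu(z) \geq 0$ for nonzero $z \in eR$. I expect the only delicate point to be the bookkeeping around enough idempotents: one must select a single $e \in I$ so that the two factors land simultaneously in $Re$ and in $eR$, since that is precisely the hypothesis under which axiom (3) applies. Once this choice is arranged, no genuine estimate remains.
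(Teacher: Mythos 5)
Your proof is correct, and it is essentially the argument of the cited result: the paper itself gives no proof (it defers to \cite[Proposition 37]{preusser2019leavitt}), whose proof likewise extracts from axiom (3) that products of nonzero elements of $Re$ and $eR$ are nonzero, picks $e\in I$ with $ex\neq 0$ (resp.\ $xe\neq 0$) via a local unit, and concludes $Re\cap\mathrm{Ann}(x)=0$ so that the annihilator of a nonzero element is never essential. The bookkeeping you flag ($y=ye$, $ex\in eR$, $e\neq 0$) is handled correctly, so no gap remains.
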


A non-zero ring $R$ is called a \textit{prime ring} if $\mathfrak{ab}=0\Rightarrow\mathfrak{a}=0$ or $\mathfrak{b}=0$ for any ideals $\mathfrak{a}$ and $\mathfrak{b}$ of $R$. A ring with enough idempotents $(R,I)$ is \textit{connected} if $eRf\neq 0$ for any $e,f\in I$.

\begin{proposition}\cite[Proposition 38]{preusser2019leavitt}
Let $(R,I)$ be a nonzero, connected ring with enough idempotents which admits a local valuation. Then $R$ is a prime ring.
\end{proposition}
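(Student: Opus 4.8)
The plan is to prove primeness directly from the definition: given nonzero two-sided ideals $\mathfrak{a}$ and $\mathfrak{b}$, I will exhibit a single nonzero element of $\mathfrak{a}\mathfrak{b}$. The engine of the argument is the observation that axiom (3) forces products of nonzero elements sitting in ``compatible corners'' to be nonzero: if $e \in I$ and $0 \neq x \in Re$, $0 \neq y \in eR$, then $\nu(xy) = \nu(x) + \nu(y)$ is a non-negative integer (both summands lie in $\mathbb{Z}^+$), so $\nu(xy) \neq -\infty$ and hence $xy \neq 0$ by axiom (1). I would isolate and record this ``corner observation'' first, since every subsequent nonvanishing claim reduces to it.

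Next I would localize $\mathfrak{a}$ and $\mathfrak{b}$ to distinguished idempotents using the enough-idempotents structure. Choosing $0 \neq a \in \mathfrak{a}$ and a local unit $u = e_1 + \dots + e_n$ (with $e_i \in I$) satisfying $au = a$, the decomposition $a = \sum_i a e_i$ forces $ae \neq 0$ for some $e = e_i \in I$; note $ae \in Re$. Symmetrically there is $f \in I$ with $0 \neq fb \in fR$. The connectedness hypothesis $eRf \neq 0$ then yields $r \in R$ with $erf \neq 0$, an element lying simultaneously in $eR$ and in $Rf$; this is where connectedness does its only work, supplying a ``bridge'' between the two chosen corners.

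The conclusion is then two applications of the corner observation. From $ae \in Re$ and $erf \in eR$ I get $aerf = (ae)(erf) \neq 0$, and $aerf \in Rf$; combining $aerf \in Rf$ with $fb \in fR$ gives $aerfb = (aerf)(fb) \neq 0$. Since $aer \in \mathfrak{a}$ and $fb \in \mathfrak{b}$, this nonzero element factors as $(aer)(fb) \in \mathfrak{a}\mathfrak{b}$, whence $\mathfrak{a}\mathfrak{b} \neq 0$ and $R$ is prime. I expect the only delicate point to be bookkeeping the one-sided supports, namely making sure each factor lands in the correct corner $Re$ or $eR$ so that axiom (3) genuinely applies at every multiplication, rather than any substantive difficulty. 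In particular the ultrametric axiom (2) is never invoked here; it is reserved for the non-singularity statement.
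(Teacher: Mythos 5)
Your proof is correct: the corner observation, the localization of $\mathfrak{a}$ and $\mathfrak{b}$ to idempotents $e,f\in I$ via local units, the bridge $erf\neq 0$ from connectedness, and the two applications of axiom (3) yield a nonzero element $(aer)(fb)\in\mathfrak{a}\mathfrak{b}$, which is exactly what primeness requires. The paper itself gives no proof (it only cites Preusser's Proposition 38), and your argument is essentially the same as the cited one, so there is nothing further to reconcile.
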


A ring $R$ is said to be \textit{von Neumann regular} if for any $x\in R$ there exists $y\in R$ such that $xyx=x$.
\begin{proposition}\cite[Proposition 39]{preusser2019leavitt}
Let $(R,I)$ be a ring with enough idempotents that has a nontrivial local valuation. Then $R$ is not von Neumann regular.
\end{proposition}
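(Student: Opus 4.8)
The plan is to argue by contradiction: assuming $R$ is von Neumann regular, I would manufacture an element whose local valuation is forced to be strictly negative, contradicting the fact that a local valuation takes values in $\mathbb{Z}^+ \cup \{-\infty\}$. The main tool from the axioms, besides the multiplicativity in axiom (3), is the ultrametric inequality in axiom (2): it first gives $\nu(-y) = \nu(y)$ (compare $\nu(0-y)$ with $\nu(0-(-y))$, using $\nu(0) = -\infty$), and then, by induction, $\nu(z_1 + \cdots + z_k) \le \max_i \nu(z_i)$ for any finite sum.

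The crucial preliminary step — and the one I expect to be the main obstacle — is to replace an arbitrary witness of non-triviality by one that is \emph{homogeneous} with respect to the distinguished idempotents $I$. This is forced on us because axiom (3) only licenses multiplicativity across a single idempotent lying in $I$; it applies neither to a general element nor to the idempotents $xy,\,yx$ that regularity would produce, since these need not lie in $I$. To perform the reduction: non-triviality supplies $x_0 \ne 0$ with $\nu(x_0) \ge 1$ (as $\nu(x_0) \in \mathbb{Z}^+$ and $\nu(x_0) \ne 0$). Choosing a local unit $u = e_1 + \cdots + e_k$ with $e_i \in I$ and $u x_0 u = x_0$, and expanding $x_0 = \sum_{i,j} e_i x_0 e_j$, the extended ultrametric inequality forces some summand $x := e_i x_0 e_j$ to satisfy $\nu(x) \ge 1$. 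Setting $e := e_i$ and $f := e_j$, this $x$ is nonzero and satisfies $e x = x = x f$, so it is adapted to axiom (3) on both sides.

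Finally I would invoke regularity. Writing $x y x = x$ and putting $y' := f y e$, the relations $e x = x = x f$ give $x y' x = x$, while $f y' = y' = y' e$ shows $y' \in fR \cap Re$. Applying axiom (3) first across the idempotent $f$ to the product $x \cdot y'$, and then across $e$ to the product $(x y') \cdot x$, gives
\[
\nu(x) = \nu(x y' x) = \nu(x y') + \nu(x) = 2\nu(x) + \nu(y'),
\]
whence $\nu(y') = -\nu(x) \le -1$. But $y' \ne 0$ (otherwise $x = x y' x = 0$), so $\nu(y') \ge 0$, a contradiction. Hence no such $y$ exists, and $R$ is not von Neumann regular. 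The only points needing careful bookkeeping are the homogeneous reduction and the verification that both applications of axiom (3) genuinely take place across idempotents of $I$; the rest is direct substitution.
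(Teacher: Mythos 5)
Your proof is correct, and it follows the same route as the source the paper cites for this statement (the paper gives no proof of its own, deferring to Preusser's Proposition 39): reduce the witness of non-triviality to a homogeneous element $x=e x f$ with $e,f\in I$ via a local unit and the ultrametric inequality, then apply multiplicativity across $f$ and $e$ to $xy'x=x$ with $y'=fye$ to force $\nu(y')=-\nu(x)<0$, contradicting $\nu(y')\in\mathbb{Z}^+$ since $y'\neq 0$. The two points you flag as needing care — the homogeneous reduction and checking both applications of axiom (3) occur across idempotents of $I$ — are handled correctly.
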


Recall that the Jacobson radical of a ring $R$ is the ideal consisting of those elements in $R$ that annihilate all simple (right or left) $R$-modules. A ring is called \textit{semiprimitive} if its Jacobson radical is the zero ideal. 
\begin{proposition}\cite[Proposition 40]{preusser2019leavitt}
Let $(R,I)$ be a connected $K$-algebra with enough idempotents which admits a local valuation $\nu$ such that $\nu(x)=0$ if and only if $x$ is a nonzero $K$-linear combination of elements in $I$. Then $R$ is semiprimitive.
\end{proposition}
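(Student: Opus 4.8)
The plan is to show $R$ is semiprimitive by proving $J(R)=0$ via contradiction, the whole idea being to transport any hypothetical nonzero radical element into a single corner ring $eRe$ (with $e\in I$), where the local valuation upgrades to a genuine, globally multiplicative valuation and the conclusion becomes a one-line computation. So I would begin by assuming $J(R)\neq 0$ and fixing some $a\in J(R)$ with $a\neq 0$. Since $(R,I)$ has enough idempotents there is a local unit $u=\sum_k e_k$ (a finite sum of distinct $e_k\in I$) with $uau=a$; expanding $a=\sum_{k,l}e_k a e_l$ and using that $a\neq 0$, some corner entry $a_0:=eaf$ is nonzero for suitable $e,f\in I$, and $a_0\in J(R)$ because $J(R)$ is a two-sided ideal.

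Next I would move this datum into one corner. By connectedness of $(R,I)$ we have $fRe\neq 0$, so I can choose $0\neq c\in fRe$. The key use of the valuation here is property (3): since $a_0=a_0f\in Rf$ and $c=fc\in fR$, we get $\nu(a_0c)=\nu(a_0)+\nu(c)\geq 0>-\infty$, so $z:=a_0c$ is \emph{nonzero}. Moreover $z\in eRe$ and $z\in J(R)$, hence $z\in eRe\cap J(R)=eJ(R)e=J(eRe)$ by the standard Peirce-corner identity. Thus I will have manufactured a nonzero element of the Jacobson radical of the unital ring $S:=eRe$ with identity $e$.

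The final step exploits that on $S=eRe$ the local valuation is a genuine valuation: for all $x,y\in S$ the linking idempotent is the single element $e\in I$, so property (3) gives $\nu(xy)=\nu(x)+\nu(y)$ for all $x,y\in S$. Combining this with the nonarchimedean inequality and the hypothesis on $\nu$, an element $x\in S$ satisfies $\nu(x)=0$ if and only if $x=\lambda e$ with $\lambda\in K$, $\lambda\neq 0$ (the only idempotent combinations surviving $e(-)e$ are scalar multiples of $e$). Since $z\in J(S)$ and $S$ is unital, $e-z$ is a unit, say $(e-z)w=e$; applying multiplicativity gives $0=\nu(e)=\nu(e-z)+\nu(w)$, and as $\nu$ is nonnegative on nonzero elements this forces $\nu(e-z)=0$. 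Hence $e-z=\lambda e$ for some $\lambda\neq 0$, i.e. $z=(1-\lambda)e$, which is either a unit of $S$ (if $\lambda\neq 1$) or zero (if $\lambda=1$); both contradict $0\neq z\in J(S)$. This contradiction yields $J(R)=0$.

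The main obstacle, and the reason for the corner reduction, is that the local valuation is multiplicative only across a \emph{single} idempotent of $I$ and not for arbitrary products, so the valuation argument cannot be run directly on $a\in J(R)$. Consequently the substantive work lies in the first two steps, where enough idempotents, connectedness, and the non-degeneracy encoded in property (3) are combined to produce a nonzero radical element inside a corner $eRe$; once inside the corner, full multiplicativity makes the closing computation essentially automatic.
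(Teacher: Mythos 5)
Your argument is correct. Note that the paper itself does not prove this statement; it only cites \cite[Proposition 40]{preusser2019leavitt}, so there is no in-paper proof to compare against, and your corner-reduction argument is a legitimate self-contained reconstruction in the expected spirit of the cited source. The three points on which your proof silently relies are all fine: (i) the elements of $I$ are pairwise orthogonal by the paper's definition of ``enough idempotents,'' which is exactly what makes $exe=\lambda e$ for any nonzero $K$-linear combination $x$ of elements of $I$ lying in $eRe$; (ii) the Peirce identity $eJ(R)e=J(eRe)$ (only the inclusion $eJ(R)e\subseteq J(eRe)$ is needed) holds also in the non-unital setting, e.g.\ by transporting a quasi-inverse $y$ of $z\in eJ(R)e$ to $eye\in eRe$; and (iii) property (3) of the local valuation applies to the product $a_0c$ because $a_0\in Rf$ and $c\in fR$ for the single idempotent $f$, which is what guarantees $z=a_0c\neq 0$. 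With these observations your chain of deductions — nonzero $z\in J(eRe)$, full multiplicativity of $\nu$ on $eRe$, $\nu(e-z)=0$ forcing $e-z=\lambda e$, and the resulting contradiction with $0\neq z\in J(eRe)$ — is complete.
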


Now we find conditions on a bi-separated graph $\dot{E}$ for which the corresponding Cohn-Leavitt path algebra admits a local valuation.

\begin{definition}\label{def: LV_Condition}
A bi-separated graph $\dot{E}=(E,(C,S),(D,T))$ is said to satisfy \textit{Condition LV} if one of the following holds:
\begin{enumerate}
    \item[$(LV1)$ :] $|S|\leq 1$ , $|T|\leq 1$ and if $X \in S$ (resp. $Y \in T$), then $|X| > 1$ (resp. $Y > 1)$. 
    \item[$(LV2)$ :] $|S|>1$ or $|T|>1$, and the following two conditions are satisfied:
     \begin{enumerate}
    \item For any $X_1, X_2\in S$, either there is no $Y\in D$ such that $X_1\cap Y\neq\emptyset~\text{and}~X_2\cap Y\neq\emptyset$
     or $\exists~ Y_1,Y_2\in D, Y_1\neq Y_2$ such that $X_i \cap Y_j \neq \emptyset$ for each $1\leq i,j\leq 2$.
     \item For any $Y_1, Y_2\in T$, either there is no $X\in C$ such that $Y_1\cap X\neq\emptyset$ and $Y_2\cap X\neq\emptyset$
     or $\exists~ X_1,X_2\in C, X_1\neq X_2$ such that $X_i \cap Y_j \neq \emptyset$ for each $1\leq i,j\leq 2$.
     \end{enumerate}
\end{enumerate}

We say $\dot{E}$ satisfies \textit{Domain condition} if either $|S|\leq 1, |T|\leq 1$ or $(LV2)$ holds.
    
\end{definition}

\begin{proposition}\label{prop: local valuation existence}
Let $\dot{E}$ be a bi-separated graph and for any $a\in\mathcal{A}_K(\dot{E})$, let $\emph{supp}(a)$ denote the set of all normal generalized paths occuring in $\emph{NF}(a)$ with nonzero coefficients, where $\emph{NF}(a)$ is the unique normal representative of $a$. If $\Dot{E}$ satisfies condition LV, then the map
 $\nu:\mathcal{A}_K(\Dot{E}) \rightarrow \mathbb{Z}^+ \cup \{-\infty\}$ defined by 
 $$a \mapsto \max\{|p|\mid p \in \emph{supp}(a)\}$$
 is a local valuation on $\mathcal{A}_K(\Dot{E})$.
 
 \begin{proof}
  The first two conditions of a local valuation are obvious. It remains to show $\nu(ab) \, = \, \nu(a)+\nu(b),$ for any $v \in E^0,~a \in \mathcal{A}_K(\Dot{E})v$ and $b \in v\mathcal{A}_K(\Dot{E})$. If one of $\nu(a)$ and $\nu(b)$ is $0$ or $-\infty$, then the result is clear. Suppose now $\nu(a),\nu(b) \, \geq \, 1$. Since any reduction preserves or decreases the length of a generalized path, it follows that $\nu(ab) \, \leq \, \nu(a)+\nu(b)$. So it remains to show that $\nu(ab) \, \geq \, \nu(a)+\nu(b)$. Let
     $$p_k=x_1^k \dots x_{\nu(a)}^k ~~(1 \leq k \leq r)$$
 be the elements of supp($a$) with length $\nu(a)$ and
     $$q_l=y_1^l \dots y_{\nu(b)}^l ~~(1 \leq l \leq s)$$
 be the elements of supp($b$) with length $\nu(b)$. We assume that the $p_k$'s are pairwise distinct and so are $q_l$'s. Since NF is a linear map, we can make the following conclusions:
 
 \begin{enumerate}
     \item If $x_{\nu(a)}^ky_{1}^l$ is not of type I or II, then 
     \begin{center}
         NF$(p_kq_l)=p_kq_l$.
     \end{center}
     
     \item If $x_{\nu(a)}^ky_{1}^l$ is of type I, then there are $X, X^{\prime} \in S$ and $Y \in D$ such that $x_{\nu(a)}^ky_{1}^l \, = \, (XY)(X^{\prime}Y)^\ast $ and $(XY)(X^{\prime}Y)^\ast $ is forbidden. So
     \begin{eqnarray}
         \text{NF}(p_kq_l)&=& \left[ \delta_{XX^{\prime}}x_1^k \ldots x_{\nu(a)-1}^ky_2^l \ldots y_{\nu(b)}^l\right] \nonumber\\
         &-&
        \sum\limits_{\substack{Y \in D \\(XY)(X^{\prime}Y)^\ast \neq x_{\nu(a)}^ky_{1}^l}}x_1^k \ldots x_{\nu(a)-1}^k(XY)(X^{\prime}Y)^\ast y_2^l \ldots y_{\nu(b)}^l.\nonumber
     \end{eqnarray}
     
     \item If $x_{\nu(a)}^ky_{\nu(b)}^l$ is of type II, then there are $Y, Y^{\prime} \in T$ and $X \in C$ such that $x_{\nu(a)}^ky_{\nu(b)}^l \, = \, (XY)^\ast (XY^{\prime})$ and $(XY)^\ast (XY^{\prime})$ is forbidden. So
     \begin{eqnarray}
       \text{NF}(p_kq_l)&=& \left[ \delta_{YY^{\prime}}x_1^k \ldots x_{\nu(a)-1}^ky_2^l \ldots y_{\nu(b)}^l\right] \nonumber \\&-&
        \sum\limits_{\substack{X \in C \\(XY)^\ast(XY^{\prime}) \neq x_{\nu(a)}^ky_{1}^l}}x_1^k \ldots x_{\nu(a)-1}^k(XY)^\ast(XY^{\prime}) y_2^l \ldots y_{\nu(b)}^l.\nonumber  
     \end{eqnarray}
 \end{enumerate}

\noindent$\mathbf{Case~ 1:}$ Assume that $x_{\nu(a)}^ky_{1}^l$ is not of type I or II, for any $k$, $l$.\\ 
Then $p_kq_l \in \text{supp}($a$)$, for any $k$, $l$. So $\nu(ab) \, \geq \, |p_kq_l| \,=\, \nu(a)+\nu(b)$.\\

\noindent $\mathbf{Case~ 2:}$ Assume that there are $k$ and $l$ such that $x_{\nu(a)}^ky_{\nu(b)}^l$ is of type I. \\
Then there are $X, X^{\prime} \in S$ and $Y \in D$ such that $x_{\nu(a)}^ky_{1}^l \, = \, (XY)(X^{\prime}Y)^\ast $ and $(XY)(X^{\prime}Y)^\ast $ is forbidden. Since $\Dot{E}$ is an LV-object, there is at least one more element $\widetilde{Y} \in D$ other than $Y$ such that $X\widetilde{Y} \neq 0$ and $X^{\prime}\widetilde{Y} \neq 0$.\\
$\mathbf{Case~ 2.1:}$ Assume $p_{k^{\prime}}q_{l^{\prime}} \neq x_1^k \ldots x_{\nu(a)-1}^k(X\widetilde{Y})(X^{\prime}\widetilde{Y})^\ast y_2^l \ldots y_{\nu(b)}^l$, for any $k^{\prime}, l^{\prime}$. 

\noindent Then $x_1^k \ldots x_{\nu(a)-1}^k(X\widetilde{Y})(X^{\prime}\widetilde{Y})^\ast y_2^l \ldots y_{\nu(b)}^l \in \text{supp}(ab)$, since it does not cancel with any other term. So we are done.\\
$\mathbf{Case~ 2.2:}$ Assume $p_{k^{\prime}}q_{l^{\prime}} \, = \, x_1^k \ldots x_{\nu(a)-1}^k(X\widetilde{Y})(X^{\prime}\widetilde{Y})^\ast y_2^l \ldots y_{\nu(b)}^l$, for some $k^{\prime}, l^{\prime}$. In this case, $p_kq_{l^{\prime}} \, = \, x_1^k \ldots x_{\nu(a)-1}^k(XY)(X^{\prime}\widetilde{Y})^\ast y_2^l \ldots y_{\nu(b)}^l \in \text{supp}(ab)$ and so we are done.\\

\noindent$\mathbf{Case~ 3:}$ Assume that there are $k$ and $l$ such that $x_{\nu(a)}^ky_{1}^l$ is of type II. \\
Then there are $Y, Y^{\prime} \in T$ and $X \in C$ such that $x_{\nu(a)}^ky_{1}^l \, = \, (XY)^\ast (XY^{\prime})$ and $(XY)^\ast (XY^{\prime})$ is forbidden. Again since $\Dot{E}$ is an LV-object, there is at least one more element $\widetilde{X} \in C$ other than $X$ such that $\widetilde{X}Y \neq 0$ and $\widetilde{X}Y^{\prime} \neq 0$. The proof now follows in exactly the same way as in $\mathbf{Cases~ 2.1}$ and $\mathbf{2.2}$.
 \end{proof}
\end{proposition}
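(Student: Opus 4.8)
The plan is to verify the three axioms of a local valuation one at a time, the first two being formal and the third carrying all the content. Throughout I use that, by Theorem \ref{Theorem: Normal forms}, $\text{NF}$ is a $K$-linear bijection from $\mathcal{A}_K(\dot E)$ onto the span of normal generalized paths, so that $\text{supp}(a)=\emptyset$ exactly when $a=0$. With the convention $\max\emptyset=-\infty$ this gives axiom (1). For axiom (2), linearity of $\text{NF}$ yields $\text{NF}(x-y)=\text{NF}(x)-\text{NF}(y)$, hence $\text{supp}(x-y)\subseteq\text{supp}(x)\cup\text{supp}(y)$, whence $\nu(x-y)\leq\max\{\nu(x),\nu(y)\}$ immediately.

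The real work is axiom (3): for $v\in E^0$, $a\in\mathcal{A}_K(\dot E)v$ and $b\in v\mathcal{A}_K(\dot E)$, I must show $\nu(ab)=\nu(a)+\nu(b)$. The inequality $\nu(ab)\leq\nu(a)+\nu(b)$ is the easy half: expanding $ab$ as a combination of products of normal paths, each such product has length at most $\nu(a)+\nu(b)$, and every reduction in the diamond-lemma system of Theorem \ref{Theorem: Normal forms} either preserves or strictly lowers length, so $\text{NF}(ab)$ contains no path longer than $\nu(a)+\nu(b)$. The cases where $\nu(a)$ or $\nu(b)$ lies in $\{0,-\infty\}$ are also immediate, so I may assume $\nu(a),\nu(b)\geq 1$.

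For the reverse inequality I would single out the maximal-length normal paths $p_1,\dots,p_r$ in $\text{supp}(a)$ and $q_1,\dots,q_s$ in $\text{supp}(b)$, and exhibit one normal path of length exactly $\nu(a)+\nu(b)$ that survives in $\text{NF}(ab)$. A product $p_kq_l$ already has this length, and it can fail to be normal only at its junction letter $x_{\nu(a)}^k y_1^l$; so I split into three cases according to whether this junction is ordinary, a type-I forbidden word, or a type-II forbidden word. If no junction is forbidden, every $p_kq_l$ is normal, distinct $p_kq_l$ stay distinct (the split at position $\nu(a)$ recovers $p_k$ and $q_l$ uniquely), and none can cancel. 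If some junction is type I, say $x_{\nu(a)}^k y_1^l=(XY)(X'Y)^\ast$ with $X,X'\in S$ sharing the column $Y$, then relation $\mathcal{A}^\prime 1$ expresses $\text{NF}(p_kq_l)$ as a length-$(\nu(a)+\nu(b)-2)$ term plus a sum of top-length terms indexed by the columns $\widetilde Y\neq Y$ meeting both $X$ and $X'$. This is exactly where Condition LV is used: it supplies at least one such $\widetilde Y$ (under LV1 because $X=X'$ has $|X|>1$ and hence meets two distinct columns; under LV2 because two rows sharing a column must share two distinct columns). The type-II case is symmetric, using $\mathcal{A}^\prime 2$ and a spare row $\widetilde X$.

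The hard part will be the cancellation bookkeeping in the forbidden cases: one must ensure the top-length terms produced by the reductions do not all annihilate when summed over $k,l$. I would fix a candidate $w=x_1^k\cdots x_{\nu(a)-1}^k(X\widetilde Y)(X'\widetilde Y)^\ast y_2^l\cdots y_{\nu(b)}^l$ and dichotomize: if $w$ never coincides with a direct product $p_{k'}q_{l'}$, then it has no partner and lies in $\text{supp}(ab)$; if it does equal some $p_{k'}q_{l'}$, then restoring $Y$ at the junction yields instead the path $x_1^k\cdots x_{\nu(a)-1}^k(XY)(X'\widetilde Y)^\ast y_2^l\cdots y_{\nu(b)}^l$, whose junction $(XY)(X'\widetilde Y)^\ast$ is \emph{not} forbidden since the columns $Y,\widetilde Y$ differ, so it is itself a surviving normal path of the required length. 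Verifying that this dichotomy always leaves at least one uncancelled path of length $\nu(a)+\nu(b)$ is the crux of the argument, and Condition LV is precisely the hypothesis that makes the needed spare column (or row) available.
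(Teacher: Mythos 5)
Your proposal follows essentially the same route as the paper's proof: the same reduction to the inequality $\nu(ab)\geq\nu(a)+\nu(b)$ for maximal-length paths, the same case analysis at the junction letter (ordinary, type I, type II), the same use of Condition LV to produce a spare column $\widetilde{Y}$ (or row $\widetilde{X}$), and the same final dichotomy --- your ``candidate $w$ coincides with some $p_{k'}q_{l'}$ or not'' is exactly the paper's Cases 2.1 and 2.2, including the observation that $(XY)(X'\widetilde{Y})^\ast$ is not forbidden because the columns differ. The ``crux'' you defer is precisely what this dichotomy settles, so no genuinely new verification is missing beyond what the paper itself records.
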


\begin{corollary}
 Let $\dot{E}$ be a bi-separated graph satifying condition LV. Then
 \begin{enumerate}
     \item $\mathcal{A}_K(\dot{E})$ is nonsingular.
     \item $\dot{E}$ is connected implies $\mathcal{A}_K(\dot{E})$ is  semiprimitive.
     \item $\dot{E}$ is connected and non-empty implies $\mathcal{A}_K(\dot{E})$ is prime.
     \item $|E^1|\geq 1$ implies $\mathcal{A}_K(\dot{E})$ is not von Neumann regular.
 \end{enumerate}
\end{corollary}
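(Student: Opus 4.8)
The plan is to obtain all four assertions from Proposition \ref{prop: local valuation existence} together with the four propositions on rings with enough idempotents recalled just above. First I would invoke Proposition \ref{prop: local valuation existence}: since $\dot{E}$ satisfies Condition LV, the map $\nu\colon\mathcal{A}_K(\dot{E})\to\mathbb{Z}^+\cup\{-\infty\}$ given by $a\mapsto\max\{|p|\mid p\in\mathrm{supp}(a)\}$ is a local valuation. Recall also that $(\mathcal{A}_K(\dot{E}),E^0)$ is a ring with enough idempotents. Thus each of the four propositions becomes applicable as soon as its remaining hypotheses are verified.

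For part (1), the existence of $\nu$ alone suffices, and the nonsingularity proposition applies verbatim to give that $\mathcal{A}_K(\dot{E})$ is nonsingular. For part (4), I would note that $\nu$ is nontrivial whenever $|E^1|\geq 1$: any edge $e\in E^1$ is a normal generalized path of length one, so $e\neq 0$ and $\nu(e)=1\neq 0$. The proposition stating that a ring with enough idempotents carrying a nontrivial local valuation is not von Neumann regular then yields (4).

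Parts (2) and (3) require two further checks. The first is the special form of $\nu$ demanded by the semiprimitivity proposition, namely that $\nu(x)=0$ if and only if $x$ is a nonzero $K$-linear combination of elements of $E^0$; this is immediate from the definition of $\nu$, since the only normal generalized paths of length $0$ are the vertices, so $\nu(x)=0$ exactly when $\mathrm{supp}(x)\subseteq E^0$ and $x\neq 0$. The second, and the only genuinely non-formal step, is to verify that $(\mathcal{A}_K(\dot{E}),E^0)$ is a \emph{connected} ring with enough idempotents, i.e. that $v\,\mathcal{A}_K(\dot{E})\,w\neq 0$ for all $v,w\in E^0$. Here I would use connectedness of $E$ (equivalently of $\widehat{E}$): choose a generalized path $p=s_1 s_2\cdots s_n$ from $v$ to $w$, where each $s_i\in E^1\cup\overline{E^1}$ runs from $v_{i-1}$ to $v_i$ with $v_0=v$ and $v_n=w$. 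Each $s_i$ is a normal path of length one, so $\nu(s_i)=1$; applying property (3) of the local valuation successively across the intermediate idempotents $v_1,\dots,v_{n-1}$ (so that $s_1\cdots s_i\in\mathcal{A}_K(\dot{E})v_i$ and $s_{i+1}\in v_i\mathcal{A}_K(\dot{E})$) gives $\nu(p)=\sum_{i=1}^n\nu(s_i)=n\neq-\infty$, whence $p\neq 0$ and $p\in v\,\mathcal{A}_K(\dot{E})\,w$. This establishes connectedness.

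With connectedness in hand, part (2) follows from the semiprimitivity proposition using the two checks above, and part (3) follows from the primeness proposition, noting that $\mathcal{A}_K(\dot{E})\neq 0$ because a non-empty $E$ contains a vertex $v$, which is a nonzero idempotent. The one point deserving care is the connectedness argument: the multiplicativity of $\nu$ across a shared vertex idempotent is exactly what prevents cancellation and forces the chosen generalized path to be nonzero, and this is also where Condition LV is essential, since without it the multiplicativity $\nu(p)=n$ can fail (indeed a generalized path may then vanish, as for $e^\ast f$ in a Leavitt algebra not meeting Condition LV).
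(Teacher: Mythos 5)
Your proposal is correct and follows exactly the route the paper intends: the corollary is stated right after Proposition \ref{prop: local valuation existence} and the four recalled propositions on rings with enough idempotents admitting local valuations, and your argument simply supplies the routine verifications (nontriviality of $\nu$ when $|E^1|\geq 1$, the characterization of $\nu(x)=0$, and connectedness of $(\mathcal{A}_K(\dot{E}),E^0)$ via multiplicativity of $\nu$ along a generalized path) that the paper leaves implicit. In particular your use of property (3) of the local valuation to show a generalized path joining two vertices is nonzero is precisely the right way to get $v\,\mathcal{A}_K(\dot{E})\,w\neq 0$, since such a path need not be normal.
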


\begin{theorem}\label{theorem: characterisation of domains}
 Let $\Dot{E}$ be a bi-separated graph. Then $\mathcal{A}_K(\Dot{E})$ is a domain if and only if $\Dot{E}$ satisfies domain condition.
\end{theorem}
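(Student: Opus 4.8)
The proof rests on the normal-form basis of Theorem~\ref{Theorem: Normal forms}, by which every element of $\mathcal{A}_K(\dot{E})$ is uniquely a $K$-linear combination of normal generalized paths; this lets me track the longest (``leading'') paths occurring in a product. I will use throughout that distinct vertices are orthogonal idempotents ($vw=0$ for $v\neq w$), so that a domain can contain at most one vertex; the single-vertex situation is thus the one in which the criterion is really tested, and the analysis is organized around it.

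For the ``only if'' direction I argue contrapositively and manufacture a zero divisor from the failure of the domain condition. If it fails then $|S|>1$ or $|T|>1$, while one of the clauses (a),(b) of $(LV2)$ in Definition~\ref{def: LV_Condition} breaks; applying the involution $\ast$ (which interchanges the roles of $(C,S)$ and $(D,T)$) reduces to the failure of (a). This failure produces either a singleton row, giving a one-sided relation $e(r(e)-e^\ast e)=0$ treated by hand, or two distinct $X_1,X_2\in S$ sharing a \emph{unique} column $Y_0\in D$. In the latter, representative, case I set $e_i=X_iY_0$; relation $\mathcal{A}1$ for $(X_1,X_2)$ reads $\sum_{Y\in D}(X_1Y)(YX_2)^\ast=\delta_{X_1X_2}s(X_1)=0$, and uniqueness of $Y_0$ collapses the sum to the single term $e_1e_2^\ast$. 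Hence $e_1e_2^\ast=0$ with both factors nonzero basis paths, so $\mathcal{A}_K(\dot{E})$ is not a domain.

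For the ``if'' direction, suppose first $(LV2)$ holds; then $\dot{E}$ satisfies Condition~LV, so Proposition~\ref{prop: local valuation existence} supplies the length map as a local valuation $\nu$. Working at the single vertex $v$, every nonzero $a,b$ factor as $a=av$, $b=vb$, and multiplicativity gives $\nu(ab)=\nu(a)+\nu(b)>-\infty$, whence $ab\neq 0$. The real content is the leading-term step inside the proof of Proposition~\ref{prop: local valuation existence}: clauses (a),(b) of $(LV2)$ guarantee that when a forbidden word is created at the junction of two leading paths, the reduction $\mathcal{A}'1$ or $\mathcal{A}'2$ leaves a length-preserving term that cannot cancel. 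In the remaining branch $|S|\le 1$, $|T|\le 1$, where $\nu$ may be trivial, at most one relation of each type $\mathcal{A}1$, $\mathcal{A}2$ survives and I would identify $\mathcal{A}_K(\dot{E})$ explicitly: the domain configurations here are the balanced single-vertex ones — a free algebra $K(\widehat{E})$, a Laurent ring $K[x,x^{-1}]$, or (via Proposition~\ref{free-product_and_tame}) a free product of such together with Leavitt algebras $L_K(m,n)$, $m>1$ — each a domain by Leavitt's theorem and a leading-term argument on the normal-form basis, which the free product inherits.

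The main obstacle is the leading-term bookkeeping of the ``if'' direction: one must verify that the reductions forced at the junction of two longest paths never annihilate \emph{all} top-length terms of $ab$, which is exactly what the two-column/two-row clauses of $(LV2)$ secure and what the contrapositive zero-divisor construction shows cannot be weakened. The degenerate branch $|S|\le 1$, $|T|\le 1$ is the other delicate point, since there the criterion must be read together with the single-vertex reduction and the balance between the $\mathcal{A}1$ and $\mathcal{A}2$ relations: the unbalanced or multi-vertex configurations, where already $vw=0$ or a one-sided Cohn relation $e(r(e)-e^\ast e)=0$ intervenes, are precisely the ones that must be excluded when pinning down which objects in this branch actually yield domains.
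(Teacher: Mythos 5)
Your two substantive prongs coincide with the paper's: for the converse you collapse an $\mathcal{A}1$ (or, after applying $\ast$, an $\mathcal{A}2$) relation at a pair $X_1\neq X_2$ in $S$ sharing a unique column $Y_0$ to the single term $(X_1Y_0)(Y_0X_2)^\ast=0$, which is exactly the paper's Case 1 and Case 2 of the converse, and for the $(LV2)$ branch of the forward direction you invoke the length function as a local valuation via Proposition \ref{prop: local valuation existence} and conclude from multiplicativity, which is exactly the paper's Case 1. Two of your additions go beyond the paper's text and are sound: you also treat the diagonal failure of clause (a) (a singleton $X=\{e\}\in S$ while $|S|>1$ or $|T|>1$), producing the zero divisor $e(r(e)-e^\ast e)=0$, a configuration the paper's converse does not address, and you state explicitly that an algebra with more than one vertex can never be a domain, which the paper leaves implicit.

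The gap is in the remaining branch of the forward direction, $|S|\le 1$, $|T|\le 1$, where the valuation may be trivial. There you only say you ``would identify'' the algebra, you restrict to what you call balanced single-vertex configurations, and you lean on the unproved assertion that a free product of domains (free algebras, Laurent rings, Leavitt algebras $L_K(m,n)$ with $m>1$) is again a domain; none of this is carried out, so the ``if'' direction is unfinished precisely on this branch. The paper disposes of it in one line (the surviving relation $ee^\ast=s(e)$ or $e^\ast e=r(e)$ is not of the form $ab=0$), but your hesitation is well grounded: your own converse computation shows that a lone one-sided relation $ee^\ast=v$ already yields $e(v-e^\ast e)=0$ with $v-e^\ast e\neq 0$ in the normal-form basis, so the case $S=\{\{e\}\}$, $T=\emptyset$, which the domain condition as defined permits, is not a domain. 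Hence this branch genuinely requires either a complete case analysis (pinning down exactly which single-vertex objects with $|S|,|T|\le 1$ are domains, and justifying the free-product step, say through the normal-form basis together with Proposition \ref{free-product_and_tame}) or a sharpening of the condition itself; as written, your proposal flags the difficulty but does not resolve it, while agreeing with the paper's argument everywhere else.
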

 \begin{proof}
  If $\Dot{E}$ satisfies domain condition, then we consider the two following cases:\\
 
    \noindent $\mathbf{Case~1:}$ Assume that $X \in S\Rightarrow|X|>1$ and $Y\in T\Rightarrow|Y|>1$. If both $S$ and $T$ are empty, then $\mathcal{A}_K(\Dot{E})$ is a free unital $K$-algebra and hence a domain (since $K$ is a field). Otherwise, by the proposition \ref{prop: local valuation existence}, there is a local valuation on $\mathcal{A}_K(\Dot{E})$. So if $ab \, = \, 0$ in $\mathcal{A}_K(\Dot{E})$, then $\nu(ab)=-\infty$, which implies $\nu(a)+\nu(b)=-\infty$. This means that $\nu(a)=-\infty$ or $\nu(b)=-\infty$. Hence $a=0$ or $b=0$. Therefore $\mathcal{A}_K(\Dot{E})$ is a domain.
    
    \noindent $\mathbf{Case~2:}$ The only remaining cases to be considered are when $S=\{X\}$ with $X=\{e\}$ or $T=\{Y\}$ with $Y=\{f\}$. In both these cases the relations imposed on $K(\widehat{E})$ are not of the form $ab=0$. 
  
For converse,  
  if $|E^0| > 1$, then obviously $\mathcal{A}_K(\Dot{E})$ is not a domain. Otherwise, we consider the following cases separately:\\
  
  
  \noindent $\mathbf{Case~1:}$ Assume that there are two distinct elements $X,X^{\prime} \in S$ which have only one common $Y \in D$ such that $XY \neq 0$, $X^{\prime}Y \neq 0$. Then $(XY)(X^{\prime}Y)^\ast  \, = \, \delta_{XX^{\prime}}s(X) \, = \, 0$. So we are done.\\
  \noindent $\mathbf{Case~2:}$ Assume that there are two distinct elements $Y,Y^{\prime} \in T$ which have only one common $X \in C$ such that $XY \neq 0$, $XY^{\prime} \neq 0$. Then $(XY)^\ast (XY^{\prime}) \, = \, \delta_{YY^{\prime}}r(Y) \, = \, 0$.\\
  
  This completes the proof.
 \end{proof}

\subsection{The Gelfand-Kirillov dimension}\label{section: GK dimension}\hfill\\

We first recall some basic facts on the growth of algebras from \cite{MR1721834}. Suppose $B$ is a finitely generated $K$-algebra. Choose a finite generating set of $B$ and let $V$ be the $K$-subspace of $B$ spanned by this chosen generating set. For each natural number $n$, let $V^n$ denote the $K$-subspace of $B$ spanned by all words in $V$ of length less than or equal to $n$. In particular, $V^1\,=\,V$. Then we have an ascending chain
\begin{center}
  $ K \subseteq V^1 \subseteq V^2 \subseteq \ldots \subseteq V^n \subseteq \ldots$
\end{center}
of finite dimensional $K$-subspaces of $B$ such that $B\,=\, \bigcup_{n \in \mathbb{N}_0} V^n$, where, by convention, $V^0\,=\,K$. Clearly, the sequence $\{\text{dim}_K(V^n)\}$ is a montonically increasing sequence and the asymptotic behaviour (see the definition \ref{Asymptotic_Equivalence}) of this sequence provides an invariant of the algebra $B$, called the $\textit{Gelfand-Kirillov dimension}$ of $B$, which is defined to be
\begin{equation}\label{GK-Dimension}
   \text{GKdim B} = \overline{\text{lim}}\frac{\text{log}~  \text{dim}_{\text{K}}(V^n)}{\text{log n}}.
\end{equation}

\begin{definition}\label{Asymptotic_Equivalence}
 Given two eventually monotonically increasing functions $\phi,\psi: \mathbb{N} \rightarrow \mathbb{R}^+$, we say $\phi \preceq \psi$ if there are natural numbers $a$ and $b$ such that $\phi(n) \leq a\psi(bn)$, for almost all $n \in \mathbb{N}$. We say \textit{$\phi$ is asymptotically equivalent to $\psi$}, if both $\phi \preceq \psi$ and $\psi \preceq \phi$. If $\phi$ and $\psi$ are asymptotically equivalent, we write $\phi \sim \psi$.
\end{definition}

Coming back to GK dimension of algebras, if a $K$-algebra $B$ has two distinct finite generating sets, and if $V$ and $W$ are the finite dimensional subspaces of $B$ spanned by these sets, then setting $\phi(n) = \text{dim}_{\text{K}}(V^n)$ and $\psi(n) \,=\, \text{dim}_{\text{K}}(W^n)$, one can show that $\phi \sim \psi$ \cite[Lemma 1.1]{MR1721834}. In this notation, if $\phi \preceq n^m$ for some $m \in \mathbb{N}$, then $B$ is said to have $\textit{polynomial growth}$ and in this case GKdim($B$)$\leq m$. If on the other hand, $\phi \sim a^n$ for some $a \in \mathbb{R}$ such that $a > 1$, then $B$ is said to have $\textit{exponential growth}$ and in this case GKdim($B$)$=\infty$.
\begin{definition}\label{def: Cond A prime}
A bi-separated graph $\dot{E}$ is said to satisfy \textit{Condition $(A^\prime)$} if 
\begin{enumerate}
 \item[$(A^\prime1)$:] $S=T=\emptyset$ implies either $|E^1|>0$ or $|E^0|=\infty$.
    \item[$(A^\prime2)$:]$S \neq \emptyset$ or $T \neq \emptyset$ implies at least one of the following  holds:
     \begin{enumerate}
         \item $\exists X_1,X_2\in S,~X_1\neq X_2$, $s(X_1)=s(X_2)$ and $Y\in D$ such that for $i\in\{1,2\},$ $X_i\cap Y\neq\emptyset$ and $(X_iY),~(X_iY)^\ast$ are not part of any forbidden word.
         \item $\exists Y_1,Y_2\in T,~Y_1\neq Y_2$, $r(Y_1)=r(Y_2)$ and $X\in C$ such that for $i\in\{1,2\},$ $Y_i\cap X\neq\emptyset$ and $(Y_iX),~(Y_iX)^\ast$ are not part of any forbidden word.
         \item $\exists X\in S,~Y\in D$ such that $X\cap Y\neq\emptyset$, $s(X)=r(Y)$ and $(XY)$,$(XY)^\ast$ are not part of any forbidden word.
         \item $\exists Y\in T,~X\in C$ such that $X\cap Y\neq\emptyset$, $s(X)=r(Y)$ and $(XY)$,$(XY)^\ast$ are not part of any forbidden word.
     \end{enumerate}
     \end{enumerate}
\end{definition}
\begin{proposition}
If $\dot{E}$ is a finite bi-separated graph and satisfies condition $A^\prime$ then $\mathcal{A}_K(\dot{E})$ has exponential growth.
\end{proposition}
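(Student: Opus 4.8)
The plan is to read off the growth of $\mathcal{A}_K(\dot E)$ from the normal-form basis of Theorem~\ref{Theorem: Normal forms}. Since $\dot E$ is finite, the set $\mathcal G = E^0\cup E^1\cup\overline{E^1}$ is a finite generating set; letting $V$ be its $K$-span, the subspace $V^n$ is spanned by the normal generalized paths of length $\le n$. As these paths form part of a $K$-basis, $\dim_K V^n$ is \emph{exactly} the number of normal generalized paths of length $\le n$. Hence it suffices to produce, for some fixed $L\in\mathbb N$, at least $2^m$ normal generalized paths of length $\le Lm$ for every $m$: this gives $\phi(n):=\dim_K V^n \succeq a^n$ with $a=2^{1/L}>1$ in the sense of Definition~\ref{Asymptotic_Equivalence}, i.e. exponential growth and $\mathrm{GKdim}\,\mathcal{A}_K(\dot E)=\infty$.

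The heart of the proof is to embed a free monoid on two generators into the set of normal paths. Concretely, I would establish the following claim: condition $A'$ produces a vertex $u\in E^0$ and two \emph{distinct} generalized paths $p_1,p_2\in\widehat E^\star$, of a common length $L$, each with source and range equal to $u$, such that every concatenation $p_{i_1}p_{i_2}\cdots p_{i_m}$ (with $i_j\in\{1,2\}$) is again a normal generalized path. Granting this, distinct binary sequences give distinct words in $\widehat E^\star$, hence $2^m$ distinct normal generalized paths of length $Lm$, which is exactly what is needed.

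I would build $p_1,p_2$ case by case from Definition~\ref{def: Cond A prime}. In case $(A'2)(a)$ one has distinct $X_1,X_2\in S$ with common source $v$ and a common $Y\in D$, yielding distinct parallel edges $e_i:=X_iY\colon v\to r(Y)$; set $p_i:=e_ie_i^\ast$, which are loops at $v$ of length $2$. Case $(A'2)(b)$ is dual: from $f_i:=Y_iX$ one sets $p_i:=f_i^\ast f_i$, loops at $r(Y_i)$. In cases $(A'2)(c)$ and $(A'2)(d)$ the hypothesis $s(X)=r(Y)$ makes $e:=XY$ a loop at $u:=s(X)$, and one takes $p_1:=e$, $p_2:=e^\ast$ (so $L=1$). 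For case $(A'1)$, where $S=T=\emptyset$, there are \emph{no} forbidden words and every generalized path is normal; finiteness and connectedness yield a vertex $u$ with $|s^{-1}(u)|+|r^{-1}(u)|\ge 2$ (the only connected finite exception being a single non-loop edge, which must be excluded), and choosing two distinct edges $a,b$ of $\widehat E$ issuing from $u$ and letting $p_1,p_2$ be the length-$2$ return loops $aa^\dagger,\,bb^\dagger$ (with $\dagger$ the reversal $e\leftrightarrow e^\ast$ in $\widehat E$) finishes this case. In every case $p_1\neq p_2$, all blocks are loops at the same vertex (so concatenations are composable), and the block lengths are fixed, so distinct sequences give distinct paths.

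The main obstacle, and the only place where condition $A'$ is used in an essential way, is verifying that $p_{i_1}\cdots p_{i_m}$ contains no forbidden subword of type I or II. By Definitions~\ref{forbidden_word} and~\ref{normal_path} it is enough to inspect the length-two subwords occurring inside each block and at the junctions between consecutive blocks. The clean observation I would isolate is that \emph{every} such length-two subword contains at least one of the designated letters $e_i,e_i^\ast$ (resp. $f_i,f_i^\ast$, resp. $XY,(XY)^\ast$): an internal subword $e_{i_j}e_{i_j}^\ast$ contains $e_{i_j}$, while a junction subword $e_{i_j}^\ast e_{i_{j+1}}$ contains $e_{i_j}^\ast$, and similarly in the other cases. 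Since the relevant clause of $A'$ asserts precisely that these designated letters are \emph{not part of any forbidden word}, and since every forbidden word has length exactly two, no length-two subword can be forbidden; in case $(A'1)$ there is nothing to check. After this normality verification the free-monoid count $\dim_K V^{Lm}\ge 2^m$ follows, and I expect the only genuinely delicate point to be the bookkeeping that the ``not part of any forbidden word'' hypotheses really cover all internal and junction patterns in each of the four sub-cases of $(A'2)$.
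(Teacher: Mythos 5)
Your overall strategy is sound, and since the paper states this proposition without supplying a proof, your argument is effectively the proof one would want: count normal generalized paths via Theorem~\ref{Theorem: Normal forms} (the lower bound $\dim_K V^{Lm}\ge 2^m$ only needs that the normal paths of length $\le Lm$ lie in $V^{Lm}$ and are linearly independent), and embed a free monoid on two loop-blocks based at a common vertex. In each sub-case of $(A^\prime 2)$ your construction works: the blocks are composable loops at a single vertex, $p_1\neq p_2$ because distinct members of a partition are disjoint (so $X_1\cap Y\neq X_2\cap Y$, etc.), and your ``designated letter'' observation is valid -- forbidden words of types I and II have length exactly two, normality is a condition on consecutive pairs of letters, and every consecutive pair in your concatenations consists entirely of the letters $(X_iY)^{\pm}$ (resp. $(Y_iX)^{\pm}$, $(XY)^{\pm}$), which clause $(A^\prime 2)$ declares to lie in no forbidden word. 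One could add the small remark that junction words such as $(YX_{i_j})^\ast(X_{i_{j+1}}Y)$ with $i_j\neq i_{j+1}$ could never be forbidden anyway, since a type II word requires the same middle row $X$, but your blanket argument already covers this.

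The genuine problem is the case $(A^\prime 1)$. Condition $(A^\prime 1)$ as written in Definition~\ref{def: Cond A prime} only demands $|E^1|>0$ when $S=T=\emptyset$ and the graph is finite, and this does \emph{not} exclude the graph consisting of a single non-loop edge $e\colon v\to w$ with trivial $(S,T)$. For that graph $\mathcal{A}_K(\dot E)=K(\widehat E)$, whose only generalized paths are the alternating words $ee^\ast e\cdots$ and $e^\ast ee^\ast\cdots$, exactly two of each length, so the growth is linear, not exponential; hence the proposition as literally stated fails there. Your parenthetical ``which must be excluded'' is therefore not a step you are entitled to take from the hypotheses -- it is a correction of the statement. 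As written, your proof establishes the proposition for all bi-separated graphs satisfying $(A^\prime 2)$, and for $(A^\prime 1)$ only under the additional (necessary) assumption that the connected finite graph with $E^1\neq\emptyset$ is not a single non-loop edge, equivalently that some vertex $u$ has $|s^{-1}(u)|+|r^{-1}(u)|\ge 2$. You should either state this strengthening of $(A^\prime 1)$ explicitly as a hypothesis, or flag the single-edge graph as a counterexample to the proposition in the form given; your verification that this is the only connected finite exception (via $2|E^1|\le |E^0|$ forcing $|E^0|\le 2$) is correct and worth keeping.
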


\begin{definition}\label{quasi-cycle}\cite[Definition 20,21]{preusser2019leavitt}
 Let $\dot{E}$ be a bi-separated graph. A \textit{quasi-cycle} is a normal generalized path $p$ in $\widehat{E}$ such that $p^2$ is normal and none of the sub-words of $p^2$ of length less than $|p|$ is normal. A quasi-cycle $p$ is called \textit{self-connected} if there is a normal path $o$ in $\widehat{E}$ such that $p$ is not a prefix of $o$ and $pop$ is normal. 
\end{definition}

\begin{theorem}
 Let $\dot{E}$ be a finite bi-separated graph. Then $\mathcal{A}_K(\dot{E})$ has exponential growth if and only if there is a self-connected quasi-cycle. 
\end{theorem}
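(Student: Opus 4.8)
The plan is to convert the growth of $\mathcal{A}_K(\dot E)$ into the combinatorics of a finite auxiliary digraph and then invoke the standard dichotomy for walk‑counts. Since $\dot E$ is finite, $\mathcal{A}_K(\dot E)$ is a finitely generated unital $K$-algebra, and I would take $V$ to be the $K$-span of the generating set $E^0\sqcup E^1\sqcup\overline{E^1}$. First I would compute $\dim_K V^n$ using the normal form Theorem \ref{Theorem: Normal forms}. Because every reduction in the system $\Sigma$ preserves or decreases the length of a generalized path, any product of at most $n$ generators reduces to a $K$-linear combination of normal generalized paths of length at most $n$; conversely each such normal path is itself a word of that length in the generators. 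As the normal generalized paths form a $K$-basis, this yields $\dim_K V^n=\#\{\mu\in\widehat E^\star : \mu \text{ normal},\ l(\mu)\le n\}$. Hence $\mathcal{A}_K(\dot E)$ has exponential growth precisely when the number of normal generalized paths of length $\le n$ grows exponentially in $n$.

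The key structural observation is that every forbidden word (Definition \ref{forbidden_word}) has length exactly $2$: a type I word $(XY)(YX^\prime)^\ast$ is of the form $ef^\ast$ and a type II word of the form $g^\ast h$ for single edges $e,f,g,h$. Consequently normality is a purely local condition on consecutive letters, which I would encode by a finite digraph $\mathcal N$ whose vertex set is $\widehat E^1=E^1\sqcup\overline{E^1}$, with an arrow $z\to z^\prime$ exactly when $zz^\prime$ is a composable, non-forbidden generalized path. Normal generalized paths of length $m\ge 1$ then correspond bijectively to directed walks through $m$ vertices of $\mathcal N$, so the counting function above agrees, up to the additive constant $|E^0|$, with the number of walks of length $\le n-1$ in $\mathcal N$. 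Thus exponential growth of $\mathcal{A}_K(\dot E)$ is equivalent to exponential growth of the walk-counting function of the finite digraph $\mathcal N$.

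I would then apply the classical dichotomy for finite digraphs: the number of walks of length $n$ grows exponentially if and only if $\mathcal N$ carries two closed walks based at a common vertex, one of which is primitive (not a power of a shorter closed walk) and the other not a power of it; otherwise the growth is polynomial. It remains to identify this configuration with the existence of a self-connected quasi-cycle (Definition \ref{quasi-cycle}). For ($\Leftarrow$), given a self-connected quasi-cycle $p$ with (nontrivial) connector $o$, the normality of $p^2$ makes $p$ a primitive closed walk at its initial letter $z_1$, the normality of $pop$ makes $C:=po$ a second closed walk at $z_1$, and the hypothesis that $p$ is not a prefix of $o$ forces $C\neq p^{\,j}$ for all $j$; this is exactly the exponential configuration. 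For ($\Rightarrow$), starting from two essentially distinct closed walks at a common vertex I would extract a primitive closed walk $p$ (a quasi-cycle) and, by routing through the second walk, construct a connecting normal path $o$ with $pop$ normal and $p$ not a prefix of $o$, exhibiting $p$ as self-connected.

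The bulk of the work, and the main obstacle, is this last translation in the ($\Rightarrow$) direction: I must pass from the abstract ``two distinct cycles through a common vertex'' to a genuine quasi-cycle in the sense of Definition \ref{quasi-cycle} and verify the precise self-connectedness requirements (namely $pop$ normal together with $p$ not a prefix of $o$, so that $po$ is a cycle strictly distinct from every power of $p$). This requires choosing base letters and orientations consistently, replacing arbitrary closed walks by their primitive roots, and a short case analysis by which the second cycle is turned into an admissible connector $o$. The remaining ingredients — the dimension count, the length‑$2$ observation for forbidden words, and the digraph dichotomy — are routine.
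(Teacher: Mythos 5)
Your reduction to path-counting is sound: by Theorem \ref{Theorem: Normal forms} and the fact that reductions never increase length, $\dim_K V^n$ is indeed the number of normal generalized paths of length at most $n$, and since every forbidden word has length two, normality is a local condition, so these paths are exactly the walks in your transition digraph $\mathcal N$ on $\widehat{E}^1$. This is a genuinely different route from the paper's: the paper gives no argument of its own here but points to Preusser's treatment, which works directly with normal paths (an explicit free-monoid lower bound from a self-connected quasi-cycle, and a decomposition of normal paths into boundedly many quasi-cycle powers and connectors for the polynomial upper bound), whereas you outsource the dichotomy to the classical walk-growth theorem for finite digraphs. Your backward direction is essentially complete: quasi-cycle minimality gives primitivity of $p$, the prefix condition rules out $po$ being a power of $p$, and two closed walks at a common base letter that are not powers of a common word generate a free monoid of normal paths, hence exponential growth (your parenthetical ``nontrivial connector'' is needed, since a trivial $o$ would make every quasi-cycle vacuously self-connected).

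The genuine gap is the forward direction, which you explicitly defer and for which the one mechanism you do propose would fail. Extracting ``the primitive root'' of a closed walk does not produce a quasi-cycle: Definition \ref{quasi-cycle} demands that no subword of $p^2$ of length less than $|p|$ be a (closed, returnable) cycle, which is strictly stronger than $p$ not being a proper power — e.g.\ $p=ab$ with $aa$ an allowed transition is primitive but not a quasi-cycle. What is actually needed, and what is missing, is the construction: locate a strongly connected component of $\mathcal N$ that is not a single simple cycle, take $p$ to be a \emph{shortest} closed walk with letters in that component (shortestness is what forces the quasi-cycle condition, since a shorter cycle occurring in $p^2$ would lie in the same component), use a vertex of that component with two out-edges to produce a closed walk $W$ at the base letter of $p$ that uses an edge not on $p$, observe that $pWp$ is normal because the junction transitions are the return transitions of $p$ and $W$, and then strip leading copies of $p$ from $W$ (each stripped word is again a valid connector, and the process cannot end in the empty word since $W$ is not a power of $p$) to obtain $o$ with $p$ not a prefix of $o$. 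Without this the equivalence between ``two essentially distinct cycles through a common vertex'' and ``self-connected quasi-cycle'' — which is the actual content of the theorem once the digraph dictionary is set up — is asserted rather than proved.
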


\begin{remark}
Let $\dot{E}$ be a \textit{tame} bi-separated graph and suppose that $\{\dot{E}_\lambda\mid\lambda\in\Lambda\}$ is a directed system of all finite sub bi-separated graphs of $\dot{E}$. By results of \cite[Section 3]{MR3795390} we have GKdim($\mathcal{A}_K(\dot{E})=\sup\limits_{\lambda}\text{GKdim}(\mathcal{A}_K(\dot{E}_\lambda))$.
\end{remark}

\subsection{Additional applications of Linear bases}\hfill\\
In this subsection we fix the following notations. Let $(R,I)$ be a $K$-algebra with enough idempotents. An element $a\in R$ is called \textit{homogeneous} if $a\in vRw$ for some $v,w\in I$. Let $B$ denote a $K$-basis for $R$ which consists of homogeneous elements and contains $I$. Let $l:B\rightarrow\mathbb{Z}^+$ be a map such that $l(b)=0\Leftrightarrow b\in I$. 

\begin{definition}
 An element $b\in B\cap vRw$ is called \textit{left adhesive} if $ab\in B$ for any $a\in B\cap Rv$ and \textit{right adhesive} if $bc\in B$ for any $c\in B\cap wR$. A \textit{left valuative basis element} is a left adhesive element $b\in B\cap eR$ such that $l(ab)=l(a)+l(b)$ for any $a\in B\cap Rv$. A \textit{right valuative basis element} is defined similarly. A \textit{valuative basis element} is an adhesive element $b\in B\cap vRw$ such that $l(abc)=l(a)+l(b)+l(c)$ for any $a\in B\cap Rv$ and $c\in B\cap wR$.
 \end{definition}
 
\begin{proposition}\cite[Proposition 53]{preusser2019leavitt}
 Suppose there exists a valuative basis element $b\in(B-I)\cap vRv$. Then $\dim_K(R)=\infty$, $R$ is not simple, neither left nor right Artinian and not von Newmann regular.
 \end{proposition}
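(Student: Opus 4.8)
The plan is to exploit the single structural fact that multiplying a basis element by $b$ (or by a power $b^n$) strictly \emph{increases} its length, and to combine this with the uniqueness of expansions in the basis $B$. First I would record the key computational lemma. Since $I$ consists of pairwise orthogonal idempotents and every element of $B$ is homogeneous, any element of the corner $vRv$ expands in $B$ using only basis elements lying in $vRv$. Then, for $a\in B\cap Rv$ and $c\in B\cap vR$, an induction on $n\geq 1$ using that $b$ is adhesive and valuative gives $ab^nc\in B$ together with
\[
l(ab^nc)=l(a)+n\,l(b)+l(c),
\]
the inductive step being to write $b^n=b\cdot b^{n-1}$, observe $b^{n-1}c\in B\cap vR$, and apply the valuative identity once. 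Specialising $a=c=v$ yields $b^n\in B$ and $l(b^n)=n\,l(b)$; because $b\notin I$ forces $l(b)\geq 1$, the lengths $n\,l(b)$ are pairwise distinct, so the $b^n$ are pairwise distinct basis elements, hence $K$-linearly independent, proving $\dim_K R=\infty$.

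For the failure of von Neumann regularity, suppose some $y$ satisfied $byb=b$. Replacing $y$ by $vyv$ (permissible since $bv=vb=b$) we may take $y\in vRv$, and expand $y=\sum_i\lambda_i u_i$ with $u_i\in B\cap vRv$. Two applications of the valuative identity show each $bu_ib\in B$ with $l(bu_ib)=2l(b)+l(u_i)\geq 2l(b)>l(b)$. Thus $byb=\sum_i\lambda_i\,bu_ib$ is supported on basis elements of length strictly greater than $l(b)$, which contradicts $byb=b$ by uniqueness of the expansion. Hence no such $y$ exists and $R$ is not von Neumann regular.

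For the two Artinian statements, I would consider the descending chains of right ideals $bR\supseteq b^2R\supseteq\cdots$ and of left ideals $Rb\supseteq Rb^2\supseteq\cdots$. If $b^nR=b^{n+1}R$, then $b^n=b^{n+1}r$ for some $r$; after replacing $r$ by $vrv$ and expanding in $B$, the lemma shows the right-hand side is supported on basis elements of length $\geq(n+1)l(b)>n\,l(b)=l(b^n)$, again contradicting uniqueness. So the chain is strictly descending and $R$ is not right Artinian; the symmetric argument with $Rb^n$ shows it is not left Artinian. For non-simplicity, note that $b=vbv\neq 0$, so (using the local units) the two-sided ideal it generates is $RbR$. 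If $R$ were simple this ideal would be all of $R$, giving $v\in RbR$; writing $v=\sum_j p_jbq_j$ and absorbing copies of $v$ to arrange $p_j,q_j\in vRv$, the lemma expresses $v$ as a combination of basis elements $abc$ each of length $\geq l(b)\geq 1$, whereas $l(v)=0$ — a final contradiction with uniqueness.

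The main obstacle is the bookkeeping in the lemma: one must check that each relevant product stays inside the basis $B$ and that its length is genuinely additive, which forces a careful joint use of the left- and right-adhesive conditions and the valuative identity, plus the reduction to the corner $vRv$ via orthogonality of the idempotents in $I$. Once this additivity is secured, all four conclusions drop out uniformly as consequences of the uniqueness of basis expansions.
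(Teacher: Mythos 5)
Your proof is correct, and since the paper gives no argument of its own here (it simply cites \cite[Proposition 53]{preusser2019leavitt}), the relevant comparison is with that reference, whose proof proceeds by essentially the same route you take: show $b^{n}$ (and more generally $ab^{n}c$) lies in $B$ with length $l(a)+nl(b)+l(c)$, then derive all four conclusions from uniqueness of basis expansions by comparing lengths. One small reading you rely on — that the valuative condition asserts both $abc\in B$ and additivity of $l$ — is indeed the intended interpretation of the definition (otherwise $l(abc)$ would be undefined), so there is no gap.
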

 
\begin{definition}\label{cond: A}
A bi-separated graph $\dot{E}=(E,(C,S),(D,T))$ is said to satisfy \textit{Condition $(A)$} if 
 \begin{enumerate}
     \item[$(A1)$:] $S=T=\emptyset$ implies $|E^0|=\infty$ or $|E^1|>0$
     \item[$(A2)$:] $S \neq \emptyset$ or $T \neq \emptyset$ implies at least one of the following holds:
     \begin{enumerate}
         \item $\exists~X\in S,~Y\in D$ such that $X\cap Y\neq\emptyset$, $(XY)(XY)^\ast$ and $(XY)^\ast(XY)$ are not forbidden words.
         \item $\exists~Y\in T,~X\in C$ such that $X\cap Y\neq\emptyset$, $(XY)(XY)^\ast$ and $(XY)^\ast(XY)$ are not forbidden words.
     \end{enumerate}
 \end{enumerate}
\end{definition}

Let $B$ denote the set of all normal generalized paths of $\mathcal{A}_K(\dot{E})$. Let $l:B\rightarrow\mathbb{Z}^+$ denote the map which maps a path to its length. If $\dot{E}$ satisfy Condition $(A2)$ then we can choose either $X\in S,~Y\in D$ or $Y\in T,~X\in C$ such that $X\cap Y\neq\emptyset$ and $(XY)(XY)^\ast$ is not forbidden. Set $b=(XY)(XY)^\ast$, then $b$ is a valuative basis element. Hence we have the following corollary.
 
 \begin{corollary}
 Let $\dot{E}$ be a bi-separated graph that satisfies Condition $(A)$. Then $\dim_K(\mathcal{A}_K(\dot{E}))=\infty$, $\mathcal{A}_K(\dot{E})$ is not simple, neither left nor right Artinian and not von Neumann regular.
 \end{corollary}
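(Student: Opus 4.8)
The plan is to reduce the whole statement to the existence of one valuative basis element and then quote \cite[Proposition 53]{preusser2019leavitt}, which already bundles all four assertions---$\dim_K(\mathcal{A}_K(\dot{E}))=\infty$, non-simplicity, failure of the descending chain condition on left and on right ideals, and failure of von Neumann regularity---into a single consequence of possessing a valuative basis element $b\in(B-I)\cap vRv$. Here $B$ is the basis of normal generalized paths provided by Theorem~\ref{Theorem: Normal forms}, $I=E^0$, and $l$ is the length function. So the entire proof comes down to exhibiting such a $b$ under Condition~(A) (Definition~\ref{cond: A}).

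First I would split along the two clauses of Condition~(A). When $S\neq\emptyset$ or $T\neq\emptyset$, clause $(A2)$ hands us either $X\in S,\,Y\in D$ or $Y\in T,\,X\in C$ with $X\cap Y\neq\emptyset$ for which neither $(XY)(XY)^\ast$ nor $(XY)^\ast(XY)$ is forbidden. Setting $e=XY$ and $v=s(e)$, I would take $b=ee^\ast$. Then $b\in vRv$, it has length $2$ so $b\in B-I$, and it is a normal path exactly because $(XY)(XY)^\ast$ is not forbidden. In the remaining clause $(A1)$, where $S=T=\emptyset$, the relations $\mathcal{A}1,\mathcal{A}2$ are vacuous, so $\mathcal{A}_K(\dot{E})=K(\widehat{E})$ and every generalized path is normal; if $|E^1|>0$ the same $b=ee^\ast$ works for any edge $e$, and the leftover possibility $|E^1|=0,\ |E^0|=\infty$ can be disposed of directly, since there $\mathcal{A}_K(\dot{E})$ is an infinite direct sum of copies of $K$, from which infinite dimension, non-simplicity and the failure of the descending chain condition are immediate.

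The technical core is then to confirm that $b=ee^\ast$ is genuinely \emph{valuative}: left- and right-adhesive, with $l(abc)=l(a)+l(b)+l(c)$ for all $a\in B\cap Rv$ and $c\in B\cap vR$. Equivalently, I must show that every concatenation $a\,e\,e^\ast\,c$ is already normal, so that no reduction can shorten it. The only places a forbidden subword could be born are the central pair $ee^\ast$ (ruled out by $(A2)$) and the two junctions, namely the last letter of $a$ abutting $e$, and $e^\ast$ abutting the first letter of $c$. The decisive tool here is the rigid alternation pattern of forbidden words recorded in Definitions~\ref{forbidden_word}--\ref{normal_path}: a type~I word is an edge followed by a starred edge, and a type~II word is a starred edge followed by an edge. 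Consequently junctions of shape (edge)$\cdot e$ or $e^\ast\cdot$(starred edge) cannot be forbidden at all, while the only dangerous shapes, (starred edge)$\cdot e$ and $e^\ast\cdot$(edge), are both of type~II and, crucially, force the column block of $e$ to lie in $T$; choosing $Y\notin T$ in $(A2)$ makes them impossible outright, and when $Y\in T$ the hypothesis that $(XY)^\ast(XY)$ is not forbidden (together with the fact that the row and column blocks of $e$ are rigidly determined by $e$) is what must be leveraged to exclude them.

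I expect this junction analysis to be the main obstacle, because it is the point at which the two ``not forbidden'' conditions of $(A2)$ have to be shown to do real work, and where one must guard against the boundary letters of an arbitrary basis element $a$ or $c$ conspiring with $e$ to manufacture a type~II forbidden word; it is precisely the shape rigidity of forbidden words that keeps this under control. Once $b$ is certified as a valuative basis element, all four conclusions drop out immediately from \cite[Proposition 53]{preusser2019leavitt}, finishing the proof.
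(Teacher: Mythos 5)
Your reduction is exactly the one the paper uses: the paper's entire justification is the paragraph preceding the corollary, which takes the witness $(X,Y)$ from Condition $(A2)$, sets $b=(XY)(XY)^\ast$, asserts (without proof) that $b$ is a valuative basis element, and quotes \cite[Proposition 53]{preusser2019leavitt}. So your strategy matches, and your identification of the junction analysis as the technical core is accurate. The gap is that you do not carry that analysis out, and the mechanism you propose for the hard subcase is insufficient. When the column block $Y$ of $e=XY$ lies in $T$ (forced in clause (b) of $(A2)$), a forbidden word at a junction need not be the diagonal word $(XY)^\ast(XY)$: it can be the chosen type II word for a pair $(Y',Y)$ or $(Y,Y')$ with $Y'\in T$, $Y'\neq Y$, $Y'\cap X\neq\emptyset$, i.e. $(Y'X)^\ast(XY)$ or $(YX)^\ast(XY')$, and Condition $(A2)$ says nothing about such cross pairs. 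Taking $a=(XY')^\ast$, a normal path of length one ending at $s(X)$, the product $ab=(XY')^\ast(XY)(XY)^\ast$ then contains a forbidden subword, so $ab\notin B$ and $b$ is not left adhesive, hence not valuative in the sense required by Proposition 53. Concretely: one vertex, edges $e,f,g,h$, rows $X=\{e,f\}$, $X_0=\{g,h\}$, columns $Y=\{e,g\}$, $Y'=\{f,h\}$, $S=\emptyset$, $T=\{Y,Y'\}$, with forbidden words chosen as $g^\ast g$, $h^\ast h$, $g^\ast h$, $f^\ast e$. Here $(X,Y)$ witnesses $(A2)$(b), yet $f^\ast\cdot ee^\ast$ reduces to $-h^\ast g e^\ast$, so $ee^\ast$ is not adhesive, and the only other witness $f$ fails right adhesiveness for the same reason. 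So your claim that every concatenation $a\,e\,e^\ast\,c$ is normal is false as stated; closing the case needs an extra idea (for instance re-making the choices in the definition of forbidden words, which is legitimate since the algebra does not depend on them, or selecting a better witness), and neither your sketch nor, to be fair, the paper supplies it.

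A secondary point: in the leftover $(A1)$ case with $E^1=\emptyset$ and $E^0$ infinite you say the case ``can be disposed of directly'' but you list only three of the four conclusions. That omission is not innocent: there $\mathcal{A}_K(\dot{E})\cong\bigoplus_{v\in E^0}Kv$ is von Neumann regular, so the fourth conclusion cannot be obtained by any argument, and in particular not by exhibiting a valuative basis element of positive length (none exists). Either that degenerate case must be excluded or the statement weakened; your proposal quietly skips the discrepancy rather than resolving it.
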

 
 \begin{definition}
Let $b\in B\cap vRw$ and $b^\prime\in B\cap v^\prime Rw^\prime$. We say that $b$ and $b^\prime$ have \textit{no common left multiple} if there are no $a\in B\cap Rv,~a^\prime\in B\cap Rv^\prime$ such that $ab=a^\prime b^\prime$. We say that $b$ and $b^\prime$ have \textit{no common right multiple} if there are no $c\in B\cap wR,~c^\prime\in B\cap w^\prime R$ such that $bc=b^\prime c^\prime$.

An element $b\in B\cap vRw$ is called \textit{right cancellative} if $ab=cb\Rightarrow a=c$ for any $a,c\in B\cap Rv$ and \textit{left cancellative} if $ba^\prime=bc^\prime\Rightarrow a^\prime=c^\prime$ for any $a^\prime,c^\prime\in B\cap wR$.
 \end{definition}
 
\begin{proposition}\cite[Proposition 56]{preusser2019leavitt}
If there are elements $b,b^\prime\in B\cap vRv$ such that $b$ is adhesive and right cancellative, $b^\prime$ is left adhesive and $b$ and $b^\prime$ have no common left multiple, then $R$ is not left Noetherian. 

If there are elements $c,c^\prime\in B\cap vRv$ such that $c$ is adhesive and left cancellative, $c^\prime$ is right adhesive and $c$ and $c^\prime$ have no common left multiple, then $R$ is not right Noetherian. 

\end{proposition}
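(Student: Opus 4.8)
The plan is to prove non-left-Noetherianity by producing an explicit strictly ascending chain of left ideals built from $b$ and $b'$; the second assertion will then follow by the left--right dual argument. Write $v$ for the idempotent with $b,b'\in B\cap vRv$. First I would record two preliminary facts. (i) Right multiplication $\rho_b\colon Rv\to R$, $x\mapsto xb$, is injective: expanding $x=\sum_i\alpha_i a_i$ over distinct $a_i\in B\cap Rv$, right cancellativity makes the $a_ib$ distinct basis elements, so $xb=0$ forces all $\alpha_i=0$. (ii) Each $w_k:=b'b^{k}$ (with $w_0=b'$) lies in $B\cap vRv$ and is left adhesive; this is a short induction, since $b$ is adhesive and $b'$ is left adhesive, so that $a\,w_k=(a\,b'b^{k-1})\,b\in B$ for every $a\in B\cap Rv$. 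In particular $c\,w_k\in B$ for all $c\in B\cap Rv$.

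Next I would set $L_n=\sum_{k=0}^{n}R\,b'b^{k}$, an ascending chain of left ideals, and reduce the theorem to the single claim that $b'b^{n+1}\notin L_n$ for every $n$, which I would prove by contradiction. Supposing $b'b^{n+1}=\sum_{k=0}^{n}r_k\,b'b^{k}$, I replace each $r_k$ by $r_kv$ (harmless, since $b'b^{k}=v\,b'b^{k}$) and expand $r_kv$ in the basis $B\cap Rv$. By fact (ii) this writes the right-hand side as a $K$-linear combination of honest basis elements $c\,b'b^{k}\in B$, while the left-hand side is the single basis element $b'b^{n+1}$. Linear independence of $B$ then yields an identity $b'b^{n+1}=c\,b'b^{k}$ in $B$ for some $c\in B\cap Rv$ and some $k\le n$. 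Cancelling the rightmost $b$ exactly $k$ times---legitimate by fact (i), since at every stage both sides are single basis elements of $Rv$---collapses this to $b'b^{n+1-k}=c\,b'$, that is $(b'b^{n-k})\,b=c\,b'$ with $n-k\ge 0$. Taking $a=b'b^{n-k}$ and $a'=c$ in $B\cap Rv$, this is precisely a common left multiple $ab=a'b'$ of $b$ and $b'$, contradicting the hypothesis. Hence $L_0\subsetneq L_1\subsetneq\cdots$ and $R$ is not left Noetherian.

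The step I expect to be the main obstacle is the passage to the basis identity $b'b^{n+1}=c\,b'b^{k}$: one must verify that after collecting terms every product that occurs is genuinely a member of $B$---which is exactly what the adhesiveness of all the $w_k$ buys---so that linear independence of $B$ can be applied, and one must keep careful track of the idempotent $v$ so that the repeated cancellation of $b$ never leaves $Rv$, where $\rho_b$ is injective. For the second statement I would run the mirror-image argument with the right ideals $R_n=\sum_{k=0}^{n}c^{k}c'\,R$, now using that $c$ is adhesive and left cancellative (so left multiplication by $c$ is injective) and that $c'$ is right adhesive; the same reduction and cancellation produce a common right multiple of $c$ and $c'$, contradicting the corresponding no-common-multiple hypothesis and showing $R$ is not right Noetherian.
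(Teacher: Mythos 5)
Your argument is correct, and it is essentially the standard proof of this statement (the paper itself gives no proof, deferring to \cite[Proposition 56]{preusser2019leavitt}): one builds the strictly ascending chain $L_0\subsetneq L_1\subsetneq\cdots$ with $L_n=\sum_{k=0}^{n}Rb^\prime b^{k}$, uses left adhesiveness of the elements $b^\prime b^{k}$ to reduce membership $b^\prime b^{n+1}\in L_n$ to a basis identity $b^\prime b^{n+1}=c\,b^\prime b^{k}$, and cancels $b$ (injectivity of $x\mapsto xb$ on $Rv$ from right cancellativity plus left adhesiveness) to manufacture a common left multiple $ab=a^\prime b^\prime$ with $a,a^\prime\in B\cap Rv$, contradicting the hypothesis. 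One remark: in the second assertion the printed hypothesis ``no common left multiple'' should read ``no common right multiple'' (as the paper's own later application with $b_1,b_3$ confirms), and your mirror-image argument correctly uses that corrected hypothesis.
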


We have the following corollary which gives a necessary condition for $\mathcal{A}_K(\dot{E})$ to be a left or right Noetherian in terms of $\dot{E}$.

\begin{corollary}
Let $\dot{E}$ be a bi-separated graph that satisfies Condition $(A^\prime)$. Then $\mathcal{A}_K(\dot{E})$ is neither left nor right Noetherian.
\end{corollary}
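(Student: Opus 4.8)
The plan is to derive the statement from the non-Noetherian criterion \cite[Proposition 56]{preusser2019leavitt}, so that the whole task reduces to exhibiting, inside the basis $B$ of normal generalized paths supplied by Theorem \ref{Theorem: Normal forms}, suitable homogeneous basis elements based at a single vertex $v$. The key elementary observation is that every forbidden word (Definition \ref{forbidden_word}) has length two, so a generalized path is normal precisely when none of its length-two subwords is forbidden; consequently, if a letter $x$ (an edge or a starred edge) is not part of any forbidden word, then neither $yx$ nor $xy$ can be forbidden, for any letter $y$. This drives every verification below: prepending or appending such an $x$ to a normal path can never create a forbidden subword, so the product is again a single basis element realized as an honest concatenation (no reduction occurs), and this simultaneously yields adhesiveness and the cancellativity needed by the criterion.

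I would organize the argument along the alternatives of $(A'2)$. In cases $(a)$ and $(b)$ the hypotheses produce two \emph{distinct parallel} edges $e_1,e_2$ with common source $v$ and common range: in $(a)$, $e_i = X_i\cap Y$ with $v=s(X_1)=s(X_2)$, and in $(b)$, $e_i = Y_i\cap X$ with $v=s(X)$; in both, the clause ``$(X_iY),(X_iY)^\ast$ (resp.\ $(Y_iX),(Y_iX)^\ast$) are not part of any forbidden word'' guarantees that $e_i$ and $e_i^\ast$ lie in no forbidden word. I then set $b=e_1e_1^\ast$ and $b'=e_1e_2^\ast$, both loops in $B\cap v\mathcal{A}_K(\dot E)v$. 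By the observation above $b$ is adhesive (left and right) and right cancellative and $b'$ is left adhesive; moreover $ab=a e_1 e_1^\ast$ and $a'b'=a' e_1 e_2^\ast$ are concatenations with distinct final letters $e_1^\ast\neq e_2^\ast$, so $b$ and $b'$ have no common left multiple, giving non-left-Noetherianity. The dual choice $c=e_1e_1^\ast$, $c'=e_2e_1^\ast$, with distinct initial letters $e_1\neq e_2$, gives non-right-Noetherianity. In cases $(c)$ and $(d)$ the hypotheses instead produce a genuine \emph{loop} edge $e=X\cap Y$ at $v=s(X)=r(Y)$ with $e,e^\ast$ in no forbidden word; here $e$ and $e^\ast$ are themselves loops at $v$, and the pair $(b,b')=(e,e^\ast)$ (respectively $(c,c')=(e,e^\ast)$) works, the two having distinct final (respectively initial) letters.

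Finally, in the alternative $(A'1)$ we have $S=T=\emptyset$, hence no forbidden words and $B=\widehat E^\star$, and I would split into two sub-cases. If $|E^0|=\infty$, I argue directly rather than through the criterion: the left ideals generated by increasing finite sets of the orthogonal vertex idempotents form a strictly ascending chain, and symmetrically on the right, so $\mathcal{A}_K(\dot E)$ is neither left nor right Noetherian. If instead there is at least one edge, it produces in the double $\widehat E$ the loops $ee^\ast$ and $e^\ast e$, and the same adhesive/cancellative bookkeeping as above (now automatic, since no word is forbidden) supplies the required elements based at a common vertex. The step deserving the most care, and the main obstacle, is the verification of the ``no common left (right) multiple'' clause together with the cancellativity: both must be read off the normal form, and this is exactly where the forbidden-word hypotheses of $(A'2)$ are indispensable, as they are precisely what force the relevant products to be honest concatenations whose boundary letters can be compared.
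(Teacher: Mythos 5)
Your handling of the cases under $(A'2)$ is correct and is essentially the paper's own proof: the paper also reduces to \cite[Proposition 56]{preusser2019leavitt} and, for $(A'2)(a)$, takes exactly the elements $(X_1Y)(X_1Y)^\ast$, $(X_1Y)(X_2Y)^\ast$, $(X_2Y)(X_1Y)^\ast$, while for $(A'2)(c)$ it uses the pair $(XY),(XY)^\ast$; your observation that forbidden words have length two, so a letter lying in no forbidden word forces all the relevant products to be honest concatenations, is the bookkeeping the paper leaves implicit. Your direct ascending-chain argument for $S=T=\emptyset$ with $|E^0|=\infty$ is also fine (the paper does not spell that case out).

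The gap is in the remaining sub-case of $(A'1)$: $S=T=\emptyset$, $E^0$ finite, $|E^1|>0$. Here $\mathcal{A}_K(\dot{E})=K(\widehat{E})$, and a single edge does not ``supply the required elements based at a common vertex'': first, $ee^\ast$ and $e^\ast e$ are loops at $s(e)$ and $r(e)$, which are different vertices unless $e$ is a loop; more seriously, the no-common-left-multiple condition can simply fail. Concretely, if $E$ has two vertices and one edge $e$ with $s(e)\neq r(e)$, then $\widehat{E}$ is an oriented $2$-cycle, $B\cap v\mathcal{A}_K(\dot{E})v=\{(ee^\ast)^n\mid n\geq 0\}$ for $v=s(e)$, any two such elements have a common left (and right) multiple, and in fact $K(\widehat{E})$ is left and right Noetherian: it is a finitely generated module over the central polynomial subalgebra generated by $ee^\ast+e^\ast e$. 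So no choice of $b,b'$ can satisfy the criterion, and indeed the conclusion of the corollary fails for this $\dot{E}$, even though it satisfies Condition $(A')$ as written; your claimed argument for this sub-case is therefore false as stated, and it can only be repaired under a stronger hypothesis (e.g.\ $e$ a loop, two distinct edges with a common source or common range, or infinitely many vertices). To be fair, the paper's proof treats only $(A'2)(a)$ and $(A'2)(c)$ and dismisses the rest as ``simple cases,'' so it does not cover this situation either; but your proposal commits explicitly to a step that fails there, and you should flag the needed extra hypothesis rather than assert that the same bookkeeping goes through.
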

\begin{proof}
We prove the statement only for conditions $(A^\prime2)(a)$ and $(A^\prime2)(c)$ leaving the other simple cases to the reader.

Suppose there exist $X_1,X_2\in S$, $X_1\neq X_2$, $s(X_1)=s(X_2)=v$ and $Y\in D$ such that for $i\in\{1,2\}$, $X_i\cap Y\neq\emptyset$ and $(X_iY)$, $(X_iY)^\ast$ are part of forbidden words. Then set $b_1:=(X_1Y)(X_1Y)^\ast$, $b_2:=(X_1Y)(X_2Y)^\ast$ and $b_3:=(X_2Y)(X_1Y)^\ast$. Then $b_1,b_2,b_3\in(B-E^0)\cap v\mathcal{A}_K(\dot{E})v$. It is easy to check that $b_1$ is adhesive and both left and right cancellative, $b_2$ is left adhesive, $b_3$ is right adhesive, $b_1,~b_2$ have no common left multiple and $b_1,~b_3$ have no common right multiple. Thus $\mathcal{A}_K(\dot{E})$ is neither left nor right Noetherian.

Now suppose that there exist $X\in S$ and $Y\in D$ such that $X\cap Y\neq\emptyset$, $s(X)=r(Y)=v$ and $(XY),~(XY)^\ast$ are not part of any forbidden word. Then both $(XY),~(XY)$ are in $(B-E^0)\cap v\mathcal{A}_K(\dot{E})v$, they are adhesive, both left and right cancellative and have neither left nor right common multiple. Therefore $\mathcal{A}_K(\dot{E})$ is neither left nor right Noetherian.
\end{proof}

\begin{center}
	\textbf{PART II}
\end{center}
In this part we specialize our attention to hypergraphs and study their Cohn-Leavitt path algebras.
\section{B-hypergraphs and their $H$-monoids}\label{section: B-hypergraphs}
We would like to recast the definition of hypergraphs in terms of bi-separated graphs (with distinguished subsets) so that we can study their Cohn-Leavitt path algebras. We note that a hyperegde $h$, for which both $I_h$ and $J_h$ are infinite, does not contribute to relations in the Leavitt path algebra. We ignore such hyperedges and all other hyperedges which do not contribute to Cohn-Leavitt path algebra relations from the definition of hypergraph. Hence we modify the definition of hypergraphs as follows.
 
\begin{definition}
 A \textit{B-hypergraph} is a pair $(\Dot{E},\Lambda)$, where $\Dot{E}=(E,(C,S),(D,T))$ is a bi-separated graph and $\Lambda$ is a nonempty indexing set whose elements are called the \textit{B-hyperedges}, such that for each $\lambda\in\Lambda$ there exists $\mathcal{X}_\lambda\subseteq C$ and $\mathcal{Y}_\lambda\subseteq D$ which further satisfy the following conditions:
 \begin{enumerate}
    \item $X\notin S$ and $Y\notin T \implies X\cap Y=\emptyset$,
    \item for any $\alpha,\beta\in\Lambda$ with $\alpha\neq\beta$, $X\in \mathcal{X_\alpha}$ and $Y\in\mathcal{Y_\beta} \implies X\cap Y=\emptyset$,
    \item for any $\lambda\in\Lambda$, $X\in \mathcal{X}_\lambda$ and $Y\in\mathcal{Y}_\lambda \implies X\cap Y\neq\emptyset$,
   \item $\Lambda=\Lambda_T^S\sqcup\Lambda_{\text{fin}}^S\sqcup \Lambda_\infty^S\sqcup\Lambda_T^\text{fin}\sqcup\Lambda_T^\infty$, and 
    \begin{eqnarray}
    S &=&\bigsqcup\limits_{\lambda\in\Lambda^S}\mathcal{X}_\lambda,\quad\text{where}\quad\Lambda^S=\Lambda_T^S\sqcup\Lambda_\text{fin}^S\sqcup\Lambda_\infty^S, \nonumber \\ 
   T &=& \bigsqcup\limits_{\lambda\in\Lambda_T}\mathcal{Y}_\lambda,\quad\text{where}\quad\Lambda_T=\Lambda_T^S\sqcup\Lambda_T^\text{fin}\sqcup\Lambda_T^\infty, \nonumber \\
  C_\text{fin} - S &=&\bigsqcup\limits_{\lambda\in\Lambda_T^\text{fin}}\mathcal{X}_\lambda, \nonumber \\ D_\text{fin} 
  - T &=& \bigsqcup\limits_{\lambda\in\Lambda_\text{fin}^S}\mathcal{Y}_\lambda, \nonumber \\
   C - C_\text{fin} &=& \bigsqcup\limits_{\lambda\in\Lambda_T^\infty}\mathcal{X}_\lambda, \nonumber \\  
   D - D_\text{fin} &=& \bigsqcup\limits_{\lambda\in\Lambda_\infty^S}\mathcal{Y}_\lambda. \nonumber 
  \end{eqnarray} 
   
\end{enumerate}
 \end{definition}

\begin{remark}\label{rem: category of B-hypergraphs}
\begin{enumerate}
    \item It is easy to check that B-hypergraphs are tame and that B-hypergraphs, along with complete morphisms form a category. This category will be denoted by \textbf{BHG}.
    
    \item Note that given a B-hypergraph $(\dot{E},\Lambda)$ with $S=C_\text{fin}$ and $T=D_\text{fin}$ we can identify $(\dot{E},\Lambda)$ with a hypergraph $\mathcal{H}$ as follows: $\mathcal{H}^0=E^0$, $\mathcal{H}^1=\Lambda$, and for each $\lambda\in\Lambda$ $s(\lambda)=(s(X))_{X\in\mathcal{X}_\lambda}$ and $r(\lambda)=(r(Y))_{Y\in\mathcal{Y}_\lambda}$. 
    
\end{enumerate}

\end{remark}

\begin{notation}
For $\lambda\in\Lambda_T$, set 
\begin{eqnarray} 
 Q_\lambda &=& \{q_Z\mid Z\subseteq\mathcal{Y}_\lambda,~0<|Z|<\infty\}~ \text{and} \nonumber \\
 Q &=& \bigsqcup\limits_{\lambda\in\Lambda_T}Q_\lambda. \nonumber
 \end{eqnarray}
For $\lambda\in\Lambda^S$, set  
\begin{eqnarray}  
 P_\lambda &=& \{p_W\mid W\subseteq\mathcal{X}_\lambda,~0<|W|<\infty\}~ \text{and} \nonumber \\
 P &=& \bigsqcup\limits_{\lambda\in\Lambda^S}P_\lambda. \nonumber
\end{eqnarray} 
\end{notation}
 
\begin{definition}\label{definition: H-monoid}
Given a B-hypergraph $(\Dot{E},\Lambda)$, its $H$-monoid $H(\dot{E},\Lambda)$ is defined as the abelian monoid generated by 
$E^0\sqcup Q\sqcup P$ modulo the following relations:
\begin{enumerate}
    \item For $\lambda\in\Lambda_T$ and $q_Z\in Q_\lambda$, $$\sum\limits_{X\in\mathcal{X}_\lambda}s(X)=\sum\limits_{Y\in Z}r(Y)+q_Z,$$
    \item For $\lambda\in\Lambda^S$ and $p_W\in P_\lambda$, $$\sum\limits_{Y\in\mathcal{Y}_\lambda}r(Y)=\sum\limits_{X\in W}s(X)+p_W,$$ 
    
    \item For $\lambda\in\Lambda_T$ and $q_{Z_1}, q_{Z_2}\in Q_\lambda$ with $Z_1\subsetneq Z_2$ 
    $$q_{Z_1}=q_{Z_2}+\sum\limits_{Y\in Z_2-Z_1}r(Y),$$
    \item For $\lambda\in\Lambda^S$ and $p_{W_1}, p_{W_2}\in P_\lambda$ with $W_1\subsetneq W_2$ 
    $$p_{W_1}=p_{W_2}+\sum\limits_{X\in W_2-W_1}s(X),$$
    \item for $\lambda\in\Lambda_T^S$,
    $$q_{\mathcal{Y}_\lambda}=0=p_{\mathcal{X}_\lambda}.$$ 
\end{enumerate}
\end{definition}

If $(\Dot{E},\Lambda)$ is B-hypergraph then $H(\dot{E},\Lambda)$ is a conical monoid. This is easy to see from the relations defining $H(\dot{E},\Lambda)$ because these relations ensure that $(x+y) \neq 0$ whenever $x \neq 0$ or $y \neq 0$, for $x,y \in H(\dot{E},\Lambda)$.

\begin{definition}\label{defn: V-monoid}
Let $R$ be a ring, and let $M_\infty(R)$ denote the set of all $\omega\times\omega$ matrices over $R$ with finitely many nonzero entries, where $\omega$ varies over $\mathbb{N}$. For idempotents $e,f\in M_\infty(R)$, the \textit{Murray-von Neumann equivalence} $\sim$ is defined as follows: $e\sim f$ if and only if there exists $x,y\in M_\infty(R)$ such that $xy=e$ and $yx=f$. 

Let $\mathcal{V}(R)$ denote the set of all equivalence classes $[e]$, for idempotents $e\in M_\infty(R)$. Define $[e]+[f]=[e\oplus f]$, where $e\oplus f$ denotes the block diagonal matrix $\begin{pmatrix}
e & 0\\ 0 & f\end{pmatrix}$. Under this operation, $\mathcal{V}(R)$ is an abelian monoid, and it is conical, that is, $a+b=0$ in $\mathcal{V}(R)$ implies $a=b=0$. Also  $\mathcal{V}(\_):$\textbf{Rings}$\rightarrow$\textbf{Mon} is a continuous functor.

Let $R$ be a unital ring and let $\mathcal{U}(R)$ be the set of all isomorphic classes of finitely generated projective left $R$-modules, endowed with direct sum as binary operation. Then $(\mathcal{U}(R),\oplus)$ is an abelian monoid. We also have $\mathcal{U}(R)\cong\mathcal{V}(R)$.
\end{definition}

\begin{theorem}\label{thm: V-monoid_H-monoid_Isomorphism}
 There is an isomorphism $\Gamma :H\rightarrow \mathcal{V}\circ \mathcal{A}_K$ of funtors $\emph{\textbf{BHG}}\rightarrow\emph{\textbf{Mon}}$.
\end{theorem}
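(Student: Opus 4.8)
The plan is to establish the isomorphism objectwise for finite $\dot E$ and then propagate it to all B-hypergraphs by continuity. First I would check that both functors are continuous: $\mathcal{A}_K$ is continuous by Proposition \ref{continuous_functor_proposition}, $\mathcal{V}$ is continuous by Definition \ref{defn: V-monoid}, and $H$ is seen to preserve direct limits directly from Definition \ref{definition: H-monoid}, since every generator in $E^0 \sqcup Q \sqcup P$ and every defining relation is supported on a finite piece of the data $(C,S),(D,T)$. Because B-hypergraphs are tame (Remark \ref{rem: category of B-hypergraphs}), Theorem \ref{direct-limit_of_finite_complete_sub-objects} writes every $(\dot E,\Lambda)$ as a direct limit of its finite complete sub-B-hypergraphs. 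Hence it suffices to produce a natural isomorphism $\Gamma_{(\dot E,\Lambda)}$ for finite $\dot E$ and to verify compatibility with the connecting maps of the directed system; continuity then carries the isomorphism to the limit.

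For finite $\dot E$ the algebra $\mathcal{A}_K(\dot E)$ is unital, so I would compute $\mathcal{V}(\mathcal{A}_K(\dot E)) \cong \mathcal{U}(\mathcal{A}_K(\dot E))$ using finitely generated projective modules. The homomorphism $\Gamma$ is defined on generators: each vertex $v \in E^0$ maps to $[v]$, while $q_Z$ and $p_W$ map to classes of the complementary summands produced by the defining relations. Concretely, for $\lambda \in \Lambda_T$ and finite $Z \subseteq \mathcal{Y}_\lambda$, relation $\mathcal{A}2$ makes $\bigoplus_{Y \in Z} r(Y)\mathcal{A}_K(\dot E)$ isomorphic to a direct summand of $\bigoplus_{X \in \mathcal{X}_\lambda} s(X)\mathcal{A}_K(\dot E)$, and I would send $q_Z$ to the class of a chosen complement; dually, $\mathcal{A}1$ handles $p_W$ for $\lambda \in \Lambda^S$. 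Checking that these assignments respect relations (1)--(5) of Definition \ref{definition: H-monoid} --- in particular that the complement of $\bigoplus_{Y \in \mathcal{Y}_\lambda} r(Y)$ inside $\bigoplus_{X \in \mathcal{X}_\lambda} s(X)$ vanishes for $\lambda \in \Lambda_T^S$, giving $q_{\mathcal{Y}_\lambda} = 0 = p_{\mathcal{X}_\lambda}$ --- makes $\Gamma$ a well-defined monoid homomorphism.

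To see $\Gamma$ is an isomorphism I would realize $\mathcal{A}_K(\dot E)$, for finite $\dot E$, as the end of a finite chain of Bergman universal constructions \cite{MR0357503}, starting from the semisimple algebra $\bigoplus_{v \in E^0} K$, whose $\mathcal{V}$-monoid is the free abelian monoid on $E^0$. Adjoining the edges and their stars as universal module homomorphisms leaves $\mathcal{V}$ unchanged, whereas imposing the row-relation $\mathcal{A}1$ for each $X \in S$ and the column-relation $\mathcal{A}2$ for each $Y \in T$ is, in each case, a universal adjunction of a map (or isomorphism) between specified finitely generated projectives. Bergman's theorem then computes $\mathcal{V}$ of each stage as the pushout of the previous monoid along the corresponding relation, together with a fresh generator recording the complementary summand in the cases that are not fully regular. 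Carrying out this bookkeeping and comparing the cumulative presentation term-by-term with the generators $E^0 \sqcup Q \sqcup P$ and relations (1)--(5) of $H(\dot E,\Lambda)$ identifies the two monoids and shows $\Gamma$ is bijective; the five-fold partition of $\Lambda$ is precisely what records which adjunctions yield genuine equalities and which yield the complement generators $q_Z$, $p_W$.

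It remains to verify naturality: a complete morphism in \textbf{BHG} induces compatible maps on $E^0$, $Q$ and $P$ and on the associated projective modules, and one checks the naturality square commutes on generators, which suffices. I expect the main obstacle to lie in the Bergman bookkeeping of the previous paragraph. Unlike the separated-graph situation of \cite{MR2980456}, here the row-separation $C$ and the column-separation $D$ both impose relations on the same edge set, constrained by $|X \cap Y| \le 1$, so the delicate points are the order in which the universal constructions are performed and the proof that the two families of relations interact confluently --- matching the simultaneous production of $q_Z$ and $p_W$ and their collapse for $\lambda \in \Lambda_T^S$. The explicit normal-form basis of Theorem \ref{Theorem: Normal forms} should be the key tool for controlling the relevant idempotents by hand where Bergman's formalism is awkward.
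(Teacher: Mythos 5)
Your overall skeleton matches the paper's: reduce to finite objects by continuity and Theorem \ref{direct-limit_of_finite_complete_sub-objects}, define $\Gamma$ on generators by sending $q_Z$, $p_W$ to classes of complementary idempotents (the paper writes these explicitly as $[\mathrm{diag}(s(X))-BB^{*}]$ and $[\mathrm{diag}(r(Y))-N^{*}N]$), and compute $\mathcal{V}$ via Bergman's Theorems 5.1 and 5.2 of \cite{MR0357503}. The genuine gap is in the step you yourself flag and do not resolve: you propose to impose the $\mathcal{A}1$ relations and the $\mathcal{A}2$ relations as \emph{separate} universal constructions and then argue that the two families ``interact confluently.'' That step is not covered by Bergman's theorems as you invoke them. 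A single family is fine: for $\lambda\in\Lambda^S$ the $\mathcal{A}1$ relations say $M_\lambda M_\lambda^{*}=\mathrm{diag}(s(X))$ for the matrix $M_\lambda=((XY))$, which is a legitimate (two-step) adjunction of an idempotent plus a universal isomorphism. But if $\lambda\in\Lambda_T^S$ and you have already imposed $M_\lambda M_\lambda^{*}=\mathrm{diag}(s(X))$, then subsequently imposing $M_\lambda^{*}M_\lambda=\mathrm{diag}(r(Y))$ on the intermediate algebra is \emph{not} an adjunction of a universal map or isomorphism between finitely generated projectives: it is the quotient by the ideal generated by the idempotent $\mathrm{diag}(r(Y))-M_\lambda^{*}M_\lambda$, and Bergman's machinery does not compute $\mathcal{V}$ of such a quotient (in general one only gets a surjection; the ideal/order-ideal analysis needed to say more is developed only later in the paper and would be circular here). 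Appealing to the normal-form basis of Theorem \ref{Theorem: Normal forms} to ``control the idempotents by hand'' does not by itself produce the monoid computation, so as written the finite-case argument is incomplete exactly at its crux.

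The way the paper closes this gap is an organizational point you are missing: the relations must be bundled \emph{per hyperedge}, not per row/column family. By the B-hypergraph axioms, for $X\in\mathcal{X}_\alpha$ and $Y\in\mathcal{Y}_\beta$ with $\alpha\neq\beta$ one has $X\cap Y=\emptyset$, so every $\mathcal{A}1$ or $\mathcal{A}2$ relation is supported inside a single $\lambda$, and relations from distinct hyperedges involve disjoint edge sets. Hence one can induct on $|\Lambda|$ (delete all edges of one $\mathcal{X}_\lambda$ to get $\dot{F}$): if $\lambda\in\Lambda_T^S$, passing from $\mathcal{A}_K(\dot{F})$ to $\mathcal{A}_K(\dot{E})$ adjoins in one stroke a universal isomorphism between $\bigoplus_{X\in\mathcal{X}_\lambda}\mathcal{A}_K(\dot{F})s(X)$ and $\bigoplus_{Y\in\mathcal{Y}_\lambda}\mathcal{A}_K(\dot{F})r(Y)$, so Bergman gives the quotient relation $[\mathrm{diag}(s(X))]=[\mathrm{diag}(r(Y))]$ (matching $q_{\mathcal{Y}_\lambda}=0=p_{\mathcal{X}_\lambda}$); if $\lambda$ lies in one of the non-regular classes, the passage is a two-step adjunction (idempotent, then universal isomorphism onto the corner), producing exactly the new generator $q$ or $p$ and the corresponding relation. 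With this bundling there is no ordering or confluence issue at all, and the case split over the five pieces of $\Lambda$ is precisely the bookkeeping you anticipated. So your proposal needs to be repaired by replacing the ``rows first, then columns'' build-up with the per-hyperedge induction; with that change it becomes the paper's proof.
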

\begin{proof}
  We first define the map $\Gamma$ as follows: For each object $\dot{E}$ in \textbf{BHG}, $$\Gamma(\dot{E},\Lambda) : H(\dot{E},\Lambda) \rightarrow \mathcal{V}\circ \mathcal{A}_K(\dot{E},\Lambda)$$ is the monoid homomorphism sending $$v \mapsto [v],$$ $$q_Z \mapsto [\text{diag}(s(X))-BB^{*}]$$ and $$p_W \mapsto [\text{diag}(r(Y))-N^{*}N],$$ where $v \in E^0$, $Z$ is any non-empty finite subset of $\mathcal{Y}_{\lambda}$, diag$(s(X))$ is the diagonal matrix of order $|\mathcal{X}_{\lambda}|$ with diagonal entries coming from the set $s(\mathcal{X}_{\lambda})$ in any order (without repetition), diag$(r(Y))$ is the diagonal matrix of order $|\mathcal{Y}_{\lambda}|$ with diagonal entries coming from the set $r(\mathcal{Y}_{\lambda})$ in any order (without repetition), $B$ is the $|\mathcal{X}_{\lambda}| \times |Z|$ matrix whose columns are precisely the ones in $Z$ and whose $i^{th}$ row consists elements of $X$ if and only if the diagonal entry in the $i^{th}$ row of diag$(s(X))$ is $s(X)$, and $N$ is the $|W| \times |\mathcal{Y}_{\lambda}|$ matrix whose rows are precisely the ones in $W$ and whose $j^{th}$ column has elements from $Y$ if and only if the diagonal entry in the $j^{th}$ column of diag$(r(Y))$ is $r(Y)$.
  
  It is not hard to see that the above map is well defined. Also the fact that every element in \textbf{BHG} is a direct sum of its finite complete subobjects and the continuity of the functors involved will suggest that it is enough to prove the results for finite subobjects. For the finite case, we use induction on $|\Lambda|$. For $\Lambda = \emptyset$, the result is trivial. So, suppose that the result holds for all finite objects $(\dot{F},\Lambda_{\Dot{F}})$ in \textbf{BHG} with $|\Lambda_{\dot{F}}| \leq (n-1)$, for some $n \geq 1$. Let $(\dot{E},\Lambda_{\dot{E}})$ be a finite object with $|\Lambda_{\dot{E}}| = n$. Fix an element $\lambda \in \Lambda_{\dot{E}}$. We can now apply induction to the object $\dot{F}$ obtained from the $\dot{E}$ by deleting all the edges in $\mathcal{X}_{\lambda}$ and leaving the remaining structure as it is, keeping $F^0 = E^0$. 
  
  First suppose that $\lambda \in \Lambda_{T}^{S}$. Then $H(\dot{E},\Lambda_{\dot{E}})$ is obtained from $H(\dot{F},\Lambda_{\dot{F}})$ by going modulo the relation $\sum\limits_{X\in\mathcal{X}_\lambda}s(X) = \sum\limits_{Y\in\mathcal{Y}_\lambda}r(Y)$. Also, the algebra $\mathcal{A}_{K}(\dot{E},\Lambda_{\dot{E}})$ is the Bergman algebra obtained from $\mathcal{A}_{K}(\dot{F},\Lambda_{\dot{F}})$ by adjoining a universal isomorphism between the finitely generated projective modules $\bigoplus\limits_{X\in\mathcal{X}_\lambda}\mathcal{A}_{K}(\dot{F},\Lambda_{\dot{F}})s(X)$ and $\bigoplus\limits_{Y\in\mathcal{Y}_\lambda}\mathcal{A}_{K}(\dot{F},\Lambda_{\dot{F}})r(Y)$. So by \cite[Theorem 5.2]{MR0357503}, $\mathcal{V}(\mathcal{A}_{K}(\dot{E},\Lambda_{\dot{E}}))$ is the quotient of $\mathcal{V}(\mathcal{A}_{K}(\dot{F},\Lambda_{\dot{F}}))$ modulo the relation $$[\text{diag}(s(X))] = [\text{diag}(r(Y))].$$ Since the map $$\Gamma(\dot{F},\Lambda_{\dot{F}}) : H(\dot{F},\Lambda_{\dot{F}}) \rightarrow \mathcal{V}\circ \mathcal{A}_K(\dot{F},\Lambda_{\dot{F}})$$ is an isomorphism by induction hypothesis, the desired result follows.
  
  Now suppose $\lambda$ does not belong to $\Lambda_{T}^{S}$. Then it is either in $\Lambda_{T}^{\infty}$ or in $\Lambda_{\infty}^{S}$. Let us first assume that $\lambda \in \Lambda_{T}^{\infty}$. In this case, $H(\dot{E},\Lambda_{\dot{E}})$ is obtained from $H(\dot{F},\Lambda_{\dot{F}})$ by adjoining a new generator $q_{\mathcal{Y}_\lambda}$ and going modulo the relation $$\sum\limits_{X\in\mathcal{X}_\lambda}s(X) = \sum\limits_{Y\in\mathcal{Y}_\lambda}r(Y) + q_{\mathcal{Y}_\lambda}.$$ On the algebra side, $\mathcal{A}_{K}(\dot{E},\Lambda_{\dot{E}})$ is obtained from $\mathcal{A}_{K}(\dot{F},\Lambda_{\dot{F}})$ in two steps by 
  \begin{enumerate}
  \item first adjoining the mutually perpendicular idempotents diag$(s(X))-BB^{*}$ and  $q_{\mathcal{Y}_\lambda}^{\prime}$, and going modulo the relation $$[\text{diag}(s(X))] = [BB^{*}] + q_{\mathcal{Y}_\lambda}^{\prime},$$ thereby, getting a new algebra $R$ and then
  \item adjoining a universal isomorphism between the left module corresponding to $[BB^{*}]$ and the left module $\bigoplus\limits_{Y \in \mathcal{Y}_{\lambda}}Rr(Y)$.
  \end{enumerate}
  So, by \cite[Theorems 5.1, 5.2]{MR0357503}, $\mathcal{V}(\mathcal{A}_{K}(\dot{E},\Lambda_{\dot{E}}))$ is obtained from $\mathcal{V}(\mathcal{A}_{K}(\dot{F},\Lambda_{\dot{F}}))$ by adjoining a new generator $q_{\mathcal{Y}_\lambda}^{\prime\prime}$ and going modulo the relation $$[\text{diag}(s(X))] = [\text{diag}(r(Y))] + q_{\mathcal{Y}_\lambda}^{\prime\prime}.$$
  This, along with the induction hypothesis, proves the theorem for the considered case.
  
  Finally suppose $\lambda \in \Lambda_{\infty}^{S}$. Again $H(\dot{E},\Lambda_{\dot{E}})$ is obtained from $H(\dot{F},\Lambda_{\dot{F}})$ by adjoining a new generator $p_{\mathcal{X}_\lambda}$ and going modulo the relation $$\sum\limits_{Y\in\mathcal{Y}_\lambda}r(Y) = \sum\limits_{X\in\mathcal{X}_\lambda}s(X) + p_{\mathcal{X}_\lambda}.$$ On the other hand, analogous to the previous case, the algebra $\mathcal{A}_{K}(\dot{E},\Lambda_{\dot{E}})$ is obtained from $\mathcal{A}_{K}(\dot{F},\Lambda_{\dot{F}})$ in two steps by 
  \begin{enumerate}
  \item first adjoining the mutually perpendicular idempotents diag$(r(Y)) - N^{*}N$ and  $p_{\mathcal{X}_\lambda}^{\prime}$, and going modulo the relation $$[\text{diag}(r(Y))] = [N^{*}N] + p_{\mathcal{X}_\lambda}^{\prime},$$ thereby, getting a new algebra $R^{\prime}$ and then
  \item adjoining a universal isomorphism between the left module corresponding to $[N^{*}N]$ and the left module $\bigoplus\limits_{X \in \mathcal{X}_{\lambda}}R^{\prime}s(X)$.
  \end{enumerate}
  So, by \cite[Theorems 5.1, 5.2]{MR0357503}, $\mathcal{V}(\mathcal{A}_{K}(\dot{E},\Lambda_{\dot{E}}))$ is obtained from $\mathcal{V}(\mathcal{A}_{K}(\dot{F},\Lambda_{\dot{F}}))$ by adjoining a new generator $p_{\mathcal{X}_\lambda}^{\prime\prime}$ and going modulo the relation $$[\text{diag}(r(Y))] = [\text{diag}(s(X))] + p_{\mathcal{X}_\lambda}^{\prime\prime},$$
  thereby completing the proof (using induction hypothesis).
  
  \end{proof}
\begin{remark}
 We note that if $M$ is any conical abelian monoid then there exists a B-hypergraph $(\dot{E},\Lambda_{\dot{E}})$ such that $M\cong H(\dot{E},\Lambda)\cong \mathcal{V(A}_K(\dot{E},\Lambda_{\dot{E}}))$. For two different proofs of this fact, we refer the reader to \cite[Proposition 4.4]{MR2980456} or  \cite[Proposition 62]{preusser2019leavitt}. 
\end{remark}

\section{Ideal lattices and Simplicity}\label{section: ideal lattice}
In this section $(\Dot{E},\Lambda)$ always denotes a B-hypergraph. Throughout this section we use the following notation: For $\lambda\in\Lambda,$ $$s(\lambda):=\bigcup\limits_{X\in\mathcal{X}_\lambda}s(X)~\text{and}~r(\lambda):=\bigcup\limits_{Y\in\mathcal{Y}_\lambda}r(X).$$

\subsection{The lattice of admissible triples in $(\dot{E},\Lambda)$}
\begin{definition}
A subset $V$ of $E^0$ is called \textit{bisaturated} if for each $\lambda\in\Lambda_T^S$, $$s(\lambda)\subseteq V \iff r(\lambda)\subseteq V.$$
The set of all bisaturated subsets of $E^0$ is denoted by BS$(\Dot{E},\Lambda)$.

Note that empty set and $E^0$ are always elements of BS$(\Dot{E},\Lambda)$. It is easy to check that BS$(\Dot{E},\Lambda)$ is closed under arbitrary intersections.

If $V$ is a subset of $E^0$, the \textit{bisaturated closure of $V$}, denoted $\overline{V}$, is the smallest bisaturated subset of $E^0$ containing $V$. Since the intersection of bisaturated subsets of $E^0$ is again bisaturated, $\overline{V}$ is well defined. 
\end{definition}

For $V\subseteq E^0$, $\overline{V}$ can be explicitly constructed as follows:
Define $V_0 = V$. If $n$ is odd positive integer, define
$$V_n = V_{n-1}\cup\{r(Y)\mid Y\in\mathcal{Y}_\lambda, \lambda\in\Lambda_T^S,~\text{and}~s(\lambda)\subseteq V_{n-1}\},$$
and if $n$ is even positive integer, define
$$V_n = V_{n-1}\cup\{s(X)\mid X\in\mathcal{X}_\lambda, \lambda\in\Lambda_T^S,~\text{and}~r(\lambda)\subseteq V_{n-1}\}.$$
Then $\overline{V}=\bigcup_{n\geq 0}V_n$.

\begin{definition}
Let $V\subseteq E^0$ be bisaturated and for any $\lambda\in\Lambda$, set
$$\mathcal{X}_{\lambda/V}=\{X\in\mathcal{X}_\lambda\mid s(X)\notin V\}~\text{and}~\mathcal{Y}_{\lambda/V}=\{Y\in\mathcal{Y}_\lambda\mid r(Y)\notin V\}.$$
Then set
$$\Lambda/V=\Lambda_T^S/V\sqcup\Lambda_\text{fin}^S/V\sqcup\Lambda_\infty^S/V\sqcup\Lambda_T^\text{fin}/V\sqcup\Lambda_T^\infty/V,$$ where
\begin{eqnarray*}
\Lambda_T^S/V &:= & \{\lambda\in\Lambda_T^S\mid0<|\mathcal{X}_{\lambda/V}|\}=\{\lambda\in\Lambda_T^S\mid0<|\mathcal{Y}_{\lambda/V}|\},\\\Lambda_\text{fin}^S/V &:=& \{\lambda\in\Lambda_\text{fin}^S,\mid0<|\mathcal{X}_{\lambda/V}|\},\\
\Lambda_\infty^S/V &:=& \{\lambda\in\Lambda_\infty^S\mid0<|\mathcal{X}_{\lambda/V}|<\infty\},\\
\Lambda_T^\text{fin}/V &:=& \{\lambda\in\Lambda_T^\text{fin}\mid0<|\mathcal{Y}_{\lambda/V}|\},\\
\Lambda_T^\infty/V &:=& \{\lambda\in\Lambda_T^\infty\mid0<|\mathcal{Y}_{\lambda/V}|<\infty\}.
\end{eqnarray*}

Let $V\subseteq E^0$ be a bisaturated set, $\Sigma\subseteq \Lambda_\text{fin}^S/V\sqcup\Lambda_\infty^S/V$ and $\Theta\subseteq\Lambda_T^\text{fin}/V\sqcup\Lambda_T^\infty/V$.
A triple $(V,\Sigma,\Theta)$ is called an \textit{admissible triple} and the set of all admissible triples in $(\dot{E},\Lambda)$ is denoted by AT$(\Dot{E},\Lambda)$.

We define a relation $\leq$ in AT$(\dot{E},\Lambda)$ as follows:
$(V_1,\Sigma_1,\Theta_1)\leq(V_2,\Sigma_2,\Theta_2)$ if $$V_1\subseteq V_2,$$
$$\Sigma_1\subseteq \Sigma_2\sqcup\Lambda^S(V),~\text{where}~\Lambda^S(V)=\Lambda^S-\Lambda/V,$$
$$\Theta_1\subseteq \Theta_2\sqcup\Lambda_T(V),~\text{where}~\Lambda_T(V)=\Lambda_T-\Lambda/V.$$
\end{definition}
We note that $(E^0,\emptyset,\emptyset)$ is the maximum and $(\emptyset,\emptyset,\emptyset)$ is the minimum in AT$(\Dot{E},\Lambda)$.

\begin{definition}
Let $V$ be a bisaturated subset of $E^0$, $\Sigma\subseteq\Lambda^S(V)\sqcup\Lambda_\text{fin}^S/V\sqcup\Lambda_\infty^S/V$, and $\Theta\subseteq\Lambda_T(V)\sqcup\Lambda_T^\text{fin}/V\sqcup\Lambda_T^\infty/V$. The \textit{$(\Sigma,\Theta)$-bisaturation of $V$} is defined as the smallest bisaturated subset $\overline{V}(\Sigma,\Theta)$ of $E^0$ containing $H$ such that
\begin{enumerate}
    \item If $\lambda\in\Sigma$ and $s(\lambda)\subseteq \overline{V}(\Sigma,\Theta)$, then $r(\lambda)\subseteq\overline{V}(\Sigma,\Theta)$ and
    \item If $\lambda\in\Theta$ and $r(\lambda)\subseteq \overline{V}(\Sigma,\Theta)$, then $s(\lambda)\subseteq\overline{V}(\Sigma,\Theta)$.
\end{enumerate}
\end{definition}

We can construct $(\Sigma,\Theta)$-saturation of $V$ as follows- Define $\overline{V}_0(\Sigma,\Theta)=V$.
If $n$ is odd positive integer, define
$$\overline{V}_n(\Sigma,\Theta) = \overline{V}_{n-1}(\Sigma,\Theta)
 \cup \{r(Y)\in E^0-\overline{V}_{n-1}(\Sigma,\Theta)\mid Y\in\mathcal{Y}_{\lambda},\lambda\in\Lambda_\text{fin}^S\cup\Sigma~\text{and}~s(\lambda)\subseteq \overline{V}_{n-1}(\Sigma,\Theta)\},$$
and if $n$ is even positive integer, define
$$\overline{V}_n(\Sigma,\Theta)=\overline{V}_{n-1}(\Sigma,\Theta)\cup\{s(X)\in E^0-\overline{V}_{n-1}(\Sigma,\Theta)\mid X\in\mathcal{X}_{\lambda},\lambda\in\Lambda_T^\text{fin}\cup\Theta~\text{and}~r(\lambda)\subseteq \overline{V}_{n-1}(\Sigma,\Theta)\}.$$
Then $\overline{V}(\Sigma,\Theta)=\bigcup_{n\geq 0}\overline{V}_n(\Sigma,\Theta)$.

\begin{proposition}
 $(\emph{AT}(\Dot{E},\Lambda),\leq)$ is a lattice, with supremum $\vee$ and infimum $\wedge$ given by
 $$(V_1,\Sigma_1,\Theta_1)\vee(V_2,\Sigma_2,\Theta_2)=(\widetilde{V},\widetilde{\Sigma},\widetilde{\Theta}),$$ where
 $$\widetilde{V}=\overline{V_1\cup V_2}(\Sigma_1\cup\Sigma_2,\Theta_1\cup\Theta_2),$$
 $$\widetilde{\Sigma}=(\Sigma_1\cup \Sigma_2)-\Lambda^S(\widetilde{V}),$$ 
 $$\widetilde{\Theta}=(\Theta_1\cup \Theta_2)-\Lambda_T(\widetilde{V}),$$ and
 
 $$(V_1,\Sigma_1,\Theta_1)\wedge(V_2,\Sigma_2,\Theta_2)=(\widehat{V},\widehat{\Sigma},\widehat{\Theta}),$$ where
 $$\widehat{V}=(V_1\cap V_2),$$
 $$\widehat{\Sigma}=(\Sigma_1\cup\Lambda^S(V))\cap(\Sigma_2\cup\Lambda^S(V))\cap(\Lambda_\emph{fin}^S/V\sqcup\Lambda_\infty^S/V),$$
 $$\widehat{\Theta}=(\Theta_1\cup\Lambda_T(V))\cap(\Theta_2\cup\Lambda_T(V))\cap(\Lambda_T^\emph{fin}/V\sqcup\Lambda_T^\infty/V).$$ 
 \end{proposition}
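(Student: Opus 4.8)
The plan is to verify directly that the two displayed formulas produce, respectively, the least upper bound and the greatest lower bound of $(V_1,\Sigma_1,\Theta_1)$ and $(V_2,\Sigma_2,\Theta_2)$ for the order $\le$, so that together with the already-noted top element $(E^0,\emptyset,\emptyset)$ and bottom element $(\emptyset,\emptyset,\emptyset)$ this exhibits $\mathrm{AT}(\dot E,\Lambda)$ as a lattice. I would first record that $\le$ is genuinely a partial order (reflexive, antisymmetric, transitive), reading the $\Lambda^S(V)$ and $\Lambda_T(V)$ in the definition of $\le$ as referring to the \emph{larger} vertex set $V_2$, since that is the only reading under which $\le$ is transitive. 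For each of $\vee$ and $\wedge$ there are then three things to check: that the output is an admissible triple, that it is a bound, and that it is the extremal bound.

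For the join, write $(\widetilde V,\widetilde\Sigma,\widetilde\Theta)$ as in the statement. Well-definedness amounts to $\widetilde V$ being bisaturated — immediate, as a $(\Sigma,\Theta)$-bisaturation is bisaturated by construction — together with $\widetilde\Sigma\subseteq\Lambda_{\text{fin}}^S/\widetilde V\sqcup\Lambda_\infty^S/\widetilde V$ and $\widetilde\Theta\subseteq\Lambda_T^{\text{fin}}/\widetilde V\sqcup\Lambda_T^\infty/\widetilde V$; these hold precisely because passing from $\Sigma_1\cup\Sigma_2$ and $\Theta_1\cup\Theta_2$ to $\widetilde\Sigma,\widetilde\Theta$ deletes the dead edges $\Lambda^S(\widetilde V)$ and $\Lambda_T(\widetilde V)$. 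That $(\widetilde V,\widetilde\Sigma,\widetilde\Theta)$ is an upper bound is a short set computation: $V_i\subseteq V_1\cup V_2\subseteq\widetilde V$, and since $\widetilde\Sigma\sqcup\Lambda^S(\widetilde V)\supseteq\Sigma_1\cup\Sigma_2$ we get $\Sigma_i\subseteq\widetilde\Sigma\sqcup\Lambda^S(\widetilde V)$, and dually for $\Theta$.

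The substance is minimality. Given any common upper bound $(W,\Pi,\Xi)$, everything reduces to the vertex inclusion $\widetilde V\subseteq W$: once that is known, $\widetilde\Sigma\subseteq\Sigma_1\cup\Sigma_2\subseteq\Pi\sqcup\Lambda^S(W)$ follows from the hypotheses $\Sigma_i\subseteq\Pi\sqcup\Lambda^S(W)$, and likewise for $\widetilde\Theta$. To get $\widetilde V\subseteq W$ I would use minimality of $\overline{V_1\cup V_2}(\Sigma_1\cup\Sigma_2,\Theta_1\cup\Theta_2)$: it suffices to show $W$ contains $V_1\cup V_2$, is bisaturated, and is closed under the generation rules attached to $\Sigma_1\cup\Sigma_2$ and $\Theta_1\cup\Theta_2$. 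The first two are given. For the rules, take $\lambda\in\Sigma_1$ with $s(\lambda)\subseteq W$ and aim for $r(\lambda)\subseteq W$: from $\Sigma_1\subseteq\Pi\sqcup\Lambda^S(W)$ either $\lambda\in\Pi$ — impossible, since every edge of $\Pi$ survives over $W$ and hence has $s(\lambda)\not\subseteq W$ — or $\lambda$ is dead over $W$, in which case I would deduce $r(\lambda)\subseteq W$ from the generation rule that the $(\Sigma,\Theta)$-bisaturation attaches to the family of $\lambda$. Carrying this implication out uniformly over the five families $\Lambda_T^S,\Lambda_{\text{fin}}^S,\Lambda_\infty^S,\Lambda_T^{\text{fin}},\Lambda_T^\infty$ — matching each family's saturation direction (bidirectional for $\Lambda_T^S$, single-directional for the other four) against the correction terms $\Lambda^S(\cdot)$ and $\Lambda_T(\cdot)$ hard-wired into $\le$ — is the delicate book-keeping, and I expect it to be the main obstacle.

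For the meet the roles reverse and the hard and easy parts swap. Well-definedness of $\widehat V=V_1\cap V_2$ is immediate from the stated closure of $\mathrm{BS}(\dot E,\Lambda)$ under intersections, and the triple-intersection formulas for $\widehat\Sigma,\widehat\Theta$ are arranged exactly so as to land inside $\Lambda_{\text{fin}}^S/\widehat V\sqcup\Lambda_\infty^S/\widehat V$ and $\Lambda_T^{\text{fin}}/\widehat V\sqcup\Lambda_T^\infty/\widehat V$; using $\Lambda^S(\widehat V)\cap(\Lambda_{\text{fin}}^S/\widehat V\sqcup\Lambda_\infty^S/\widehat V)=\emptyset$ one even simplifies $\widehat\Sigma$ to $\Sigma_1\cap\Sigma_2\cap(\Lambda_{\text{fin}}^S/\widehat V\sqcup\Lambda_\infty^S/\widehat V)$, which makes the lower-bound inequalities $\widehat\Sigma\subseteq\Sigma_i\sqcup\Lambda^S(V_i)$ transparent. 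Crucially, the vertex part of ``greatest lower bound'' is now free: any lower bound $(U,\Phi,\Psi)$ has $U\subseteq V_1$ and $U\subseteq V_2$, hence $U\subseteq\widehat V$ with no saturation required, so the only remaining work is the set-theoretic check that $\Phi\subseteq\widehat\Sigma\sqcup\Lambda^S(\widehat V)$ and $\Psi\subseteq\widehat\Theta\sqcup\Lambda_T(\widehat V)$, which again reduces to comparing the defect edges over $\widehat V$ with those over the $V_i$. I would close by observing that the existence of all binary joins and meets, together with the top and bottom elements, gives the lattice structure.
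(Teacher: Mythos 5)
Your overall route coincides with the paper's: verify that the displayed triples are admissible and are bounds, then prove extremality, with the join's extremality reduced to showing $\widetilde{V}\subseteq W$ for an arbitrary upper bound $(W,\Pi,\Xi)$ (the paper does exactly this, by induction over the stages $\widetilde{V}_n$ of the saturation, which is the same device as your appeal to minimality of $\overline{V_1\cup V_2}(\Sigma_1\cup\Sigma_2,\Theta_1\cup\Theta_2)$), and the meet handled by a direct set-theoretic comparison of defect hyperedges over $\widehat{V}$ and over the $V_i$. The reduction "$\widetilde{V}\subseteq W$ implies the $\Sigma$- and $\Theta$-inclusions", the meet half, and your simplification $\widehat{\Sigma}=(\Sigma_1\cap\Sigma_2)\cap(\Lambda_\text{fin}^S/\widehat{V}\sqcup\Lambda_\infty^S/\widehat{V})$ are all fine and consistent with the paper.

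The gap is at the step you yourself flag as the main obstacle, and your sketched resolution does not close it. To apply minimality of the saturation you must show that $W$ itself is closed under the generation rules attached to $\Sigma_1\cup\Sigma_2$ and $\Theta_1\cup\Theta_2$, e.g.\ that $\lambda\in\Sigma_1$ with $s(\lambda)\subseteq W$ forces $r(\lambda)\subseteq W$. Your dichotomy correctly excludes $\lambda\in\Pi$, but the surviving branch gives nothing: for $\lambda\in\Lambda_\text{fin}^S$, membership in $\Lambda^S(W)$ means precisely $\mathcal{X}_{\lambda/W}=\emptyset$, i.e.\ $s(\lambda)\subseteq W$, which is the hypothesis you started from, and for $\lambda\in\Lambda_\infty^S$ it only says $\mathcal{X}_{\lambda/W}$ is empty or infinite; in neither case is $r(\lambda)$ mentioned. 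Moreover the "generation rule attached to the family of $\lambda$" is a closure condition imposed on the set $\overline{V_1\cup V_2}(\Sigma_1\cup\Sigma_2,\Theta_1\cup\Theta_2)$ being constructed, not a property of $W$, so it cannot be invoked to conclude $r(\lambda)\subseteq W$. What is needed at exactly this point --- and what the paper's induction uses when, in the case $v\in s(\lambda)$ with $\lambda\in\Theta_1\cup\Theta_2$ and $r(\lambda)\subseteq V$, it passes from $\lambda\in\Lambda_T(V)$ to $v\in V$ --- is the assertion that a hyperedge lying in $\Lambda^S(W)$ (resp.\ $\Lambda_T(W)$) has its range (resp.\ source) vertices inside $W$. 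This is not a formality: it must be extracted from, or added to, the intended meaning of $\Lambda^S(\cdot)$ and $\Lambda_T(\cdot)$ (it is automatic, for instance, for triples arising from order-ideals via $\psi$, where $q_\lambda\in I$ together with $r(\lambda)\subseteq W$ gives $\mathbf{s}(\lambda)\in I$ and hence $s(\lambda)\subseteq W$). Until you either prove this implication for an arbitrary admissible upper bound, carrying it through all five families, or state explicitly the reading of $\Lambda^S(W),\Lambda_T(W)$ under which it holds, the minimality of the join --- the substance of the proposition --- remains unproved.
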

 \begin{proof}
 Clearly, $(\widetilde{V},\widetilde{\Sigma},\widetilde{\Theta})\in$ AT$(\Dot{E},\Lambda)$ and is greater than $(V_i,\Sigma_i,\Theta_i)$ for $i=1,2$. Now let $(V,\Sigma,\Theta)\in$ AT$(\Dot{E},\Lambda)$ such that $(V_i,\Sigma_i,\Theta_i)\leq(V,\Sigma,\Theta)$ for $i=1,2$. It is enough to prove that $\widetilde{V}\subseteq V$ for all $n\in\mathbb{Z}^+$. We do this inductively. Define $\widetilde{V}_n=\overline{(V_1\cup V_2)}_n(\Sigma_1\cup\Sigma_2,\Theta_1\cup\Theta_1)$. For $n=0$ the claim is clear by assumption. Now assume that $n\geq 1$ and that $\widetilde{V}_{n-1}\subseteq V$. Let $v\in\widetilde{V}_n$. If $v\in s(\lambda)$ or $v\in r(\lambda)$ for $\lambda\in\Lambda_T^S$, then $v\in V$ because $V$ is bisaturated. Now suppose $v\in s(\lambda)$ for $\lambda\in\Theta_1\cup\Theta_2$. By definition and the induction hypothesis, we have 
 $r(\lambda)\subseteq \widetilde{V}_m\subseteq V$, where $m$ is largest even integer less than $n$. In particular, this implies that $\lambda\notin \Theta$. Since $\lambda\in\Theta_1\cup\Theta_2\subseteq \Lambda_T(V)\cup \Theta$ we conclude that $v\in H$, which completes the induction step. The inclusion $(\Theta_1\cup\Theta_2)-\Lambda_T(\widetilde{V})\subseteq\Theta$ follows. Similar arguments shows that if $v\in r(\lambda)$ for $\lambda\in\Sigma_1\cup\Sigma_2$, then $v\in V$ and $(\Theta_1\cup\Theta_2)-\Lambda_T(\widetilde{V})\subseteq\Theta$.
 
 It is clear that $(\widehat{V},\widehat{\Sigma},\widehat{T})\in$ AT$(\Dot{E},\Lambda)$ and $(\widehat{V},\widehat{\Sigma},\widehat{T})\leq(V_i,\Sigma_i,\Theta_i)$ for $i=1,2$. If $(V,\Sigma,\Theta)\in$ AT$(\Dot{E},\Lambda)$ such that $(V,\Sigma,\Theta)\leq(V_i,\Sigma_i,\Theta_i)$ for $i=1,2$, then clearly $V\subseteq\widehat{V}$. Consider $\lambda\in\Theta-\Lambda_T(\widehat{V})$. Then there exists $v\in s(\lambda)-\widehat{V}$, so $v\notin V_j$ for some $j\in\{1,2\}$, and $\lambda\notin\Lambda_T(V_j)$. Let us fix $j=1$.  Since $(V,\Sigma,\Theta)\leq(V_1,\Sigma_1,\Theta_1)$, it follows that $\lambda\in\Theta_1$. Hence, $\mathcal{Y}_{\lambda/V_1}$ is nonempty, hence $\mathcal{Y}_{\lambda/\widehat{V}}$ is nonempty. On the other hand, $\lambda\in\Theta$ implies that $\mathcal{Y}_{\lambda/V}$ is finite, hence $\mathcal{Y}_{\lambda/\widehat{V}}$ is finite. Thus $\lambda\in\Lambda_T^\text{fin}/\widehat{V}\sqcup\Lambda_T^\infty/\widehat{V}$. We also have $\lambda\in\Theta_i\sqcup\Lambda_T(V_i)$ for $i=1,2$, because $(V,\Sigma,\Theta)\leq(V_i,\Sigma_i,\Theta_i)$ for $i=1,2$, and consequently $\lambda\in\widehat{\Theta}$. This shows $\Theta\subseteq\widehat{\Theta}\sqcup\Lambda_T(V)$. Similarly we can show that $\Sigma\subseteq\widehat{\Sigma}\sqcup\Lambda^S(V)$ proving that $(V,\Sigma,\Theta)\leq(\widehat{V},\widehat{\Sigma},\widehat{\Theta})$. This shows that $(\widehat{V},\widehat{\Sigma},\widehat{\Theta})$ is the infimum required.
 
 Hence AT$(\Dot{E}\Lambda)$ is a lattice.
 \end{proof}
 
\subsection{The lattice of order-ideals in $H(\dot{E},\Lambda)$}

\begin{definition}An \textit{order-ideal} of a monoid $M$ is a submonoid $I$ of $M$ such that $x+y\in I$ for some $x,y\in M$ implies that both $x$ and $y$ belong to $I$. 
\end{definition}
Every monoid $M$ is equipped with a pre-order $\leq$ as follows: for $x,y\in M$, $x\leq y$ if and only if there exists $z\in M$ such that $x+z=y$. Hence an equivalent definition of an order-ideal $I$ is as follows: For each $x,y\in M$, if $x\leq y$ and $y\in I$ then $x\in I$.

Let $\mathcal{L}(M)$ denote the set of all order-ideals of $M$. We note that $\mathcal{L}(M)$ is closed under arbitrary intersections. For a submonoid $J$ of $M$, let $\langle J\rangle$ consists of those elements $x\in M$ such that $x\leq y$ for some $y\in J$. Then $\langle J \rangle$ denote the order-ideal generated by $J$. Then $\mathcal{L}(M)$ can be shown to a complete lattice with respect to inclusion. For, an arbitrary family $\{I_i\}$ of order-ideals of $M$, the supremum is given by $\langle \sum I_i \rangle$.

We want to study the lattice of order-ideals of $H(\dot{E},\Lambda)$. For convenience we modify some notations in the previous section as follows.
\begin{notation}
$$\text{For}~ \lambda\in\Lambda_T,\quad\textbf{s}(\lambda):=\sum\limits_{X\in\mathcal{X}_\lambda}s(X).$$
$$\text{For}~ \lambda\in\Lambda^S,\quad\textbf{r}(\lambda):=\sum\limits_{Y\in\mathcal{Y}_\lambda}r(Y).$$
Note that the above sums are finite.
$$\text{For}~\lambda\in\Lambda_T^\text{fin},\quad q_\lambda:=q_{\mathcal{Y}_\lambda}.$$ 
$$\text{For}~\lambda\in\Lambda_\text{fin}^S,\quad p_\lambda:=p_{\mathcal{X}_\lambda}.$$

Also, $$\text{for}~\lambda\in\Lambda_T^\infty,~\text{set}~\mathcal{Z}_\lambda = \{Z\mid Z\subseteq\mathcal{Y}_\lambda, 0<|Z|<\infty\},$$ 
$$\text{for}~\lambda\in\Lambda_\infty^S,~\text{set}~\mathcal{W}_\lambda = \{W\mid W\subseteq\mathcal{X}_\lambda, 0<|W|<\infty\}.$$
$$\mathcal{Z} := \bigsqcup\limits_{\lambda\in\Lambda_T^\infty}\mathcal{Z}_\lambda~\text{and}~\mathcal{W} := \bigsqcup\limits_{\lambda\in\Lambda_\infty^S}\mathcal{W}_\lambda.$$

Finally set
$$Q^0 = \{q_\lambda\mid\lambda\in\Lambda_T^\text{fin}\}\sqcup\{q_{Z}\mid Z\in\mathcal{Z}\},$$
$$P^0 = \{p_\lambda\mid\lambda\in\Lambda_\text{fin}^S\}\sqcup\{p_{W}\mid W\in\mathcal{W}\}.$$
\end{notation}

\begin{definition}
Let $F$ be the free abelian monoid on $E^0\sqcup Q^0\sqcup P^0$. We identify $H(\dot{E},\Lambda)$ with $F/\sim$, where $\sim$ is the congruence on $F$ generated by the relations 
$$\textbf{s}(\lambda)\sim\begin{cases}
\textbf{r}(\lambda)\qquad\quad\text{if}~\lambda\in\Lambda_T^S,\\
\textbf{r}(\lambda)+q_\lambda\quad\text{if}~\lambda\in\Lambda_T^\text{fin}, \text{and}\\
\textbf{r}(\lambda)+q_Z\quad\text{if}~\lambda\in\Lambda_T^\infty~\text{and}~Z\in\mathcal{Z}_\lambda,\\
\end{cases}$$
and
$$\textbf{r}(\lambda)\sim\begin{cases}
\textbf{s}(\lambda)+p_\lambda\quad\text{if}~\lambda\in\Lambda_\text{fin}^S,\\
\textbf{s}(\lambda)+p_W\quad\text{if}~\lambda\in\Lambda_\infty^S~\text{and}~W\in\mathcal{W}_\lambda,\\
\end{cases}$$
for $Z_1,Z_2\in\mathcal{Z}$ with $Z_1\subsetneq Z_2$, and
$q_{Z_1}\sim q_{Z_2}+\textbf{r}(Z_2-Z_1),$
and for $W_1,W_2\in\mathcal{W}$ with $W_1\subsetneq W_2$, and
$p_{W_1}\sim p_{W_2}+\textbf{s}(W_2-W_1).$

\end{definition}

\begin{lemma}
If $I$ is an oder-ideal of $H(\dot{E},\Lambda)$, then the set $V=I\cap E^0$ is bisaturated.
\end{lemma}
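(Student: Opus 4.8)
The plan is to exploit the single congruence relation in $H(\dot{E},\Lambda)$ that binds the two sides of a B-hyperedge in $\Lambda_T^S$ together, namely $\textbf{s}(\lambda)\sim\textbf{r}(\lambda)$, and to play the submonoid property of $I$ against its order-ideal property. First I would fix $\lambda\in\Lambda_T^S$ and recall that for such $\lambda$ both $\mathcal{X}_\lambda$ and $\mathcal{Y}_\lambda$ are finite. This finiteness is exactly what makes $q_{\mathcal{Y}_\lambda}$ and $p_{\mathcal{X}_\lambda}$, and hence relation $(5)$ of Definition \ref{definition: H-monoid}, meaningful; consequently $\textbf{s}(\lambda)=\sum_{X\in\mathcal{X}_\lambda}s(X)$ and $\textbf{r}(\lambda)=\sum_{Y\in\mathcal{Y}_\lambda}r(Y)$ are genuine finite sums in $H(\dot{E},\Lambda)$, and combining relations $(1)$ and $(5)$ (equivalently, reading off the congruence presentation directly) gives the equality $\textbf{s}(\lambda)=\textbf{r}(\lambda)$ in $H(\dot{E},\Lambda)$.

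For the forward implication I would assume $s(\lambda)\subseteq V$. Since $s(\lambda)=\{s(X)\mid X\in\mathcal{X}_\lambda\}$ as a set of vertices, every summand $s(X)$ lies in $V\subseteq I$; as $I$ is a submonoid it is closed under addition, whence $\textbf{s}(\lambda)\in I$. Using $\textbf{s}(\lambda)=\textbf{r}(\lambda)$ we obtain $\textbf{r}(\lambda)\in I$. Now I invoke the order-ideal property in its preorder form: for each $Y\in\mathcal{Y}_\lambda$ we have $r(Y)\leq\textbf{r}(\lambda)$ in the algebraic preorder of $H(\dot{E},\Lambda)$, so $r(Y)\in I$, and hence $r(Y)\in I\cap E^0=V$. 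Letting $Y$ range over $\mathcal{Y}_\lambda$ yields $r(\lambda)\subseteq V$. The reverse implication $r(\lambda)\subseteq V\Rightarrow s(\lambda)\subseteq V$ is the symmetric argument, again using $\textbf{r}(\lambda)=\textbf{s}(\lambda)$ together with the submonoid property to push up to $\textbf{r}(\lambda)$ and the order-ideal property to come back down to each $s(X)$. Since $\lambda\in\Lambda_T^S$ was arbitrary, $V$ is bisaturated.

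The argument is short, and the only point requiring care is the interplay between the two decorations of $s(\lambda)$: the set-theoretic union $s(\lambda)$ governing bisaturation versus the monoidal sum $\textbf{s}(\lambda)$ appearing in the defining relations. I would make explicit that passing from ``all vertices of $s(\lambda)$ lie in $I$'' to ``$\textbf{s}(\lambda)\in I$'' uses that $I$ is a \emph{submonoid}, whereas passing back from ``$\textbf{r}(\lambda)\in I$'' to ``each $r(Y)\in I$'' uses that $I$ is an \emph{order-ideal}; neither direction follows from the other half of the definition alone. No deeper obstacle arises, since the relation $\textbf{s}(\lambda)=\textbf{r}(\lambda)$ for $\lambda\in\Lambda_T^S$ is built directly into the presentation of $H(\dot{E},\Lambda)$.
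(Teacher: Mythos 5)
Your argument is correct and is essentially the paper's own proof: fix $\lambda\in\Lambda_T^S$, use the submonoid property of $I$ to pass from the vertices of one side to the sum $\textbf{s}(\lambda)$ (resp. $\textbf{r}(\lambda)$), apply the defining relation $\textbf{s}(\lambda)=\textbf{r}(\lambda)$, and use the order-ideal property to descend to the individual vertices of the other side. The paper merely writes the implication starting from $r(\lambda)\subseteq V$ and remarks that the converse is similar, so the two proofs coincide up to which direction is written out.
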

\begin{proof}
  Let $\lambda\in\Lambda_T^S$ and $r(\lambda)\subseteq V$, then $\textbf{r}(\lambda)=\textbf{s}(\lambda)\in I$. Since $I$ is order-ideal, and $s(X)\leq\textbf{s}(\lambda)$ for each $X\in\mathcal{X}_\lambda$, we have $s(X)\in I$ for each $X\in\mathcal{X}_\lambda$, and hence $s(\lambda)\subseteq V$. Converse follows similarly.
\end{proof}

\begin{definition}\label{definition: correspondence between order-ideals and admissible triples}
Let $V$ be a bisaturated subset of $E^0$.
$$\text{For}~\lambda\in\Lambda_T^\infty/V,~\text{if}~0<|\mathcal{Y}_{\lambda/V}|<\infty, \quad 
q_{\lambda/V}:=q_{\mathcal{X}_{\lambda/V}}.$$

$$\text{For}~\lambda\in\Lambda_\infty^S/V,~\text{if}~0<|\mathcal{X}_{\lambda/V}|<\infty,\quad 
p_{\lambda/V}:=p_{\mathcal{X}_{\lambda/V}}$$
If $I$ is an order-ideal of $V(\dot{E},\Lambda)$, set $\psi(I) = (V,\Sigma,\Theta),$ where
\begin{eqnarray*}
V &:=& I\cap E^0,\\
\Sigma &:=& \{\lambda\in\Lambda_\text{fin}^S/V\mid p_\lambda\in I\}\sqcup\{\lambda\in\Lambda_\infty^S/V\mid p_{\lambda/V}\in I\},~\text{and}\\
\Theta &:=& \{\lambda\in\Lambda_T^\text{fin}/V\mid q_\lambda\in I\}\sqcup\{\lambda\in\Lambda_T^\infty/V\mid q_{\lambda/V}\in I\}.
\end{eqnarray*}

Conversely, for any $(V,\Sigma,\Theta)\in AT(\Dot{E},\Lambda)$, let $I(V,\Sigma,\Theta)$ denote the submonoid of $H(\dot{E},\Lambda)$ generated by the set $V\sqcup Q(\Theta)\sqcup P(\Sigma)$, where
\begin{eqnarray*}
Q(\Theta) &=& \{q_\lambda\mid\lambda\in\Lambda_T^\text{fin}/V\cap \Theta\}\sqcup\{q_{\lambda/V}\mid\lambda\in\Lambda_T^\infty/V\cap \Theta\},\\
P(\Sigma) &=& \{p_\lambda\mid\lambda\in\Lambda_\text{fin}^S/V\cap \Sigma\}\sqcup\{p_{\lambda/V}\mid\lambda\in\Lambda_\infty^S/V\cap \Sigma\},
\end{eqnarray*}

and $\langle I(V,\Sigma,\Theta)\rangle$ be the order-ideal generated by $I(V,\Sigma,\Theta)$. Set $\phi(V,\Sigma,\Theta) = \langle I(V,\Sigma,\Theta)\rangle$.
\end{definition}

\begin{lemma}\label{lemma: I}
If $I$ is any order-ideal of $H(\dot{E},\Lambda)$, then $I=\phi\psi(I)$.
\end{lemma}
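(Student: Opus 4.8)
The plan is to establish the two inclusions $\phi\psi(I)\subseteq I$ and $I\subseteq\phi\psi(I)$ separately; write $\psi(I)=(V,\Sigma,\Theta)$ throughout, so that $V=I\cap E^0$ (bisaturated by the previous lemma) and $\phi\psi(I)=\langle I(V,\Sigma,\Theta)\rangle$. First I would dispose of $\phi\psi(I)\subseteq I$, which is the easy half. By the definition of $\psi$ in Definition \ref{definition: correspondence between order-ideals and admissible triples}, every generator of the submonoid $I(V,\Sigma,\Theta)$ — the vertices of $V$, the elements of $Q(\Theta)$, and the elements of $P(\Sigma)$ — already lies in $I$; hence $I(V,\Sigma,\Theta)\subseteq I$. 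Since $I$ is itself an order-ideal and $\phi\psi(I)$ is by construction the smallest order-ideal containing $I(V,\Sigma,\Theta)$, the inclusion follows.

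For the reverse inclusion $I\subseteq\phi\psi(I)$ I would reduce to generators. As $H(\dot E,\Lambda)=F/{\sim}$ with $F$ free abelian on $E^0\sqcup Q^0\sqcup P^0$, any $x\in I$ is a finite sum of these generators, and each summand $g$ satisfies $g\le x$, hence $g\in I$ because $I$ is an order-ideal. Since $\phi\psi(I)$ is closed under addition, it suffices to place every generator lying in $I$ inside $\phi\psi(I)$. A vertex $v\in I$ lies in $V\subseteq I(V,\Sigma,\Theta)$, so the only real work is with the generators $q_\lambda,q_Z$ and, symmetrically, $p_\lambda,p_W$.

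The heart of the argument is the following case split for a generator $q_Z\in I$ attached to $\lambda\in\Lambda_T^\infty$ (the finite case $q_\lambda$ being identical, with $\mathcal{Y}_\lambda$ in place of $\mathcal{Y}_{\lambda/V}$). Put $Z'=\{Y\in Z\mid r(Y)\notin V\}$. If $Z'=\emptyset$, then $\mathbf{r}(Z)\in I$, so relation $(1)$ of Definition \ref{definition: H-monoid}, namely $\mathbf{s}(\lambda)=\mathbf{r}(Z)+q_Z$, forces $\mathbf{s}(\lambda)\in I$, whence $s(X)\in V$ for every $X\in\mathcal{X}_\lambda$; thus $q_Z\le\mathbf{s}(\lambda)\in I(V,\Sigma,\Theta)$. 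If $Z'\neq\emptyset$, peeling off the $V$-vertices via relation $(3)$ gives $q_{Z'}=q_Z+\mathbf{r}(Z\setminus Z')\in I$; then for any $Y_0\in\mathcal{Y}_{\lambda/V}\setminus Z'$ relation $(3)$ again yields $r(Y_0)\le q_{Z'}\in I$, i.e. $r(Y_0)\in V$, contradicting $Y_0\in\mathcal{Y}_{\lambda/V}$. Hence $\mathcal{Y}_{\lambda/V}=Z'$ is finite, so $\lambda\in\Lambda_T^\infty/V$ and $q_{\lambda/V}=q_{Z'}\in I$; this places $\lambda\in\Theta$ and gives $q_Z\le q_{\lambda/V}\in Q(\Theta)$, so $q_Z\in\phi\psi(I)$. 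The generators $p_W,p_\lambda$ are handled by interchanging the roles of source and range (and of $S$, $T$), using relations $(2)$ and $(4)$ in place of $(1)$ and $(3)$.

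\textbf{The main obstacle} is precisely this rigidity step for the infinite generators: showing that $q_Z\in I$ together with $Z$ meeting $\mathcal{Y}_{\lambda/V}$ forces $\mathcal{Y}_{\lambda/V}$ to be finite and equal to $Z\cap\mathcal{Y}_{\lambda/V}$. This is where relation $(3)$ and the order-ideal property must be combined, and where one must keep the vertices absorbed into $V$ carefully separated from the genuine $\Theta$-contribution; the cases $Z'=\emptyset$ and $Z'\neq\emptyset$ land in $\phi\psi(I)$ for genuinely different reasons (membership in the $V$-submonoid versus membership in $Q(\Theta)$), and verifying that these exhaust all possibilities is the delicate part.
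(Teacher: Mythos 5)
Your proposal is correct and takes essentially the same route as the paper's proof: the easy inclusion $\phi\psi(I)\subseteq I$ via the generators of $I(V,\Sigma,\Theta)$, reduction of an arbitrary $x\in I$ to its generators using the order-ideal property, and the same per-generator use of relations (1) and (3) of the $H$-monoid (and their symmetric counterparts for the $p$'s). Your split on whether $Z\cap\mathcal{Y}_{\lambda/V}$ is empty merely reorganizes the paper's three subcases on $|\mathcal{Y}_{\lambda/V}|$ (empty, finite nonzero, infinite), with the same computations and the same dichotomy of conclusions ($q_Z\le\mathbf{s}(\lambda)$ with $s(\lambda)\subseteq V$, versus $\lambda\in\Theta$ and $q_Z\le q_{\lambda/V}$).
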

\begin{proof}
Let $\psi(I)=(V,\Sigma,\Theta)$ and $I(V,\Sigma,\Theta)=J$ so that $\phi\psi(I)=\langle J \rangle$. It is clear that $J\subseteq I$ and therefore $\langle J\rangle\subseteq I$. For converse, consider a nonzero element $x\in I$. Then $x=\sum_iv_i+\sum_jq_{\alpha_j}+\sum_kp_{\beta_k}+\sum_lq_{Z_l}+\sum_mp_{W_m}$ for some $v_i\in E^0$, $\alpha_j\in\Lambda_T^\text{fin}$, $\beta_k\in\Lambda_\text{fin}^S$, $Z_l\in\mathcal{Z}$, and $W_m\in\mathcal{W}$. Since $I$ is an order ideal, $v_i, q_{\alpha_j}, p_{\beta_k}, q_{Z_l}, p_{W_m}\in I$, and so to prove that $x\in\langle J\rangle$, it is enough to show that $v, q_\alpha, p_\beta, q_Z, p_W$ for all $v\in E^0, \alpha\in\Lambda_T^\text{fin}, \beta\in\Lambda_\text{fin}^S, Z\in\mathcal{Z}$ and $W\in\mathcal{W}$.

\textbf{Case 1}
If $v\in E^0\cap I$, then $v\in V$ by definition of $H$, hence $v\in J$.

\textbf{Case 2}
Let $\alpha\in\Lambda_T^\text{fin}$ such that $q_\alpha\in I$. 

\textbf{Subcase 2.1}
If $r(\alpha)\subseteq V$, then $\textbf{r}(\alpha)\in I$ and so 
$\textbf{s}(\alpha)=\textbf{r}(\alpha)+q_\alpha\in I$.
Hence $s(X)\in V$ for each $X\in\mathcal{X}_\alpha$, and so $\textbf{s}(\alpha)\in J$. Since $q_\alpha\leq\textbf{s}(\alpha)$, it follows that $q_\alpha\in\langle J\rangle$.

\textbf{Subcase 2.2}
If $r(\alpha)\not\subseteq V$, then by definition $\alpha\in \Theta\cap\Lambda_T^\text{fin}/V$. Hence $q_\alpha\in J$.

\textbf{Case 3}
Let $\lambda\in\Lambda_T^\infty$ and $Z\in\mathcal{Z}_\lambda$ such that $q_Z\in I$.

\textbf{Subcase 3.1}
$\mathcal{Y}_{\lambda/V}=\emptyset$. This is equivalent to $r(\lambda)\subseteq V$ and the argument follows similar to subcase 2.1.

\textbf{Subcase 3.2}
$0<|\mathcal{Y}_{\lambda/V}|<\infty$. In this case, we have 
$$\textbf{r}(\mathcal{Y}_{\lambda/V}-Z)=\textbf{r}([\mathcal{Y}_{\lambda/V}\cup Z]-Z)\leq q_{(\mathcal{Y}_{\lambda/V}\cup Z)}+\textbf{r}([\mathcal{Y}_{\lambda/V}\cup Z]-Z)=q_Z\in I.$$
It follows that $\textbf{r}(\mathcal{Y}_{\lambda/V}-Z)\in I$ and so $r(\mathcal{Y}_{\lambda/V}-Z)\subseteq H$. Hence $\mathcal{Y}_{\lambda/V}\subseteq Z$. Since $r(Z-\mathcal{Y}_{\lambda/V})\subseteq H$, we get
$q_{\lambda/V}=\textbf{r}(Z-\mathcal{Y}_{\lambda/V})+q_Z\in I,$ 
so that $\lambda\in \Theta$ by definition, and since $q_Z\leq q_{\lambda/V}$, we get $q_Z\in\langle J\rangle$.

\textbf{Subcase 3.3}
$|\mathcal{Y}_{\lambda/V}|=\infty$. Then there exists $Y\in\mathcal{Y}_{\lambda/V}-Z$, and we have 
$$r(Y)\leq \textbf{r}(Z\sqcup\{Y\}-Z)+q_{\{Y\}}=q_Z\in I.$$
But this implies that $r(Y)\in I$ and so $r(Y)\in V$, which contradicts $Y\in\mathcal{Y}_{\lambda/H}$. Thus $q_Z\in\langle J\rangle$.

The remaining cases are proved analogously.
\end{proof}

\begin{construction}
Let $(\Dot{E},\Lambda)$ be a B-hypergraph and $(V,\Sigma,\Theta)\in AT(\Dot{E},\Lambda)$. For $A\subseteq E^1$, define
$$A_r(V)=A\cap r^{-1}(V)~\text{and}~A_s(V)=A\cap s^{-1}(V).$$

We define the quotient B-hypergraph $(\dot{\widetilde{E}},\widetilde{\Lambda})$ as follows: $\Dot{\widetilde{E}}$ is given by
$$\widetilde{E}^0=E^0-V\quad\text{and}\quad\widetilde{E}^1=E_r^1(V)\cup E_s^1(V).$$
$r_{\widetilde{E}}$ and $s_{\widetilde{E}}$ are restriction maps of $r_E$ and $s_E$ respectively.

For $v\in\widetilde{E}^0$, set 
\begin{eqnarray*}
\widetilde{C}_v &=& \{X_r(V)\mid X\in C_v~\text{and}~X_r(V)\neq\emptyset\}~\text{and}~\widetilde{C}=\bigsqcup\limits_{v\in\widetilde{E}^0}\widetilde{C}_v,\\
\widetilde{D}_v &=& \{Y_s(V)\mid Y\in D_v~\text{and}~Y_s(V)\neq\emptyset\}~\text{and}~\widetilde{D}=\bigsqcup\limits_{v\in\widetilde{E}^0}\widetilde{D}_v,\\
\widetilde{S} &=& \{X_r(V)\mid X\in S~\text{and}~ X_r(V)\neq\emptyset\}\sqcup\{X_r(V)\mid X\in\mathcal{X}_\lambda, \lambda\in \Sigma\},~ \text{and}\\
\widetilde{T} &=& \{Y_s(V)\mid Y\in S~\text{and}~ Y_s(V)\neq\emptyset\}\sqcup\{Y_s(V)\mid Y\in\mathcal{Y}_\lambda, \lambda\in \Theta\}.
\end{eqnarray*}

Let $\widetilde{\Lambda}$ be defined as follows:
\begin{eqnarray*}
\widetilde{\Lambda}_{\widetilde{T}}^{\widetilde{S}} &=& \{\widetilde{\lambda}\mid\lambda\in\Lambda_T^S/V\sqcup \Sigma\sqcup \Theta\},\\
\widetilde{\Lambda}_\text{fin}^{\widetilde{S}} &=& \{\widetilde{\lambda}\mid\lambda\in\Lambda_\text{fin}^S/V-\Sigma\},\\
\widetilde{\Lambda}_\infty^{\widetilde{S}} &=& \{\widetilde{\lambda}\mid\lambda\in\Lambda_\infty^S/V-\Sigma\},\\
\widetilde{\Lambda}_{\widetilde{T}}^\text{fin} &=& \{\widetilde{\lambda}\mid\lambda\in\Lambda_T^\text{fin}/V-\Theta\},~\text{and}\\
\widetilde{\Lambda}_{\widetilde{T}}^\infty &=& \{\widetilde{\lambda}\mid\lambda\in\Lambda_T^\infty/V-\Theta\}.
\end{eqnarray*}

\end{construction}

We note that if $\pi:M_1\rightarrow M_2$ is a monoid homomorphism and $M_2$ is conical, then ker $\pi:=\pi^{-1}(0)$ is an order-ideal of $M_1$.

\begin{theorem}\label{theorem: ker of monoid hom}
 Let $(\dot{E},\Lambda)$ be a B-hypergraph,  $(V,\Sigma,\Theta)\in \emph{AT}(\Dot{E},\Lambda)$ and $(\Dot{\widetilde{E}},\widetilde{\Lambda})$ is the corresponding quotient B-hypergraph. Suppose that $I:=\langle I(V,\Sigma,\Theta)\rangle$ is the order ideal in $M:=H(\dot{E},\Lambda)$. Then there exists a monoid homomorphism $\pi:M\rightarrow \widetilde{M}:=M(\Dot{\widetilde{E}})$ such that $I=\emph{ker}~\pi$. 
\end{theorem}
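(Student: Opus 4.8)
The plan is to build $\pi$ from the generators-and-relations presentation of $M = H(\dot{E},\Lambda)$ in Definition \ref{definition: H-monoid}, by prescribing it on the generating set $E^0 \sqcup Q^0 \sqcup P^0$ and then checking that it respects the defining congruence. On vertices I would send $v \mapsto v$ when $v \in \widetilde{E}^0 = E^0 - V$ and $v \mapsto 0$ when $v \in V$. On the hyperedge generators I would send each $q$- or $p$-generator attached to $\lambda$ to the corresponding generator of $\widetilde{M} = M(\dot{\widetilde{E}}) = H(\dot{\widetilde{E}},\widetilde{\Lambda})$ associated with the surviving rows/columns (those with source/range outside $V$), except that it is sent to $0$ exactly when the Construction places $\widetilde{\lambda}$ in $\widetilde{\Lambda}_{\widetilde{T}}^{\widetilde{S}}$ — that is, when $\lambda \in \Sigma$ or $\lambda \in \Theta$, or when the range/source of $\lambda$ is wholly absorbed into $V$ — since then relation (5) of Definition \ref{definition: H-monoid} forces the corresponding generator of $\widetilde{M}$ to vanish. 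This assignment is essentially forced on us: applying the prospective $\pi$ to the relation $\mathbf{s}(\lambda) = \mathbf{r}(Z) + q_Z$ and its analogues, and using that $V$ is bisaturated, pins down $\pi(q_Z)$ up to exactly these choices.

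Step two is to verify that $\pi$ is well defined, by running through the five families of relations of Definition \ref{definition: H-monoid} organized along the partition $\Lambda = \Lambda_T^S \sqcup \Lambda_{\text{fin}}^S \sqcup \Lambda_\infty^S \sqcup \Lambda_T^{\text{fin}} \sqcup \Lambda_T^\infty$ and, within each class, by whether the relevant sources/ranges survive in $E^0 - V$ and whether $\lambda$ lies in $\Sigma$ or $\Theta$. Bisaturation of $V$ is what makes the $\Lambda_T^S$-relations transform correctly — either both sides survive and reproduce the $\widetilde{\Lambda}_{\widetilde{T}}^{\widetilde{S}}$-relation, or both collapse to $0$ — while admissibility of $(V,\Sigma,\Theta)$ guarantees that a hyperedge in $\Sigma$ or $\Theta$ is precisely one the Construction turns saturating, so $\pi$ may legitimately send its $p$- or $q$-generator to $0$. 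The comparison relations (3) and (4) are handled by splitting $\mathbf{r}(Z_2 - Z_1)$ (resp. $\mathbf{s}(W_2 - W_1)$) into a surviving part, which matches the corresponding relation in $\widetilde{M}$, and a part with ranges (sources) in $V$, which $\pi$ annihilates — the same split used in Subcases 3.2--3.3 of the proof of Lemma \ref{lemma: I}.

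With $\pi$ defined, the inclusion $I \subseteq \ker\pi$ is immediate: the generators $V \sqcup Q(\Theta) \sqcup P(\Sigma)$ of $I(V,\Sigma,\Theta)$ (Definition \ref{definition: correspondence between order-ideals and admissible triples}) all map to $0$, and since $\widetilde{M}$ is conical, $\ker\pi$ is an order-ideal (as observed just before the theorem), hence contains the order-ideal $I = \langle I(V,\Sigma,\Theta)\rangle$ they generate. For the reverse inclusion the plan is to show that $\pi$ realizes the monoid quotient $\widetilde{M} \cong M/I$, where $M/I$ is the congruence quotient identifying $I$ with $0$; for an order-ideal in a commutative monoid one has $\ker(M \to M/I) = I$, so any isomorphism $M/I \xrightarrow{\ \sim\ } \widetilde{M}$ compatible with $\pi$ yields $\ker\pi = I$. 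I would obtain this isomorphism by matching presentations: reading the generators and relations of $\widetilde{M}$ off the Construction, one checks that they coincide with the image in $M/I$ of the presentation of $M$ together with the relations $g = 0$ for the chosen generators $g$ of $I$. Concretely, I would build the inverse $\sigma : \widetilde{M} \to M/I$ sending each generator of $\widetilde{M}$ to the class of a chosen preimage, verify $\sigma$ respects the relations of $\widetilde{M}$, and check that $\sigma$ and the induced map $\overline{\pi} : M/I \to \widetilde{M}$ are mutually inverse.

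The main difficulty will be the bookkeeping for the strata with infinitely many rows or columns — the generators $q_Z$ and $p_W$ indexed by finite subsets $Z \subseteq \mathcal{Y}_\lambda$, $W \subseteq \mathcal{X}_\lambda$ — where one must track how a finite $Z$ meets the absorbed part and reconcile the images with relations (1) and (3) simultaneously; this is exactly the delicate case division (whether $\mathcal{Y}_{\lambda/V}$ is empty, finite, or infinite) that already appears in Subcases 3.1--3.3 of Lemma \ref{lemma: I}. Verifying that the five strata of $\widetilde{\Lambda}$ produced by the Construction correspond precisely to the strata of the congruence presentation of $M/I$ is where essentially all of the work lies; once that correspondence is in place, the mutual-inverse check and the conclusion $\ker\pi = I$ are routine.
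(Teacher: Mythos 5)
Your overall architecture tracks the paper's for the first half: the paper likewise defines $\pi$ generator-by-generator, checks the five families of relations of Definition \ref{definition: H-monoid} by the same stratification of $\Lambda$, and gets $I\subseteq\ker\pi$ exactly as you do (generators of $I(V,\Sigma,\Theta)$ die, and $\ker\pi$ is an order-ideal since $\widetilde{M}$ is conical). Where you genuinely diverge is the reverse inclusion. The paper does not identify $\widetilde{M}$ with a quotient of $M$ at all: it computes $\psi(\ker\pi)$ and observes that for $\lambda\notin\Theta$ (resp. $\notin\Sigma$) the image $\pi(q_\lambda)$, $\pi(q_{\lambda/V})$ (resp. $\pi(p_\lambda)$, $\pi(p_{\lambda/V})$) is visibly nonzero, so $\psi(\ker\pi)=(V,\Sigma,\Theta)$; then Lemma \ref{lemma: I}, applied to the order-ideal $\ker\pi$, gives $\ker\pi=\phi\psi(\ker\pi)=I$ with no inverse map constructed. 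Your route — matching a presentation of $M/I$ against the Construction of $(\dot{\widetilde{E}},\widetilde{\Lambda})$ and building an explicit inverse $\sigma$ — is viable and would in fact prove the stronger statement $\widetilde{M}\cong M/I$, but it buys this at the cost of a second round of relation-checking (for $\sigma$) that the paper's use of the already-established correspondence $\phi,\psi$ avoids.

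Two concrete points in your sketch need repair before it goes through. First, your prescription of $\pi$ on the infinite strata ("the corresponding generator of $\widetilde{M}$, except sent to $0$ exactly when $\widetilde{\lambda}\in\widetilde{\Lambda}_{\widetilde{T}}^{\widetilde{S}}$") is not the map that works: for $\lambda\in\Lambda_T^\infty$ and a finite $Z\subseteq\mathcal{Y}_\lambda$, if $\lambda\in\Theta$ the correct image is $\mathbf{r}(\mathcal{Y}_{\widetilde{\lambda}}-\widetilde{Z})$, which is nonzero for proper $\widetilde{Z}$; if $\lambda\notin\Theta$ but $r(Z)\subseteq V$ it is $\mathbf{s}(\widetilde{\lambda})$; and if $\lambda\in\Lambda_T^\infty/V-\Theta$ it is $q_{\widetilde{\lambda}}+\mathbf{r}(\mathcal{Y}_{\widetilde{\lambda}}-\widetilde{Z})$ — in none of these cases is it "a generator or $0$", and sending all such $q_Z$ to $0$ when $\lambda\in\Theta$ breaks relation (1) of Definition \ref{definition: H-monoid} (you would get $\mathbf{s}(\widetilde{\lambda})=\mathbf{r}(\widetilde{Z})$ instead of $\mathbf{r}(\mathcal{Y}_{\widetilde{\lambda}})$). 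You gesture at this by saying the relations pin the values down, but the well-definedness check only works once these images are written out. Second, you conflate two quotients: the monoid presented by "relations of $M$ plus $g=0$ for the generators $g$ of $I(V,\Sigma,\Theta)$" only kills the submonoid $I(V,\Sigma,\Theta)$, not a priori the order-ideal $I=\langle I(V,\Sigma,\Theta)\rangle$, whereas your kernel fact $\ker(M\to M/I)=I$ is for the quotient by the congruence generated by $I\times\{0\}$. The two coincide here, but only after you know the presented monoid is conical (e.g.\ by first identifying it with $\widetilde{M}$, which is conical): from $x+z\in I(V,\Sigma,\Theta)$ and conicality one gets that the class of $x$ vanishes. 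Neither issue is fatal, but both must be made explicit for your second half to close.
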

\begin{proof}
 We begin by defining $\widetilde{v},\widetilde{q}_\alpha,\widetilde{p}_\beta,\widetilde{q}_Z,\widetilde{p}_W\in\widetilde{M}$ for $v\in E^0$, $\alpha\in\Lambda_T^\text{fin}$, $\beta\in\Lambda_\text{fin}^S$, $Z\in\mathcal{Z}$, and $W\in\mathcal{W}$. For $v\in E^0$, set
 $$\widetilde{v} = \begin{cases} v\quad\text{if}~v\notin V ~\text{and}\\
 0\quad\text{if}~v\in V.\\
 \end{cases}$$

For $\alpha\in\Lambda_T^\text{fin}$, we define $\widetilde{q}_\alpha$ as follows:
\begin{enumerate}
    \item If $s(\alpha)\subseteq V$ or $r(\alpha)\subseteq V$,  $\widetilde{q}_\alpha=0$
    \item If $s(\alpha)\not\subseteq V$ and $r(\alpha)\not\subseteq V$, $\widetilde{q}_\alpha = \begin{cases} 0\quad\text{if}~\alpha\in \Theta~\text{and}\\
    q_{\widetilde{\alpha}}\quad\text{if}~\alpha\notin \Theta.\\
    \end{cases}$
\end{enumerate}

For $\beta\in\Lambda_\text{fin}^S$, we define $\widetilde{p}_\beta$ as follows:
\begin{enumerate}
    \item If $s(\beta)\subseteq V$ or $r(\beta)\subseteq V$,  $\widetilde{p}_\beta=0$.
    \item If $s(\beta)\not\subseteq V$ and $r(\beta)\not\subseteq V$,  $\widetilde{p}_\beta = \begin{cases} 0\quad\text{if}~\beta\in \Sigma~\text{and}\\
    p_{\widetilde{\beta}}\quad\text{if}~\beta\notin \Sigma.\\
    \end{cases}$
\end{enumerate}

For $\lambda\in\Lambda_T^\infty$, and $Z\in\mathcal{Z}_\lambda$, we define $\widetilde{q}_Z$ as follows:
\begin{enumerate}
    \item If $s(\lambda)\subseteq V$, $\widetilde{q}_Z = 0$.
    \item If $s(\lambda)\not\subseteq V$, set $\widetilde{Z}=\{Y_s(V)\in\mathcal{Y}_{\widetilde{\lambda}}\mid Y\in Z\}$.
    \begin{enumerate}
    \item If $\lambda\in \Theta$  $\widetilde{q}_Z = \textbf{r}(\mathcal{Y}_{\widetilde{\lambda}}-\widetilde{Z})$,
    \item If $\lambda\notin \Theta$ and $r(Z)\subseteq H$,  $\widetilde{q}_Z =
    \textbf{s}(\lambda)$
    \item If $\lambda\notin \Theta$, $r(Z)\not\subseteq V$ and $\lambda\notin\Lambda_T^\infty/V$, $\widetilde{q}_Z =  q_{\widetilde{Z}}$ and
    \item If $\lambda\notin \Theta$, $r(Z)\not\subseteq V$, and $\lambda\in\Lambda_T^\infty/V$. $\widetilde{q}_Z = 
    q_{\widetilde{\lambda}}+\textbf{r}(\mathcal{Y}_{\widetilde{\lambda}}-\widetilde{Z})$.
\end{enumerate}
\end{enumerate}

For $\lambda\in\Lambda_\infty^S$, and $W\in\mathcal{W}_\lambda$, we define $\widetilde{p}_W$ as follows:
\begin{enumerate}
    \item If $r(\lambda)\subseteq V$, $\widetilde{p}_W = 0$.
   \item If $r(\lambda)\not\subseteq V$ set $\widetilde{W}=\{X_r(V)\in\mathcal{X}_{\widetilde{\lambda}}\mid X\in W\}$.
  \begin{enumerate}
    \item If $\lambda\in \Sigma$,
    $\widetilde{p}_W = \textbf{s}(\mathcal{X}_{\widetilde{\lambda}}-\widetilde{W})$,
    \item If $r(\lambda)\not\subseteq V$, $\lambda\notin \Sigma$ and $s(W)\subseteq V$, $\widetilde{p}_W = 
    \textbf{r}(\lambda)$,
    \item If $r(\lambda)\not\subseteq V$, $\lambda\notin \Sigma$, $s(W)\not\subseteq V$ and $\lambda\notin\Lambda_\infty^S/V$, $\widetilde{p}_W = p_{\widetilde{W}}$ and
    \item If $r(\lambda)\not\subseteq V$, $\lambda\notin \Sigma$, $s(W)\not\subseteq V$, and $\lambda\in\Lambda_\infty^S/V$,  $\widetilde{p}_W =
    p_{\widetilde\lambda}+\textbf{s}(\mathcal{X}_{\widetilde{\lambda}}-\widetilde{W})$.
\end{enumerate}
\end{enumerate}
We define $\pi:M\rightarrow\widetilde{M}$ by mapping generators $v\mapsto\widetilde{v}$, for all $v\in E^0$, $q_\alpha\mapsto\widetilde{q}_\alpha$, for all $\alpha\in\Lambda_T^\text{fin}$, $p_\beta\mapsto\widetilde{p}_\beta$ for all $\beta\in\Lambda_\text{fin}^S$, $q_Z\mapsto\widetilde{q}_Z$ for all $Z\in\mathcal{Z}$, and $p_W\mapsto\widetilde{p}_W$ for all $W\in\mathcal{W}$. To show that $\pi$ defines a homomorphism we need to verify that images of the generators satisfy all the defining relations. Here we only show for $\lambda\in\Lambda_T$ and the argument follows analogously for $\lambda\in\Lambda_S$. 

Let $\lambda\in\Lambda_T$. We introduce a new notation
\begin{eqnarray*}
\widetilde{\textbf{s}}(W) &:=& \sum\limits_{X\in W}\widetilde{s(X)}~\text{for subsets}~W\subseteq \mathcal{X}_\lambda~\text{and}\\
\widetilde{\textbf{r}}(Z) &:=& \sum\limits_{Y\in Z}\widetilde{r(Y)}~\text{for subsets}~Z\subseteq \mathcal{Y}_\lambda.
\end{eqnarray*}

Suppose that $\lambda\in\Lambda_T^S$. If $s(\lambda)\subseteq V$, then $r(\lambda)\subseteq V$, and we get
$$\widetilde{\textbf{s}}(\lambda)=0=\widetilde{\textbf{r}}(\lambda).$$ 
If $s(\lambda)\not\subseteq V$ then $r(\lambda)\not\subseteq V$, and we get
$$\widetilde{\textbf{s}}(\lambda)=\textbf{s}(\mathcal{X}_{\lambda/V})=\textbf{s}(\widetilde{\lambda})=\textbf{r}(\widetilde{\lambda})=\textbf{r}(\mathcal{Y}_{\lambda/V})=\widetilde{\textbf{r}}(\lambda).$$

Suppose that $\lambda\in\Lambda_T^\text{fin}$. If $s(\lambda)\subseteq V$, then $\mathcal{Y}_{\lambda/V}=\emptyset$. If $s(\lambda)\not\subseteq V$, and $r(\lambda)\subseteq V$, then $\mathcal{X}_{\lambda/V}=\emptyset$. In both of the above cases we have
$$\widetilde{\textbf{s}}(\lambda)=0=\widetilde{\textbf{r}}(\lambda)+\widetilde{q}_\lambda.$$
So let $s(\lambda)\not\subseteq V$ and $r(\lambda)\not\subseteq V$. If $\lambda\in \Theta$ then 
$$\widetilde{\textbf{s}}(\lambda)=\textbf{s}(\mathcal{X}_{\lambda/V})=\textbf{s}(\widetilde{\lambda})=\textbf{r}(\widetilde{\lambda})=\textbf{r}({\mathcal{Y}_{\lambda/V})+0=\widetilde{\textbf{r}}(\lambda)}+\widetilde{q}_\lambda.$$
If $\lambda\notin \Theta$ then
$$\widetilde{\textbf{s}}(\lambda)=\textbf{s}(\mathcal{X}_{\lambda/V})=\textbf{s}(\widetilde{\lambda})=\textbf{r}(\widetilde{\lambda})=\textbf{r}({\mathcal{Y}_{\lambda/V})+q_{\mathcal{Y}_{\lambda/V}}=\widetilde{\textbf{r}}(\lambda)}+\widetilde{q}_\lambda.$$

Now suppose that $\lambda\in\Lambda_T^\infty$, and $Z\in\mathcal{Z}_\lambda$. If $s(\lambda)\subseteq V$, then $\mathcal{Y}_{\lambda/V}=\emptyset$, and hence $r(Z)=\emptyset$. Then we have 
$$\widetilde{\textbf{s}}(\lambda)=0=\widetilde{\textbf{r}}(Z)+\widetilde{q}_Z.$$
Hence we assume that $s(\lambda)\not\subseteq V$ for rest of the step. If $\lambda\in \Theta$ then $\widetilde{\lambda}\in\widetilde{\Lambda}_{\widetilde{T}}^{\widetilde{S}}$ and we have
$$\widetilde{\textbf{s}}(\lambda)=\textbf{s}({\widetilde{\lambda}})=\textbf{r}(\widetilde{\lambda})=\textbf{r}(\mathcal{Y}_{\widetilde\lambda})=\textbf{r}(\widetilde{Z})+\textbf{r}(\mathcal{Y}_{\widetilde{\lambda}}-\widetilde{Z})=\textbf{r}(\widetilde{Z})+\widetilde{q}_Z=\widetilde{\textbf{r}}(Z)+\widetilde{q}_Z.$$
If $\lambda\notin \Theta$ and $r(Z)\subseteq V$ then we have
$$\widetilde{\textbf{s}}(\lambda)=\textbf{s}(\widetilde{\lambda})=\widetilde{q}_Z=\widetilde{\textbf{r}}(Z)+\widetilde{q}_Z.$$
If $\lambda\notin \Theta$, $r(Z)\not\subseteq V$ and $\lambda\notin\Lambda_T^\infty/V$ then we have
$$\widetilde{\textbf{s}}(\lambda)=\textbf{s}(\widetilde{\lambda})=\textbf{r}(\widetilde{Z})+q_{\widetilde{Z}}=\widetilde{\textbf{r}}(Z)+\widetilde{q}_Z.$$
If $\lambda\notin \Theta$, $r(Z)\not\subseteq V$ and $\lambda\in\Lambda_T^\infty/V$ then we have
$$\widetilde{\textbf{s}}(\lambda)=\textbf{s}(\widetilde{\lambda})=\textbf{r}(\widetilde{\lambda})+q_{\widetilde{\lambda}}=\textbf{r}(\mathcal{Y}_{\widetilde{\lambda}}-\widetilde{Z})+\textbf{r}(\widetilde{Z})+q_{\widetilde{\lambda}}=\widetilde{\textbf{r}}(Z)+\widetilde{q}_Z.$$

Now assume that for $\lambda\in\Lambda_T^\infty$ let $Z_1,Z_2\in\mathcal{Z}_\lambda$ and $Z_1\subsetneq Z_2$. If $s(\lambda)\subseteq V$ then we have
$$\widetilde{q}_{Z_1}=0=\widetilde{\textbf{r}}(Z_2-Z_1)+\widetilde{q}_{Z_2}.$$
So we may assume that $s(\lambda)\not\subseteq V$.
If $\lambda\in \Theta$, 
$$\widetilde{q}_{Z_1}=\textbf{r}(\mathcal{Y}_{\widetilde{\lambda}})-\widetilde{Z}_1=\textbf{r}(\widetilde{Z}_2-\widetilde{Z}_1)+\textbf{r}(\mathcal{Y}_{\widetilde{\lambda}})=\widetilde{\textbf{r}}(Z_2-Z_1)+\widetilde{q}_{Z_2}.$$
Only remaining case is when $\lambda\notin \Theta$. If $\lambda\in\Lambda_T^\infty/V-\Theta$, then
$$\widetilde{q}_{Z_1}=q_{\widetilde{\lambda}}+\textbf{r}(\mathcal{Y}_{\widetilde{\lambda}}-Z_1)=q_{\widetilde{\lambda}}+\textbf{r}(\mathcal{Y}_{\widetilde{\lambda}}-Z_2)+\textbf{r}(\widetilde{Z_2}-\widetilde{Z_1})=\widetilde{q}_{Z_2}+\widetilde{\textbf{r}}(Z_2-Z_1).$$
Hence, we may assume that $\lambda\notin\Lambda_T^\infty/V$.
If $r(Z_2)\subseteq V$ then we have
$$\widetilde{q}_{Z_1}=\textbf{s}(\lambda)=\textbf{r}(\widetilde{Z}_2-\widetilde{Z}_1)+\textbf{s}(\lambda)=\widetilde{\textbf{r}}(Z_2-Z_1)+\widetilde{q}_{Z_2}.$$
If $r(Z_1)\subseteq V$ but $r(Z_2)\not\subseteq V$, we have
$$ \widetilde{q}_{Z_1}=\textbf{s}(\lambda)=\textbf{r}(\widetilde{Z_2})+q_{\widetilde{Z}}=\widetilde{\textbf{r}}(Z_2-Z_1)+\widetilde{q}_{Z_2}.$$
Finally, if $r(Z_1)\not\subseteq V$, then we have
$$\widetilde{q}_{Z_1}=q_{\widetilde{Z}_1}=\textbf{r}(\widetilde{Z}_2-\widetilde{Z}_1)+q_{\widetilde{Z}_2}=\widetilde{\textbf{r}}(Z_2-Z_1)+\widetilde{q}_{Z_2}.$$
Thus we have shown that $\pi$ is a monoid homomorphism.

Now we show that $I\subseteq \text{ker}~\pi$. Since ker$\pi$ is order-ideal, it suffices to show that $I(V,\Sigma,\Theta)\subseteq \text{ker}\pi$. For $v\in H$ we have $\pi(v)=\widetilde{v}=0$. For $\lambda\in \Theta\cap \Lambda_T^\text{fin}$, we have $\pi(q_{\lambda})=\widetilde{q}_{\lambda}=0$. If $\lambda\in \Theta\cap\Lambda_T^\infty$, then $\pi(q_{\lambda/V})=\widetilde{q}_{\lambda/V}=0$. Similarly we can verify that if $\lambda\in \Sigma\cap\Lambda_\text{fin}^S$ then $\pi(q_\lambda)=0$ and if $\lambda\in \Sigma\cap\Lambda_\infty^S$ then $\pi(q_{\lambda/V})=0$.

We claim that $\psi(\text{ker}~\pi)=(V,\Sigma,\Theta)$. For, let $\psi(\text{ker}~\pi)=(\widetilde{V},\widetilde{\Sigma},\widetilde{\Theta})$. It follows from definition that $\widetilde{V}=I\cap E^0=V$ and by the previous paragraph $\Sigma\subseteq \widetilde{\Sigma}$ and $\Theta\subseteq\widetilde{\Theta}$. Consider $\lambda\in\Lambda_T^\text{fin}/V\sqcup\Lambda_T^\infty/V$. If $\mathcal{Y}_\lambda$ is finite and $\lambda\notin \Theta$, then $\pi(q_{\lambda})=\widetilde{q}_{\lambda}\neq0$. Hence $q_\lambda\notin\text{ker}~\pi$ and so $\lambda\notin\widetilde{\Theta}$. If $\mathcal{Y}_\lambda$ is infinite and $\lambda\notin \Theta$, then $\pi(q_{\mathcal{Y}_{\lambda/V}})=\widetilde{q}_{\mathcal{Y}_{\lambda/V}}\neq0$. Thus $\lambda\notin\widetilde{\Theta}$. Hence $\Theta=\widetilde{\Theta}$. Similarly $\Sigma=\widetilde{\Sigma}$.

Finally, since $\psi(\text{ker}~\pi)=(V,\Sigma,\Theta)$ and $I=\phi\circ\psi(I)$, we have that ker $\pi=\langle I(V,\Sigma,\Theta)\rangle=I$.
\end{proof}

\begin{corollary}\label{corollary: H}
If $(V,\Sigma,\Theta)\in \emph{AT}(\Dot{E},\Lambda)$, then $(V,\Sigma,\Theta)=\psi\circ\phi(V,\Sigma,\Theta)$.
\end{corollary}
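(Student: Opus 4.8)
The plan is to obtain this as an essentially immediate consequence of Theorem~\ref{theorem: ker of monoid hom}, so that almost no new work is required. Fix an admissible triple $(V,\Sigma,\Theta)\in \text{AT}(\dot E,\Lambda)$ and abbreviate $M=H(\dot E,\Lambda)$. By the definition of $\phi$, the order-ideal $\phi(V,\Sigma,\Theta)$ is $I:=\langle I(V,\Sigma,\Theta)\rangle$. First I would invoke Theorem~\ref{theorem: ker of monoid hom} to produce the monoid homomorphism $\pi\colon M\to\widetilde M$ into the conical monoid $\widetilde M=M(\dot{\widetilde E})$ attached to the quotient B-hypergraph, for which $I=\ker\pi$.

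The second step is to recall that the proof of Theorem~\ref{theorem: ker of monoid hom} does not merely produce $\pi$; in its final paragraph it also verifies the identity $\psi(\ker\pi)=(V,\Sigma,\Theta)$. Concretely, that verification shows $\ker\pi\cap E^0=V$, so the first coordinate is recovered, and that for each relevant $\lambda$ the generator $q_\lambda$, $q_{\lambda/V}$, $p_\lambda$ or $p_{\lambda/V}$ lies in $\ker\pi$ exactly when $\lambda$ belongs to the corresponding piece of $\Theta$ or $\Sigma$; by the very definition of $\psi$ in Definition~\ref{definition: correspondence between order-ideals and admissible triples} this says precisely that $\psi(\ker\pi)=(V,\Sigma,\Theta)$. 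Since I may use any result established before the corollary, I would simply cite this identity rather than re-run the case analysis.

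Combining the two steps gives
$$\psi\circ\phi(V,\Sigma,\Theta)=\psi(I)=\psi(\ker\pi)=(V,\Sigma,\Theta),$$
which is exactly the assertion. I would close by remarking that, read together with Lemma~\ref{lemma: I} (which gives $\phi\circ\psi=\mathrm{id}$ on order-ideals), this corollary shows that $\phi$ and $\psi$ are mutually inverse bijections between the complete lattice $\mathcal{L}(H(\dot E,\Lambda))$ of order-ideals and $\text{AT}(\dot E,\Lambda)$; since both maps are visibly inclusion-preserving, they form an isomorphism of lattices.

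There is no real obstacle here, precisely because the substantive content — checking that $\widetilde v$, $\widetilde q_\lambda$, $\widetilde p_\lambda$, $\widetilde q_{\lambda/V}$ and $\widetilde p_{\lambda/V}$ each vanish under $\pi$ if and only if the corresponding index lies in $V$, $\Theta$ or $\Sigma$ — was already carried out inside the proof of Theorem~\ref{theorem: ker of monoid hom}. Had one instead wanted a self-contained argument, the entire weight of the proof would fall on re-deriving that biconditional case-by-case over the decomposition $\Lambda=\Lambda_T^S\sqcup\Lambda_{\text{fin}}^S\sqcup\Lambda_\infty^S\sqcup\Lambda_T^{\text{fin}}\sqcup\Lambda_T^\infty$, which is exactly the delicate part one would want to avoid repeating.
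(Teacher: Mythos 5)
Your argument is correct and is exactly the route the paper intends: the corollary is an immediate byproduct of Theorem~\ref{theorem: ker of monoid hom}, whose proof ends by verifying $\psi(\ker\pi)=(V,\Sigma,\Theta)$ while its statement gives $\ker\pi=\phi(V,\Sigma,\Theta)$, so $\psi\circ\phi(V,\Sigma,\Theta)=\psi(\ker\pi)=(V,\Sigma,\Theta)$. Only your closing aside is slightly too quick: that $\phi$ and $\psi$ are order-preserving is not ``visible'' but is argued separately in Theorem~\ref{theorem: AT and L lattice isomorphism}, though this does not affect the corollary itself.
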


\begin{theorem}\label{theorem: AT and L lattice isomorphism}
 Let $\Dot{E}$ be a B-hypergraph. Then there are mutually inverse lattice isomorphisms
 $$\phi:\emph{AT}(\Dot{E},\Lambda)\rightarrow\mathcal{L}(H(\dot{E},\Lambda))~\text{and}~\psi:\mathcal{L}(H(\dot{E},\Lambda))\rightarrow \emph{AT}(\Dot{E},\Lambda),$$
 where $\phi(V,\Sigma,\Theta)=\langle I(V,\Sigma,\Theta)\rangle$ for $(V,\Sigma,\Theta)\in \emph{AT}(\Dot{E},\Lambda)$ and $\psi$ is defined as in definition \ref{definition: correspondence between order-ideals and admissible triples}.
 \end{theorem}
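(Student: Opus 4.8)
The plan is to bootstrap the theorem from the two identities already proved: $\phi\circ\psi=\mathrm{id}$ (Lemma \ref{lemma: I}) and $\psi\circ\phi=\mathrm{id}$ (Corollary \ref{corollary: H}). These at once exhibit $\phi$ and $\psi$ as mutually inverse bijections between $\mathrm{AT}(\dot{E},\Lambda)$ and $\mathcal{L}(H(\dot{E},\Lambda))$, so the only thing left to settle is order behaviour. Since both sides are lattices, it is enough to prove that $\psi$ is an \emph{order}-isomorphism, i.e. that
$$I_1\subseteq I_2\iff \psi(I_1)\leq\psi(I_2);$$
a bijection that is monotone with monotone inverse automatically preserves $\vee$ and $\wedge$, and is therefore a lattice isomorphism. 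Thus the proof splits into the two implications above, the forward one being monotonicity of $\psi$ and the backward one being monotonicity of $\phi$.

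For the forward implication, suppose $I_1\subseteq I_2$ and write $\psi(I_j)=(V_j,\Sigma_j,\Theta_j)$. The inclusion $V_1=I_1\cap E^0\subseteq I_2\cap E^0=V_2$ is immediate from the definition of $\psi$. The substance is to track how the distinguished generators move: for $\lambda\in\Theta_1$ I must show $\lambda\in\Theta_2\sqcup\Lambda_T(V_2)$, and similarly for $\Sigma_1$. When $\lambda$ is absorbed into the larger quotient (so $r(\lambda)\subseteq V_2$) it falls into the slack set $\Lambda_T(V_2)$ and there is nothing to prove; when it survives one has $\mathcal{Y}_{\lambda/V_2}\subseteq\mathcal{Y}_{\lambda/V_1}$, and relation (3) of Definition \ref{definition: H-monoid}, namely $q_{Z_1}=q_{Z_2}+\mathbf{r}(Z_2-Z_1)$, lets me upgrade $q_{\lambda/V_1}\in I_2$ to $q_{\lambda/V_2}\in I_2$ because the correction term $\mathbf{r}(\mathcal{Y}_{\lambda/V_1}-\mathcal{Y}_{\lambda/V_2})$ is a sum of vertices already in $V_2\subseteq I_2$. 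This places $\lambda$ in $\Theta_2$. The finite case is direct, and relation (4) gives the mirror argument for $\Sigma$; altogether $\psi(I_1)\leq\psi(I_2)$.

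The backward implication, monotonicity of $\phi$, is where the real work lies and is the step I expect to be the main obstacle. Given $(V_1,\Sigma_1,\Theta_1)\leq(V_2,\Sigma_2,\Theta_2)$ I need $\langle I(V_1,\Sigma_1,\Theta_1)\rangle\subseteq\langle I(V_2,\Sigma_2,\Theta_2)\rangle$. Rather than wrestle with order-ideals directly, I would invoke Theorem \ref{theorem: ker of monoid hom}, which realizes $\langle I(V_2,\Sigma_2,\Theta_2)\rangle$ as the kernel of an explicit monoid homomorphism $\pi\colon H(\dot{E},\Lambda)\to M(\dot{\widetilde{E}})$ built from $(V_2,\Sigma_2,\Theta_2)$. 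Membership of a generator of $I(V_1,\Sigma_1,\Theta_1)$ then reduces to the computable test $\pi(\,\cdot\,)=0$. The generators are the vertices of $V_1$ (killed since $V_1\subseteq V_2$) together with the elements $q_\lambda$, $q_{\lambda/V_1}$ for $\lambda\in\Theta_1$ and their $p$-analogues for $\Sigma_1$; for each I run through the clauses defining $\pi$ in Theorem \ref{theorem: ker of monoid hom}. The decisive point is to verify, case by case across the five families $\Lambda_T^S,\Lambda_{\mathrm{fin}}^S,\Lambda_\infty^S,\Lambda_T^{\mathrm{fin}},\Lambda_T^\infty$, that whenever $\lambda$ survives modulo $V_2$ the order hypothesis $\Theta_1\subseteq\Theta_2\sqcup\Lambda_T(V_2)$ (respectively $\Sigma_1\subseteq\Sigma_2\sqcup\Lambda^S(V_2)$) forces $\lambda\in\Theta_2$ (resp. $\Sigma_2$), so that the matching clause of $\pi$ returns $0$. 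The careful pairing of these subfamilies against the slack terms $\Lambda_T(V_2)$ and $\Lambda^S(V_2)$ in the definition of $\leq$ is exactly the delicate bookkeeping that must be executed without error, and it is precisely this matching that rules out the apparently dangerous configurations (such as a $\Lambda_T^\infty$-edge whose range, but not source, lies in $V_2$).

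Finally, once $\psi$ is known to be a bijection that is simultaneously monotone and order-reflecting, it is an order-isomorphism with inverse $\phi$; order-isomorphisms between lattices preserve binary meets and joins, so $\psi$ and $\phi$ transport the operations $\vee,\wedge$ computed earlier to the corresponding operations on order-ideals, completing the proof.
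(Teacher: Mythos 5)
Your overall architecture is the paper's: you reduce the theorem to order-preservation of $\psi$ and $\phi$ via Lemma \ref{lemma: I} and Corollary \ref{corollary: H}, and your argument for monotonicity of $\psi$ (vertices are immediate; in the surviving case upgrade $q_{\lambda/V_1}\in I_2$ to $q_{\lambda/V_2}\in I_2$ using relation (3) of Definition \ref{definition: H-monoid}, the correction term being a sum of vertices of $V_2$) is essentially the argument in the paper. The divergence, and the gap, is in the monotonicity of $\phi$, which you yourself single out as the decisive step but then only outline: you propose to replace the paper's direct domination argument by the kernel description of Theorem \ref{theorem: ker of monoid hom} and to check $\pi(\cdot)=0$ on the generators of $I(V_1,\Sigma_1,\Theta_1)$, deferring the case analysis, and you assert that the interplay between the order relation and the slack sets ``rules out'' the configuration of a $\Lambda_T^\infty$-hyperedge whose range, but not source, lies in $V_2$. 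That assertion is not proved, and it does not follow from anything you invoke: bisaturation of $V_2$ only constrains $\lambda\in\Lambda_T^S$, and the order relation only gives $\Theta_1\subseteq\Theta_2\sqcup\Lambda_T(V_2)$, so $\lambda\in\Theta_1\cap\Lambda_T^\infty$ with $r(\lambda)\subseteq V_2$, $s(\lambda)\not\subseteq V_2$ and $\lambda\notin\Theta_2$ is not excluded.

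In exactly that configuration your criterion fails as stated: the relevant clause of $\pi$ in Theorem \ref{theorem: ker of monoid hom} (the case $s(\lambda)\not\subseteq V_2$, $\lambda\notin\Theta_2$, $r(Z)\subseteq V_2$ with $Z=\mathcal{Y}_{\lambda/V_1}$) returns $\mathbf{s}(\lambda)$, a nonzero sum of surviving vertices, not $0$, so ``$\pi$ kills the generator'' is precisely what is in doubt there. Even in the benign surviving case $\lambda\in\Theta_2\cap\Lambda_T^\infty$ the clause returns $\mathbf{r}(\mathcal{Y}_{\widetilde{\lambda}}-\widetilde{Z})$, and one still has to observe that $V_1\subseteq V_2$ forces $\mathcal{Y}_{\lambda/V_2}\subseteq\mathcal{Y}_{\lambda/V_1}$, hence that this difference is empty; your outline omits this as well. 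For comparison, the paper treats the slack case not through $\pi$ but through the inequalities $q_\lambda\leq q_\lambda+\mathbf{r}(\lambda)=\mathbf{s}(\lambda)$ and $q_{\lambda/V_1}\leq q_{\lambda/V_1}+\mathbf{r}(\mathcal{Y}_{\lambda/V_1})=\mathbf{s}(\lambda)$, together with the claim $\mathbf{s}(\lambda)\in I(V_2,\Sigma_2,\Theta_2)$ (which tacitly uses $s(\lambda)\subseteq V_2$); whatever route you take, some input of this kind is needed at exactly this point, and your kernel computation as described does not supply it. As it stands, the proposal proves the reduction and one half of the order-preservation, but the half you call the main obstacle is asserted rather than established.
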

 \begin{proof}
   The maps $\psi$ and $\phi$ are well defined by definition and by Lemma \ref{lemma: I} $\phi\circ\psi$ is the identity map on $\mathcal{L}(H(\dot{E},\Lambda))$ and by Corollary \ref{corollary: H} $\psi\circ\phi$ is the identity map on AT$(\Dot{E},\Lambda)$. We only to have to show that $\psi$ and $\phi$ are order-preserving.
   
   Suppose $I_1\subseteq I_2$ are order-ideals of $H(\dot{E},\Lambda)$ and $(V_j,\Sigma_j,\Theta_j)=\psi(I_j)$ for $j=1,2$. Clearly $V_1\subseteq V_2$. We only show that $\Theta_1\subseteq \Theta_2\sqcup \Lambda_T(V_2)$. Let $\lambda\in\Theta_1$. First suppose that $\lambda\in\Lambda_T^\text{fin}/V_1$ and $q_\lambda\in I_1$. If $\lambda\in\Lambda_T^\text{fin}/V_2$, then $\lambda\in\Theta_2$. Otherwise, $r(\lambda)\subseteq V_2$ and so $\textbf{s}(\lambda)\in I_2$, which implies $s(\lambda)\in V_2$ and $\lambda\in\Lambda_T(V_2)$. Now suppose that $\lambda\in\Lambda_Y^\infty/V_2$ and $q_{\lambda/V_1}\in I_1$. If $\lambda\in\Lambda_T^\infty/V_2$, then $q_{\lambda/V_2}$ is defined and also 
   $$q_{\lambda/V_2}=\textbf{r}(\{Y\in\mathcal{Y}_\lambda\mid r(Y)\in V_2-V_1\})+q_{\lambda/V_1}\in I_2,$$
   so $\lambda\in\Theta_2$. Otherwise, $r(\lambda)\subseteq V_2$ and so $\textbf{r}(\lambda/V_1)\in I_2$, hence $\textbf{s}(\lambda)\in I_2$, again giving $\lambda\in\Lambda_T(V_2)$. $\Sigma_1\subseteq \Sigma_2\sqcup\Lambda^S(V_2)$ follows on similar lines.
   
   Finally, let $(V_1,\Sigma_1,\Theta_1)$ and $(V_2,\Sigma_2,\Theta_2)$ are elements of AT$(\dot{E},\Lambda)$ such that $(V_1,\Sigma_1,\Theta_1)\leq(V_2,\Sigma_2,\Theta_2)$. Clearly $V_1\subseteq I(V_2,\Sigma_2,\Theta_2)$. Consider $\lambda\in\Theta_1\cap\Lambda_T^\text{fin}$. If $\lambda\in\Theta_2$, then $q_\lambda\in I(V_2,\Sigma_2,\Theta_2)$ by definition of $I(V_2,\Sigma_2,\Theta_2)$. If $\lambda\in\Lambda_T(V)$, then 
   $$q_\lambda\leq q_\lambda+\textbf{r}(\lambda)=\textbf{s}(\lambda)\in I(V_2,\Sigma_2, \Theta_2)$$
   and so $q_\lambda\in\langle I(V_2,\Sigma_2,\Theta_2)\rangle$. Now consider $\lambda\in \Theta_1\cap\Lambda_T^\infty$. If $\lambda\in\Theta_2$, then $q_{\lambda/V_2}\in I(V_2,\Sigma_2,\Theta_2)$ and since $$q_{\lambda/V_1}\leq q_{\lambda/V_1}+\textbf{r}(\mathcal{Y}_{\lambda/V_2}-\mathcal{Y}_{\lambda/V_1})=q_{\lambda/V_2},$$
   it follows that $q_{\lambda/V_1}\in\langle I(V_2,\Sigma_2,\Theta_2)\rangle$. If $\lambda\in\Lambda(V)$, then 
   $$q_{\lambda/V_1}\leq q_{\lambda/V_1}+\textbf{r}(\mathcal{Y}_{\lambda/V_1})=\textbf{s}(\lambda)\in I(V_2,\Sigma_2,\Theta_2)$$
   and again $q_{\lambda/V_1}\in\langle I(V_2,\Sigma_2,\Theta_2)\rangle$. A similar arguments shows that $\Sigma_1\subseteq\Sigma_2\sqcup\Lambda^S(V)$. Therefore all the generators of $I(V_1,\Sigma_1,\Theta_1$ lie in $\phi(V_2,\Sigma_2,\Theta_2)$, and we conclude that $\phi(V_1,\Sigma_1,\Theta_1)\subseteq\phi(V_2,\Sigma_2,\Theta_2)$. Hence $\phi$ is order-preserving. 
   
 \end{proof}
 
 \subsection{The lattice of trace-ideals in $\mathcal{A}_K(\dot{E},\Lambda)$}
 
 \begin{definition}
 Let $R$ be an arbitrary ring and Idem($M_\infty(R))$ denote the set of idempotents in $M_\infty(R))$. An ideal $I$ of $R$ is called a \textit{trace-ideal} provided $I$ can be generated by the entries of the matrices in some subset of Idem($M_\infty(R))$. We denote by Tr$(R)$ the set of all trace ideals of $R$. Since Tr$(R)$ is closed under arbitrary sums and arbitrary intersections, it forms a complete lattice with respect to inclusion.
 \end{definition}
 \begin{proposition}\cite[Proposition 10.10]{MR2980456} \label{prop: LV and Tr lattice isomorphism}
  For any ring $R$ there are mutually inverse lattice isomorphisms
  $$\Phi:\mathcal{L(V}(R))\rightarrow \emph{Tr}(R)\quad\text{and}\quad\Psi:\emph{Tr}(R)\rightarrow\mathcal{L(V}(R))$$
  given by
  $$\Phi(I)=\langle\text{entries of}~e\mid e\in\emph{ Idem}(M_\infty(R))~\text{and}~[e]\in I\rangle$$
  $$\Psi(J)=\{[e]\in\mathcal{V}(R)\mid e\in \emph{Idem}(M_\infty(J))\}.$$
 \end{proposition}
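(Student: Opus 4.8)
The plan is to route both maps through a single dictionary between idempotents and ideals. For $e\in\mathrm{Idem}(M_\infty(R))$ write $T(e)$ for the two-sided ideal of $R$ generated by the entries of $e$. First I would record three facts. (i) $T$ is a class invariant: if $[e]=[f]$, say $xy=e$ and $yx=f$, then after replacing $x$ by $exf$ and $y$ by $fye$ one checks that $x=ex=xf$ and $y=fy=ye$ still satisfy $xy=e$, $yx=f$; from $y=fy$ every entry of $y$ lies in $T(f)$, hence so does every entry of $e=xy$, and symmetrically $T(f)\subseteq T(e)$, giving $T(e)=T(f)$. (ii) $T(e\oplus f)=T(e)+T(f)$, immediate on inspecting entries. (iii) $[e]\le[f]$ implies $T(e)\subseteq T(f)$: writing $[e]+[g]=[f]$ gives $T(f)=T(e\oplus g)=T(e)+T(g)\supseteq T(e)$ by (i) and (ii). With these the two maps acquire clean descriptions, $\Phi(I)=\sum_{[e]\in I}T(e)$ and $\Psi(J)=\{[e]\in\mathcal V(R)\mid T(e)\subseteq J\}$, the latter because $T(e)\subseteq J$ iff the entries of $e$ lie in $J$ iff $e\in\mathrm{Idem}(M_\infty(J))$.

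Next I would verify that both maps are well defined and order preserving. By construction $\Phi(I)$ is generated by entries of idempotents, hence a trace ideal, and $\Phi$ is monotone since a larger $I$ contributes more summands. For $\Psi$, the characterization $\Psi(J)=\{[e]\mid T(e)\subseteq J\}$ makes the order-ideal axioms transparent: closure under $\oplus$ and the defining implication both follow from $T(e\oplus f)=T(e)+T(f)$ (so $[e\oplus f]\in\Psi(J)$ iff $[e],[f]\in\Psi(J)$), while downward closure is exactly (iii); monotonicity in $J$ is clear. Since a pair of mutually inverse order-preserving maps is an order isomorphism, which automatically preserves $\vee$ and $\wedge$, it then suffices to show $\Phi$ and $\Psi$ are mutually inverse.

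The identity $\Phi\circ\Psi=\mathrm{id}_{\mathrm{Tr}(R)}$ is the easy half. For a trace ideal $J$, every $T(e)$ with $[e]\in\Psi(J)$ lies in $J$, so $\Phi\Psi(J)\subseteq J$. Conversely $J$ is, by definition, generated by the entries of some family $\{g_\beta\}\subseteq\mathrm{Idem}(M_\infty(R))$; then $T(g_\beta)\subseteq J$, so $[g_\beta]\in\Psi(J)$ and hence $T(g_\beta)\subseteq\Phi\Psi(J)$, giving $J=\sum_\beta T(g_\beta)\subseteq\Phi\Psi(J)$.

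The real work, and where I expect the main obstacle, is $\Psi\circ\Phi=\mathrm{id}_{\mathcal L(\mathcal V(R))}$. The inclusion $I\subseteq\Psi\Phi(I)$ is immediate from $T(e)\subseteq\Phi(I)$ for $[e]\in I$. The reverse inclusion requires the following \textbf{Key Lemma}: if $e$ is an idempotent whose entries lie in $\sum_\alpha T(f_\alpha)$, then $[e]$ lies in the order-ideal generated by $\{[f_\alpha]\}$. Since $e$ has only finitely many entries, each a finite sum drawn from finitely many $T(f_\alpha)$, one passes to $F=f_1\oplus\cdots\oplus f_p$ and is reduced to proving $[e]\le m[F]$ for some $m\in\mathbb N$ whenever the entries of $e$ lie in $T(F)$; because $I$ is itself an order-ideal containing each $[f]$, this places $[e]\in I$. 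For the reduced statement I would use the projective-module picture: $[e]$ corresponds to a finitely generated projective $P$ with $\mathrm{tr}(P)=T(e)$, and since $P=P\,\mathrm{tr}(P)\subseteq P\,T(F)$, writing $T(F)=\sum_{\phi}\phi(Q)$ for the projective $Q$ of $[F]$ and applying this to a finite generating set of $P$ yields finitely many homomorphisms $Q\to P$ whose images generate $P$, i.e. a surjection $Q^{(k)}\twoheadrightarrow P$; projectivity of $P$ splits it, so $P\mid Q^{(k)}$ and $[e]\le k[F]$. The delicate point is validity for an arbitrary, possibly non-unital, $R$: one cannot literally invoke finitely generated projective $R$-modules, so I would either transport the argument to idempotent matrices over $R$ directly (assembling maps from factorizations $e_{ij}=a_{ij}\,F^{(m)}\,b_{ij}$ so that the resulting $s,t$ satisfy $ts=e$ with $st$ a subidempotent of a finite power of $F$) or pass to the unital ring $M_\infty(R)$ via $\mathcal V(R)\cong\mathcal V(M_\infty(R))$ and run the module argument there. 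Once the Key Lemma is secured, the four steps assemble into the asserted mutually inverse order isomorphisms, which are consequently lattice isomorphisms.
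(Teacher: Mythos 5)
The paper itself contains no proof of this proposition: it is quoted directly from Ara--Goodearl \cite[Proposition 10.10]{MR2980456}, so the only comparison available is with that source, and your outline follows essentially the same standard route. On its own merits your argument is structurally sound: facts (i)--(iii) about $T(e)$ are correctly verified, they give the clean descriptions $\Phi(I)=\sum_{[e]\in I}T(e)$ and $\Psi(J)=\{[e]\mid T(e)\subseteq J\}$, the order-ideal axioms for $\Psi(J)$, the identity $\Phi\circ\Psi=\mathrm{id}$, and the reduction of $\Psi\circ\Phi=\mathrm{id}$ to your Key Lemma are all correct, and your projective-module proof of the Key Lemma is complete when $R$ is unital (the points $\mathrm{tr}(P)=T(e)$, $P=P\,\mathrm{tr}(P)$, and the splitting of $Q^{(k)}\twoheadrightarrow P$ are all standard and correctly deployed).

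Two points need attention in the generality actually claimed (arbitrary, possibly non-unital $R$, which is the case the paper uses). First, your fallback route fails as stated: $M_\infty(R)$ is never unital, and for arbitrary $R$ it need not even have local units, so one cannot simply ``run the module argument there''; the matrix-level route must carry the proof. Second, that matrix route does work, but your one-line sketch hides a small necessary step: in a non-unital ring $T(F)=\mathbb{Z}S+RS+SR+RSR$ (with $S$ the set of entries of $F$), so an entry of $e$ need not a priori admit a two-sided factorization $\sum_l a_lF_{p_lq_l}b_l$ with $a_l,b_l\in R$. Writing $e=e^3$ repairs this, since then each entry of $e$ lies in $R\,T(F)\,R\subseteq RSR$. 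Given such factorizations, build rectangular matrices $x,y$ over $R$ using a separate diagonal block of $G:=F^{\oplus k}$ for each pair (entry of $e$, term of its factorization), so that no cross terms arise and $xGy=e$; then $u:=exG$ and $v:=Gye$ satisfy $uv=e(xGy)e=e$ and $vu=GyexG=:g$, where $g$ is an idempotent with $GgG=g$, so $k[F]=[g]+[G-g]=[e]+[G-g]$ and hence $[e]\le k[F]$. Modulo writing out this bookkeeping (and deleting the $M_\infty(R)$ remark), your proposal does prove the proposition.
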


 \begin{lemma}\label{lemma: idempotents}
Let $(\Dot{E},\Lambda)$ be a B-hypergraph. Then the trace ideals of $A:=\mathcal{A}_K(\Dot{E},\Lambda)$ are precisely the idempotent generated ideals and the lattice isomorphism 
$\Phi:\mathcal{L(V}(A))\rightarrow \emph{Tr}(A)$ is expressed as 
$$\Phi(I)=\langle\text{idempotents}~e\in A\mid[e]\in I\rangle.$$
\end{lemma}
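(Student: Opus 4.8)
The plan is to deduce the statement from the general ring-theoretic correspondence of Proposition \ref{prop: LV and Tr lattice isomorphism}, by showing that for $A=\mathcal{A}_K(\dot E,\Lambda)$ the matrix idempotents appearing in the formula for $\Phi$ may be replaced by honest idempotents of $A$. Write $J_2(I):=\langle\, e\in A : e^2=e,\ [e]\in I\,\rangle$ for the candidate right-hand side. One inclusion is immediate: an idempotent $e\in A$ is a $1\times 1$ idempotent matrix whose only entry is $e$, so if $[e]\in I$ then $e\in\Phi(I)$ by Proposition \ref{prop: LV and Tr lattice isomorphism}, giving $J_2(I)\subseteq\Phi(I)$. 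The whole content of the lemma is therefore the reverse inclusion $\Phi(I)\subseteq J_2(I)$. Since for an idempotent $e\in A$ one has $e\in\Phi(I)\iff [e]\in\Psi\Phi(I)=I$, this is equivalent to the assertion that every trace ideal $J=\Phi(I)$ is generated by the idempotents of $A$ it contains; and for that it suffices to prove that $J$ admits a set of local units consisting of idempotents of $A$ lying in $J$, for then any $x\in J$ satisfies $x=uxu\in AuA\subseteq J_2(I)$.

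To produce these idempotents I would run through the generators of the monoid. By Theorem \ref{thm: V-monoid_H-monoid_Isomorphism}, $\mathcal{V}(A)\cong H(\dot E,\Lambda)$ is generated by the vertex classes $[v]$ together with the gap classes $q_Z$ and $p_W$. As $\Phi$ is a lattice isomorphism it preserves suprema, so $\Phi(I)=\sum_\alpha \mathrm{tr}(e_\alpha)$ where the $e_\alpha$ range over a generating family of idempotent matrices with $[e_\alpha]\in I$, and it is enough to treat one generator at a time. The vertex summands are harmless: $\mathrm{tr}(Av)=AvA=\langle v\rangle$ is generated by the single idempotent $v$, and $[v]\le [e_\alpha]\in I$ forces $v\in J_2(I)$. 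Thus the problem concentrates entirely on the gap generators.

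The key step, which I expect to be the main obstacle, is to realize each gap class by an idempotent of $A$. Under $\Gamma$ the generator $q_Z$ is the class of the matrix idempotent $G=\operatorname{diag}(s(X))-BB^{\ast}$ of size $|\mathcal{X}_\lambda|$, where (by the B-hypergraph axioms, only the rows of $\mathcal{X}_\lambda$ meet the columns of $\mathcal{Y}_\lambda$) relation $\mathcal{A}2$ gives $B^{\ast}B=\operatorname{diag}(r(Y))_{Y\in Z}$, so that $BB^{\ast}$ is idempotent and $G$ is a genuine idempotent. The natural move is to transport $G$ across the Murray--von Neumann equivalence $\operatorname{diag}(s(X))\sim\sum_{X\in\mathcal{X}_\lambda}s(X)$ implemented by the row $x=(s(X))_{X}$ (for which $x^{\ast}x=\operatorname{diag}(s(X))$ and $xx^{\ast}=\sum_X s(X)$), obtaining a single idempotent $f=xGx^{\ast}\in A$ with $[f]=[G]=q_Z$ and hence $\mathrm{tr}(G)=\mathrm{tr}(f)=\langle f\rangle$. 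The delicate point is precisely the bookkeeping when several rows of $\mathcal{X}_\lambda$ share a source vertex: then $xx^{\ast}=\sum_X s(X)$ overcounts and fails to be idempotent, so the naive equivalence breaks down and one must argue separately that the gap projective is still generated by idempotents of $A$ whose classes lie in $I$. Carrying this through--showing that the local units of the gap component can be taken to be idempotents of $A$ inside the ideal--is where the argument needs genuine care and is the crux of the proof. The companion generator $p_W$ is handled symmetrically using $\mathcal{A}1$.

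Granting the gap case, I would assemble the pieces: each summand $\mathrm{tr}(e_\alpha)$ of $\Phi(I)$ is then generated by idempotents of $A$ of class $\le[e_\alpha]\in I$, hence by idempotents in $J_2(I)$, so that $\Phi(I)\subseteq J_2(I)$ and therefore $\Phi(I)=J_2(I)$. Since every $J_2(I)$ is by construction idempotent-generated and, conversely, any idempotent-generated ideal is the trace ideal of its own idempotents, this identifies $\mathrm{Tr}(A)$ with the idempotent-generated ideals and simultaneously yields the displayed formula for $\Phi$.
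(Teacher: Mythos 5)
Your overall route is the same as the paper's (the paper's proof is a one-line reduction to \cite[Proposition 6.2]{MR2980456} after recording that $\mathcal{V}(A)\cong H(\dot{E},\Lambda)$ is generated by the vertex classes and the gap classes), and the parts you carry out are fine: the inclusion $J_2(I)\subseteq\Phi(I)$, the reduction to monoid generators lying in $I$, the vertex generators, and the transport $f=xGx^{\ast}$ of a gap matrix idempotent to an honest idempotent of $A$ when the sources $s(X)$, $X\in\mathcal{X}_\lambda$, are pairwise distinct. But the proposal stops exactly at the step you yourself identify as the crux: nothing is proved about a gap generator whose hyperedge has repeated sources (dually, repeated ranges), and ``granting the gap case'' is precisely the content of the lemma that goes beyond the separated-graph situation of Ara--Goodearl, where each $\mathcal{X}_\lambda$ is a single row and the gap classes are classes of honest idempotents $v-\sum_{e\in Z}ee^{\ast}$. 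So as written the proof is incomplete.

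Worse, that step cannot be carried out in the stated generality, so the gap is not a matter of ``bookkeeping with care''. Consider the B-hypergraph with one vertex $v$, $\mathcal{X}_\lambda=\{X_1,X_2\}$, $\mathcal{Y}_\lambda=\{Y_1,Y_2,Y_3\}$, all six edges $X_iY_j$ present, $S=\emptyset$ and $T=D$, so $\lambda\in\Lambda_T^{\mathrm{fin}}$ and the only relations in $A=\mathcal{A}_K(\dot{E},\Lambda)$ are $\sum_{i}(X_iY_j)^{\ast}(X_iY_k)=\delta_{jk}v$. Writing $q:=q_{\mathcal{Y}_\lambda}$ (the other $q_Z$ differ from it by multiples of $v$), the $H$-monoid is $\langle v,q\mid 2v=3v+q\rangle$, and under $\Gamma$ the class $q$ is $[G]$ with $G=\mathrm{diag}(v,v)-BB^{\ast}\in M_2(A)$. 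Since no defining relation applies to the words $v$ or $nq$, the congruence classes of $v$ and of $nq$ in the free monoid are singletons; hence $I:=\{kq\mid k\geq 0\}$ is a proper nonzero order-ideal, the only elements of $H$ below $v$ are $0$ and $v$, and $v\notin I$. Now $A$ is unital with $1_A=v$, so every idempotent $e\in A$ satisfies $[e]\leq[v]$, i.e.\ $[e]\in\{0,v\}$, and therefore the only idempotent of $A$ with class in $I$ is $0$; on the other hand $\Phi(I)$ contains the entries of $G\neq 0$, so $\Phi(I)\neq 0=\langle\, e=e^2\in A\mid [e]\in I\,\rangle$, and $\Phi(I)$ contains no nonzero idempotent of $A$ at all, so it is not idempotent-generated. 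Thus the displayed formula fails whenever a gap hyperedge has repeated sources; the lemma is correct (and your argument completes) only under an extra hypothesis such as: for every $\lambda$ contributing a $q$-generator the vertices $s(X)$, $X\in\mathcal{X}_\lambda$, are pairwise distinct, and dually for $p$-generators --- which covers separated graphs, the setting of the result the paper's proof silently relies on. So your instinct about where the difficulty sits is accurate, but the proposal neither fills that hole nor can it be filled as stated; a correct write-up must add such a hypothesis or otherwise show that the gap classes in question are realized by idempotents of $A$.
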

\begin{proof}
  The proof goes exactly similar to \cite[Proposition 6.2]{MR2980456}, except that in the present case, the V-monoid $\mathcal{V}(\mathcal{A}_K(\dot{E}))$ is generated by 
  \begin{eqnarray}
  \{[v]~|~v \in E^0\} & \cup & \{[q_Z^\prime]~|~Z \subseteq \mathcal{Y}_\lambda,~ 0 < |Z| < \infty~\text{and}~ \lambda \notin \Lambda_S\} \nonumber \\
  & \cup & \{[p_W^\prime]~|~W \subseteq \mathcal{X}_\lambda,~0 < |W| < \infty~\text{and}~ \lambda \notin \Lambda_T\} \nonumber.
  \end{eqnarray}
  (Here, because of Theorem \ref{thm: V-monoid_H-monoid_Isomorphism}, we are using the notation $[q_Z^\prime]$ and $[p_W^\prime]$ for the generators of $\mathcal{V}(\mathcal{A}_K(\dot{E}))$. Strictly speaking, $[q_z^\prime]$ and $[p_W^\prime]$ stand for images of $q_Z$ and $p_W$ respectively under the map $\Gamma$ defined in Theorem \ref{thm: V-monoid_H-monoid_Isomorphism}).
\end{proof}
\begin{theorem}
  Let $(\Dot{E}.\Lambda)$ be a B-hypergraph and $A=\mathcal{A}_K(\Dot{E},\Lambda)$. Then there exist mutually inverse lattice isomorphisms
 $$\xi:\emph{AT}(\Dot{E},\Lambda)\rightarrow \emph{Tr}(A)~\text{and}~\zeta:\emph{Tr}(A)\rightarrow \emph{AT}(\Dot{E},\Lambda).$$
 \end{theorem}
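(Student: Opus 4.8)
The plan is to realize $\xi$ and $\zeta$ as composites of lattice isomorphisms that have already been established in the paper, threading through the $H$-monoid and the $\mathcal{V}$-monoid of $A$. Concretely, Theorem \ref{theorem: AT and L lattice isomorphism} supplies mutually inverse lattice isomorphisms $\phi : \mathrm{AT}(\dot{E},\Lambda) \to \mathcal{L}(H(\dot{E},\Lambda))$ and $\psi$ in the reverse direction; Proposition \ref{prop: LV and Tr lattice isomorphism}, together with the concrete description in Lemma \ref{lemma: idempotents}, supplies mutually inverse lattice isomorphisms $\Phi : \mathcal{L}(\mathcal{V}(A)) \to \mathrm{Tr}(A)$ and $\Psi$; and Theorem \ref{thm: V-monoid_H-monoid_Isomorphism} supplies a monoid isomorphism $\Gamma = \Gamma(\dot{E},\Lambda) : H(\dot{E},\Lambda) \to \mathcal{V}(A)$. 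All that remains is to bridge $\mathcal{L}(H(\dot{E},\Lambda))$ and $\mathcal{L}(\mathcal{V}(A))$ by means of $\Gamma$.

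First I would verify the one genuinely new (but routine) point: any monoid isomorphism $\Gamma : M_1 \to M_2$ induces a lattice isomorphism $\Gamma_\ast : \mathcal{L}(M_1) \to \mathcal{L}(M_2)$ via $I \mapsto \Gamma(I)$. Indeed, an order-ideal is defined purely in terms of the monoid addition and the associated algebraic pre-order $x \leq y \iff \exists z,\ x+z=y$; since $\Gamma$ both preserves and reflects addition, it preserves and reflects this pre-order, so $\Gamma(I)$ is a submonoid that is downward closed under $\leq$ whenever $I$ is, and $\Gamma^{-1}$ does the same in reverse. Hence $\Gamma_\ast$ is an inclusion-preserving bijection with inverse $(\Gamma^{-1})_\ast$, i.e.\ a lattice isomorphism.

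Then I would set $\xi = \Phi \circ \Gamma_\ast \circ \phi$ and $\zeta = \psi \circ (\Gamma_\ast)^{-1} \circ \Psi$. Each factor is a lattice isomorphism, so both composites are; and since $\zeta$ is the composite of the inverses of the factors of $\xi$ taken in the opposite order, $\zeta = \xi^{-1}$, giving the desired mutually inverse lattice isomorphisms. For an explicit description I would unwind the definitions: an admissible triple $(V,\Sigma,\Theta)$ is sent by $\phi$ to the order-ideal $\langle I(V,\Sigma,\Theta)\rangle$ of $H(\dot{E},\Lambda)$, transported by $\Gamma_\ast$ to an order-ideal of $\mathcal{V}(A)$, and finally, by Lemma \ref{lemma: idempotents}, to the trace ideal $\langle\, e \in A \text{ idempotent} \mid [e] \in \Gamma(\langle I(V,\Sigma,\Theta)\rangle) \,\rangle$.

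The only point requiring care is bookkeeping of the three directions so that $\xi$ and $\zeta$ come out genuinely inverse; there is no serious obstacle, since all the substantive content has already been carried out in the cited results, namely the lattice structure on $\mathrm{AT}(\dot{E},\Lambda)$ and its identification with $\mathcal{L}(H(\dot{E},\Lambda))$, the $H$-monoid/$\mathcal{V}$-monoid isomorphism, and the order-ideal/trace-ideal correspondence.
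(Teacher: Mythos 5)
Your proposal is correct and follows essentially the same route as the paper: the paper's proof likewise obtains $\xi=\Phi\Gamma\phi$ and $\zeta=\psi\Gamma^{-1}\Psi$ by composing Theorem \ref{theorem: AT and L lattice isomorphism}, the induced lattice isomorphism coming from Theorem \ref{thm: V-monoid_H-monoid_Isomorphism}, and Proposition \ref{prop: LV and Tr lattice isomorphism} together with Lemma \ref{lemma: idempotents}. The only difference is that you spell out the routine fact that a monoid isomorphism induces an isomorphism of order-ideal lattices (which the paper passes over by abuse of notation), while the paper additionally unwinds the composite into an explicit description of $\zeta(J)$ and identifies $\xi(H,\Sigma,\Theta)$ with the ideal generated by $H\sqcup P(\Sigma)\sqcup Q(\Theta)$; neither difference affects correctness.
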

 \begin{proof}
   Set $M:=H(\dot{E},\Lambda)$ and let $\Gamma:M\rightarrow\mathcal{V}(A)$ be the monoid isomorphism. By abuse of notation we also use $\Gamma$ to denote the induced lattice isomorphism $\mathcal{L}(M)\rightarrow\mathcal{L(V}(A))$. Due to Theorem \ref{theorem: AT and L lattice isomorphism} and Proposition \ref{prop: LV and Tr lattice isomorphism}, we have mutually inverse lattice isomorphisms
   $$\Phi\Gamma\phi:\text{AT}(\Dot{E},\Lambda)\rightarrow\text{Tr}(A)\quad\text{and}\quad\psi\Gamma^{-1}\Psi:\text{Tr}(A)\rightarrow\text{AT}(\Dot{E},\Lambda).$$
   
   More explicitly, if $J\in\emph{Tr}(A)$, then $\zeta(J)=(H,\Sigma,\Theta)$, where
   $$H=E^0\cap J,$$
   $$\Sigma=\{\lambda\in\Lambda_\text{fin}^S/H\mid p_\lambda\in J\}\sqcup\{\lambda\in\Lambda_\infty^S/H\mid p_{\lambda/H}\in J\},$$
   $$\Theta=\{\lambda\in\Lambda_T^\text{fin}/H\mid q_\lambda\in J\}\sqcup\{\lambda\in\Lambda_T^\infty/H\mid q_{\lambda/H}\in J\}.$$
   
   For converse, let $(H,\Sigma,\Theta)\in\emph{AT}(\Dot{E},\Lambda)$. First define $\xi(H,\Sigma,\Theta)=\langle H\sqcup P(\Sigma)\sqcup Q(\Theta)\rangle$, where $P(\Sigma)$ and $Q(\Theta)$ are defined as in Definition \ref{definition: correspondence between order-ideals and admissible triples}. 
   Then define $J(H,\Sigma,\Theta)$ to be the order-ideal of $\mathcal{V}(A)$ generated by the set $H^\prime\sqcup P^\prime(\Sigma)\sqcup Q^\prime(\Sigma)$, where
   $$H^\prime=\{[v]\mid v\in H\},$$
   $$P^\prime(\Sigma)=\{[p_\lambda^\prime]\mid \lambda\in\Sigma\cap\Lambda_\text{fin}^S/H\}\sqcup\{[p_{\lambda/H}^\prime]\mid\lambda\in\Sigma\cap\Lambda_\infty^S\},$$
   $$Q^\prime(\Theta)=\{[q_\lambda^\prime]\mid \lambda\in\Theta\cap\Lambda_T^\text{fin}/H\}\sqcup\{[q_{\lambda/H}^\prime]\mid^\lambda\in\Theta\cap\Lambda_T^\infty\}.$$
   
  By Lemma \ref{lemma: idempotents}, it follows that 
  $$\Phi\Gamma\phi(H,\Sigma,\Theta)=\langle\text{idempotents}~e\in A\mid [e]\in J(H,\Sigma,\Theta)\rangle.$$
  It is clear that $\xi(H,\Sigma,\Theta)\subseteq \Phi\Gamma\phi(H,\Sigma,\Theta)$.
  
  If $e$ is an idempotent in $A$ such that $[e]\in J(H,\Sigma,\Theta)$, then 
  $$[e]\leq\sum\limits_{i=1}^{n_1}[v_i]+\sum\limits_{j=1}^{n_2}[p_{\alpha_j}^\prime]+\sum\limits_{k=1}^{n_3}[p_{\beta_k/H}^\prime]+\sum\limits_{l=1}^{n_4}[q_{\gamma_l}^\prime]+\sum\limits_{m=1}^{n_5}[q_{\delta_m/H}^\prime]$$
  where $v_i\in H$, $\alpha_j\in\Sigma\cap\Lambda_\text{fin}^S$, $\beta_k\in\Sigma\cap\Lambda_\infty^S$, $\gamma_l\in\Theta\cap\Lambda_T^\text{fin}$ and $\delta_m\in\Theta\cap\Lambda_T^\infty$. Therefore $e$ is equivalent to some idempotent
  $e^\prime\leq \mathcal{D}$ where $\mathcal{D}$ is a diagonal matrix with entries $v_i,p_{\alpha_j},p_{\beta_k/H},q_{\gamma_l}$, and $q_{\delta_m/H}$.Thus it follows that $e$ lies in these $v_i,p_{\alpha_j},p_{\beta_k/H},q_{\gamma_l}$, and $q_{\delta_m/H}$. Hence $\Phi\Gamma\phi=\xi$.
 \end{proof}
\subsection{Simplicity}

A non-zero conical monoid $M$ is \textit{simple} if its only order-ideals are $\{0\}$ and $M$.

\begin{theorem}\label{theorem: trace ideal simplicity}
 Let $(\Dot{E},\Lambda)$ be a B-hypergraph. Then the following conditions are equivalent
 \begin{enumerate}
     \item The only trace ideals of $\mathcal{A}_K(\Dot{E},\Lambda)$ are $0$ and $\mathcal{A}_K(\Dot{E},\Lambda)$.
     \item $H(\dot{E},\Lambda)
     $ is a simple monoid.
     \item $S=C_\emph{fin}$, $T=D_\emph{fin}$ and the only bisaturated subsets of $E^0$ are $\emptyset$ and $E^0$.
 \end{enumerate}
\end{theorem}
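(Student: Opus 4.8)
The plan is to transport the entire question onto the purely combinatorial lattice $\mathrm{AT}(\dot E,\Lambda)$ by means of the lattice isomorphisms already established, and then to read off exactly when that lattice collapses to a two-element chain. Throughout I use that $\dot E$ is a nonempty B-hypergraph, so $E^0\neq\emptyset$ and $H:=H(\dot E,\Lambda)$ is a nonzero conical monoid.

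I would first dispose of $(1)\Leftrightarrow(2)$. Composing the isomorphism $\phi\colon\mathrm{AT}(\dot E,\Lambda)\xrightarrow{\sim}\mathcal L(H)$ of Theorem~\ref{theorem: AT and L lattice isomorphism} with the isomorphism $\xi\colon\mathrm{AT}(\dot E,\Lambda)\xrightarrow{\sim}\mathrm{Tr}(A)$ of the preceding theorem (equivalently, chaining Theorem~\ref{thm: V-monoid_H-monoid_Isomorphism} with Proposition~\ref{prop: LV and Tr lattice isomorphism}) yields a lattice isomorphism $\mathrm{Tr}(A)\cong\mathcal L(H)$ sending $0\mapsto\{0\}$ and $A\mapsto H$. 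Since $E^0\neq\emptyset$, the bottom and top differ. Now $(1)$ asserts that $\mathrm{Tr}(A)$ is the two-element chain $\{0<A\}$, while $(2)$ asserts that $\mathcal L(H)$ is the two-element chain $\{\{0\}<H\}$; a lattice isomorphism forces one of these to hold iff the other does, so $(1)\Leftrightarrow(2)$.

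For $(2)\Leftrightarrow(3)$ I would use $\phi,\psi$ once more to rewrite $(2)$ as: $\mathrm{AT}(\dot E,\Lambda)$ consists only of its minimum $(\emptyset,\emptyset,\emptyset)$ and its maximum $(E^0,\emptyset,\emptyset)$. Each implication is handled by contraposition, by exhibiting a third admissible triple. If some bisaturated $V\notin\{\emptyset,E^0\}$ exists, then $(V,\emptyset,\emptyset)$ is such a triple. If $S\neq C_{\mathrm{fin}}$, then $C_{\mathrm{fin}}-S=\bigsqcup_{\lambda\in\Lambda_T^{\mathrm{fin}}}\mathcal X_\lambda\neq\emptyset$; picking $\lambda\in\Lambda_T^{\mathrm{fin}}$ we have $\lambda\in\Lambda_T^{\mathrm{fin}}/\emptyset$ (its finitely many columns being nonempty), so $(\emptyset,\emptyset,\{\lambda\})$ is admissible and distinct from both extremes, and dually $T\neq D_{\mathrm{fin}}$ produces $(\emptyset,\{\lambda\},\emptyset)$. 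This gives $(2)\Rightarrow(3)$. For $(3)\Rightarrow(2)$ I take an arbitrary $(V,\Sigma,\Theta)\in\mathrm{AT}$: bisaturatedness together with the third clause of $(3)$ forces $V\in\{\emptyset,E^0\}$; for $V=E^0$ every $\mathcal X_{\lambda/V}$ and $\mathcal Y_{\lambda/V}$ is empty, so all index sets $\Lambda^{*}_{*}/E^0$ vanish and $\Sigma=\Theta=\emptyset$; for $V=\emptyset$ the hypotheses $S=C_{\mathrm{fin}}$ and $T=D_{\mathrm{fin}}$ translate, via the partition of $\Lambda$ in the definition of a B-hypergraph, into $\Lambda_T^{\mathrm{fin}}=\Lambda_{\mathrm{fin}}^S=\emptyset$, which is what is needed to annihilate $\Sigma$ and $\Theta$ over $V=\emptyset$.

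The main obstacle is exactly this last step. Showing $\Sigma=\Theta=\emptyset$ over $V=\emptyset$ amounts to checking that none of the defect index sets $\Lambda_{\mathrm{fin}}^S/\emptyset,\ \Lambda_\infty^S/\emptyset,\ \Lambda_T^{\mathrm{fin}}/\emptyset,\ \Lambda_T^\infty/\emptyset$ can carry a generator, and one must keep careful track of all five blocks $\Lambda=\Lambda_T^S\sqcup\Lambda_{\mathrm{fin}}^S\sqcup\Lambda_\infty^S\sqcup\Lambda_T^{\mathrm{fin}}\sqcup\Lambda_T^\infty$ and of the precise finiteness clauses in the definition of $\Lambda^{*}_{*}/V$. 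The finite defect classes are emptied directly by $S=C_{\mathrm{fin}}$ and $T=D_{\mathrm{fin}}$; the infinite defect classes $\Lambda_\infty^S,\Lambda_T^\infty$ are not seen by the bisaturation condition (which involves only $\Lambda_T^S$) and so must be tied back to the hypotheses by a dedicated argument. Everything else in the proof is formal transport along the isomorphisms of Theorems~\ref{thm: V-monoid_H-monoid_Isomorphism} and~\ref{theorem: AT and L lattice isomorphism} and Proposition~\ref{prop: LV and Tr lattice isomorphism}.
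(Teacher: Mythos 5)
Your route is the paper's route: transport everything onto $\mathrm{AT}(\dot E,\Lambda)$ via Theorem \ref{thm: V-monoid_H-monoid_Isomorphism}, Proposition \ref{prop: LV and Tr lattice isomorphism} and Theorem \ref{theorem: AT and L lattice isomorphism}, get $(1)\Leftrightarrow(2)$ by the lattice isomorphism $\mathrm{Tr}(\mathcal A_K(\dot E,\Lambda))\cong\mathcal L(H(\dot E,\Lambda))$, prove $(2)\Rightarrow(3)$ by exhibiting a third admissible triple ($(V,\emptyset,\emptyset)$ for a nontrivial bisaturated $V$, $(\emptyset,\emptyset,\{\lambda\})$ with $\lambda\in\Lambda_T^{\mathrm{fin}}$ when $S\neq C_{\mathrm{fin}}$, $(\emptyset,\{\lambda\},\emptyset)$ with $\lambda\in\Lambda_{\mathrm{fin}}^S$ when $T\neq D_{\mathrm{fin}}$), and prove $(3)\Rightarrow(2)$ by showing $\mathrm{AT}(\dot E,\Lambda)=\{(\emptyset,\emptyset,\emptyset),(E^0,\emptyset,\emptyset)\}$. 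This is exactly the published argument (the paper's wording swaps which defect class witnesses $S\neq C_{\mathrm{fin}}$ versus $T\neq D_{\mathrm{fin}}$; your labels are the correct ones, and the point is symmetric anyway). One cosmetic remark: for $\lambda\in\Lambda_T^{\mathrm{fin}}$, membership in $\Lambda_T^{\mathrm{fin}}/\emptyset$ only requires $\mathcal Y_\lambda\neq\emptyset$, not that it be finite, and nonemptiness holds because the blocks $\mathcal X_\lambda,\mathcal Y_\lambda$ are nonempty by construction.

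The only genuine incompleteness is the step you yourself flag and then leave open: in $(3)\Rightarrow(2)$ with $V=\emptyset$, the hypotheses $S=C_{\mathrm{fin}}$ and $T=D_{\mathrm{fin}}$ empty only $\Lambda_{\mathrm{fin}}^S$ and $\Lambda_T^{\mathrm{fin}}$, so you still must show that $\Sigma$ and $\Theta$ cannot pick elements of $\Lambda_\infty^S/\emptyset$ or $\Lambda_T^\infty/\emptyset$. This is needed, but it closes in a few lines from the B-hypergraph axioms, with no use of $(3)$ at all: let $\lambda\in\Lambda_T^\infty$ and pick $X\in\mathcal X_\lambda$, so $|X|=\infty$ and $X\notin S$. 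Each $e\in X$ lies in a unique column $Y_e\in D$; since $X\notin S$, condition (1) of the B-hypergraph definition forces $Y_e\in T$, hence $Y_e\in\mathcal Y_\mu$ for some $\mu\in\Lambda_T$, and condition (2) forces $\mu=\lambda$; finally $|X\cap Y|\le 1$ (condition (4) of Definition \ref{biseparated graph}) makes $e\mapsto Y_e$ injective, so $|\mathcal Y_{\lambda/\emptyset}|=|\mathcal Y_\lambda|=\infty$ and the strict finiteness clause in the definition of $\Lambda_T^\infty/V$ excludes $\lambda$. Dually $\Lambda_\infty^S/\emptyset=\emptyset$. With this inserted your proof is complete; the paper absorbs exactly this point, without comment, into its opening ``Observe that'' identities, so you have in fact isolated the one step the published proof glosses over.
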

\begin{proof}
From Proposition \ref{prop: LV and Tr lattice isomorphism} it follows that $(1)\Leftrightarrow(2)$.

$(2)\Rightarrow(3):$ Observe that $(\Lambda_\text{fin}^S/\emptyset)\sqcup(\Lambda_\infty^S/\emptyset)=C_\text{fin}-S$ and $(\Lambda_T^\text{fin}/\emptyset)\sqcup(\Lambda_T^\infty/\emptyset)=D_\text{fin}-T$. Similarly, $(\Lambda_\text{fin}^S/E^0)\sqcup(\Lambda_\infty^S/E^0)=\emptyset$ and $(\Lambda_T^\text{fin}/E^0)\sqcup(\Lambda_T^\infty/E^0)=\emptyset$. By Theorem \ref{theorem: AT and L lattice isomorphism}, the only members of AT$(\Dot{E},\Lambda)$ are $(\emptyset,\emptyset,\emptyset)$ and $(E^0,\emptyset,\emptyset)$.  If $\lambda\in\Lambda_\text{fin}^S$, then $(\emptyset,\{\lambda\},\emptyset)\in\text{AT}(\Dot{E},\Lambda)$. This proves that $S=C_\text{fin}$. Similarly $T=D_\text{fin}$. If $H$ is any bisaturated subset of $E^0$, then $(H,\emptyset,\emptyset)\in\text{AT}(\Dot{E},\Lambda)$ and hence the only bisaturated subsets of $E^0$ are $E^0$ and $\emptyset$.

$(3)\Rightarrow(2):$ In this case AT$(\Dot{E},\Lambda)=\{(E^0,\emptyset,\emptyset),(\emptyset,\emptyset,\emptyset)\}$. The result follows at once from Theorem \ref{theorem: AT and L lattice isomorphism}.
\end{proof}

Although the characterization of simplicity of $\mathcal{A}_K(\Dot{E},\Lambda)$ is quite difficult, from theorem \ref{theorem: trace ideal simplicity} we can easily conclude the following necessary condition.
\begin{corollary}
Let $(\dot{E},\Lambda)$ be a hypergraph and suppose $L_K(\dot{E},\Lambda)$ is simple then 
the only trace ideals of $L_K(\Dot{E},\Lambda)$ are $0$ and $L_K(\Dot{E},\Lambda)$.
\end{corollary}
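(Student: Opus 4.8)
The plan is to observe that the corollary is an immediate consequence of the definition of a trace ideal combined with the meaning of simplicity, so the ``work'' is essentially bookkeeping about the non-unital setting. First I would recall that, by the definition preceding Proposition \ref{prop: LV and Tr lattice isomorphism}, every trace ideal is by construction a genuine two-sided ideal of $L_K(\dot{E},\Lambda)$: it is the ideal generated by the entries of some family of idempotent matrices in $M_\infty(L_K(\dot{E},\Lambda))$. Consequently $\mathrm{Tr}(L_K(\dot{E},\Lambda))$ sits inside the lattice of all two-sided ideals of the algebra.

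Next I would make explicit what simplicity means here. Since $L_K(\dot{E},\Lambda)$ is a ring with enough idempotents, one has $R^2=R$ automatically: for each $r$ there is a local unit $u$ with $uru=r$, whence $r\in R^2$. Therefore simplicity is equivalent to the statement that the only two-sided ideals are $0$ and the whole algebra. Putting the two observations together, any trace ideal, being a two-sided ideal, must equal $0$ or $L_K(\dot{E},\Lambda)$, which is precisely the asserted conclusion. There is no genuine obstacle in the argument; the only point meriting a moment's care is the potential non-unitality of the algebra, and this is dispatched by the enough-idempotents property just invoked.

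Finally I would emphasise that the value of the statement lies in its interaction with Theorem \ref{theorem: trace ideal simplicity} rather than in its (trivial) proof: the condition ``the only trace ideals are $0$ and $L_K(\dot{E},\Lambda)$'' is shown there to be equivalent both to $H(\dot{E},\Lambda)$ being a simple monoid and to the graph-theoretic data $S=C_{\mathrm{fin}}$, $T=D_{\mathrm{fin}}$ together with the absence of proper nonempty bisaturated subsets of $E^0$. Thus the corollary records these as \emph{necessary} conditions for simplicity of the Leavitt path algebra of the hypergraph, complementing the remark that a full characterisation of simplicity of $\mathcal{A}_K(\dot{E},\Lambda)$ remains difficult.
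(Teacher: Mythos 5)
Your proof is correct and is essentially the argument the paper intends: trace ideals are by definition two-sided ideals, so simplicity (with the non-unital issue handled by local units) immediately forces every trace ideal to be $0$ or the whole algebra, after which Theorem \ref{theorem: trace ideal simplicity} converts this into the monoid-theoretic and graph-theoretic necessary conditions. No gap to report.
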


\section{Representations of Leavitt path algebras of regular hypergraphs}\label{sec: reps of LPA of hypergraphs}

Given a graph $E$, the \textit{category of quiver representations} of $E$ is the category of functors from the path category $\mathcal{C}_E$ to the category of $K$-vector spaces. A morphism of quiver representations is a natural transformation between two such functors. In other words, a quiver representation $\rho$ assigns a (possibly infinite dimensional) $K$-vector space $\rho(v)$ to each $v\in E^0$ and a linear transformation $\rho(e):\rho(s(e))\rightarrow\rho(r(e))$ to each $e\in E^1$. And a morphism of quiver representations $\phi:\rho\rightarrow\rho^\prime$ is a family of linear transformations $\{\phi_v\mid\rho(v)\rightarrow\rho^\prime(v)\}_{v\in E^0}$ such that for each $e\in E^1$ the following diagram commutes:
\[ \begin{tikzcd}
\rho(s(e)) \arrow{r}{\rho(e)} \arrow[swap]{d}{\phi_{s(e)}} & \rho(r(e)) \arrow{d}{\phi_{r(e)}} \\%
\rho^\prime(s(e)) \arrow{r}{\rho^\prime(e)}& \rho^\prime(r(e)).
\end{tikzcd}
\]

This section generalizes the results of \cite{MR4009579}. Throughout this section, a hypergraph always means a regular hypergraph. We will work in the category $\mathfrak{M}_L$ of unital (right) modules over $L:=L_K(\dot{E},\Lambda)$ where $(\dot{E},\Lambda)$ is a hypergraph.  The category $\mathfrak{M}_L$ is closed under taking quotients, submodules, extensions and arbitrary sums and hence it is an abelian category with sums. Note however that it is not closed under infinite product if $E^0$ is infinite.
\begin{lemma}\label{lemma: unital module satisfies an isomorphism}
Let $M$ be a right $L$-module. Then  $\bigoplus\limits_{X\in\mathcal{X}_\lambda}Ms(X)$ is isomorphic (as vector space) to $\bigoplus\limits_{Y\in\mathcal{Y}_\lambda}Mr(Y)$  for every $\lambda\in\Lambda$.
\end{lemma}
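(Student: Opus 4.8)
The plan is to construct the isomorphism explicitly using the defining relations of the Leavitt path algebra $L=L_K(\dot E,\Lambda)$, which for a regular hypergraph means that for each $\lambda\in\Lambda$ both relations $\mathcal A 1$ and $\mathcal A 2$ hold for all $X,X'\in\mathcal X_\lambda$ and all $Y,Y'\in\mathcal Y_\lambda$ (since $S=C_{\mathrm{fin}}=C$ and $T=D_{\mathrm{fin}}=D$ for a regular hypergraph). Fix $\lambda\in\Lambda$. The key building blocks are the edges $e_{XY}:=XY$ indexed by pairs $(X,Y)$ with $X\in\mathcal X_\lambda$, $Y\in\mathcal Y_\lambda$, $X\cap Y\neq\emptyset$, together with their stars $e_{XY}^\ast$. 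First I would define a $K$-linear map $\Phi_\lambda\colon\bigoplus_{X\in\mathcal X_\lambda}Ms(X)\to\bigoplus_{Y\in\mathcal Y_\lambda}Mr(Y)$ on the summand $Ms(X)$ by sending $m\in Ms(X)$ to the tuple whose $Y$-component is $m\,(XY)=m\,e_{XY}\in Mr(Y)$ (this lands in $Mr(Y)$ because $e_{XY}r(Y)=e_{XY}$, i.e. $\widehat r(e_{XY})=r(Y)$). Dually I would define $\Psi_\lambda$ in the reverse direction using the stars $e_{XY}^\ast$, sending $n\in Mr(Y)$ to the tuple with $X$-component $n\,(YX)^\ast=n\,e_{XY}^\ast\in Ms(X)$.

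\emph{The heart of the proof} is then to verify that $\Phi_\lambda$ and $\Psi_\lambda$ are mutually inverse, and this is exactly where relations $\mathcal A 1$ and $\mathcal A 2$ are consumed. Computing $\Psi_\lambda\circ\Phi_\lambda$ on $m\in Ms(X)$, the $X'$-component of the image is
$$\sum_{Y\in\mathcal Y_\lambda} m\,(XY)\,(YX')^\ast \;=\; m\sum_{Y\in D}(XY)(YX')^\ast \;=\; m\,\delta_{XX'}\,s(X),$$
where the middle equality holds because condition (2) of the B-hypergraph definition forces $(XY)=0$ unless $Y\in\mathcal Y_\lambda$, so the sum over $\mathcal Y_\lambda$ coincides with the sum over all of $D$, and the last equality is precisely relation $\mathcal A 1$. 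Since $ms(X)=m$ for $m\in Ms(X)$, this shows $\Psi_\lambda\Phi_\lambda=\mathrm{id}$. The computation $\Phi_\lambda\Psi_\lambda=\mathrm{id}$ is entirely symmetric and uses relation $\mathcal A 2$ in the same way. Finally I would assemble the maps $\Phi_\lambda$ over $\lambda$ into the claimed vector-space isomorphism; since the statement is fixed-$\lambda$, this last bookkeeping is trivial.

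\emph{The main obstacle} I anticipate is purely notational rather than conceptual: one must be careful that the sums appearing in $\mathcal A 1$ and $\mathcal A 2$, which are indexed over all of $D$ (resp.\ all of $C$), collapse to finite sums over $\mathcal Y_\lambda$ (resp.\ $\mathcal X_\lambda$). This is guaranteed by the hypergraph axioms — condition (2) ensures $X\cap Y=\emptyset$ whenever $X\in\mathcal X_\alpha$, $Y\in\mathcal Y_\beta$ with $\alpha\neq\beta$, and regularity (finiteness of $\mathcal X_\lambda$ and $\mathcal Y_\lambda$) makes the relevant sums finite and well-defined. I would state this reduction once at the outset so that the two displayed verifications read cleanly. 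A secondary point to check is right-linearity of $\Phi_\lambda$ and $\Psi_\lambda$, but this is immediate since both maps are given by right multiplication by fixed elements of $L$, and the target summands $Mr(Y)$, $Ms(X)$ are themselves right $L$-submodules only after we remember we are proving a vector-space isomorphism — so I would phrase the maps as $K$-linear from the start and not claim more than the lemma asserts.
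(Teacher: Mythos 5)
Your proposal is correct and follows essentially the same route as the paper: the paper packages your $\Phi_\lambda$ and $\Psi_\lambda$ as the matrix $[\lambda]$ (entries the edges $YX$) and its adjoint transpose $[\lambda]^\ast$, acting by right multiplication, and verifies $[\lambda]^\ast[\lambda]=\mathrm{id}$ and $[\lambda][\lambda]^\ast=\mathrm{id}$ using exactly the relations $L1$ and $L2$ as you do. Your explicit remark that the sums in $\mathcal{A}1$, $\mathcal{A}2$ over all of $D$ (resp.\ $C$) collapse to finite sums over $\mathcal{Y}_\lambda$ (resp.\ $\mathcal{X}_\lambda$) is a point the paper uses implicitly; spelling it out is a small improvement but not a different argument.
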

\begin{proof}
 For each $\lambda\in\Lambda$, let $[\lambda]$ be the rectangular matrix of size $|\mathcal{Y}_\lambda|\times|\mathcal{X}_\lambda|$ whose entry in $Y^{th}$ row and $X^{th}$ column is the edge $YX$. Then $[\lambda]:\bigoplus\limits_{X\in\mathcal{X}_\lambda}Ms(X)\rightarrow\bigoplus\limits_{Y\in\mathcal{Y}_\lambda}Mr(Y)$given by $$(m_X)_{X\in\mathcal{X}_\lambda}[\lambda]=\left(\sum\limits_{X^\prime \in\mathcal{X}_\lambda}m_X\mu_{(X^\prime Y)}\right)_{Y\in\mathcal{Y}_\lambda},$$ is a well defined linear map, where $\mu_{(XY)}$ is right multiplication by the edge $XY$. We show that $[\lambda]$ is an isomorphism with the inverse $[\lambda]^\ast$, which is the adjoint transpose matrix of $[\lambda]$.
Note that $[\lambda]^\ast:\bigoplus\limits_{Y\in\mathcal{Y}_\lambda}Mr(Y)\rightarrow\bigoplus\limits_{X\in\mathcal{X}_\lambda}Ms(X)$ is given by 
$$(m_Y)_{Y\in\mathcal{Y}_\lambda}[\lambda]^\ast=\left(\sum\limits_{Y^\prime\in\mathcal{Y}_\lambda}m_Y\mu_{(Y^\prime X)^\ast}\right)_{X\in\mathcal{X}_\lambda}.$$
We check their compositions:
\begin{eqnarray*}
   (m_Y)_{Y\in\mathcal{Y}_\lambda}[\lambda]^\ast[\lambda] & = & \left(\sum\limits_{Y^\prime\in\mathcal{Y}_\lambda}m_Y\mu_{(Y^\prime X)^\ast}\right)_{X\in\mathcal{X}_\lambda}[\lambda]\nonumber \\
          & = & \left(\sum\limits_{X\in\mathcal{X}_\lambda}\left(\sum\limits_{Y^\prime\in\mathcal{Y}_\lambda}m_Y\mu_{(Y^\prime X)^\ast}\right)\mu_{(XY)}\right)_{Y\in\mathcal{Y}_\lambda}\nonumber \\
          & = & \left(\sum\limits_{X\in\mathcal{X}_\lambda}\left(\sum\limits_{Y^\prime\in\mathcal{Y}_\lambda}m_Y\mu_{(Y^\prime X)^\ast}\mu_{(XY)}\right)\right)_{Y\in\mathcal{Y}_\lambda}\nonumber \\
          & = & \left(\sum\limits_{X\in\mathcal{X}_\lambda}\sum\limits_{Y^\prime\in\mathcal{Y}_\lambda}m_Y\mu_{(Y^\prime X)^\ast (XY)}\right)_{Y\in\mathcal{Y}_\lambda}\nonumber \\
          & = & \left(\sum\limits_{Y^\prime\in\mathcal{Y}_\lambda}\sum\limits_{X\in\mathcal{X}_\lambda}m_Y\mu_{(Y^\prime X)^\ast (XY)}\right)_{Y\in\mathcal{Y}_\lambda}\nonumber \\
          & = & \left(\sum\limits_{Y^\prime\in\mathcal{Y}_\lambda}m_Y\mu_{\sum\limits_{X\in\mathcal{X}_\lambda}(Y^\prime X)^\ast (XY)}\right)_{Y\in\mathcal{Y}_\lambda}\nonumber \\
          & = & \left(\sum\limits_{Y^\prime\in\mathcal{Y}_\lambda}m_Y\mu_{\delta_{Y^\prime Y}r(Y^\prime)}\right)_{Y\in\mathcal{Y}_\lambda}\quad\text{(by}~L2)\nonumber \\
          & = & (m_Y)_{Y\in\mathcal{Y}_\lambda}.
  \end{eqnarray*}
Similarly by $L1$, we get 
$$(m_X)_{X\in\mathcal{X}_\lambda}[\lambda][\lambda]^\ast=(m_X)_{X\in\mathcal{X}_\lambda}$$
which establishes the result.
\end{proof}
\begin{remark}\label{remark: M is sum of Mv}
If $M$ is unital, then for any $m\in M$, we have $m=\sum_{k=1}^lm_kv_k$ for some vertices $v_k\in E^0$. Hence $M=\sum\limits_{v\in E^0}Mv$. When considered as paths, the vertices of $E$ form a set of orthogonal idempotents, hence the above sum is direct. Therefore we have $$M=\bigoplus_{v\in E^0}Mv.$$
\end{remark}
\begin{theorem}\label{theorem: condition H}
 The category $\mathfrak{M}_L$ is equivalent to the full subcategory of quiver representations $\rho$ of $E$ satisfying:
 \begin{equation}\tag{H}
    \text{For all}~\lambda\in\Lambda,~[\rho(\lambda)]:\bigoplus\limits_{X\in\mathcal{X}_\lambda}\rho(s(X))\rightarrow\bigoplus\limits_{Y\in\mathcal{Y}_\lambda}\rho(r(Y))~\text{is an isomorphism.} 
 \end{equation}
\end{theorem}
\begin{proof}
Let $M$ be a right $L$-module. We define a quiver representation $\rho_M$ as follows: $\rho_M(v)=Mv$ for each $v\in E^0$ and $\rho_M(e):Ms(e)\rightarrow Mr(e)$ is given by $ms(e)\rho_M(e):=ms(e)e=me=mr(e)$. By Lemma \ref{lemma: unital module satisfies an isomorphism}, (H) is satisfied. 
If $\varphi:M\rightarrow N$ is an $L$-module homomorphism then $\varphi_v$ is  the linear transformation making the following diagram commtative.
\[ \begin{tikzcd}
\rho_M(v)=Mv \arrow[hookrightarrow]{r}{} \arrow[swap]{d}{\varphi_v} & M \arrow{d}{\varphi} \\%
\rho_N(v)=Nv \arrow[hookrightarrow]{r}{}& N.
\end{tikzcd}
\]
Since right multiplication by an edge $e$ commutes with $\varphi$, this defines a homomorphism of quiver representations.

Given a quiver representation $\rho$, we define the correspoding module $M_\rho:=\bigoplus\limits_{v\in E^0}\rho(v)$. To get an $L$-module structure on $M_\rho$, we define the following projections and inclusions: For each $v\in E^0$, define $$p_v:M_\rho\twoheadrightarrow\rho(v)\quad\text{;}\quad i_v:\rho(v)\hookrightarrow M_\rho$$ and for each $\lambda\in\Lambda$, $X\in\mathcal{X}_\lambda$ and $Y\in\mathcal{Y}_\lambda$, define $$p_X:\bigoplus\limits_{X\in\mathcal{X}_\lambda}\rho(s(X))\twoheadrightarrow\rho(s(X))\quad\text{;}\quad i_X:\rho(s(X))\hookrightarrow\bigoplus\limits_{X\in\mathcal{X}_\lambda}\rho(s(X))$$
$$p_Y:\bigoplus\limits_{Y\in\mathcal{Y}_\lambda}\rho(r(Y))\twoheadrightarrow\rho(r(Y))\quad\text{;}\quad i_Y:\rho(r(Y))\hookrightarrow\bigoplus\limits_{Y\in\mathcal{Y}_\lambda}\rho(r(Y))$$
Now let $mv:=mp_vi_v$, $m(XY):=mp_{s(X)}i_X[\rho(\lambda)]p_Y i_{r(Y)}$, and $m(YX)^\ast:=mp_{r(Y)}i_Y[\rho(\lambda)]^\ast p_X i_{s(X)}$. To keep track of the last defining relations, we draw the following diagram:
\begin{center}
		\begin{tikzpicture}
	[->,>=stealth',shorten >=1pt,thick, scale=0.6]

	\path[->]	(-1,2)	edge[] 	node	{}		(1,2);
	\path[->]	(1,-2)	edge[] 	node	{}		(-1,-2);
	
	\path[->]	(-6.5,2)	edge[] 	node	{}		(-5,2);
	\path[->]	(-5,-2)	edge[] 	node	{}		(-6.5,-2);
	
	\path[->]	(5,2)	edge[] 	node	{}		(6.5,2);
	\path[->]	(6.5,-2)	edge[] 	node	{}		(5,-2);
	
	\path[->]	(-9,-2)	edge[bend left=40] 	node	{}		(-11,-0.5);
	\path[->]	(-11,0.5)	edge[bend left=40] 	node	{}		(-9,2);
	\path[->]	(9,2)	edge[bend left=40] 	node	{}		(11,0.5);
	\path[->]	(11,-0.5)	edge[bend left=40] 	node	{}		(9,-2);

	\node at ((0,2.5) {$[\lambda]$};
	\node at ((0.3,-2.5) {$[\lambda]^\ast$};
	\node at ((-3,1.8) {$\bigoplus\limits_{X\in\mathcal{X}_\lambda}\rho(s(X))$};
	\node at ((-3,-2.2) {$\bigoplus\limits_{X\in\mathcal{X}_\lambda}\rho(s(X))$};
	\node at ((3,1.8) {$\bigoplus\limits_{Y\in\mathcal{Y}_\lambda}\rho(r(Y))$};
	\node at ((3,-2.2) {$\bigoplus\limits_{Y\in\mathcal{Y}_\lambda}\rho(r(Y))$};
	\node at ((-11,0) {$\bigoplus\limits_{w\in E^0}\rho(w)$};
	\node at ((11,0) {$\bigoplus\limits_{w\in E^0}\rho(w)$};
	\node at ((-7.8,2) {$\rho(s(X))$};
	\node at ((-7.8,-2) {$\rho(s(X))$};
	\node at ((7.8,2) {$\rho(r(Y))$};
	\node at ((7.8,-2) {$\rho(r(Y))$};
	\node at ((-10.8,2) {$p_{s(X)}$};
	\node at ((-10.8,-2) {$i_{s(X)}$};
	\node at ((10.8,2) {$i_{r(Y)}$};
	\node at ((10.8,-2) {$p_{r(Y)}$};
	
	\node at ((-5.8,2.5) {$i_X$};
	\node at ((-5.8,-2.5) {$p_X$};
	\node at ((5.8,2.5) {$p_Y$};
	\node at ((5.8,-2.5) {$i_Y$};
	\end{tikzpicture}
\end{center}
Here the composition of the upper arrows correspond to right multiplication by $(XY)$ and the composition by lower arrows correspond to right multiplication by $(YX)^\ast$. Verifying that the above defining relation satisfy defining relations of $L$ is left to the reader.

Now we show that the above constructions give equivalance of categories. By Remark \ref{remark: M is sum of Mv}, we have $M_{\rho_M}=\bigoplus\limits_{v\in E^0}Mv=M$ and their $L$-module structures also match. Given a module homomorphism $\varphi: M\rightarrow N$, we have $\varphi=\bigoplus\limits_{v\in E^0}\varphi_v:\bigoplus\limits_{v\in E^0}Mv\rightarrow\bigoplus\limits_{v\in E^0}N$. 

For the composition in the other order $\rho_{M_\rho}(v)=M_\rho v=\left(\bigoplus\limits_{w\in E^0}\rho(w)\right)v=\rho(v)$ and $\rho(e)=\rho_{M_\rho}(e):M_\rho s(e)\rightarrow M_\rho r(e)$. For, let $e=(XY)$ for some $X\in\mathcal{X}_\lambda$ and $Y\in\mathcal{Y}_\lambda$, then the following diagram commutes.
\[ \begin{tikzcd}
M_\rho s(e)=\rho(s(e)) \arrow[hookrightarrow]{r}{i_{s(e)}} \arrow[swap]{d}{\rho(e)} & M_\rho=\bigoplus\limits_{w\in E^0}\rho(w) \arrow{d}{p_{s(e)}i_X[\rho(\lambda)]p_Y i_{r(e)}} \\%
M_\rho r(e)=\rho(r(e)) \arrow[hookrightarrow]{r}{i_{r(e)}}& M_\rho=\bigoplus\limits_{w\in E^0}\rho(w)
\end{tikzcd}
\]
Finally, for any homomorphism $\{\varphi_v:\rho(v)\rightarrow\sigma(v)\}_{v\in E^0}$ from $\rho$ to $\sigma$, the $v$-component of $\bigoplus\limits_{w\in E^0}\varphi_w$ is $\varphi_v:\rho_{M_\rho}(v)=\rho(v)\rightarrow\sigma_{M_\sigma}(v)=\sigma(v)$.
\end{proof}
\begin{remark}
We note that the full subcategory of graded quiver representations with respect to standard $\mathbb{Z}$-grading satisfying condition (H) is equivalent to the category of graded unital $L$-modules. The proof follows on similar lines of proof of Theorem \ref{theorem: condition H}.
\end{remark}

\begin{theorem}\label{theorem: retract}
The composition of the forgetful functor from $\mathfrak{M}_L$ to $\mathfrak{M}_E$ with $\underline{\hspace{6pt}}\otimes_{K(E)}L$ from $\mathfrak{M}_E$ to $\mathfrak{M}_L$ is naturally equivalent to the identity functor on $\mathfrak{M}_L$.
\end{theorem}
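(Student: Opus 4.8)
The plan is to recognize the asserted natural equivalence as the counit of a standard adjunction and to deduce that it is an isomorphism from the fact that the forgetful functor is fully faithful, the latter being exactly what Theorem \ref{theorem: condition H} provides.

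First I would set up the adjunction. Write $R:=K(E)$ and $S:=L=L_K(\dot E,\Lambda)$. The inclusion $R\hookrightarrow S$ (which is genuinely an inclusion, by the corollary to Theorem \ref{Theorem: Normal forms}) turns $S$ into an $(R,S)$-bimodule. The forgetful functor $U\colon\mathfrak{M}_L\to\mathfrak{M}_E$ is then restriction of scalars along this inclusion, and $(-)\otimes_R S\colon\mathfrak{M}_E\to\mathfrak{M}_L$ is extension of scalars; call the latter $F$. The pair $F\dashv U$ is the usual extension/restriction adjunction, and its counit $\epsilon_M\colon U(M)\otimes_R S\to M$ is the multiplication map $m\otimes\ell\mapsto m\ell$. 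This counit is precisely the natural transformation $F\circ U\to\mathrm{id}_{\mathfrak{M}_L}$ that the theorem concerns, so it suffices to prove $\epsilon$ is a natural isomorphism. (Along the way one checks that $F$ and $U$ really do map between the unital categories $\mathfrak{M}_E$ and $\mathfrak{M}_L$, i.e. that they preserve unitality of modules in the enough-idempotents setting.)

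Second, I would prove that $U$ is fully faithful, and here Theorem \ref{theorem: condition H} does the work. That theorem identifies $\mathfrak{M}_L$ with the \emph{full} subcategory $\mathrm{Rep}^{H}(E)$ of quiver representations of $E$ satisfying condition (H); by the same construction with condition (H) deleted, $\mathfrak{M}_E$ is equivalent to the category $\mathrm{Rep}(E)$ of all quiver representations of $E$ (a unital right $K(E)$-module $N$ corresponding to the representation $v\mapsto Nv$, $e\mapsto$ right multiplication by $e$). Under these two equivalences $U$ becomes the inclusion $\mathrm{Rep}^{H}(E)\hookrightarrow\mathrm{Rep}(E)$, since the underlying quiver representation of $U(M)$ is exactly $\rho_M$ (same vertex spaces $Mv$ and the same edge actions, only the adjoint actions of the $e^\ast$ being forgotten). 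An inclusion of a full subcategory is fully faithful, so $U$ is fully faithful.

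Finally, I invoke the standard categorical fact that for an adjunction $F\dashv U$ the right adjoint $U$ is fully faithful if and only if the counit $\epsilon$ is a natural isomorphism; combining this with the previous two steps yields $\epsilon\colon F\circ U\xrightarrow{\ \sim\ }\mathrm{id}_{\mathfrak{M}_L}$, as claimed. I expect the only real obstacle to be the bookkeeping needed to run this cleanly over rings with enough idempotents rather than unital rings: one must confirm that extension of scalars sends unital $K(E)$-modules to unital $L$-modules and that the adjunction together with the fully-faithful criterion descends to the unital subcategories. As a computational cross-check (or a route avoiding the categorical criterion), I would instead verify directly that $\mu=\epsilon_M$ is bijective: surjectivity is immediate from unitality since $m=\mu(m\otimes u)$ for a local unit $u=\sum_v v$; and injectivity follows by using the relations $L1,L2$ and condition (H) to rewrite, modulo the balancing relations, every element of $U(M)\otimes_{K(E)}L$ as $\sum_v m_v\otimes v$ with $m_v\in Mv$, so that $\mu(\sum_v m_v\otimes v)=\sum_v m_v=0$ forces each $m_v=0$ because $M=\bigoplus_v Mv$. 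The delicate point of this alternative is the rewriting step, where the invertibility of the maps $[\rho_M(\lambda)]$ must be exploited to eliminate the ghost edges from the right-hand tensor factor.
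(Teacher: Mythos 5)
Your proposal is correct, but it takes a genuinely different route from the paper. The paper proves the theorem by hand: it writes down the candidate natural isomorphism $M\otimes_{K(E)}L\to M$, $m\otimes a\mapsto ma$, constructs an explicit inverse $m\mapsto\sum_{v}m\otimes v$ (a finite sum by unitality), verifies that this inverse is $L$-linear on the generators $v$, $(XY)$, $(YX)^\ast$ using the relations $L1$ and $L2$, and then checks that the two composites are the identity on the generators $m\otimes v$ with $m\in Mv$. You instead recognize the multiplication map as the counit of the extension/restriction-of-scalars adjunction and reduce the theorem to the statement that the forgetful functor $U:\mathfrak{M}_L\to\mathfrak{M}_E$ is fully faithful, which you extract from Theorem \ref{theorem: condition H}: under the identification of $\mathfrak{M}_L$ with the full subcategory of quiver representations satisfying (H) and of $\mathfrak{M}_E$ with all quiver representations, $U$ is the inclusion, and fullness amounts to the observation that the ghost-edge actions are determined by the edge actions via the inverses $[\rho(\lambda)]^{-1}$, so any $K(E)$-linear map between unital $L$-modules is automatically $L$-linear. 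This absorbs all the relation-level computation into the already-proved Theorem \ref{theorem: condition H} and replaces it by the standard fact that a right adjoint is fully faithful iff the counit is an isomorphism; the price is exactly the bookkeeping you flag, namely that the tensor-hom adjunction and the counit criterion must be run in the unital-module categories over rings with enough idempotents (this works here because $K(E)\hookrightarrow L$ contains all vertices, so restriction preserves unitality and $L$ is unital as a left $K(E)$-module, and the paper's corollary to Theorem \ref{Theorem: Normal forms} guarantees the inclusion). Your fallback ``computational cross-check'' is essentially the paper's own argument; note that in carrying it out one may only move elements of $K(E)$ (vertices and real edges) across $\otimes_{K(E)}$, so the ghost-edge case genuinely requires the surjectivity coming from (H) rather than a formal rewriting, a point worth making explicit if you pursue that variant.
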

\begin{proof}
We note that both forgetful functor and $\underline{\hspace{6pt}}\otimes_{K(E)}L$ send unital modules to unital modules. 
Let the composition of forgetful functor with $\underline{\hspace{6pt}}\otimes_{K(E)}L$ be denoted by $\mathcal{F}$ and the identity functor on $\mathfrak{M}_L$ be denoted by $\mathcal{I}$. If $M$ is an $L$-module, the $L$-module homomorphism $M\otimes_{K(E)}L\rightarrow M$ given by $m\otimes a\mapsto ma$ defines an natural transformation from $\mathcal{F}$ to $\mathcal{I}$. To see that this is an isomorphism, we define its inverse $M\rightarrow M\otimes_{K(E)}L$ by $m\mapsto\sum\limits_{\substack{v\in E^0\\mv\neq 0}}m\otimes v$. Observe that this sum is finite since $M$ is unital. 

To check that the above inverse map defines an $L$-linear map, we need to check on generators. For every $w\in E^0$ and $m\in M$ we have $\sum mu\otimes v=m\otimes u=\left(\sum m\otimes v\right) u$, since $E^0$ is a set of orthogonal idempotents. For all $\lambda\in\Lambda$, $X\in\mathcal{X}_\lambda$, $Y\in\mathcal{Y}_\lambda$ and $m\in M$ we have 
\begin{eqnarray*}
   \sum m(XY)\otimes v & = & m(XY)\otimes r(Y)\quad\qquad\text{since}~ev=0~\text{iff}~r(e)\neq v \\
          & = & m(XY)\otimes \left(\sum\limits_{X^\prime\in\mathcal{X}_\lambda}(YX^\prime)^\ast(X^\prime Y)\right)\qquad\text{by}~(L2) \\
          & = & \sum\limits_{X^\prime\in\mathcal{X}_\lambda}m(XY)(YX^\prime)^\ast\otimes(X^\prime Y) \\
          & = &  ms(X)\otimes (XY)\qquad\qquad\text{by}~(L1)\\
          & = &  m\otimes (XY)\\
          & = &  \left(\sum m\otimes v\right)(XY).
  \end{eqnarray*}
Similarly $\sum m(YX)^\ast\otimes v=(\sum m\otimes v)(YX)^\ast$.

The composition $m\mapsto \sum m\otimes v\mapsto \sum mv=m$. Since elements of the form $m\otimes v$ with $m\in Mv$ generate $M\otimes L$ as an $L$-module and for such elements we have $m\otimes v\mapsto mv\mapsto mv\otimes v=m\otimes v$, the other composition is also identity.
\end{proof}

Recall that the \textbf{universal localization} $\Sigma^{-1}A$ of an algebra $A$ with respect to a set $\Sigma=\{\sigma:P_\sigma\rightarrow Q_\sigma\}$ of homomorphisms between finitely generated projective $A$-modules, is an initial object among algebra homomorphisms $f:A\rightarrow B$ such that $\sigma\otimes id_B:P_\sigma\otimes_AB\rightarrow Q_\sigma\otimes_AB$ is an isomorphism for every $\sigma\in\Sigma$.

\begin{theorem}\label{theorem: localization}
$L$ is the universal localization of $K(E)$ with respect to $$\Big\{\sigma_\lambda:\bigoplus\limits_{Y\in\mathcal{Y}_\lambda}(r(Y))K(E)\longrightarrow \bigoplus\limits_{X\in\mathcal{X}_\lambda}(s(X))K(E)\Big\}_{\lambda\in\Lambda},$$

$$(a_Y)_{Y\in\mathcal{Y}_\lambda}\xmapsto{\sigma_\lambda}\left(\sum\limits_{Y\in\mathcal{Y}_\lambda}(XY)a_Y\right)_{X\in\mathcal{X}_\lambda}.$$
\end{theorem}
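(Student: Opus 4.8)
The statement asserts that $L=L_K(\dot E,\Lambda)$, together with the canonical inclusion $\iota\colon K(E)\hookrightarrow L$ (the inclusion furnished by the corollary to Theorem \ref{Theorem: Normal forms}), is the universal localization of $K(E)$ at $\Sigma=\{\sigma_\lambda\}_{\lambda\in\Lambda}$. The plan is to verify the two defining requirements separately: first that $\iota$ inverts every $\sigma_\lambda$, i.e. $\sigma_\lambda\otimes_{K(E)}L$ is an isomorphism; and second that $L$ is initial among all $K(E)$-algebras $f\colon K(E)\to B$ with this inverting property. Throughout I use that the hypergraph is regular, so each $\mathcal X_\lambda$ and $\mathcal Y_\lambda$ is finite and the modules $\bigoplus_Y r(Y)K(E)$, $\bigoplus_X s(X)K(E)$ are finitely generated projective; this makes every $\sigma_\lambda\otimes_{K(E)}B$ a genuine left-multiplication by the finite matrix $M^\lambda$ with $M^\lambda_{XY}=f(XY)$, so that ``invertible'' literally means this matrix admits a two-sided inverse over $B$.

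For the first requirement, tensoring gives $\sigma_\lambda\otimes_{K(E)}L\colon\bigoplus_{Y\in\mathcal Y_\lambda}r(Y)L\to\bigoplus_{X\in\mathcal X_\lambda}s(X)L$, namely $(a_Y)_Y\mapsto(\sum_Y (XY)a_Y)_X$; this is exactly the map $[\lambda]$ of Lemma \ref{lemma: unital module satisfies an isomorphism} applied to the free module in place of an arbitrary $M$. I would propose its inverse to be the adjoint
\[
\tau_\lambda\colon (b_X)_{X\in\mathcal X_\lambda}\longmapsto\Bigl(\textstyle\sum_{X\in\mathcal X_\lambda}(XY)^\ast b_X\Bigr)_{Y\in\mathcal Y_\lambda},
\]
and verify $\tau_\lambda\circ(\sigma_\lambda\otimes L)=\mathrm{id}$ and $(\sigma_\lambda\otimes L)\circ\tau_\lambda=\mathrm{id}$ by the identical computation as in that lemma: the first composite reduces, via relation $L2$ (using $(YX)^\ast=(XY)^\ast$), to $\sum_X (YX)^\ast(XY')=\delta_{YY'}r(Y)$, and the second reduces via $L1$ to $\sum_Y (XY)(YX')^\ast=\delta_{XX'}s(X)$. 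Hence $\iota$ lies in the inverting class.

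For initiality, let $f\colon K(E)\to B$ be any algebra map inverting each $\sigma_\lambda$, and write $N^\lambda=(M^\lambda)^{-1}$, an $\mathcal Y_\lambda\times\mathcal X_\lambda$ matrix with $N^\lambda_{YX}\in f(r(Y))Bf(s(X))$. I would feed the data $A_v=f(v)$, $A_e=f(e)$ and $B_e:=N^\lambda_{YX}$ (for $e=X\cap Y$ with $X\in\mathcal X_\lambda$, $Y\in\mathcal Y_\lambda$) into the universal property of Proposition \ref{Universal_property_for_CLPA}. Condition (1) there is immediate from the idempotent bookkeeping $N^\lambda_{YX}\in f(r(Y))Bf(s(X))$, while the two matrix identities $M^\lambda N^\lambda=I$ and $N^\lambda M^\lambda=I$ unravel termwise into precisely conditions (2) and (3) of that proposition, namely $\sum_Y A_{XY}B_{YX'}=\delta_{XX'}A_{s(X)}$ and $\sum_X B_{YX}A_{XY'}=\delta_{YY'}A_{r(Y)}$. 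The proposition then yields a unique algebra homomorphism $g\colon L\to B$ with $g(v)=f(v)$, $g(e)=f(e)$, $g(e^\ast)=B_e$, so that $g\circ\iota=f$.

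Finally, for uniqueness of $g$ I would argue that any extension $g'$ of $f$ is forced: $g'$ fixes $g'(v)=f(v)$ and $g'(e)=f(e)$ because these are images of $\iota$, and applying $g'$ to relations $L1$ and $L2$ shows that the matrix with entries $g'((XY)^\ast)$ is simultaneously a left and right inverse of $M^\lambda$; by uniqueness of matrix inverses it equals $N^\lambda$, whence $g'(e^\ast)=B_e=g(e^\ast)$ and $g'=g$. I expect the only real obstacle to be the index bookkeeping in the third step: one must match the orientation of the inverse-matrix entries $N^\lambda_{YX}$ with the starred generators $e^\ast$ and confirm that $M^\lambda N^\lambda=I=N^\lambda M^\lambda$ reproduce exactly the $\mathcal A$-relations (with the correct Kronecker deltas and idempotent endpoints) rather than their transposes --- everything else is the same adjoint-matrix manipulation already carried out in Lemma \ref{lemma: unital module satisfies an isomorphism}.
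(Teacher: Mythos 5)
Your proof is correct and follows essentially the same route as the paper: invertibility of $\sigma_\lambda\otimes\mathrm{id}_L$ via the adjoint matrix and the relations $L1$, $L2$ (the computation of Lemma \ref{lemma: unital module satisfies an isomorphism}), and initiality by extracting the images of the starred edges from the inverse of $\sigma_\lambda\otimes\mathrm{id}_B$ and extending $f$ to $L$. The paper obtains $f((YX)^\ast)$ as the composite of inclusion, $(\sigma_\lambda\otimes\mathrm{id}_B)^{-1}$ and projection, which is exactly your entry $N^\lambda_{YX}$ of the inverse matrix; your explicit appeal to Proposition \ref{Universal_property_for_CLPA} and the two-sided-inverse uniqueness argument just spell out what the paper leaves implicit.
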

\begin{proof}
Since $v\in E^0$ is an idempotent, the cyclic module $vK(E)$ is projective. For each $\lambda\in\Lambda$, $\sigma_\lambda\otimes id_L$ is an isomorphism with inverse $\sigma_\lambda^\ast$, where $(a_X)_{X\in\mathcal{X}_\lambda}\xmapsto{\sigma_\lambda^\ast}\left(\sum\limits_{X\in\mathcal{X}_\lambda}(YX)^\ast a_X\right)_{Y\in\mathcal{Y}_\lambda}$. If $f:K(E)\rightarrow B$ is an algebra homomorphism, then $f(v)^2=f(v)$ and $vK(E)\otimes_{K(E)}B\cong f(v)B$ by $a\otimes b\mapsto f(a)b$ and $b\mapsto v\otimes b$. 

Let $f:K(E)\rightarrow B$ be an algebra homomorphism such that $\sigma_\lambda\otimes id_B$ is an isomorphism for all $\lambda\in\Lambda$ then the composition $f(s(X))B\cong (s(X))K(E)\otimes_{K(E)}B\xhookrightarrow{i_{s(X)}\otimes id_B}\left(\bigoplus\limits_{X\in\mathcal{X}_\lambda}s(X)K(E)\right)\otimes_{K(E)}B\xrightarrow{\sigma_\lambda^\ast\otimes ~id_B}\left(\bigoplus\limits_{Y\in\mathcal{Y}_\lambda}r(Y)K(E)\right)\otimes_{K(E)}B\xrightarrow{p_{r(Y)}\otimes id_B}r(Y)K(E)\otimes_{K(E)}B\cong f(r(Y))B$ is uniquely and completely determined, which we call $f((YX)^\ast)$. Now $\Tilde{f}(v):=f(v)$ for all $v\in E^0$, $\Tilde{f}((XY)):=f((XY))$ for all $\lambda\in\Lambda$, $X\in\mathcal{X}_\lambda$ and $Y\in\mathcal{Y}_\lambda$ defines the unique homomorphism $\Tilde{f}:L\rightarrow B$ factoring $f$ through $K(E)\rightarrow L$. 
\end{proof}
\begin{proposition}\label{prop: dimension function can be realized}
Let $(\dot{E},\Lambda)$ be hypergraph. If $d:E^0\rightarrow\mathbb{N}\cup\{\infty\}$ satisfies $$\sum\limits_{X\in\mathcal{X}_\lambda}d(s(X))=\sum\limits_{Y\in\mathcal{Y}_\lambda}d(r(Y))~\text{for all}~\lambda\in\Lambda,$$ then there is an $L$-module $M$ with $\emph{dim}_K(Mv)=d(v)$.
\end{proposition}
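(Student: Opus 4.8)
The plan is to invoke the equivalence of categories established in Theorem \ref{theorem: condition H}, which reduces the problem to constructing a quiver representation $\rho$ of $E$ satisfying condition (H) with $\dim_K\rho(v)=d(v)$ for every $v\in E^0$. The module $M_\rho=\bigoplus_{v\in E^0}\rho(v)$ associated to such a $\rho$ will then satisfy $\dim_K(M_\rho v)=\dim_K\rho(v)=d(v)$, so everything comes down to producing one such representation.

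First I would fix, for each $v\in E^0$, a $K$-vector space $\rho(v)$ with $\dim_K\rho(v)=d(v)$; concretely $\rho(v)=K^{d(v)}$ when $d(v)$ is finite and $\rho(v)=K^{(\mathbb{N})}$ when $d(v)=\infty$. Since $(\dot E,\Lambda)$ is a regular hypergraph, every $\lambda\in\Lambda$ has finite index sets $\mathcal{X}_\lambda$ and $\mathcal{Y}_\lambda$, so both $\bigoplus_{X\in\mathcal{X}_\lambda}\rho(s(X))$ and $\bigoplus_{Y\in\mathcal{Y}_\lambda}\rho(r(Y))$ are finite direct sums, of dimensions $\sum_{X\in\mathcal{X}_\lambda}d(s(X))$ and $\sum_{Y\in\mathcal{Y}_\lambda}d(r(Y))$ respectively. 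These agree by hypothesis, so the two spaces are $K$-isomorphic, and I would choose an isomorphism $\Phi_\lambda:\bigoplus_{X\in\mathcal{X}_\lambda}\rho(s(X))\to\bigoplus_{Y\in\mathcal{Y}_\lambda}\rho(r(Y))$.

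Next I would read off the edge maps from $\Phi_\lambda$. For an edge $e=XY$ with $X\in\mathcal{X}_\lambda$, $Y\in\mathcal{Y}_\lambda$, I set $\rho(XY):=p_Y\,\Phi_\lambda\, i_X$, the $(Y,X)$-component of $\Phi_\lambda$, where $i_X$ and $p_Y$ are the canonical inclusion and projection. The feature that makes this work for hypergraphs is that $X_h^i\cap Y_h^j=\{h_{ij}\}$ is always a single edge (see Example \ref{hyper bi-separation}), so for a fixed $\lambda$ every pair $(X,Y)$ corresponds to an actual edge and no component of $\Phi_\lambda$ is forced to vanish. Consequently the matrix $[\rho(\lambda)]$ of Lemma \ref{lemma: unital module satisfies an isomorphism}, whose $(Y,X)$-entry is exactly $\rho(XY)$, equals $\Phi_\lambda$, which is an isomorphism; thus $\rho$ satisfies condition (H), and applying Theorem \ref{theorem: condition H} yields the module $M:=M_\rho$ with the prescribed dimensions.

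The main point requiring care is the case $d(v)=\infty$: one must check that the two finite direct sums are genuinely isomorphic and not merely both labelled $\infty$. I would handle this by interpreting $\infty$ as a single fixed infinite dimension (say $\aleph_0$) throughout, so that any finite direct sum with at least one infinite-dimensional summand is countably infinite-dimensional; then the equality $\sum_X d(s(X))=\sum_Y d(r(Y))$ in $\mathbb{N}\cup\{\infty\}$ forces the two sides to have equal $K$-dimension — either a common finite value or both $\aleph_0$ — and the isomorphism $\Phi_\lambda$ exists in either case.
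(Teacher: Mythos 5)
Your proposal is correct and follows essentially the same route as the paper's proof: define $\rho(v)=K^{d(v)}$ (or $K^{(\mathbb{N})}$ when $d(v)=\infty$), use the hypothesis on $d$ to choose an isomorphism $\theta_\lambda$ between the two finite direct sums for each $\lambda$, take the edge maps to be its components so that $[\rho(\lambda)]$ is that isomorphism, and invoke Theorem \ref{theorem: condition H}. Your extra remarks on regularity, on every pair $(X,Y)$ corresponding to an actual edge, and on the infinite-dimensional case only make explicit what the paper leaves implicit.
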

\begin{proof}
Define the quiver representation $\rho$ by $\rho(v)=K^{d(v)}$ if $d(v)<\infty$ and $\rho(v)=K^{(\mathbb{N})}$ otherwise. Then by definition of $d$ we can find isomorphism $\theta_\lambda:\bigoplus\limits_{X\in\mathcal{X}_\lambda}\rho(s(X))\rightarrow\bigoplus\limits_{Y\in\mathcal{Y}-\lambda}\rho(r(Y))$ for all $\lambda\in\Lambda$. Let $\rho(XY):=i_{s(X)}\theta_\lambda p_{r(Y)}$ for all $\lambda\in\Lambda$, $X\in\mathcal{X}_\lambda$ and $Y\in\mathcal{Y}_\lambda$. Condition (H) is satisfied by construction and the corresponding $L$-module $M$ of Theorem \ref{theorem: condition H} has dim$_K(Mv)=\text{dim}_K\rho(v)=d(v)$.
\end{proof}

\begin{definition}\label{def: dimension function}
A \textbf{dimension function} of a hypergraph $(\dot{E},\Lambda)$ is a function $d:E^0\rightarrow\mathbb{N}$ satisfying $\sum\limits_{X\in\mathcal{X}_\lambda}s(X)=\sum\limits_{Y\in\mathcal{Y}_\lambda}r(Y)$ for all $\lambda\in\Lambda$.
\end{definition}
\begin{remark}\label{remark: L has f.d. rep if}
If the $L$-module $M$ is finitary, i.e, dim$(Mv)<\infty$ for all $v\in E^0$ then by Lemma \ref{lemma: unital module satisfies an isomorphism}, $d(v):=\text{dim}(Mv)$ is a dimension function. By Proposition \ref{prop: dimension function can be realized}, the converse also holds. that is, every dimension function is realizable. Moreover, since dim($M)=\sum\limits_{v\in E^0}\text{dim}(Mv)$, $d(v):=\text{dim}(Mv)$ has finite support if $M$ is finite dimensional.
\end{remark}

\subsection{Support subgraphs and the hypergraph monoid}

Let $\mathcal{H}=(\dot{E},\Lambda)$ be a hypergraph and $E^\prime$ be a full subgraph of $E$. Then there is a natural (hyper) bi-separation induced on $E^\prime$ from $\mathcal{H}$ as follows: For each $X\in C$, if $s(X)\in (E^\prime)^0$, define $X^\prime=X\bigcap (E^\prime)^1$ and similarly for each $Y\in D$, if $r(Y)\in(E^\prime)^0$, define $Y^\prime=Y\bigcap (E^\prime)^0$. Also for each $\lambda\in\Lambda$, define $\lambda^\prime$ using the following data: $\mathcal{X}_{\lambda^\prime}=\{X^\prime\mid X\in\mathcal{X}_\lambda\}$ and $\mathcal{Y}_{\lambda^\prime}=\{Y^\prime\mid Y\in\mathcal{Y}_\lambda\}$. Finally define $\Lambda^\prime=\{\lambda^\prime\mid \lambda\in\Lambda, \mathcal{X}_\lambda\neq\emptyset~\text{and}~\lambda\neq\emptyset\}$. We call $\mathcal{H}^\prime=(\dot{E^\prime},\Lambda^\prime)$ \textbf{full sub-hypergraph} of $\mathcal{H}$ (hyper-induced from $E^\prime$).

\begin{definition}
Let $\mathcal{H}=(\dot{E},\Lambda)$ be a hypergraph. A full sub-hypergraph $\mathcal{H}^\prime=(\dot{E^\prime},\Lambda^\prime)$ is called \textbf{co-bisaturated} if the following conditions are satisfied: 
For every $\lambda^\prime\in\Lambda^\prime$,
\begin{enumerate}
    \item if $s(X)\in (E^\prime)^0$, then $X\cap (E^\prime)^1\neq\emptyset$, where $X\in\mathcal{X}_{\lambda^\prime}$,
    \item if $r(Y)\in (E^\prime)^0$, then $Y\cap(E^\prime)^1\neq\emptyset$, where $Y\in\mathcal{Y}_{\lambda^\prime}$.
\end{enumerate}
\end{definition}

We note that a full sub-hypergraph $\dot{E^\prime}$ of $\dot{E}$ is a co-bisaturated if and only if $E^0-(E^\prime)^0$ is a bisaturated subset of $E^0$.

Let $\mathcal{H}=(\dot{E},\Lambda)$ be a hypergraph and $M$ be a right $L(\mathcal{H})$-module. The \textbf{support subgraph} of $M$, denoted by $E_M$, is the full subgraph of $E$ induced on $V_M:=\{v\in E^0\mid Mv\neq 0\}$. The hypergraph $\mathcal{H}_M=(\dot{E}_M,\Lambda_M)$, which is the full sub-hypergraph of $\mathcal{H}$ hyper-induced from $E_M$, is called  the \textbf{support sub-hypergraph} of $M$.
\begin{lemma}
Let $\mathcal{H}=(\dot{E},\Lambda)$ be a hypergraph and $\mathcal{H}^\prime=(\dot{E^\prime},\Lambda^\prime)$ be a full sub-hypergraph of $\mathcal{H}$. Then the following are equivalent:
\begin{enumerate}
    \item $\mathcal{H}^\prime=\mathcal{H}_M$, is the support sub-hypergraph of a unital $L(\mathcal{H})$-module $M$.
    \item $\mathcal{H}^\prime$ is co-bisaturated.
    \item The map $\theta:L(\mathcal{H})\rightarrow L(\mathcal{H}^\prime)$ defined (on generators) by
    $$\theta(x):=\begin{cases}
    x\quad\text{if}~x\in (E^\prime)^0\sqcup (E^\prime)^1\sqcup \overline{(E^\prime)^1},\\
    0\quad\text{otherwise}\end{cases}$$
    extends to an onto algebra homomorphism.
\end{enumerate}
\end{lemma}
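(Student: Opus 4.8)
The plan is to establish the cycle of implications $(1)\Rightarrow(2)\Rightarrow(3)\Rightarrow(1)$, exploiting at each step the correspondence between unital modules and representations satisfying condition (H) from Theorem \ref{theorem: condition H} together with the universal property in Proposition \ref{Universal_property_for_CLPA}.

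For $(1)\Rightarrow(2)$ I would argue through the stated note that $\mathcal{H}'$ is co-bisaturated if and only if $W:=E^0-(E')^0$ is bisaturated. Since $\mathcal{H}'=\mathcal{H}_M$, by definition $(E')^0=V_M=\{v\in E^0\mid Mv\neq 0\}$, so $W=\{v\mid Mv=0\}$. Because the hypergraph is regular, $\Lambda=\Lambda_T^S$, and for each $\lambda$ one has $s(\lambda)\subseteq W$ exactly when $\bigoplus_{X\in\mathcal{X}_\lambda}Ms(X)=0$ and $r(\lambda)\subseteq W$ exactly when $\bigoplus_{Y\in\mathcal{Y}_\lambda}Mr(Y)=0$. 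Lemma \ref{lemma: unital module satisfies an isomorphism} provides an isomorphism $\bigoplus_{X\in\mathcal{X}_\lambda}Ms(X)\cong\bigoplus_{Y\in\mathcal{Y}_\lambda}Mr(Y)$, so the two subspaces vanish together; hence $s(\lambda)\subseteq W\iff r(\lambda)\subseteq W$, i.e. $W$ is bisaturated.

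For $(2)\Rightarrow(3)$ I would invoke the universal property of $L(\mathcal{H})=\mathcal{A}_K(\dot E)$, setting $A_v=\theta(v)$, $A_e=\theta(e)$, $B_e=\theta(e^\ast)$ in $\mathfrak A=L(\mathcal{H}')$ (the nonzero values being $v,e,e^\ast$ for $v,e$ in $E'$, else $0$). Condition (1) of the universal property follows from fullness of $E'$: if $e\in(E')^1$ then $s(e),r(e)\in(E')^0$, while if $e\notin(E')^1$ both $A_e,B_e$ vanish. The substance is conditions (2)/(3). Writing $\widetilde X:=X\cap(E')^1$, I would note that when $s(X_1)\notin(E')^0$ every $\theta(X_1Y)=0$ and $\theta(s(X_1))=0$, so both sides of the $L1$ identity vanish; and when $s(X_1)\in(E')^0$, co-bisaturation forces $\widetilde X_1\neq\emptyset$, a genuine (finite) row of $\mathcal{H}'$. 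A term $\theta(X_1Y)\theta((YX_2)^\ast)$ survives only if $r(Y)\in(E')^0$, and co-bisaturation identifies precisely these $Y$ with columns $\widetilde Y\in D'$; the truncated sum therefore equals $\sum_{\widetilde Y\in D'}(\widetilde X_1\widetilde Y)(\widetilde Y\widetilde X_2)^\ast$, to which the $L1$ relation of $L(\mathcal{H}')$ applies, yielding $\delta_{\widetilde X_1\widetilde X_2}s(X_1)=\delta_{X_1X_2}s(X_1)$ (distinct nonempty restrictions remain distinct). Condition (3) is verified symmetrically using $L2$ and the second co-bisaturation clause. This gives the algebra homomorphism $\theta$, which is onto since its image contains the generating set $(E')^0\sqcup(E')^1\sqcup\overline{(E')^1}$ of $L(\mathcal{H}')$.

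For $(3)\Rightarrow(1)$ I would take $M:=L(\mathcal{H}')$ and make it a right $L(\mathcal{H})$-module by restriction of scalars along $\theta$, i.e. $m\cdot a:=m\,\theta(a)$. It is unital: any $m$ has a local unit $f=\sum_{v'\in F}v'$ with $F\subseteq(E')^0$ finite, and since $\theta$ fixes $(E')^0$ we get $m\cdot f=m\theta(f)=mf=m$. Finally $Mv=M\theta(v)$ is nonzero exactly when $\theta(v)\neq 0$, which (using that vertices are nonzero basis elements by Theorem \ref{Theorem: Normal forms}) happens precisely for $v\in(E')^0$; thus $V_M=(E')^0$, the support subgraph $E_M$ is the full subgraph $E'$, and $\mathcal{H}_M=\mathcal{H}'$. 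The main obstacle is the bookkeeping in $(2)\Rightarrow(3)$: one must show the $\theta$-truncated $L1/L2$ sums coincide with the defining relations of $L(\mathcal{H}')$, and it is exactly co-bisaturation that guarantees the restricted rows and columns are nonempty so that those relations are available.
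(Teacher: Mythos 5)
Your proposal is correct and follows essentially the same route as the paper: the cycle $(1)\Rightarrow(2)\Rightarrow(3)\Rightarrow(1)$, with Lemma \ref{lemma: unital module satisfies an isomorphism} forcing $\bigoplus_{X}Ms(X)$ and $\bigoplus_{Y}Mr(Y)$ to vanish together for $(1)\Rightarrow(2)$, a verification that the truncated $L1/L2$ sums become the defining relations of $L(\mathcal{H}')$ (the paper checks the relations directly rather than citing the universal property, but this is the same computation) for $(2)\Rightarrow(3)$, and restriction of scalars along $\theta$ with $M=L(\mathcal{H}')$ for $(3)\Rightarrow(1)$. Your rephrasing of $(1)\Rightarrow(2)$ via the bisaturated complement is only a repackaging of the paper's argument and is legitimate, since the paper states that equivalence just before the lemma.
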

\begin{proof}
$(1)\Rightarrow(2)$: Let $\lambda^\prime\in\Lambda^\prime$ and $X\in\mathcal{X}_{\lambda^\prime}$. Assume $s(X)\in(E_M)^0$, then $0\neq Ms(X)\hookrightarrow\bigoplus\limits_{X\in\mathcal{X}_\lambda}Ms(X)\cong\bigoplus\limits_{Y\in\mathcal{Y}_\lambda}Mr(Y)$ implies that there exists $Y\in\mathcal{Y}_\lambda$ such that $X\cap Y\neq\emptyset$ which is equivalent to $X\cap(E^\prime)^1\neq\emptyset$. Similarly, if $Y\in\mathcal{Y}_{\lambda^\prime}$ and $r(Y)\in(E_M)^0$, then $Y\cap(E^\prime)^1\neq\emptyset$.

$(2)\Rightarrow(3)$: We check that $\theta$ preserves the defining relations of $L(\mathcal{H})$. It is direct that path algebra relations are satisfied. Let $\lambda^\prime\in\Lambda^\prime$, $X_1,X_2\in\mathcal{X}_{\lambda^\prime}$ and $Y_1,Y_2\in\mathcal{Y}_{\lambda^\prime}$. If $s(X_i), r(Y_i)\in(E^\prime)^0$ then $X_i\cap(E^\prime)^1\neq\emptyset$ and $Y_1\cap(E^\prime)^1\neq\emptyset$. Hence the image of $\sum\limits_{X\in\mathcal{X}_{\lambda}}(Y_1X)^\ast(XY_2)=\delta_{Y_1Y_2}r(Y)$ is $\sum\limits_{X\in\mathcal{X}_{\lambda^\prime}}(Y_1X)^\ast(XY_2)=\delta_{Y_1Y_2}r(Y)$. Similarly, the image of $\sum\limits_{Y\in\mathcal{Y}_{\lambda}}(X_1Y)(YX_2)^\ast=\delta_{X_1X_2}s(X)$ is $\sum\limits_{Y\in\mathcal{Y}_{\lambda^\prime}}(X_1Y)(YX_2)^\ast=\delta_{X_1X_2}s(X)$.

$(3)\Rightarrow(1)$: Let $M:=L(\mathcal{H}^\prime)\cong L(\mathcal{H})/\text{Ker}\theta$. Now $v\in(E^\prime)^0$ if and only if $\theta(v)\neq 0$ and $Mv=L(\mathcal{H}^\prime)v\neq0$. Hence the vertex set of $E_M$ is $(E^\prime)^0$. It is routine to check that $\mathcal{H}^\prime$ is full sub-hypergraph and hence $\mathcal{H}_M=\mathcal{H}^\prime$. 
\end{proof}
\begin{proposition}
If $M$ is a unital $L(\mathcal{H})$-module then $M$ also has the structure of a unital $L(\mathcal{H}_M)$-module induced through the epimorphism $\theta:L(\mathcal{H})\rightarrow L(\mathcal{H}_M)$. 
Moreover, $\emph{Ker}\theta$ is generated by $E^0-V_M=\{v\in E^0\mid Mv=0\}$ and  $\emph{Ker}\theta\subseteq\emph{Ann}M$.
\end{proposition}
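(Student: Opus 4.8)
The plan is to reduce all three assertions to the single computation $\operatorname{Ker}\theta = J$, where $J$ denotes the two-sided ideal of $L(\mathcal{H})$ generated by $E^0 - V_M$. Granting this, the remaining claims are formal. First, for a right module the annihilator $\operatorname{Ann}M = \{a \in L(\mathcal{H}) \mid Ma = 0\}$ is a \emph{two-sided} ideal, since $Ma = 0$ forces $M(ba) = (Mb)a \subseteq Ma = 0$ for every $b$; as $Mv = 0$ for each $v \in E^0 - V_M$ by definition of $V_M$, this ideal contains $E^0 - V_M$, hence contains $J = \operatorname{Ker}\theta$, which gives $\operatorname{Ker}\theta \subseteq \operatorname{Ann}M$. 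Second, because $\theta$ is the surjection supplied by the preceding lemma and $\operatorname{Ker}\theta \subseteq \operatorname{Ann}M$, the $L(\mathcal{H})$-action on $M$ factors through $\bar\theta : L(\mathcal{H})/\operatorname{Ker}\theta \xrightarrow{\sim} L(\mathcal{H}_M)$, equipping $M$ with an $L(\mathcal{H}_M)$-module structure; unitality is inherited because $M = \bigoplus_{v \in E^0} Mv = \bigoplus_{v \in V_M} Mv$ by Remark \ref{remark: M is sum of Mv}, and the vertex set of $\mathcal{H}_M$ is exactly $V_M$.

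For the inclusion $J \subseteq \operatorname{Ker}\theta$ there is nothing to prove: $\theta(v) = 0$ for every $v \in E^0 - V_M$ by the definition of $\theta$. The content is the reverse inclusion, for which I would use the normal-form basis of Theorem \ref{Theorem: Normal forms}. Since $E_M$ is a full, co-bisaturated sub-hypergraph, one may choose the forbidden words for $\mathcal{H}_M$ to be precisely the restrictions to $\widehat{E_M}$ of those fixed for $\mathcal{H}$; with such a compatible choice, a generalized path lying entirely inside $\widehat{E_M}$ is normal in $\widehat{E_M}$ if and only if it is normal in $\widehat{E}$. Consequently $\theta$ carries the normal-path basis $B$ of $L(\mathcal{H})$ onto the basis of $L(\mathcal{H}_M)$ together with $0$: a basis path $\mu$ all of whose letters lie in $(E_M)^1 \sqcup \overline{(E_M)^1}$ maps to the corresponding normal path of $\widehat{E_M}$, while every other basis path (including a length-zero path $\mu = v$ with $v \notin V_M$) maps to $0$. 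Hence $\operatorname{Ker}\theta$ is spanned by those normal paths that use at least one letter outside $\widehat{E_M}$.

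It remains to see that each such $\mu$ lies in $J$. Writing $\mu = x_1 \cdots x_n$ with vertex sequence $v_0 = s(\mu), v_1, \dots, v_n = r(\mu)$, the idempotent relations permit inserting any $v_i$, so $\mu = (x_1\cdots x_i)\,v_i\,(x_{i+1}\cdots x_n)$. If some letter $x_i$ lies outside $\widehat{E_M}$ then, because $E_M$ is the full subgraph on $V_M$, that edge has an endpoint outside $V_M$, i.e. some $v_i \notin V_M$; the length-zero case is $\mu = v_0 \notin V_M$ directly. In either situation $\mu$ factors through a vertex $v_i \in E^0 - V_M \subseteq J$, and as $J$ is a two-sided ideal we conclude $\mu \in J$. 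Therefore $\operatorname{Ker}\theta \subseteq J$, and equality follows, showing that $\operatorname{Ker}\theta$ is generated by $E^0 - V_M$ and completing the argument.

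The step I expect to be the main obstacle is the compatibility of forbidden words invoked in the second paragraph: one must verify that the restrictions to $\widehat{E_M}$ of the chosen forbidden words of $\mathcal{H}$ form a legitimate system of forbidden words for $\mathcal{H}_M$, and that the two notions of normality then agree on paths inside $E_M$. This is exactly where the hypotheses that $\mathcal{H}_M$ is \emph{full} and \emph{co-bisaturated} (equivalently, that $E^0 - V_M$ is bisaturated) are used, guaranteeing that the defining relations of $L(\mathcal{H}_M)$ are the honest restrictions of those of $L(\mathcal{H})$. An alternative route that sidesteps forbidden words altogether is to construct the inverse of $\bar\theta$ directly from the universal property of $L(\mathcal{H}_M)$ in Proposition \ref{Universal_property_for_CLPA}, checking that the classes in $L(\mathcal{H})/J$ of the $V_M$-vertices and $E_M$-edges satisfy the relations $\mathcal{A}1$ and $\mathcal{A}2$ of $\mathcal{H}_M$; here the same co-bisaturation hypothesis is what makes the surviving terms of each pushed-forward $\mathcal{H}$-relation match the corresponding $\mathcal{H}_M$-relation.
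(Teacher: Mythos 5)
Your overall architecture (reduce everything to $\mathrm{Ker}\,\theta=J$, then get $\mathrm{Ker}\,\theta\subseteq\mathrm{Ann}\,M$ and the induced unital $L(\mathcal{H}_M)$-structure formally) is sound, but the key step --- the normal-form computation of $\mathrm{Ker}\,\theta$ --- has a genuine gap: the compatibility claim for forbidden words is false as stated. The forbidden words of $\mathcal{H}$ are fixed once and for all in Definition \ref{forbidden_word}, and their restrictions to $\widehat{E_M}$ need not constitute a legitimate system for $\mathcal{H}_M$, because a pair of rows that both survive in $\mathcal{H}_M$ may have had its chosen common column killed. Concretely, take one hyperedge with $I_h=J_h=\{1,2\}$, four distinct vertices $u_1,u_2,w_1,w_2$, edges $h_{ij}\colon u_i\to w_j$, rows $X_i=\{h_{i1},h_{i2}\}$, columns $Y_j=\{h_{1j},h_{2j}\}$; fix the forbidden words of $\mathcal{H}$ using the column $Y_1$, and let $M$ be a module with $V_M=\{u_1,u_2,w_2\}$ (realizable, and $\{w_1\}$ is bisaturated, so $\mathcal{H}_M$ is co-bisaturated). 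The surviving pair $(X_1',X_2')$ has the common column $Y_2'$, but the restricted system supplies no forbidden word for it, so it is not a valid choice in the sense of Definition \ref{forbidden_word}; moreover $h_{12}h_{22}^{\ast}$ lies entirely in $\widehat{E_M}$ and is normal in $\widehat{E}$, yet $h_{12}h_{22}^{\ast}=-h_{11}h_{21}^{\ast}\in\mathrm{Ker}\,\theta$, since its image is the single-term relation $\mathcal{A}1$ for $(X_1',X_2')$ in $L(\mathcal{H}_M)$, which is $0$. Hence, for the given fixed choice, your description of $\mathrm{Ker}\,\theta$ as the span of the normal paths using a letter outside $\widehat{E_M}$ is wrong and the argument collapses. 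The repair is to move the freedom to the other side: re-choose the forbidden words of $\mathcal{H}$ adapted to $V_M$. For a pair of rows with sources in $V_M$, co-bisaturation gives an edge of one row with range in $V_M$, and regularity of the hypergraph guarantees that the column of this edge meets the other row as well, so a common column with range in $V_M$ can always be selected (similarly for type II); since Theorem \ref{Theorem: Normal forms} is valid for any choice, with this adapted choice your biconditional on normality becomes true and the rest of your argument does go through. You flag the compatibility as the main obstacle, but you locate the choice on $\mathcal{H}_M$ rather than on $\mathcal{H}$ and you omit the existence argument, which is precisely the missing content.

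For comparison, the paper proceeds in the opposite order and avoids normal forms entirely: the unital $L(\mathcal{H}_M)$-structure is produced first, by restricting the quiver representation $\rho_M$ to $E_M$ and checking condition (H) of Theorem \ref{theorem: condition H} (using $Ms(X)=0$ and $Mr(Y)=0$ for the deleted rows and columns), which already shows the action factors through $\theta$; the identity $\mathrm{Ker}\,\theta=\langle E^0-V_M\rangle$ is then obtained by exhibiting an inverse homomorphism $L(\mathcal{H}_M)\to L(\mathcal{H})/\langle E^0-V_M\rangle$ on generators --- that is, the ``alternative route'' via the universal property that you sketch in your last sentence is essentially the paper's actual proof of the kernel statement.
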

\begin{proof}
Let $\rho_M$ be the quiver representation of $E$ corresponding to $M$ as defined in the proof of Theorem \ref{theorem: condition H}. We claim that the restriction of $\rho_M$ to $E_M$ satisfies (H). That is, if $\rho^\prime:=\rho_M|_{E_M}$, then for all $\lambda^\prime\in\Lambda_M$,
$$[\rho^\prime]:\bigoplus\limits_{X\in\mathcal{X}_{\lambda^\prime}}\rho^\prime(s(X))\rightarrow\bigoplus\limits_{Y\in\mathcal{Y}_{\lambda^\prime}}\rho^\prime(r(Y))~\text{is an isomorphism}.$$

For, 
\begin{eqnarray*}
\bigoplus\limits_{X\in\mathcal{X}_{\lambda^\prime}}\rho^\prime(s(X)) & = & \bigoplus\limits_{X\in\mathcal{X}_{\lambda^\prime}}Ms(X)\\
& = & \bigoplus\limits_{X\in\mathcal{X}_\lambda}Ms(X)\quad\text{since}~Ms(X)=0~\text{for}~X\not\in\mathcal{X}_{\lambda^\prime}\\
& \cong & \bigoplus\limits_{Y\in\mathcal{Y}_\lambda}Mr(Y)\\
& = & \bigoplus\limits_{Y\in\mathcal{Y}_{\lambda^\prime}}Mr(Y)\quad\text{since}~Mr(Y)=0~\text{for}~Y\not\in\mathcal{Y}_{\lambda^\prime} \\
& = & \bigoplus\limits_{Y\in\mathcal{Y}_{\lambda^\prime}}\rho^\prime(r(Y)).
\end{eqnarray*}
Let $M^\prime$ be the unital $L(\mathcal{H}_M)$-module corresponding to $\rho^\prime$. Now $M^\prime$ is also an $L(\mathcal{H})$-module via $\theta:L(\mathcal{H})\rightarrow L(\mathcal{H}_M)$. As vector spaces $M^\prime=\bigoplus\limits_{v\in V_M}Mv\cong\bigoplus\limits{v\in E^0}Mv=M$. We can define an $L(\mathcal{H}_M)$-module structure on $M$ via this isomorphism. The action of the generators on $M$ and $M^\prime$ is compatible with this isomorphism, so $M\cong M^\prime$ as $L(\mathcal{H})$-modules. Thus the $L(\mathcal{H})$-module structure of $M$ is induced from the $L(\mathcal{H}_M)$-module structure via $\theta$.

For the second part, Let $I_M$ be the ideal generated by $E^0-V_M$. We show that $L(\mathcal{H}_M)\cong L(\mathcal{H})/\text{Ker}\theta$ and $L(\mathcal{H})/I_M$ are isomorphic.  Since, $E^0-V_M\subseteq \text{Ker}\theta$, we have a surjection from $L(\mathcal{H})/I_M$ to $L(\mathcal{H}_M)$. Let $\varphi:L(\mathcal{H}_M)\rightarrow L(\mathcal{H})/I$ be defined on generators by $x\mapsto x+I$ where $x\in E^0\sqcup E^1\sqcup \overline{E^1}$. It is not hard to show that $\varphi$ is a homomorphism and the inverse of the above surjection. 
\end{proof}
Recall that given a hypergraph $\mathcal{H}=(\Dot{E},\Lambda)$, its hypergraph monoid $H(\mathcal{H})$ is defined as the additive monoid generated by 
$E^0$ modulo the following relations:
$$\sum\limits_{X\in\mathcal{X}_\lambda}s(X)=\sum\limits_{Y\in \mathcal{Y}_\lambda}r(Y)\quad\text{for all}~\lambda\in\Lambda.$$

Therefore, dimension functions of $\mathcal{H}$ correspond exactly to monoid homomorphisms from $H(\mathcal{H})$ to $\mathbb{N}$. 

Since $H(\mathcal{H})$ is isomorphic to the monoid $\mathcal{V}(L(\mathcal{H}))$, the generator $v$ of $H(\mathcal{H})$ corresponds to the (right) projective $L(\mathcal{H})$-module $vL(\mathcal{H})$. The corresponding relations among the isomorphism classes of the cyclic projective modules was shown to hold in the proof of Theorem \ref{theorem: localization}. We can now reinterpret the existence of a nonzero finite dimensional represenatation in terms of the nonstable K-theory of $L(\mathcal{H})$.

\begin{theorem}
$L(\mathcal{H})$ has a nonzero finite dimensional representation if and only if $\dot{E}$ has a finite, full co-bisaturated sub-hypergraph $\mathcal{G}$ with a nonzero monoid homomorphism from $\mathcal{V}(L(\mathcal{G}))$ to $\mathbb{N}$.
\end{theorem}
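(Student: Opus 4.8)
The plan is to prove both directions by connecting three objects: nonzero finite-dimensional $L(\mathcal{H})$-modules, dimension functions with finite support, and full co-bisaturated sub-hypergraphs carrying a nonzero $\mathbb{N}$-valued monoid homomorphism. The bridge between modules and dimension functions is already in place via Remark~\ref{remark: L has f.d. rep if} and Proposition~\ref{prop: dimension function can be realized}, and the identification $H(\mathcal{H})\cong\mathcal{V}(L(\mathcal{H}))$ lets me restate dimension functions as monoid homomorphisms $\mathcal{V}(L(\mathcal{H}))\to\mathbb{N}$. The role of the support sub-hypergraph is to carve out precisely the finite piece where the dimension function is supported.

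\emph{Forward direction.} Suppose $M$ is a nonzero finite-dimensional $L(\mathcal{H})$-module. First I would set $d(v)=\dim_K(Mv)$; by Lemma~\ref{lemma: unital module satisfies an isomorphism} this is a dimension function, and since $\dim_K(M)=\sum_{v\in E^0}d(v)<\infty$ it has finite support, so $V_M=\{v\mid Mv\neq 0\}$ is finite. The support sub-hypergraph $\mathcal{H}_M=(\dot{E}_M,\Lambda_M)$ is then finite, full, and (by the equivalence $(1)\Leftrightarrow(2)$ of the lemma characterizing co-bisaturated sub-hypergraphs, applied to the module $M$) co-bisaturated. I would take $\mathcal{G}=\mathcal{H}_M$. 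The restricted function $d|_{V_M}$ is a dimension function of $\mathcal{G}$ satisfying the defining relations of $H(\mathcal{G})$ on each $\lambda'\in\Lambda_M$ (these are exactly the relations of $\lambda$ restricted to the surviving edges), hence descends to a monoid homomorphism $H(\mathcal{G})\cong\mathcal{V}(L(\mathcal{G}))\to\mathbb{N}$; it is nonzero because $M\neq 0$ forces some $d(v)>0$ with $v\in V_M$.

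\emph{Converse.} Suppose $\mathcal{G}=(\dot{E}',\Lambda')$ is a finite, full, co-bisaturated sub-hypergraph with a nonzero monoid homomorphism $f:\mathcal{V}(L(\mathcal{G}))\cong H(\mathcal{G})\to\mathbb{N}$. Reading $f$ on the generators $\{v\mid v\in(E')^0\}$ gives a dimension function $d'$ of $\mathcal{G}$, which I extend to all of $E^0$ by setting $d(v)=0$ for $v\notin(E')^0$. I must check that $d$ is a dimension function for the whole hypergraph $\mathcal{H}$: for each $\lambda\in\Lambda$, the relation $\sum_{X\in\mathcal{X}_\lambda}d(s(X))=\sum_{Y\in\mathcal{Y}_\lambda}d(r(Y))$ must hold. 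This is where co-bisaturatedness of $E^0-(E')^0$ (equivalently, bisaturatedness of $E^0-(E')^0$, as noted after the definition) does the work: it guarantees that if any $s(X)$ or $r(Y)$ attached to $\lambda$ lies in $(E')^0$ then $\lambda$ restricts to a genuine hyperedge $\lambda'\in\Lambda'$ and the relation for $\lambda$ collapses to the relation for $\lambda'$, both sides being zero on the vertices outside $(E')^0$. With $d$ verified as a dimension function of $\mathcal{H}$ with finite support, Proposition~\ref{prop: dimension function can be realized} yields an $L(\mathcal{H})$-module $M$ with $\dim_K(Mv)=d(v)$; finite support makes $M$ finite-dimensional and nonzero makes $M\neq 0$.

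\emph{Main obstacle.} The routine parts are the module-to-function translations, which the earlier results hand me almost verbatim. The delicate point is the verification in the converse that the extended $d$ satisfies the hyperedge relations of $\mathcal{H}$, not merely of $\mathcal{G}$ — that is, a relation could in principle be violated by a hyperedge $\lambda$ with some source/range vertices inside $(E')^0$ and some outside. The content of \emph{co-bisaturated} is exactly designed to preclude this: I expect to phrase the argument as ``either $\lambda$ restricts to $\lambda'\in\Lambda'$ with both sums unchanged, or $\lambda$ touches $(E')^0$ only through vertices carrying $d$-value zero, so both sums vanish,'' using conditions (1) and (2) of the definition of co-bisaturated to rule out the mixed case. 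I would make this dichotomy precise as the single nontrivial lemma inside the proof.
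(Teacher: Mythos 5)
Your proposal is correct and follows essentially the same route as the paper: both directions go through the equivalence between nonzero finite-dimensional representations and nonzero dimension functions of finite support, with the forward direction passing to the support sub-hypergraph (finite, full and co-bisaturated by the support lemma) and the converse extending the dimension function by zero, where co-bisaturatedness (equivalently, bisaturatedness of the complementary vertex set) is exactly what makes the extended function satisfy every hyperedge relation of $\mathcal{H}$ before invoking the realization proposition. The only difference is that you spell out the dichotomy for a hyperedge meeting $(E')^0$, which the paper leaves implicit.
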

\begin{proof}
By Remark \ref{remark: L has f.d. rep if}, $L(\mathcal{H})$ has a nonzero finite dimensional representation if and only if $\mathcal{H}$ has a nonzero dimension function of finite support. The support of such dimension function defines a finite, full, co-bisaturated sub-hypergraph $\mathcal{G}$ and its restriction gives a nonzero dimension function on $\mathcal{G}$ and thus a nonzero monoid homomorphism from $\mathcal{V}(L(\mathcal{G}))$ to $\mathbb{N}$ as well. Conversely, since $\mathcal{G}$ is co-bisaturated, any nonzero dimension function on $\mathcal{G}$ can be extended by $0$ to a dimension function on $\mathcal{H}$ and this gives a nonzero dimension function of finite support on $\mathcal{H}$.
\end{proof}

\section{Some remarks on Cohn-Leavitt path algebras of B-hypergraphs with Invariant Basis Number}\label{sec: lpa of hypergraphs and IBN}
Let $\mathcal{H}=(\dot{E},\Lambda)$ be a finite B-hypergraph and let $H:=H(\mathcal{H})$ be the $H$-monoid of $\mathcal{H}$. Let the Cohn-Leavitt path algebra $\mathcal{A}_K(\mathcal{H})$ be denoted simply by $L$ and its Grothendieck group by $\mathcal{K}_0(L)$. Let $\mathcal{U}(L)$ denote the submonoid of the $\mathcal{V}$-monoid $\mathcal{V}(L)$  generated by the element $[L] \in \mathcal{V}(L)$. Then $L$ has IBN property if and only if $\mathcal{U}(L)$ has infinite order. Now suppose $G(\mathcal{U}(L))$ denotes the Grothendieck group of $\mathcal{U}(L)$. Then one can show that the natural map $G(\mathcal{U}(L)) \rightarrow \mathcal{K}_0(L)$ induced by the inclusion $\mathcal{U}(L) \hookrightarrow \mathcal{V}(L)$ is an embedding (see \cite[Proposition 7]{MR3954639}). So $[L]$, treated as an element in the group $\mathcal{K}_0(L)$, has infinite order. This means that the element $[L] \otimes 1$ in $\mathcal{K}_0(L) \otimes \mathbb{Q}$ is nonzero. We know that $[L] \in \mathcal{V}(L)$ corresponds to the element $[\sum_{v \in E^0}v] \in H$ under the isomorphism of functors proved in Theorem \ref{thm: V-monoid_H-monoid_Isomorphism}. So, if $G(H)$ denotes the Grothendieck group of $H$, from the above arguments we can conclude that $L$ has IBN if and only if $\sum_{v \in E^0}v$ is nonzero in $G(H) \otimes \mathbb{Q}$, which is equivalent saying that $\sum_{v \in E^0}v$ is not in the $\mathbb{Q}-$ linear span of the elements of $R$ in $\mathbb{Q}\Omega$ (cf. \cite[Theorem 13]{MR3954639}), where  $\Omega$ is the set $E^0 \sqcup Q \sqcup P$ ($Q$ and $P$ are as in the Definition \ref{definition: H-monoid}) and
\begin{eqnarray*}
R &:=& \bigcup\limits_{\lambda \in \Lambda_S^T} \left[ \sum\limits_{X \in \mathcal{X}_{\lambda}}s(X) - \sum\limits_{Y \in \mathcal{Y}_{\lambda}}r(Y)\right]\\ &\bigsqcup& \bigcup\limits_{\lambda \in \Lambda_T^\text{fin}} \left[ \sum\limits_{X \in \mathcal{X}_{\lambda}}s(X) - \sum\limits_{Y \in \mathcal{Y}_{\lambda}}r(Y) - q_{\mathcal{Y}_{\lambda}}\right]\\
&\bigsqcup& \bigcup\limits_{\lambda \in \Lambda_S^\text{fin}} \left[\sum\limits_{Y \in \mathcal{Y}_{\lambda}}r(Y)-\sum\limits_{X \in \mathcal{X}_{\lambda}}s(X) -p_{\mathcal{X}_{\lambda}}\right]. 
\end{eqnarray*} 

\subsection{A Matrix criterion for Leavitt path algebra of a finite hypergraph having IBN}\hfill\\
In this subsection we generalize the main result of \cite[Section 3]{MR3955043}. Let $(\dot{E},\Lambda)$ be a finite hypergraph such that $|\Lambda| = h$ and $E^0 = n$. Then by theorem \ref{thm: V-monoid_H-monoid_Isomorphism}, the $\mathcal{V}$-monoid of $L_K(\dot{E})$ is generated by the set $E^0$ modulo  $h$ relations of the form
\begin{equation}\label{hypergraph_monoid_relations}
    \sum_{t=1}^{m}s(X_t) = \sum_{u=1}^{l}r(Y_u),
\end{equation}
one corresponding to each element of $\Lambda$. Let $A$ and $B$ be the coefficient matrices corresponding to the LHS and RHS respectively of the $h$ relations \eqref{hypergraph_monoid_relations}. Then it is clear that both $A$ and $B$ are $h \times n$ matrices with entries as non-negative integers. Let $T$ be a free abelian monoid on the set of all vertices. For each element $x \in T$, and for each $i$ such that $1 \leq i \leq h$, let $M_i(x)$ denote the element of $T$ which results by applying to $x$ the relation \eqref{hypergraph_monoid_relations} corresponding to the element $\lambda_i \in \Lambda$. For any sequence $\sigma$ taken from the set $\{1,2,\dots,h\}$, and any $x \in T$, let $\Delta_{\sigma}(x) \in T$ be the element obtained by applying $M_i$ operations in the order specified by $\sigma$. 

\begin{definition}
 Suppose for each pair $x,y \in T$, $[x] = [y]$ in the $\mathcal{V}$-monoid if and only if there are two sequences $\sigma$ and $\sigma^{\prime}$ taken from the set $\{1,2,\dots,h\}$ such that $\Delta_{\sigma}(x) = \Delta_{\sigma^{\prime}}(y)$ in $T$. Then we say that the \textit{confluence condition} holds in $T$.
\end{definition}

\begin{theorem}\label{IBN_for_hypergraphs}
 Let $(\dot{E},\Lambda)$ be a finite hypergraph such that $|\Lambda| = h$ and $E^0 = n$. Suppose $A$ and $B$ are the coefficient matrices corresponding to LHS and RHS respectively of the $h$ relations of the $\mathcal{V}$-monoid of $L_{K}(\dot{E})$. Also suppose that the confluence condition holds in $T$, the free abelian monoid on $E^0$. Then $L_{K}(\dot{E})$ has Invariant Basis Number if and only if $\text{rank}(B^t-A^t) < \text{rank}([B^t-A^t ~ c])$, where $c$ is the column matrix of order $n \times 1$ with all its entries equal to $1$.
 \begin{proof}
  Suppose that $\text{rank}(B^t-A^t) < \text{rank}(B^t-A^t~c )$. We prove that if $m$ and $p$ are positive integers such that
  \begin{equation}\label{IBN_equation}
      m [\sum_{i=1}^nv_i] = p[\sum_{i=1}^nv_i]
  \end{equation}
  in the $\mathcal{V}$-monoid,then $m=p$. So let us assume that the equation \eqref{IBN_equation} holds for some positive integers $m$ and $p$. Since the confluence condition holds in $T$, there are two sequences $\sigma$ and $\sigma^{\prime}$ taken from $\{1,2,\dots,h\}$ such that $\Delta_{\sigma}(m\sum_{i=1}^nv_i) = \Delta_{\sigma^{\prime}}(p\sum_{i=1}^nv_i) = \gamma (\text{say})$ in $T$. Now suppose $M_j$ is invoked $k_j$ times in $\Delta_{\sigma}$ and $k_{j}^{\prime}$ times in $\Delta_{\sigma^{\prime}}$. Then we have 
  \begin{eqnarray*}
   \gamma & = & \Delta_{\sigma}(m\sum_{i=1}^nv_i) \nonumber \\
          & = & [m+k_1(b_{11}-a_{11})+\dots+k_h{(b_{h1}-a_{h1})}]v_1 \nonumber \\
          & + & [m+k_1(b_{12}-a_{12})+\dots+k_h{(b_{h2}-a_{h2})}]v_2 \nonumber \\
          & + & \dots \dots \dots \nonumber \\
          & + & [m+k_1(b_{1z}-a_{1z})+\dots+k_h{(b_{hz}-a_{hz})}]v_z \nonumber \\
          & + & [m + k_1(b_{1(z+1)})+\dots+k_h(b_{h(z+1)})]v_{z+1} \nonumber \\
          & + & \dots \dots \dots \nonumber \\
          & + & [m + k_1(b_{1n})+\dots+k_h(b_{hn})]v_n.
  \end{eqnarray*}
  Also
  \begin{eqnarray*}
   \gamma & = & \Delta_{\sigma^{\prime}}(p\sum_{i=1}^nv_i) \nonumber \\
          & = & [p+k_1^{\prime}(b_{11}-a_{11})+\dots+k_h^{\prime}{(b_{h1}-a_{h1})}]v_1 \nonumber \\
          & + & [p+k_1^{\prime}(b_{12}-a_{12})+\dots+k_h^{\prime}{(b_{h2}-a_{h2})}]v_2 \nonumber \\
          & + & \dots \dots \dots \nonumber \\
          & + & [p+k_1^{\prime}(b_{1z}-a_{1z})+\dots+k_h^{\prime}{(b_{hz}-a_{hz})}]v_z \nonumber \\
          & + & [p + k_1^{\prime}(b_{1(z+1)})+\dots+k_h^{\prime}(b_{h(z+1)})]v_{z+1} \nonumber \\
          & + & \dots \dots \dots \nonumber \\
          & + & [p + k_1^{\prime}(b_{1n})+\dots+k_h^{\prime}(b_{hn})]v_n.
  \end{eqnarray*}
  Let $m_i=(k_i^{\prime}-k_i)$ for $i=1,\dots,h$. From the above two equations, we have the following system of equations-
  \begin{eqnarray*}
   (m-p) & = & m_1(b_{11}-a_{11})+\dots+m_h(b_{h1}-a_{h1}) \nonumber \\
   (m-p) & = & m_1(b_{12}-a_{12})+\dots+m_h(b_{h2}-a_{h2})\nonumber \\
   & \vdots & \nonumber \\
   (m-p) & = & m_1(b_{1z}-a_{1z})+\dots+m_h(b_{hz}-a_{hz})\nonumber \\
   & \vdots & \nonumber \\
   (m-p) & = & m_1(b_{1n}-a_{1n})+\dots+m_h(b_{hn}-a_{hn}).
  \end{eqnarray*}
  So $(m_1,\dots,m_h) \in \mathbb{Z}^h$ is a solution of the linear system $(B^t-A^t)x = (m-p)c$, where $x = (x_1,\dots,x_h)^t$ and $c$ is the column matrix mentioned in the statement of the theorem. This means $\text{rank}(B^t-A^t) = \text{rank}(B^t-A^t~~(m-p)c)$. We know that if $m \neq p$, then $\text{rank}(B^t-A^t~~(m-p)c) = \text{rank}(B^t-A^t~~c)$. This would mean that $\text{rank}(B^t-A^t) = \text{rank}(B^t-A^t~~c)$ whenever $m \neq p$, contrary to our initial assumption. This proves the first part.
  
  \noindent Conversely, assume that $\text{rank}(B^t-A^t) = \text{rank}(B^t-A^t~~c):=r$. We will prove that there exists a pair of distinct positive integers $m$ and $p$ such that $m[\sum_{i=1}^nv_i] = p[\sum_{i=1}^nv_i]$ in the $\mathcal{V}$-monoid of $\mathcal{A}_K(\dot{E})$. 
  
  \noindent The fact that $\text{rank}(B^t-A^t~~c) = r$ means that after finite number of elementary row operations, $(B^t-A^t~~c)$ can be brought to the form 
 \arraycolsep=3pt\medmuskip=1mu
\setcounter{MaxMatrixCols}{12}
\[\begin{pmatrix*}
0 & \dots & d_{1j_1} & \dots & d_{1j_2-1} & 0 & d_{1j_2+1} & \dots & d_{1j_r-1} & 0 & \dots & c_1 \\
0 & \dots & 0 & \dots & 0 & d_{2j_2} & d_{2j_2+1} & \dots & d_{2j_r-1} & 0 & \dots & c_2\\
\vdots & \vdots & \vdots & \vdots & \vdots &  \vdots & \dots & \vdots & \vdots & \vdots & \dots & \vdots \\
0 & \dots & \dots & \dots & 0 & \dots & \dots & 0 & 0 & d_{rj_r} & \dots & c_r \\
0 & \dots & \dots & \dots & 0 & \dots & \dots & 0 & 0 & 0 & \dots & 0\\
\vdots & \vdots & \vdots & \vdots & \vdots &  \vdots & \dots & \vdots & \vdots & \vdots & \dots & \vdots \\
0 & \dots & \dots & \dots & 0 & \dots & \dots & 0 & 0 & 0 & \dots & 0\\
\end{pmatrix*}\]
where the entries are integers, $d_{1j_1}d_{2j_2}\dots d_{rj_r} \neq 0$ and $\sum_{i=1}^rc_i^2 \neq 0$.
So it is clear that one particular solution for the linear system $(B^t-A^t)x = c$ is the column vector $(\frac{c_1}{d_{1j_1}},\frac{c_2}{d_{2j_2}},\dots,\frac{c_r}{d_{rj_r}},0,\dots,0)^t$.

\noindent Now let  
\begin{gather*}
  m_j :=
 \begin{cases} \frac{c_i|d_{1j_1}d_{2j_2}\dots d_{rj_r}|}{d_{ij_i}} & \text{if}~ j=j_i~(1 \leq i \leq r)\\
 0 & \text{otherwise},
\end{cases}    
\end{gather*}
$$p:= \text{max}\{|m_j| \mid j = 1,\dots,h\},\qquad
m:= |d_{1j_1}d_{2j_2}\dots d_{rj_r}| + p~ \text{and}$$
\begin{gather*}
(k_j^{\prime},k_j) := \begin{cases} (0,0) & \text{if}~m_j =0\\
(m_j,0) & \text{if}~m_j > 0\\
(0,-m_j) & \text{if}~m_j < 0.
\end{cases}
\end{gather*}

\noindent From the above definitions, it is clear that $(m-p) > 0$. So the $h$-tuple $(m_1,m_2,\dots,m_h)$ is a solution for the linear system $(B^t-A^t)x = (m-p)c$. This, from the first part of the proof, is equivalent to showing that $m[\sum_{i=1}^nv_i] = p[\sum_{i=1}^nv_i]$. This means that $\mathcal{A}_K(\dot{E})$ does not have Invariant Basis Number, thereby completing the proof.
 \end{proof}
\end{theorem}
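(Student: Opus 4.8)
The plan is to translate the ring-theoretic statement ``$L_K(\dot E)$ has IBN'' into a purely linear-algebraic condition by passing through the $\mathcal{V}$-monoid and then exploiting the confluence hypothesis. Recall that $L := L_K(\dot E)$ is unital (as $E^0$ is finite), so $L^m \cong L^p$ as modules precisely when $m[L] = p[L]$ in $\mathcal{V}(L)$; since $[L] = [\sum_{v \in E^0} v]$ corresponds under the isomorphism of Theorem \ref{thm: V-monoid_H-monoid_Isomorphism} to $\sum_{i=1}^n v_i \in H$, the algebra $L$ fails to have IBN exactly when there are distinct positive integers $m \neq p$ with $m\,[\sum_i v_i] = p\,[\sum_i v_i]$ in $\mathcal{V}(L) \cong H$. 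Thus the whole problem reduces to deciding when such a coincidence can occur, and this is where the confluence condition enters: by hypothesis $m[\sum_i v_i] = p[\sum_i v_i]$ holds if and only if there are sequences $\sigma, \sigma'$ over $\{1,\dots,h\}$ with $\Delta_\sigma(m\sum_i v_i) = \Delta_{\sigma'}(p\sum_i v_i)$ in the free abelian monoid $T$.

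For the forward direction I would assume $\mathrm{rank}(B^t - A^t) < \mathrm{rank}([\,B^t - A^t \mid c\,])$ and argue by contradiction that no nontrivial coincidence occurs. Suppose $m[\sum_i v_i] = p[\sum_i v_i]$; pick witnessing sequences $\sigma, \sigma'$ and let $k_j$ (resp. $k_j'$) denote the number of times the relation $M_j$ is invoked in $\Delta_\sigma$ (resp. $\Delta_{\sigma'}$). Applying $M_j$ once changes the multiplicity of $v_i$ by $b_{ji} - a_{ji}$, so the common reduced word $\gamma$ has $v_i$-coefficient equal both to $m + \sum_j k_j(b_{ji}-a_{ji})$ and to $p + \sum_j k_j'(b_{ji}-a_{ji})$. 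Subtracting and writing $m_j := k_j' - k_j$ yields, for every $i$, the scalar equation $(m-p) = \sum_j m_j(b_{ji}-a_{ji})$, i.e. $(B^t - A^t)\mathbf{m} = (m-p)\,c$ with $\mathbf{m} = (m_1,\dots,m_h)^t \in \mathbb{Z}^h$. Hence $(m-p)c$ lies in the column space of $B^t - A^t$; if $m \neq p$ this would force $\mathrm{rank}([\,B^t - A^t \mid c\,]) = \mathrm{rank}(B^t - A^t)$, contradicting the standing hypothesis. Therefore $m = p$, and $L$ has IBN.

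For the converse I would assume $\mathrm{rank}(B^t - A^t) = \mathrm{rank}([\,B^t - A^t \mid c\,]) =: r$ and manufacture a genuine coincidence. Equality of ranks means $c$ lies in the $\mathbb{Q}$-column space of $B^t - A^t$, so the system $(B^t - A^t)x = c$ has a rational solution; reducing $[\,B^t - A^t \mid c\,]$ to row-echelon form and clearing denominators produces an explicit integer vector $\mathbf{m} \in \mathbb{Z}^h$ and a positive integer $\delta$ with $(B^t - A^t)\mathbf{m} = \delta\, c$. Setting $m - p = \delta > 0$ and reading the entries $m_j$ back as application-counts --- using $M_j$ forward $m_j$ times on the $p$-side when $m_j > 0$ and $-m_j$ times on the $m$-side when $m_j < 0$ --- the computation of the previous paragraph runs in reverse, so $\Delta_\sigma(m\sum_i v_i) = \Delta_{\sigma'}(p\sum_i v_i)$ and hence $m[\sum_i v_i] = p[\sum_i v_i]$ with $m \neq p$. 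By the first paragraph, $L$ then fails IBN.

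The step I expect to be the genuine obstacle is the bookkeeping in the converse: one must choose $m$ (hence $p = m - \delta$) large enough that every prescribed application of a relation $M_j$ is actually executable in $T$, i.e. at each stage the current element dominates the $j$-th row of $A$ so that the forward move is legal and all intermediate multiplicities remain non-negative. This is precisely why the argument cannot merely invoke the existence of a rational solution but needs the explicit echelon-form bounds (the product $|d_{1j_1}\cdots d_{rj_r}|$ and $p = \max_j |m_j|$) to guarantee enough ``material'' to carry out the rewriting; verifying this legality, together with ensuring that the confluence hypothesis is genuinely used to pass between the monoid equality and the combinatorial equality in $T$, is the delicate part. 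The linear-algebra core --- the rank criterion itself --- is then routine once this translation is in place.
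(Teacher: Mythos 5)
Your proposal is correct and follows essentially the same route as the paper: reduce IBN to the existence of distinct $m,p$ with $m[\sum_i v_i]=p[\sum_i v_i]$ in $\mathcal{V}(L)\cong H$, use the confluence hypothesis and the counting of applications $k_j,k_j'$ to derive the system $(B^t-A^t)\mathbf{m}=(m-p)c$ for the forward direction, and for the converse read an integer solution obtained from the echelon form of $[\,B^t-A^t\mid c\,]$ back as application counts. The executability (legality) issue you flag in the converse is likewise left unverified in the paper's own proof, which simply appeals to the first part to assert the equivalence, so your attempt is, if anything, more explicit about that point than the original.
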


\section*{Acknowledgement}
The authors would like to thank Ramesh Sreekantan, Roozbeh Hazrat and Pere Ara  for their valuable suggestions and comments.

The first named author gratefully acknowledges the Department of Atomic Energy (DAE), India for financial support through PhD scholarship.

The second named author was a visitor of Indian Statistical Institute Bangalore Center during the preparation of this manuscript. He would like to thank the institute for providing conducive environment for research. He thanks B. Venkatesh for his kind support.
He also gratefully acknowledges the Department of Atomic Energy (DAE), India for financial support through Postdoctoral Fellowship.

\end{document}